\def\be{\begin{equation}}
\def\ee{\end{equation}}
\def\bea{\begin{eqnarray}}
\def\eea{\end{eqnarray}}
\def\bes{\begin{eqnarray*}}
\def\ees{\end{eqnarray*}}
\def\nn{\nonumber}
\def\<{\langle}
\def\>{\rangle}
\def\lb{\label}
\def\bs{\setminus}
\def\R{{\bf R}}
\def\C{{\bf C}}
\def\N{{\bf N}}
\def\U{{\bf U}}
\def\ga{{\gamma}}
\def\th{{\theta}}
\def\om{{\omega}}
\def\ep{{\epsilon}}
\def\lm{{\lambda}}
\def\diag{{\rm diag}}
\def\Sp{{\rm Sp}}
\def\dm{{\rm \diamond}}
\newtheorem{lemma}{Lemma}[section]
\newtheorem{theorem}[lemma]{Theorem}
\newtheorem{corollary}[lemma]{Corollary}
\newtheorem{proposition}[lemma]{Proposition}
\newtheorem{definition}[lemma]{Definition}
\newtheorem{remark}[lemma]{Remark}
\newtheorem{question}{Question}[section]
\newtheorem{fact}{Fact}[section]
\newtheorem*{thmA}{Theorem A}
\newtheorem*{thmB}{Theorem B}
\newtheorem*{thmC}{Theorem C}
\newtheorem*{propD}{Proposition D}
\title{\bf The positive fundamental group of $\Sp(2n)$}
\author{Jian Wang$^{1}$\thanks{Partially supported by NSFC (Nos. {12071231, 12361141812}) and the Fundamental Research Funds for the Central University (No. 63243062). E-mail: wangjian@nankai.edu.cn} \quad and \quad
	Qinglong Zhou$^{2}$\thanks{Partially supported by NSFC (No.12171426), the Natural Science Foundation of Zhejiang Province (No. Y19A010072) and the Fundamental Research Funds for the Central Universities (No. 226-2024-00136).
		E-mail: zhouqinglong@zju.edu.cn. }\\
	$^{1}$ School of Mathematical Sciences and LPMC,\\ Nankai University, Tianjin 300071, China\\
	$^{2}$ School of Mathematical Science,\\Zhejiang University, Hangzhou 310058, Zhejiang, China\\
}
\date{}
\begin{document}

\maketitle

\begin{abstract}
	In this paper, 
	we examine the homotopy classes of positive loops in $\Sp(2n)$.
	We demonstrate that two positive loops are homotopic
	if and only if they are homotopic through positive loops. This provides a positive answer to a conjecture raised by McDuff in \cite{McD}. As a consequence, we extend several results of McDuff \cite{McD} and Chance \cite{Cha} to higher dimensional symplectic manifolds without dimensional restrictions.
\end{abstract}

\renewcommand{\theequation}{\thesection.\arabic{equation}}
\section{Introduction}
\label{sec:1}

\subsection{The main results}
A positive path in the group of real symplectic matrices $\Sp(2n)$
is a smooth path $\gamma(t)$ whose derivative $\gamma'$ satisfies
\begin{equation}\label{symp-ode}
\gamma'(t)=JP(t)\gamma(t),  
\end{equation}
where $P(t)$ is a positive definite symmetric matrix
and $J$ is the standard symplectic matrix.

In 1997, Lalonde and McDuff defined the positive fundamental group
$\pi_{1,pos}(\Sp(2n))$ to be the semigroup 
generated by positive loops with the base point at the identity,
where two loops are considered equivalent 
if one can deformed to the other via a family of positive loops.
In \cite{LaM}, the authors posed the natural question: 
$$\mathrm{\,Is\; the\; obvious\; map\;}
\pi_{1,pos}(\Sp(2n))\rightarrow \pi_1(\Sp(2n))
\mathrm{\; injective ?}$$
In 2008, McDuff \cite{McD} conjectured its truth. Our affirmative response to this conjecture is provided by the following theorem.

\begin{thmA}[Main result]\label{Thm:injective.n>2}
The natural map
$$
\pi_{1,pos}(\Sp(2n))\rightarrow \pi_1(\Sp(2n))
$$
is injective and onto $\mathbf{N}\backslash\{1,2,\cdots,n-1\}$ for any $n\in \mathbf{N}$.
\end{thmA}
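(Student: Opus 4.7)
My plan is to treat the two halves of the theorem separately: first identify the image as $\N\setminus\{1,\ldots,n-1\}$, then establish injectivity, which I expect to be the main obstacle.

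For the image, I would use that $\pi_1(\Sp(2n))\cong\Z$ via the Maslov index $\mu$, so the natural map sends a positive loop to an integer and the image is a sub-semigroup of $\Z$. The lower bound $\mu(\gamma)\ge n$ for any positive loop $\gamma$ should follow from the crossing-form description of the Conley--Zehnder/Maslov index: whenever $\gamma(t)$ crosses the identity, the associated crossing form is the restriction of $P(t)$ to $\ker(\gamma(t)-I)$, which is positive definite and contributes at least its dimension to $\mu$; summing these contributions around the loop forces $\mu\ge n$. To realize every integer $k\ge n$, I would exhibit the diagonal model loop
\[
\gamma_{(k_1,\ldots,k_n)}(t)=\bigoplus_{j=1}^n R_{2\pi k_j t}, \qquad \sum_{j=1}^n k_j=k,\quad k_j\ge 1,
\]
where $R_\theta$ denotes the symplectic rotation on $\R^2$ through angle $\theta$. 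This loop is positive precisely because every $k_j\ge 1$, and such a partition of $k$ into $n$ positive integers exists iff $k\ge n$, explaining exactly why $\{1,\ldots,n-1\}$ is excluded.

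For injectivity, suppose $\gamma_0,\gamma_1$ are two positive loops with the same Maslov index $k\ge n$. The plan is: (a) deform each $\gamma_i$ through positive loops to a block-diagonal normal form on which $\R^{2n}$ splits orthogonally into invariant symplectic $2$-planes on which $\gamma$ acts by rotation; (b) within the block-diagonal class, further deform to the standard $\gamma_{(k_1,\ldots,k_n)}$ above; (c) interpolate between any two partitions $(k_1,\ldots,k_n)$ and $(l_1,\ldots,l_n)$ of $k$ by shifting one unit at a time between coordinates, which remains in the admissible set $\{k_j\ge 1\}$ as long as $k\ge n$. Concatenating these three families yields the required positive homotopy from $\gamma_0$ to $\gamma_1$.

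The main obstacle will be step (a). Positive definiteness of $P(t)$ is an open but non-convex condition that is not stable under arbitrary homotopies, and a general positive loop may exhibit intricate spectral behavior with eigenvalues colliding and transitioning between elliptic, hyperbolic, and parabolic regimes. The core technical difficulty is to produce the block-diagonalization without ever violating positivity. My attempt would be to induct on $n$: at each step split off an invariant symplectic $2$-plane using the monotone rotation of an eigendirection forced by positivity, then apply the inductive hypothesis to the symplectic complement. The delicate points are controlling transitions through degenerate spectra and ensuring that the splitting depends smoothly on the homotopy parameter. Previously known partial cases obtained in low dimension by McDuff \cite{McD} and Chance \cite{Cha} should serve to anchor the induction.
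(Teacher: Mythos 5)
Your description of the image is essentially the paper's (and Slimowitz's): the block rotations $\bigoplus_j R_{2\pi k_j t}$ with $k_j\ge 1$ realize every $k\ge n$, and the lower bound $\mu\ge n$ comes from the positivity of the crossing form at the identity, which the paper references via Lalonde--McDuff and Ekeland. Your steps (b) and (c) for injectivity also coincide with what the paper does once a loop has been reduced to one with no collisions away from $\pm 1$ (this is Theorem \ref{Thm:pos.homotopy.of.loops.has.no.collisions}, which decomposes into $\Sp(2)$ factors and uses Slimowitz's Lemma 4.7 to rebalance the partition $(k_1,\ldots,k_n)$).

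The genuine gap is exactly where you place it, step (a), but the repair you sketch does not work. Inducting on $n$ by splitting off an invariant symplectic $2$-plane and passing to the symplectic complement fails because a generic positive loop unavoidably has \emph{collisions}: pairs of eigenvalues on $\U$ meet, form a non-diagonalizable normal form $N_2(\theta,b)$, and leave the circle into $\C\setminus(\U\cup\R)$ as a Krein quadruple; pairs on $\R$ do the analogous thing. At such a moment there is no distinguished invariant $2$-plane at all — the generalized eigenspace is a single $4$-dimensional block — so the splitting cannot be propagated continuously through the collision time, let alone continuously in a homotopy parameter $s$. In $\Sp(2n)$ with $n\ge 3$ there are also triple collisions ($N_3$-type), which make an inductive $2$-plane peel-off even less viable. "Anchoring the induction at low dimension via McDuff and Chance" doesn't help either, since their results are consequences of the $\Sp(2)$/$\Sp(4)$ cases, not tools for producing the splitting.

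What the paper actually does in place of your step (a) is the whole technical content: it classifies the constrained ways collisions can occur along a positive loop (e.g.\ an eigenvalue pair can exit $\U$ into $\C\setminus(\U\cup\R)$ only if it recently emerged from $\pm 1$ or collided with a third pair — Lemma \ref{toplogical.constraint.of.Sp4} and its $\Sp(2n)$ analogue), introduces $C^1$-connectable positive homotopies and Speed-Changing/Positive-Decomposition/Smoothing Lemmata so local modifications can be glued and block-decompositions extracted (Lemma \ref{Lm:positive.decomposition}, Lemma \ref{Lemma:change.part.time}, Lemma \ref{lem: smoothing}), proves that in the truly hyperbolic region positivity imposes no constraint on the conjugacy-class tangent directions (Proposition D, Lemma \ref{Lemma:hyperbolic.positive.path}, Theorem \ref{Thm:remove.hyperbolic.collisions}), and then runs a two-phase reduction: first transfer collisions on $\U\setminus\{\pm 1\}$ either across $\pm 1$ onto $\R\setminus\{0,\pm 1\}$ (Theorem \ref{Th:transfer.collisions}) or cancel them in pairs through a triple-collision cusp (Theorem \ref{Thm:Sp6.remove.collisions.on.U}); then eliminate the real-line collisions using truly hyperbolic positive homotopies (Theorems \ref{Thm.remove.collisions.on.R.of.Sp4}, \ref{Thm.remove.collisions.on.R.of.Sp2n}). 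Only after this global reduction does the block-diagonal picture in your step (a) become available, at which point your (b) and (c) apply as stated. So your outline is correct in shape but the central step is unproved and your suggested mechanism for it is not salvageable as written.
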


The cases of $\Sp(2)$ and $\Sp(4)$ in Theorem A is due to Slimowitz \cite{Sli}. However, the proof of Theorem A for $\Sp(4)$ in \cite{Sli} has gaps (see Section \ref{sec:sp4} below), and its method is difficult to extend to higher dimensions. Our method effectively and rigorously addresses the case of $\Sp(4)$ and extends it to arbitrary even dimensions, thereby offering a comprehensive solution to the problem.



\bigskip

Our results can extend several results of McDuff \cite{McD} and Chance \cite{Cha} to higher dimensional symplectic manifolds. In their work, they explored loops of non-autonomous Hamiltonian diffeomorphisms with fixed maxima and further studied the Hofer geometry of the group of Hamiltonian diffeomorphisms. Finally, they were able to provide criteria for 4-manifolds to be uniruled.

Let $(M,\omega)$ be a closed, connected symplectic manifold. We suppose that $H_t: M\rightarrow \mathbf{R}$ is a smooth family of Hamiltonians parameterized by $t\in[0,1]$, and let $X_t^H$ denote the associated Hamiltonian vector field defined by $\omega(X_t^H,\cdot)=dH_t$. Also, let $\phi_t^H$ denote the corresponding flow. A point $x_0\in M$ is called a fixed global maximum if $H_t(x_0)\geq H_t(y)$ for all values of $t$ and for all $y\in M$. A similar definition of a fixed global minimum can be given.

Suppose $\ga=\{\phi_t^H\}$ is a loop in the group 
of Hamiltonian diffeomorphisms, $\mathrm{Ham}(M,\omega)$. We assume that $x_0$ is a fixed global maximum of $\ga$. We say that $\gamma$ is a circle action near $x_0$ if the generating Hamiltonian $H_t$ has the form $const - \sum m_i|z_i^2|$ in appropriate coordinates $(z_1,\cdots,z_k)$ of $x_0$, where $m_i$ are positive integers. 
We say this action is effective (i.e., no element other than the identity
acts trivially) if and only if $\mathrm{gcd}(m_1,\cdots,m_k)=1$.

A symplectic manifold is called {\it uniruled} if some point class nonzero
Gromov–Witten invariant does not vanish. More specifically this means that
there exist $a_2,\cdots,a_k\in H_*(M)$ so that
	$$\<pt, a_2,\cdots,a_k\>^M_{k,\beta} = 0 \,\mathrm{\, for\,\, some\, }\, 0 \neq \beta \in H_2^S(M),$$
where $pt$ is the point class in $H_0(M)$. We refer the reader to \cite{MS} for details on the Gromov–Witten invariants. \medskip

It is well known that the only uniruled symplectic closed 2–manifold is $S^2$. We note that, for the symplectic closed 2–manifold $M$ other than $S^2$, there is no loop $\ga$ in $\mathrm{Ham}(M,\omega)$ that is a nonconstant circle action near $x_0$, where $x_0$ is fixed by $\ga$ (see \cite[Remark 1.3]{McD}).

\medskip

In \cite{McD}, McDuff used the Seidel element and methods of relative Gromov–Witten invariants to show that the following celebrated result
\begin{theorem}[\cite{McD}, Theorem 1.1]\label{thm:McD}
Suppose that $\mathrm{Ham}(M,\omega)$ contains a loop $\ga$ with a fiexed maximum near which $\ga$ is an effective circle action. Then $(M,\omega)$ is uniruled.
\end{theorem}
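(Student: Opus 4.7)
The plan is to apply the Seidel representation $\mathcal{S}:\pi_1(\mathrm{Ham}(M,\omega))\to QH^{ev}(M;\Lambda)^\times$, compute $\mathcal{S}([\ga])$ using the local structure at $x_0$, and then extract a nonvanishing point-class Gromov--Witten invariant. First I would form the Hamiltonian fibration $\pi:P_\ga\to S^2$ by clutching two copies of $D^2\times M$ via $\ga$ over the equator; the fixed global maximum $x_0$ yields a distinguished section $\sigma_{\max}:S^2\to P_\ga$. Using the local normal form $H_t=\mathrm{const}-\sum m_i|z_i|^2$, the equivariant structure near $x_0$ identifies the vertical normal bundle of $\sigma_{\max}$ with $\bigoplus_i\mathcal{O}(-m_i)$ over $\mathbf{CP}^1$, with vertical first Chern class $-\sum_i m_i$.

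The heart of the argument is to show that $\sigma_{\max}$ contributes a unit to the Seidel element. With an $S^1$-equivariant compatible almost complex structure near $x_0$, I expect $\sigma_{\max}$ to be the unique pseudoholomorphic section in its homology class, cut out transversely once multiple-cover issues are dealt with. The effectivity hypothesis $\mathrm{gcd}(m_1,\ldots,m_k)=1$ enters exactly here: it guarantees that the isotropy along $\sigma_{\max}$ is trivial, so that $\sigma_{\max}$ is not an orbifold cover of some lower-energy section and its contribution to the count is honest and nonzero. This pins down the leading term of $\mathcal{S}([\ga])$ as a nonzero scalar multiple of $[M]$, tensored with a Novikov monomial recording $c_1^{vert}(\sigma_{\max})$ and $\om(\sigma_{\max})$.

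Finally, since $\mathcal{S}([\ga])$ is a unit in $QH^{ev}(M;\Lambda)$, I would compare two natural expressions for $\mathcal{S}([\ga])\ast[pt]$: one through section-class Gromov--Witten invariants on $P_\ga$ with a fiber point insertion, and one through the decomposition of these section-class invariants into spherical fiber-class invariants $\<pt,a_2,\ldots,a_k\>^M_{k,\beta}$ of $M$. Using the leading-term computation above, matching Novikov coefficients forces at least one such invariant with $0\neq\beta\in H_2^S(M)$ to be nonzero, which is precisely the uniruled condition.

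The main obstacle I anticipate is the geometric analysis around $\sigma_{\max}$: establishing compactness of the section moduli at the minimal energy level, controlling stable-map degenerations whose bubble trees could contribute to the same section class, and using the coprimality $\mathrm{gcd}(m_1,\ldots,m_k)=1$ in a genuinely essential way to exclude orbifold multiple covers whose weighted counts would otherwise cancel the contribution of $\sigma_{\max}$. Transversality is arranged by keeping the almost complex structure equivariant on a neighborhood of $\sigma_{\max}$ and generic elsewhere, or by constructing a virtual fundamental class on the section moduli if direct regularity fails.
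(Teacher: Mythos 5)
This theorem is quoted from McDuff \cite{McD} (her Theorem 1.1) and is not proved in the present paper; the authors state it as background, cite McDuff for the proof, and summarize her method in one line as ``the Seidel element and methods of relative Gromov--Witten invariants.'' So there is no in-paper proof to check you against, only that gloss.

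Measured against it, your first two paragraphs are the right skeleton: clutching $P_\gamma \to S^2$, the section $\sigma_{\max}$ through the fixed maximum, the weight computation of $c_1^{\mathrm{vert}}(\sigma_{\max})=-\sum_i m_i$, and the use of effectivity to make $\sigma_{\max}$ an honest (non-multiply-covered) regular contribution to the leading term of $\mathcal{S}([\gamma])$ all track McDuff's argument. Where your sketch thins out is precisely the step the paper flags as ``relative Gromov--Witten'': your third paragraph proposes to ``decompose section-class invariants into spherical fiber-class invariants $\langle pt, a_2,\ldots,a_k\rangle^M_{k,\beta}$'' and ``match Novikov coefficients,'' but that decomposition is not formal. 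It is the content of a relative GW/degeneration argument---degenerate $P_\gamma$ over a nodal base sphere and split a nonvanishing section-class invariant carrying a point constraint into absolute fiber-class invariants of $M$---and carrying that out is where most of the work in McDuff's proof actually lives. Your final paragraph locates the main difficulty in the transversality and compactness analysis around $\sigma_{\max}$; that part matters, but the missing named ingredient, and the genuinely hard step, is the relative-GW splitting of the fibration invariants into invariants of the fiber.
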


Further, McDuff obtained the following result through a technical lemma (see \cite[Lemma 2.22]{McD})

\begin{proposition} [\cite{McD}, Proposition 1.4]\label{prop:McD}
Suppose that $\mathrm{dim} M\leq4$ and the loop $\ga$ has a nondegenerate maximum at $x_{max}$. Then $\ga$ can be homotoped so that it is a nonconstant circle action near its maximum, that when $\mathrm{dim} M=4$ can be assumed effective. Thus $(M,\omega)$ is uniruled.
\end{proposition}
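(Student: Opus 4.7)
\medskip
\noindent\textbf{Proof proposal.} The strategy is to reduce Proposition \ref{prop:McD} to a positive-homotopy question in $\Sp(2n)$ with $2n = \dim M \leq 4$, apply Theorem A to replace the linearization of $\ga$ at $x_{max}$ by a standard circle action loop of the same Maslov index, and then lift this linear-level homotopy to a homotopy of Hamiltonian loops via \cite[Lemma 2.22]{McD}.

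Concretely, I would first fix a Darboux chart at $x_{max}$ and consider $\Phi_t := d(\phi_t^H)|_{x_{max}} \in \Sp(2n)$, which satisfies equation (\ref{symp-ode}) with $P(t) = -\mathrm{Hess}_{x_{max}} H_t$. Because $x_{max}$ is a nondegenerate maximum of each $H_t$, the symmetric matrix $P(t)$ is positive definite, so $\Phi_t$ is a positive loop in $\Sp(2n)$ with some Maslov index $\mu$. Theorem A forces $\mu \in \mathbf{N}\backslash\{1,\ldots,n-1\}$; assuming $\ga$ is not nullhomotopic we have $\mu \geq n$. I would then choose positive integers $m_1,\ldots,m_n$ with $\sum_i m_i = \mu$, arranging $\gcd(m_1,m_2) = 1$ in the case $n=2$ (e.g.\ $m_1 = 1$, $m_2 = \mu - 1$ whenever $\mu \geq 2$). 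The standard circle-action loop $\Psi_t := \diag(e^{2\pi i m_1 t},\ldots,e^{2\pi i m_n t})$ is then a positive loop of Maslov index $\mu$, so Theorem A produces a smooth family $\{\Phi^s_t\}_{s\in[0,1]}$ of positive loops connecting $\Phi_t$ to $\Psi_t$.

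The remaining step is to lift $\{\Phi^s_t\}$ to a smooth family $\{\ga^s\}$ of Hamiltonian loops in $\mathrm{Ham}(M,\om)$ with $\ga^0 = \ga$, each $\ga^s$ agreeing with $\ga$ outside a small fixed Darboux neighborhood of $x_{max}$, and with linearization $\Phi^s_t$ at $x_{max}$. This is precisely the content of \cite[Lemma 2.22]{McD}, whose interpolation argument goes through when $\dim M \leq 4$. The resulting loop $\ga^1$ is then a nonconstant (and, for $\dim M = 4$, effective) circle action near $x_{max}$, so Theorem \ref{thm:McD} applied to $\ga^1$ shows that $(M,\om)$ is uniruled. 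The main obstacle in this plan is the lifting step: translating a homotopy through positive loops in $\Sp(2n)$ into an honest homotopy of Hamiltonians on $M$ while respecting the fixed-maximum structure is the delicate technical point, and it is exactly what imposes the restriction $\dim M \leq 4$ in the proposition as stated.
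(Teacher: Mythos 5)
Your outline reconstructs the right skeleton of McDuff's argument: linearize $\gamma$ at $x_{\max}$ to get a positive loop $\Phi_t$ in $\Sp(2n)$ (the nondegenerate-maximum condition makes $P(t) = -\mathrm{Hess}_{x_{\max}}H_t$ positive definite), deform $\Phi_t$ through positive loops to a diagonal circle action $\Psi_t$ of the same winding number, and then use \cite[Lemma 2.22]{McD} to implant this deformation of the linearization back into the loop of Hamiltonian diffeomorphisms on $M$. The effectiveness discussion (choosing $m_1 = 1$, $m_2 = \mu - 1$ when $n = 2$, and noting it cannot be arranged when $n = 1$) also matches the intended content. (Minor: what you call the Maslov index $\mu$ is really the winding number; in the paper's conventions the Maslov-type index of $\diag(e^{2\pi m_1 t J_2},\ldots,e^{2\pi m_n t J_2})$ is $2\sum m_i$.)

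However, you misattribute the source of the hypothesis $\dim M \leq 4$. You write that the lifting step via \cite[Lemma 2.22]{McD} ``is exactly what imposes the restriction,'' but the lifting lemma itself works in arbitrary dimension. The restriction in McDuff's Proposition 1.4 comes entirely from step two: at the time, the only available positive-homotopy result (``homotopic $\Rightarrow$ positively homotopic'') was Slimowitz's, proved only for $\Sp(2)$ and $\Sp(4)$. The paper says this explicitly right after the proposition: ``The dimensional restriction here is due to the fact that the homotopy results of Slimowitz have only been proved for $\dim \leq 4$.'' This is not a cosmetic slip: the entire point of Theorem~B in the present paper is that replacing Slimowitz's $n\leq 2$ result by Theorem~A removes the dimension restriction from the first conclusion. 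If the obstruction really lived in Lemma 2.22, Theorem~A would not help, and Theorem~B would be false as stated. (Relatedly, you invoke Theorem A, which is the main theorem of this paper and postdates McDuff; to reproduce McDuff's Proposition 1.4 as cited you should invoke Slimowitz, which is Theorem A restricted to $n\leq 2$ and is what forces $\dim M\leq 4$.)
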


The statement of the above proposition is taken from the arXiv version (see arXiv:0706.0675v4). However, there is a slight error in its journal version (see \cite[Remark 2.1]{Cha}).
The dimensional restriction here is due to the fact that the homotopy
results of Slimowitz \cite{Sli} have only been proved for $\mathrm{dim}\leq 4$. Later, Chance \cite{Cha} studied the degenerate case of the fixed global maximum, combining with results of Slimowitz \cite{Sli}, he obtained the following result 

\begin{theorem}[\cite{Cha}, Theorem 1.2]\label{thm: Cha1}Let $M$ be a symplectic manifold with $\mathrm{dim} M \leq 4$. If $\{\phi^H_t\}$ is any nonconstant loop of Hamiltonian diffeomorphisms with $x_0\in M$ a fixed global maximum, then there is a nonconstant loop $\{\phi^K_t\}$ with $x_0$ still a fixed global maximum, which is an effective $S^1$ action near $x_0$. Furthermore, $\{\phi^K_t\}$ will be homotopic to some iterate of $\{\phi^H_t\}$.
\end{theorem}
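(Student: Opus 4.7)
The plan is to reduce the assertion to a statement about positive loops in $\Sp(2n)$ with $2n=\dim M\le 4$ and then invoke Theorem A (which subsumes the homotopy theorem of \cite{Sli} used in Chance's original proof). Since $x_0$ is a fixed global maximum of every $H_t$, it is a critical point at each time and hence a fixed point of the flow; therefore $\Phi(t):=d(\phi^H_t)_{x_0}$ is a smooth loop in $\Sp(2n)$, satisfying $\Phi'(t)=JP(t)\Phi(t)$ with $P(t)=-\mathrm{Hess}(H_t)(x_0)$. When the maximum is \emph{nondegenerate}, $P(t)$ is positive definite and $\Phi$ is a positive loop in the sense of \eqref{symp-ode}; the degenerate case will be reduced to this one in a preliminary step.

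Given the positive linearized loop $\Phi$, Theorem A identifies $\pi_{1,pos}(\Sp(2n))$ with $\mathbf{N}\setminus\{1,\dots,n-1\}\subset\pi_1(\Sp(2n))$. A standard circle action $K^{m_1,\dots,m_n}_t(z)=c-\pi\sum m_j|z_j|^2$ linearizes to a positive loop of Maslov index $\sum m_j$, and is effective iff $\gcd(m_j)=1$. Passing to the $k$-fold iterate of $\Phi$, the resulting Maslov index $k\,\mu(\Phi)\ge kn$ can be written as $\sum_{j=1}^n m_j$ with $m_j\ge 1$ and $\gcd(m_j)=1$ (take, e.g., $m_1=k\mu(\Phi)-(n-1)$ and $m_j=1$ for $j\ge 2$), and Theorem A then furnishes a homotopy $\{\Phi_s\}_{s\in[0,1]}$ through positive loops connecting the iterate of $\Phi$ to the linearization of this effective circle action.

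To promote this linear-level positive homotopy to a homotopy of Hamiltonian loops on $M$ with $x_0$ fixed as a global maximum throughout, I would follow the cut-off construction of \cite[Lemma 2.22]{McD}: in a Darboux chart around $x_0$, replace the generating Hamiltonian by the quadratic model realizing $\Phi_s$ on a small ball, interpolate smoothly on an annulus, and retain the iterated original Hamiltonian outside. Positivity of the quadratic models keeps $x_0$ a strict local maximum throughout the homotopy, and shrinking the ball preserves global maximality. To handle the degenerate case in the hypothesis, I would first invoke Chance's perturbation argument \cite[Section 2]{Cha}: possibly after iteration, the loop $\{\phi^H_t\}$ can be homotoped so that its fixed maximum at $x_0$ becomes nondegenerate, reducing to the case just treated.

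The main obstacle is the globalization step. Two delicate issues must be handled carefully: first, matching endpoints so that every stage of the homotopy remains a loop---this is automatic here because the positive homotopy supplied by Theorem A has precisely the right time-$1$ map, namely the one coming from the effective circle action; and second, in the degenerate case, destroying the Hessian degeneracy without changing the free homotopy class of the loop, which is the technical heart of Chance's argument and is the reason iteration is needed. With these in place, Theorem A plays exactly the role of Slimowitz's homotopy theorem in the nondegenerate step, and the proof of \cite[Theorem 1.2]{Cha} goes through.
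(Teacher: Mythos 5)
The statement you are proving is quoted verbatim in the paper from Chance, \cite[Theorem 1.2]{Cha}; it is \emph{not} proved in the present paper, so there is no ``paper's own proof'' against which to compare. What the paper does say is important, though, and your reconstruction overlooks it: immediately after stating this result, the authors flag that it is in fact \emph{incorrect} as written. Because the argument rests on \cite[Lemma 2.22]{McD}, whose reduction requires the linearized loop to live in $\Sp(2n)$ with $n\ge 2$, the theorem really holds only for $\dim M=4$, not $\dim M\le 4$; the paper even supplies the $S^2$ counterexample (the two-time rotation is a nonconstant loop with a fixed global maximum, but no iterate of it is homotopic in $\mathrm{Ham}(S^2)$ to an effective $S^1$ action near that point, since $\pi_1(\mathrm{Ham}(S^2))=\mathbf{Z}_2$). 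Your proposal asserts the theorem exactly as stated and never confronts this, which is a genuine gap.

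In fact your own construction silently exposes the problem. You write the (iterated) Maslov datum as $m_1=k\mu(\Phi)-(n-1)$ and $m_j=1$ for $j\ge 2$, and then appeal to $\gcd(m_j)=1$ to get effectiveness. That trick works precisely because $m_2=1$ exists, i.e.\ because $n\ge 2$. When $n=1$ (so $\dim M=2$) there is only $m_1=k\mu(\Phi)$ and $\gcd(m_1)=k\mu(\Phi)$, which cannot be made $1$ by iterating (iteration only makes it worse). So your decomposition step already fails for $\dim M=2$; had you noticed this, you would have located exactly the error the paper points out. Aside from that, the remainder of the outline --- linearize at $x_0$ to get a loop $\Phi\in\Sp(2n)$, observe that nondegeneracy of the maximum makes $\Phi$ a positive loop, invoke Theorem A (in place of Slimowitz) to connect (an iterate of) $\Phi$ through positive loops to the linearization of an effective $S^1$ action, globalize by McDuff's cut-off construction near $x_0$, and reduce the degenerate case to the nondegenerate one by Chance's iterate-and-perturb argument --- is a faithful summary of the route the paper intends when it says in Theorem~B that the dimension restriction can be relaxed ``without modifying their proofs.'' One further small imprecision: the reason iteration appears in Chance's theorem is entirely the degenerate case (perturbing the Hessian without leaving the homotopy class), not to adjust the Maslov index; your text mixes these two roles.
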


We note that there is also a slight error in the above theorem. This theorem is based on the same technical lemma (\cite[Lemma 2.22]{McD}) as Proposition \ref{prop:McD}. For the same reason, the theorem holds only when $\mathrm{dim} M=4$ (see \cite[Remark 2.1]{Cha}). When $M=S^2$, the autonomous two-time rotation is a counterexample; furthermore, for any $\{\phi^H_t\}$ satisfying the condition in the above theorem, $\{\phi^K_t\}$ can not be an effective $S^1$ action near $x_0$ since it is homotopic to some iterate of $\{\phi^H_t\}$. In fact, $\{\phi^K_t\}$ can be a nonconstant circle action near $x_0$.

\medskip

Combining with the results Theorem \ref{thm:McD} and Proposition \ref{prop:McD} of McDuff, the facts that the only uniruled symplectic closed 2–manifold is $S^2$, and that there is no loop $\ga$ in $\mathrm{Ham}(M,\omega)$ that is a nonconstant circle action near a fixed point of $\ga$ when $M$ is a symplectic closed 2–manifold other than $S^2$, Chance proved the following result

\begin{theorem}[\cite{Cha}, Theorem 1.3]\label{thm: Cha2}
If $\mathrm{dim} M\leq4$ and there exists a nonconstant loop of
Hamiltonian diffeomorphisms with a fixed global maximum, then $(M,\omega)$ is uniruled.
\end{theorem}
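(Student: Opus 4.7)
The plan is to reduce Theorem \ref{thm: Cha2} to a direct application of Theorems \ref{thm:McD} and \ref{thm: Cha1}, after dispatching the low-dimensional sub-cases by hand. I will split the argument according to $\dim M$.

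Suppose first $\dim M \le 2$. The classification of closed symplectic surfaces gives two sub-cases. If $M = S^2$, then $(M,\omega)$ is uniruled, and the conclusion holds automatically. If instead $M$ is a closed oriented surface of genus $g \ge 1$, I claim the hypothesis of Theorem \ref{thm: Cha2} cannot be satisfied: were $\{\phi^H_t\}$ a nonconstant Hamiltonian loop on $M$ with a fixed global maximum $x_0$, then Theorem \ref{thm: Cha1} would produce a loop $\{\phi^K_t\}$ that is a nonconstant circle action near $x_0$. This contradicts the remark recalled in the excerpt (\cite[Remark 1.3]{McD}), which asserts that no such loop exists when $M$ is a closed symplectic 2-manifold other than $S^2$. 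Hence the implication is vacuously true in this sub-case.

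Now assume $\dim M = 4$. Applying Theorem \ref{thm: Cha1} to the given nonconstant loop $\{\phi^H_t\}$ produces a Hamiltonian loop $\{\phi^K_t\}$, homotopic to some iterate of $\{\phi^H_t\}$, such that $x_0$ is still a fixed global maximum and $\{\phi^K_t\}$ is an \emph{effective} $S^1$ action in a neighborhood of $x_0$. Theorem \ref{thm:McD} then applies directly to $\{\phi^K_t\}$ and yields that $(M,\omega)$ is uniruled.

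The main obstacle does not lie in this bookkeeping but upstream, in the invocation of Theorem \ref{thm: Cha1} for $\dim M = 4$: the effectiveness of the circle action produced there depends on the positive-loop homotopy theorem of Slimowitz \cite{Sli}, which is available only in $\Sp(2)$ and $\Sp(4)$. This is the dimensional bottleneck behind the hypothesis $\dim M \le 4$, and removing it for arbitrary $\Sp(2n)$ is precisely the content of Theorem A of the present paper.
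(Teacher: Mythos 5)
Your argument is correct and reconstructs what the paper only summarizes in passing: this theorem is Chance's (cited as \cite[Theorem~1.3]{Cha}) and the paper gives no proof of its own, only a one-sentence list of ingredients (Theorem~\ref{thm:McD}, Proposition~\ref{prop:McD}, and the two facts about closed symplectic $2$-manifolds). Your reconstruction is essentially the same approach, and if anything is slightly more accurate than the paper's summary: you appeal to Theorem~\ref{thm: Cha1} rather than Proposition~\ref{prop:McD} at the key step, which is the right call because Proposition~\ref{prop:McD} only covers nondegenerate maxima, whereas Theorem~\ref{thm: Cha1} (Chance's technical input) handles the general case that Theorem~\ref{thm: Cha2} actually asserts.

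One small point worth being explicit about, since the paper itself flags it: in your $\dim M \leq 2$, genus $\geq 1$ sub-case you invoke Theorem~\ref{thm: Cha1}, but the paper notes that Theorem~\ref{thm: Cha1} as literally stated (with ``\emph{effective} $S^1$ action'') is incorrect for $\dim M < 4$; only the weakened conclusion that $\{\phi^K_t\}$ is a \emph{nonconstant circle action} near $x_0$ survives. You already write ``nonconstant circle action'' in that step, which is exactly the conclusion needed to contradict \cite[Remark~1.3]{McD}, so the logic goes through — but it would be cleaner to say you are using the corrected/weakened form of Theorem~\ref{thm: Cha1} rather than the statement as printed. With that caveat acknowledged, both the $\dim M \leq 2$ dispatch and the $\dim M = 4$ application of Theorem~\ref{thm: Cha1} followed by Theorem~\ref{thm:McD} are sound, and match how the result is obtained in \cite{Cha}.
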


As a consequence of Theorem A, without modifying their proofs, we obtain

\begin{thmB}The condition $\mathrm{dim}M\leq4$ in Theorem \ref{thm: Cha2} and in the conclusion of the first part of Proposition \ref{prop:McD} can be all removed. The condition about dimension in Theorem \ref{thm: Cha1} and in the conclusion of the second part of Proposition \ref{prop:McD} can be extended to $\mathrm{dim} M\geq 4$.  \end{thmB}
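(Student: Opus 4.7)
The plan is to revisit the proofs of Proposition \ref{prop:McD}, Theorem \ref{thm: Cha1}, and Theorem \ref{thm: Cha2} from \cite{McD, Cha} and to identify precisely where the dimensional hypothesis $\dim M \le 4$ is invoked. My expectation is that this hypothesis enters only through an appeal to Slimowitz's homotopy theorem for positive loops in $\Sp(2)$ and $\Sp(4)$, and that everything else goes through in arbitrary dimension.

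First I would isolate the technical input. The crucial lemma is \cite[Lemma 2.22]{McD}, which reduces the problem of homotoping a Hamiltonian loop $\{\phi_t^H\}$ with a fixed nondegenerate maximum $x_0$ to a nonconstant circle action near $x_0$ to a statement about homotoping a positive loop in $\Sp(2n)$ (where $2n=\dim M$) through positive loops. The positivity reflects the fact that near a maximum the Hessian of $H_t$ is negative-definite, so the associated $JP(t)$ is positive-definite on the linearization. McDuff and Chance invoke Slimowitz's theorem \cite{Sli} at exactly this step, and Slimowitz's theorem is established only for $n\le 2$, forcing $\dim M \le 4$.

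Theorem A supplies the missing ingredient in arbitrary dimension: for every $n$, two positive loops in $\Sp(2n)$ that are homotopic in $\Sp(2n)$ are homotopic through positive loops. Substituting Theorem A for Slimowitz's result in the proof of \cite[Lemma 2.22]{McD} extends that lemma to all $n$. Since the remaining ingredients in \cite{McD, Cha}---the Seidel representation, the relative Gromov--Witten computations, and the extraction of a nonvanishing point-class Gromov--Witten invariant---are dimension-independent, the restriction $\dim M \le 4$ drops out of the first part of Proposition \ref{prop:McD} and of Theorem \ref{thm: Cha2}.

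For the effectiveness claim of Theorem \ref{thm: Cha1} and the second part of Proposition \ref{prop:McD}, I would use the surjectivity statement of Theorem A: the image of $\pi_{1,pos}(\Sp(2n))\to\pi_1(\Sp(2n))$ is $\mathbf{N}\setminus\{1,\ldots,n-1\}$. For $\dim M=2n\ge 4$ this affords precisely the flexibility in Maslov index needed to realize a positive loop whose associated weights satisfy $\gcd(m_1,\ldots,m_n)=1$, following Slimowitz's $\Sp(4)$ argument verbatim. In dimension two ($n=1$) this flexibility is absent, which accounts for the stronger hypothesis $\dim M\ge 4$ in this part of Theorem B and is consistent with the $S^2$ counterexample noted in the discussion of Theorem \ref{thm: Cha1}. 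The main obstacle is auditing \cite[Lemma 2.22]{McD} to confirm that the low-dimensional hypothesis enters there only through the homotopy of positive loops; once this is verified, Theorem B follows by a direct substitution of Theorem A, without any modification of the original arguments of \cite{McD, Cha}.
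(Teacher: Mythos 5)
Your proposal is correct and follows essentially the same route as the paper, which treats Theorem~B as an immediate corollary of Theorem~A: the paper's entire argument is the remark that the dimension restrictions in McDuff's and Chance's theorems arise only from the appeal to Slimowitz's positive-homotopy result in $\Sp(2)$ and $\Sp(4)$ (via \cite[Lemma 2.22]{McD}), and substituting Theorem~A for that result removes the restriction ``without modifying their proofs.'' Your additional remarks on the surjectivity statement accounting for the effectiveness claim and the exclusion of $n=1$ are a reasonable fleshing-out of the same observation and are consistent with the paper's discussion around Theorem~\ref{thm: Cha1}.
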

\smallskip

Given a path $\{\phi^H_t\}, t\in [0, 1]$, its Hofer length is defined as

$$\mathcal{L}(\{\phi^H_t\}) = \int_0^1 \left (\max_x H_t(x)-\min_x H_t(x)\right)dt.$$
This allows one to construct a nondegenerate Finsler metric on $\mathrm{Ham}(M,\omega)$, whose geometry has been studied extensively, see, e.g., \cite{BP,KL,LaM95,P}. In particular, in \cite{LaM95} Lalonde and McDuff show that if $\{\phi^H_t\}$ is a Hofer length minimizing geodesic then its generating Hamiltonian has at least one fixed global minimum and one fixed global maximum. 

Let us consider the unit sphere $\mathbb{S}^2=\{z\in \mathbf{R}^3 \mid \|z\|=1\}$ and its standard symplectic form $\omega_0$ on it. We note that $\pi_1(\mathrm{Ham}(\mathbb{S}^2,\omega_0))\simeq\mathbf{Z}_2$. Let $\{\phi^H_t\}$ be the autonomous one-time rotation where $H$ is the height function. It is clear that $\{\phi^H_t\}$ represents the nontrivial element in $\pi_1(\mathrm{Ham}(\mathbb{S}^2,\omega_0))$. By a result of \cite[Corollary 1.5, pp. 38]{LaM95}, $\{\phi^H_t\}$ is a Hofer length minimizing geodesic which is minimal in its homotopic class.

\medskip 
When $M$ is a symplectic closed 2–manifold other than $S^2$, we note that $\pi_1(\mathrm{Ham}(M,\omega))\simeq\{0\}$. Thus, combining the aforementioned fact and Theorem B, we obtain the following theorem, which generalizes \cite[Theorem 1.4]{Cha}.

\begin{thmC}\label{thm: Cha3} Let $(M,\omega)$ be a closed and connected symplectic manifold. We suppose that $\gamma \in \pi_1(\mathrm{Ham}(M,\omega))$ is nontrivial. If there exists a representative $\{\phi^H_t\}$ of $\gamma$ which is Hofer length minimizing, then $(M,\omega)$ is uniruled.
\end{thmC}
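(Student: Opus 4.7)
The plan is to assemble three ingredients already laid out in the excerpt: the Lalonde--McDuff characterization of Hofer length minimizing paths, the nontriviality of $\gamma$, and the dimension-unrestricted version of Theorem \ref{thm: Cha2} furnished by Theorem B.

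First, I would observe that since $\gamma$ is nontrivial in $\pi_1(\mathrm{Ham}(M,\omega))$, the given representative $\{\phi^H_t\}$ is in particular a nonconstant loop of Hamiltonian diffeomorphisms. Next, I would appeal to the Lalonde--McDuff result recalled just before the statement of Theorem C: because $\{\phi^H_t\}$ is Hofer length minimizing, its generating Hamiltonian $H_t$ admits at least one fixed global maximum $x_0\in M$ (and at least one fixed global minimum, which we do not need here). So we have a nonconstant Hamiltonian loop possessing a fixed global maximum, which is precisely the hypothesis of Theorem \ref{thm: Cha2}.

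The extended Theorem \ref{thm: Cha2}, granted by Theorem B, applies in every dimension $\dim M\geq 4$ and yields uniruledness of $(M,\omega)$ directly. It only remains to dispose of the case $\dim M=2$. If $M=S^2$, then $M$ is already uniruled. If $M$ is any other closed symplectic $2$-manifold, then $\pi_1(\mathrm{Ham}(M,\omega))\simeq\{0\}$, which contradicts the nontriviality of $\gamma$; so this case does not arise. This finishes the proof.

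The main obstacle is really upstream: essentially all of the work lies in Theorem A and the deduction of Theorem B from it, which removes the dimensional restrictions in \cite{McD,Cha}. Given those results, the proof of Theorem C is a short packaging of the Hofer-geometric input of \cite{LaM95} with the extended uniruledness criterion; no additional Hamiltonian-dynamical or Gromov--Witten-theoretic input is required. The only point that deserves a moment of care is the two-dimensional case, which as noted above reduces either to $S^2$ or to a vacuous hypothesis.
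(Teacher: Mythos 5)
Your argument is correct and coincides with the paper's deduction: the Lalonde--McDuff fixed-extrema property of Hofer minimizers supplies a nonconstant loop with a fixed global maximum, the extended Theorem~\ref{thm: Cha2} (via Theorem B) gives uniruledness in higher dimensions, and the two-dimensional case splits into $S^2$ (already uniruled) and surfaces of higher genus (where $\pi_1(\mathrm{Ham})$ is trivial, so the hypothesis is vacuous). One small inaccuracy: Theorem B actually \emph{removes} the dimension restriction in Theorem~\ref{thm: Cha2} entirely rather than merely extending it to $\dim M\geq 4$, but since you treat $\dim M=2$ separately this does not affect the validity of your proof.
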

\medskip

Kre\v{\i}n extensively investigated the structure of positive paths $\{\gamma(t)\}_{t\in[0,1]}$ in $\Sp(2n)$ in a series of papers in the early 1950s. Subsequently, Gelfand and Lidskii \cite{GL55} examined the topological properties of general paths, particularly in the context of stability theory for periodic linear flows. Concurrently, Bott \cite{B56}  independently explored positive flows within the complexified linear group, focusing on the geometry of closed geodesics. In 1962, the Kre\v{\i}n–Lyubarski\v{\i} theorem \cite{KL62} established the analytic properties of eigenvalues and eigenvectors when systems (\ref{symp-ode}) are analytic. Recently, Y.~Chang, Y.~Long, and the first author in \cite[Theorem 1.3]{CLW} investigated the bifurcation of eigenvalues along positive paths when the eigenvalues lie on the unit circle, extending the result of Kre\v{\i}n–Lyubarski\v{\i} to the $C^1$-case. As a result of these works, the structure of positive paths is well understood provided that the spectrum of $\{\ga(t)\}$ remains on the unit circle. However, we need to investigate their behavior outside this circle. In \cite[pp. 363]{LaM}, Lalonde and McDuff noted that ‘‘Surprisingly, a thorough study of the behavior of these paths outside the unit circle does not seem to exist." In their article, they conducted such a study in the case of generic positive paths of dimension 4.

Let $\pi:\Sp(2n)\to\mathcal{C}onj(\Sp(2n)$ denote
the projection:
$$\pi(A)=\bigcup_{X}
    \Big\{
    XAX^{-1}:\;X\in\Sp(2n)
    \Big\}\in\mathcal{C}onj(\Sp(2n)).$$ For every $A\in \Sp(2n)$, let $\mathcal{P}_A\subset T_A\Sp(2n)$ be the positive cone on $A$: $\mathcal{P}_A=\{JPA \mid P=P^T, P>0\}$. As an intermediate step in our results, we have the following proposition (see the proof in Section \ref{sec:4.1}), which provides information about the behavior of positive paths outside the unit circle.
\begin{propD}
	If $A\in\Sp(2n)$ is a truly hyperbolic symplectic matrix, i.e. there are no eigenvalues of $A$ on the unit circle,
	then the projection $\pi:\Sp(2n)\rightarrow\mathcal{C}onj(\Sp(2n))$
	maps the positive cone $\mathcal{P}_A$ at $A$ onto the tangent space to $\mathcal{C}onj(\Sp(2n))$ at $\pi(A)$.
\end{propD}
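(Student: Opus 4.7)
The strategy is to recast the claim as a linear-algebra identity in $T_A\Sp(2n)$ and to exploit the Lagrangian stable/unstable decomposition provided by the hyperbolic hypothesis. Under the trivialization $T_A\Sp(2n) \simeq \mathrm{Sym}(2n,\R)$ sending $JSA \leftrightarrow S$, the positive cone $\P_A$ corresponds to the open cone of positive definite symmetric matrices, while the kernel of $d\pi_A$, which is the tangent space $\{\xi A - A\xi : \xi \in \mathfrak{sp}(2n)\}$ to the conjugacy orbit, corresponds to a linear subspace $W \subset \mathrm{Sym}(2n,\R)$. Since $T_{\pi(A)}\mathcal{C}onj(\Sp(2n)) = T_A\Sp(2n)/T_A(\mathrm{orbit})$, Proposition D is equivalent to the identity
\[ \mathrm{Sym}(2n,\R) \;=\; \{P:P>0\} \;+\; W. \]

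The hyperbolicity of $A$ provides an $A$-invariant Lagrangian splitting $\R^{2n}=L_s\oplus L_u$, and in a symplectic basis adapted to this splitting one has $A = \mathrm{diag}(A_s,\,A_s^{-T})$ with all eigenvalues of $A_s$ strictly inside the unit disk. Any $V\in T_A\Sp(2n)$ is then parametrized by a triple $(a,b,c)$, with $a\in M_n(\R)$ arbitrary and $b,c$ symmetric $n\times n$ matrices, and a direct block computation shows that $W$ consists precisely of the triples
\[ (a,b,c) \;=\; \bigl(\alpha - A_s\alpha A_s^{-1},\;\beta - A_s\beta A_s^T,\;\gamma - A_s^{-T}\gamma A_s^{-1}\bigr) \]
for $\alpha\in M_n(\R)$ and $\beta,\gamma$ symmetric, while the positive cone corresponds to the triples $(-P_{12}^T,\,-P_{22},\,P_{11})$ arising from the $n\times n$ blocks of an arbitrary positive definite $P$. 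The one place the hyperbolic hypothesis is essential is the key lemma that the two linear operators $\beta\mapsto \beta - A_s\beta A_s^T$ and $\gamma\mapsto\gamma - A_s^{-T}\gamma A_s^{-1}$ are invertible on $\mathrm{Sym}_n(\R)$: their eigenvalues lie respectively in $\{1-\lambda_i\lambda_j\}$ and $\{1-\lambda_i^{-1}\lambda_j^{-1}\}$, where $\{\lambda_i\}$ denotes the spectrum of $A_s$, and the inequality $|\lambda_i|<1$ forces each such expression to be nonzero.

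The conclusion then follows by a direct construction. Given any target $(a,b_0,c_0)$, set $P_{11}=I$, $P_{12}=-a^T$, and $P_{22}=aa^T+2I$; the Schur complement equals $2I>0$, so $P$ is positive definite. The associated positive-cone triple is $(a,\,-aa^T-2I,\,I)$, and its difference with $(a,b_0,c_0)$ equals $(0,\;b_0+aa^T+2I,\;c_0-I)$, whose $a$-component $0$ trivially lies in the image of $\alpha\mapsto \alpha-A_s\alpha A_s^{-1}$, and whose symmetric $b$- and $c$-components lie in the images of the corresponding operators by the invertibility above. The main obstacle in writing the argument out is organising the block computation cleanly so as to separate the three components; once that is done, the actual use of the truly hyperbolic hypothesis is confined to the short eigenvalue argument for the invertibility of the $\beta$- and $\gamma$-operators, and the rest is an explicit linear-algebraic decomposition.
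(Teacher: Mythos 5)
Your proof is correct, and it follows a genuinely different route from the paper's. Both start from the same reformulation: under the trivialization $JSA\leftrightarrow S$, the claim becomes $\mathrm{Sym}(2n,\R)=\{P>0\}+W$ with $W=\{\,Y-A^{-T}YA^{-1}:Y\in\mathrm{Sym}(2n,\R)\,\}$. From there the paper uses Long's normal forms: working block by block, with the off-diagonal entries of each Jordan block scaled to a small $\epsilon$, it constructs a single $Z$ such that $A^{-T}ZA^{-1}-Z$ is positive definite, and then dominates an arbitrary $Q$ by adding a large multiple $k(A^{-T}ZA^{-1}-Z)$. You instead pass to a symplectic basis for the stable/unstable Lagrangian splitting $L_s\oplus L_u$, reduce $A$ to $\mathrm{diag}(A_s,A_s^{-T})$, and show that $W$ already contains every triple of the form $(0,b,c)$ because the Sylvester-type operators $\beta\mapsto\beta-A_s\beta A_s^T$ and $\gamma\mapsto\gamma-A_s^{-T}\gamma A_s^{-1}$ are invertible (their spectra $\{1-\lambda_i\lambda_j\}$ and $\{1-\lambda_i^{-1}\lambda_j^{-1}\}$ avoid $0$ precisely because $|\lambda_i|<1$); you then match the remaining $a$-component with an explicit positive definite $P$ whose Schur complement is $2I$. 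Your version avoids the normal-form machinery entirely, does not need a positive definite element of $W$ to exist, and isolates the hyperbolicity hypothesis in a single transparent spectral fact about Sylvester operators — and it correctly treats the $a$-component separately, since the operator $\alpha\mapsto\alpha-A_s\alpha A_s^{-1}$ is \emph{not} invertible (it kills $\alpha=I$). The paper's version, by contrast, is shorter once Long's normal forms and the ``large $k$'' trick are in hand, but is less self-contained.
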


\subsection{Outline of the proof}

Now let's introduce the idea central to our proof.
For technical reasons, we will consider 
the concept of
{\it generic path} (as defined in \cite{LaM,Sli}; or see Definition \ref{def:generic homotopy} below) and {\it generic homotopy} (see Definition \ref{def:generic homotopy} below).

Consider a positive loop $\gamma=\{\gamma(t)\}_{t\in[0,1]}$ based at the identity. 
Since the set of positive paths is open in the $C^1$-topology, $\gamma$ can be positively homotopic to
a generic loop based at the identity. Therefore, we only need to consider
the generic loops.

The main difficulties involve 
dealing with collisions of the positive paths, where a collision refers to a non-diagonalizable symplectic matrix with multiple eigenvalues.

\subsubsection{The idea of the proof for the case in $\Sp(4)$}\label{sec:sp4}


In the first step, we will move all the collisions from $\U\backslash\{\pm1\}$ to $\R\backslash\{0,\pm1\}$, where $\U$ is the unit circle in $\C$.
Then, we obtain a positive path that
only has collisions on $\R\backslash\{0,\pm1\}$.
In the second step, we will eliminate all
the collisions on $\R\backslash\{0,\pm1\}$.
Finally, according to the Positive Decomposition Lemma (see Proposition \ref{Lm:positive.decomposition} below), 
such a loop can be decomposed into the direct sum
of two positive path in $\Sp(2)$.
Hence, according to Slimowitz's result for $\Sp(2)$, we complete this case. Below, we provide a more detailed description.

{\bf Step 1.} 
Let $\gamma$ be a generic positive loop in $\Sp(4)$. 
A key observation is the constraints
of the collisions,
arising from the positivity of paths in $\Sp(4)$.
Indeed, we divide the collisions into two cases based on whether they belong to $\U\backslash\{\pm1\}$ or $\R\backslash\{0,\pm1\}$.
In the case where the collisions on $\U\backslash\{\pm1\}$, they fall into the following two cases:

(i) if the eigenvalues of $\gamma$ leaves $\U$, enters $\C\backslash(\U\cup\R)$
through a collision
	at time $t^*$,
	then after the previous collision (if it exist),
	there must exist a pair of eigenvalues of the path which comes from $\{1,1\}$ or $\{-1,-1\}$ at a time $t_0 < t^*$, see the top figure in Figure \ref{move_collision};
	
(ii) if the eigenvalues of $\gamma$ leaves $\C\backslash(\U\cup\R)$, enters $\U$
through a collision
	at time $t^*$, then before the next collision (if it exist), there must exist a pair of eigenvalues of the path entering $\{1,1\}$ or $\{-1,-1\}$ at a time $t_0 > t^*$.

We refer to the positive path $\mu=\gamma|_{[t_0-\epsilon,t^*+\epsilon]}$
in (i) (or $\mu=\gamma|_{[t^*-\epsilon,t_0+\epsilon]}$ in (ii)) as an {\it elementary collide-out path} ({\it elementary collide-in path}).
For more details, see Definition \ref{elementary.collide.path} below.

\begin{figure}[ht]
	\centering
	\includegraphics[height=12.0cm]{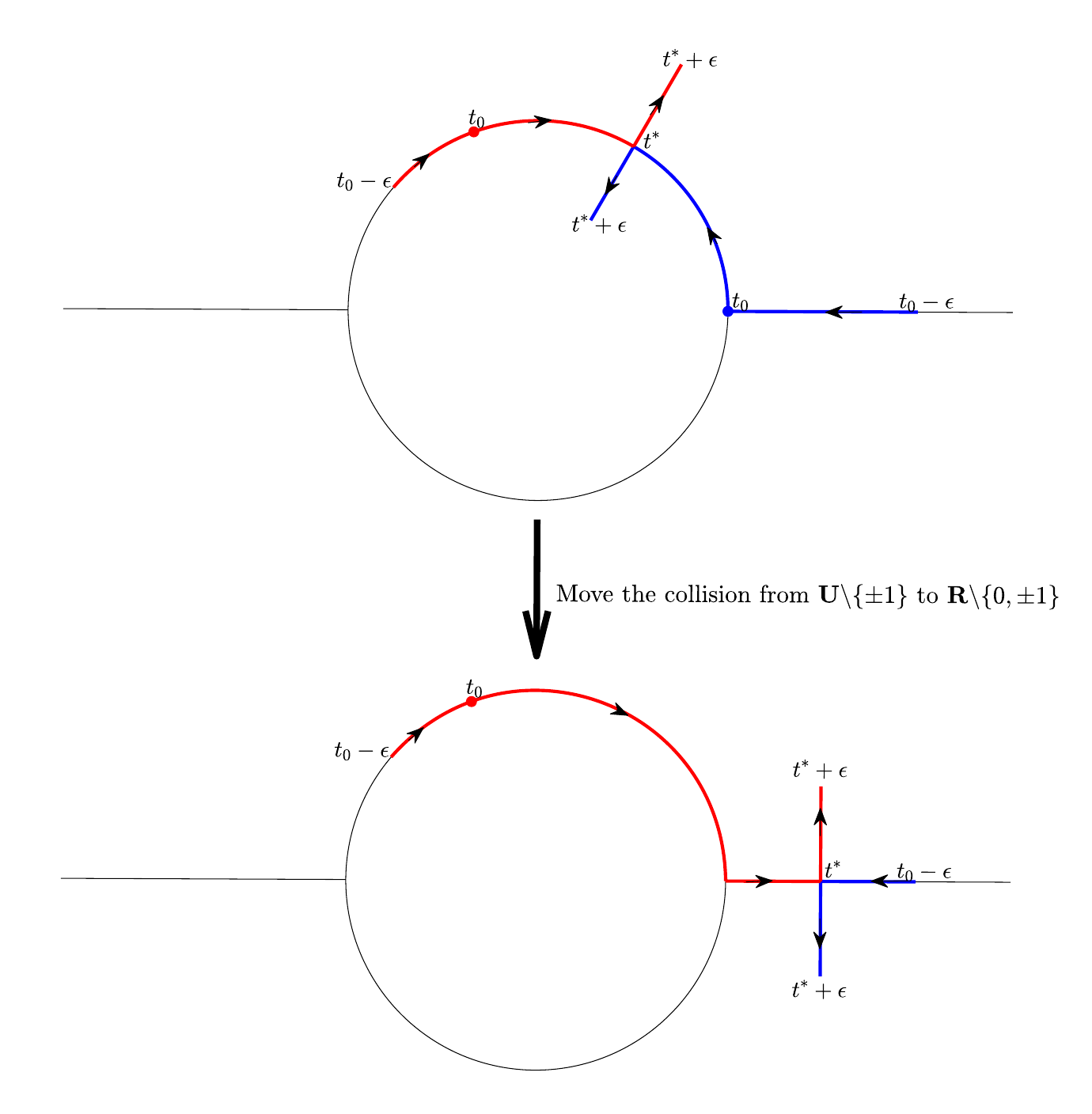}
	\vskip -0.5 cm
	\caption{Move the collision in Step 1. The curves with different colors represent two
 different pairs of eigenvalues, respectively.}
 \label{move_collision}
\end{figure}

In the case where the collisions on $\R\backslash\{\pm1\}$, we have only one case:

(iii) the eigenvalues of $\gamma$ leave $\C\backslash(\U\cup\R)$ and enter $\R\backslash\{0,\pm1\}$,
or leave $\R\backslash\{0,\pm1\}$ and enter $\C\backslash(\U\cup\R)$,
through a collision at time $t^*$.

The main objective of this step is to transfer
the collisions of type (i) or type (ii)
to type (iii). 
Thus, for any collision point of $\gamma$ on $\U\backslash\{\pm1\}$,
there exists a sub-path of $\gamma$ on time interval $[t_1,t_2]$ that
contains the collision point,
which is either  an elementary collide-in path
or an elementary collide-out path.
Furthermore, we can assume that $\gamma(t_1)$
and $\gamma(t_2)$ are generic points.
By Theorem \ref{Th:transfer.collisions}
(refer to Figure \ref{move_collision} for an illustration),
there exists a positive path $\mu(t),t\in[t_1,t_2]$, which has only one collision on $\R\backslash\{0,\pm1\}$,
and a $C^1$-connectable positive homotopy 
(see Definition \ref{pos.connectable.homotopy} below), denoted as $M(s,t)$ such that
$M(0,t)=\gamma(t),M(1,t)=\mu(t)$ for $t\in[t_1,t_2]$
and $M(s,t_1)=X_1(s)^{-1}\gamma(t_1)X_1(s),\;M(s,t_2)=X_2(s)^{-1}\gamma(t_2)X_2(s)$ for some continuous paths $X_1,X_2$.
To extend such a homotopy to the entire loop,
we define
$$
    H(s,t)=\left\{
    \matrix{X_1(s)^{-1}\gamma(t)X_1(s), & 0\le t\le t_1,\cr
    M(s,t), & t_1\le t\le t_2,\cr
    X_2(s)^{-1}\gamma(t)X_2(s), & t_2\le t\le 1.
    }\right.
$$
Since $M$ is a $C^1$-connectable positive homotopy, $H(s,t)$ is $C^1$ differtiable with respect to $t$. 
This effectively moves a collision on $\U\backslash\{\pm1\}$ to $\R\backslash\{0,\pm1\}$. 

As the path is generic, the number of collisions is finite. 
By repeating the operation above, we can move all the collisions from $\U\backslash\{\pm1\}$ to $\R\backslash\{0,\pm1\}$.

{\bf Step 2.} 
Now, let's consider a positive path $\gamma$ with possible collisions on $\R\backslash\{0,\pm1\}$.
Suppose the eigenvalues first collide at $t=t_1$,
then they must collide at $\R^+$ (or $\R^-$) from $\mathcal{O}_\mathcal{R}$ to $\mathcal{O}_\mathcal{C}$.
Since the loop must return to $I$,
there must be a subsequent collision
from $\mathcal{O}_\mathcal{C}$ to $\mathcal{O}_\mathcal{R}$.
Let's assume they successively collide at $t=t_2$,
which must collide from $\mathcal{O}_\mathcal{C}$ to $\mathcal{O}_\mathcal{R}$.
We have only four cases:

(i) The path starts from $\mathcal{O}_{\mathcal{R}^+,\mathcal{R}^+}$,
enters $\mathcal{O}_\mathcal{C}$, and then returns to $\mathcal{O}_{\mathcal{R}^+,\mathcal{R}^+}$;

(ii) The path starts from $\mathcal{O}_{\mathcal{R}^-,\mathcal{R}^-}$,
enters $\mathcal{O}_\mathcal{C}$, and then returns to $\mathcal{O}_{\mathcal{R}^-,\mathcal{R}^-}$;

(iii) The path starts from $\mathcal{O}_{\mathcal{R}^+,\mathcal{R}^+}$,
enters $\mathcal{O}_\mathcal{C}$, and then returns to $\mathcal{O}_{\mathcal{R}^-,\mathcal{R}^-}$;

(iv) The path starts from $\mathcal{O}_{\mathcal{R}^-,\mathcal{R}^-}$,
enters $\mathcal{O}_\mathcal{C}$, and then returns to $\mathcal{O}_{\mathcal{R}^+,\mathcal{R}^+}$.

Here, $\mathcal{O}_\mathcal{C}$, $\mathcal{O}_\mathcal{U}$, $\mathcal{O}_\mathcal{R}$, and $\mathcal{O}_\mathcal{U,R}$ are four open regions in $\Sp(4)$ whose union is dense in $\Sp(4)$. These regions are defined by Lalonde and McDuff in Section 3.2 of \cite{LaM}. For convenience, these open regions, along with other higher codimension regions, are also listed in Section \ref{subsec:5.1} of the current paper.
Moreover, $\mathcal{O}_{\mathcal{R^+,R^+}}$ 
(or $\mathcal{O}_{\mathcal{R^-,R^-}}$) represent
the subset of $\mathcal{O}_{\mathcal{R}}$,
which consists of all matrices with both pair of eigenvalues in $\R^+\backslash\{1\}$ (or $\R^-\backslash\{-1\}$).

For Cases (iii) and (iv), employing a  similar technology of Step 1, we can move the collision from $\R^+$ (or $\R^-$) to $\R^-$ (or $\R^+$) through $\U$.
Consequently, these two cases can be transformed into Case (i) or Case (ii).
Thus, $\gamma|_{[t_1-\ep,t_2+\ep]}$
becomes a truly hyperbolic path.
Then, by Lemma \ref{Lemma:hyperbolic.positive.path} and Theorem \ref{Thm:remove.hyperbolic.collisions},
there exists another truly hyperbolic path
$\mu(t),t\in[t_1-\ep,t_2+\ep]$ entirely within $\mathcal{O}_\mathcal{R^+,R^+}$ (or $\mathcal{O}_\mathcal{R^-,R^-}$),
such that $\gamma$ and $\mu$ are $C^1$-connectably homotopic.
Therefore, the two collisions at times $t=t_1$ and $t=t_2$ have been eliminated.

By repeating the operation above, we can eliminate all the collisions on $\R\backslash\{0,\pm1\}$.

{\bf Step 3.} 
Now the loops have no collisions of eigenvalues except for $\pm1$.
According to Lemma 4.6 in \cite{Sli} (the corrected
version in our paper), 
there exist two positive loops $\gamma_1$
and $\gamma_2$ in $\Sp(2)$
such that $\gamma$ and $\diag(\gamma_1,\gamma_2)$ are positively
homotopic.
By Theorem 3.3 and  Lemma 4.7 in \cite{Sli}, we complete the proof of our main theorem for $\Sp(4)$.
\medskip

We now point out the main differences between our proof
and the proof in \cite{Sli} for $\Sp(4)$.

In the proof of Lemma 4.3 in \cite{Sli},
the author constructed a model of positive path
such that its eigenvalues remain on
one pair of conjugate rays in $\pi(\mathcal{O}_\mathcal{C})$
and simply go out along these rays
to a point where the norm of the eigenvalue reaches its maximum and then returns. Note that the eigenvalues of the two collisions in the model are the same. The first sentence in the proof of Lemma 4.3 suggests that any two positive paths are positively homotopic if they share the same conjugacy classes at their endpoints. However, there are two issues with this claim. The first relates to the Part Speed Changing Lemma (refer to Lemma \ref{Lemma:change.part.time} below). The second concerns the global movement of collisions, similar to Lemma \ref{Th:move.collisions.on.U} in our paper.

In the proof of Lemma 4.2 in \cite{Sli}, 
any generic positive path in $\mathcal{C}onj(\Sp(4))$ can be broken up into finitely many sections
lying in $\mathcal{S}$ connected by parts of type $(1)$, $(2)$, $(3)$, or $(4)$,
where $\mathcal{S}$ is the set of all open strata with eigenvalues in $\U\cup \R$ along with some boundary components to make it a connected set. Its cutting method (cut points on $\pi(\mathcal{O}_\mathcal{U})$) cannot be extended to higher-dimensional cases.
However, our cutting method is based on the constraints of the collisions, as stated in Step 1.
Such a method can effectively be extended to higher-dimensional cases.
Corresponding to our cutting method, the models we constructed are the elementary collide-in path and the elementary collide-out path (refer to Step 1). 

\bigskip

In the current paper, we provide a rigorous proof for the positive decomposition of all-dimensional positive paths (see Lemma \ref{Lm:positive.decomposition} below). This Positive Decomposition Lemma guarantees the existence of the positive paths $\{X_t\},\{Y_t\}$ in \cite[Lemma 4.6]{Sli}. Since the motion on conjugacy classes of
$X_t$ (or $Y_t$) satisfies the constraints imposed by the positivity of $\pi(\diag(X_t,Y_t))$, there exists a positive path in $\Sp(2)$
with the prescribed motion on conjugacy classes.
In a more general situation,
it is interesting to ask the following question which is used in \cite{Sli}:

\begin{question}\label{qu.lift}
  Given a path $\{a_t\}_{t\in[0,1]}$ in  $\mathcal{C}onj(\Sp(2n))$ described by the motions of the eigenvalues, is there a positive path $\{\ga_t\}_{t\in[0,1]}$ such that $\pi(\ga_t)=a_t$ for $t\in[0,1]$?
\end{question}
This is a  nontrivial problem. In Section 3(b) of \cite{Sei},
such an existence problem for $\Sp(2)$ is discussed by Seidel.
For more general situations, the partial necessary conditions are provided in \cite[Theorem 1.3]{CLW}, where the authors gave the regularity of time $t$
when the eigenvalues collide on $\U$.
To solve Question \ref{qu.lift}, a key point is to know
the regularity of the collisions among the eigenvalues with respect to $t$, not just on $\U$. Unlike [23], our proofs do not require an answer to Question 1.1. In our proof,
we construct positive paths and homotopies directly in $\Sp(2n)$, solely with the aid of the geometric intuitions in $\mathcal{C}onj(\Sp(2n))$, thus enabling us to avoid such obstacles.

\subsubsection{The idea of the proof of the case in $\Sp(2n)$ for $n\ge3$}
\label{subsec, n>2}
A key method is to reduce the dimension. Roughly speaking, if the eigenvalues of some positive path $\gamma$ can be decomposed into two independent parts on some time interval—meaning that the only collisions exist within each part separately, and there are no collisions between these two parts—then $\gamma$ can be decomposed into the direct sum of two lower-dimensional positive paths $\gamma_1$ and $\gamma_2$ in the sense of $C^1$-connectable positive homotopy. Subsequently, we can consider the collisions of $\gamma_1$ and $\gamma_2$, respectively.

For paths in $\Sp(4)$, collisions, except for $\pm1$, must occur between two pairs of eigenvalues. However, when considering $\Sp(2n)$ for $n\geq 3$, collisions, except for $\pm1$, may involve more than two pairs of eigenvalues. To distinguish between different collisions, we use the term ``{\it $n$-collision}" to represent a collision among $n$ pairs of eigenvalues (or $n$ quadruple of eigenvalues on  $\C\backslash(\U\cup\R)$). In particular, for "2-collision" and "3-collision", we also denote them as ``{\it double-collision}" and ``{\it triple-collision}".

For the collisions of a generic positive loop $\gamma$ in $\Sp(2n)$, where $n \geq 3$, the constraints raised from
the positiveness of paths become
more complicated. As we consider the generic positive loop, we do not need to consider the collisions in $\C\backslash(\U\cup\R)$ since this case belongs to the codimension 2 or higher strata in $\Sp(2n)$. Hence we can only consider the collisions on $\U\backslash\{\pm1\}$ or $\R\backslash\{0,\pm1\}$.

However, for collisions on $\U\backslash\{\pm1\}$,
except for a similar constraints outlined in Step 1 for $\Sp(4)$,
we have more situations.
Namely, if a quadruplet of eigenvalues of $\gamma$ exits from $\U\backslash\{\pm1\}$ and enters $\C\backslash(\U\cup\R)$ at time $t_2$, denoted by ${\lambda_1,{1\over\lambda_1}}$ and ${\lambda_2,{1\over\lambda_2}}$ (when $\lambda_1,\lambda_2\in\C\backslash(\U\cup\R)$, we assume the relation $\lambda_1\bar\lambda_2=1$ holds,
and hence $\{\lambda_1,{1\over\lambda_1},\lambda_2,{1\over\lambda_2}\}$ forms a quadruple
of eigenvalues), 
then one of the following cases must occur:
	
	(i) after the most recent preceding double-collision (if it exists) with ${\lambda_i,{1\over\lambda_i}},i=1,2$ of $\gamma$, one pair of eigenvalues, denoted as ${\lambda_1,{1\over\lambda_1}}$, enters from $\{1,1\}$ or $\{-1,-1\}$ at time $t_1 < t_2$;
 
	(ii) one pair of eigenvalues, denoted as $\{\lambda_2,{1\over\lambda_2}\}$,
	collide with a third pair of eigenvalues $\{\lambda_3,{1\over\lambda_3}\}$ of $\gamma$
	at time $t_1<t_2$ from $\C\backslash(\U\cup\R)$,
	and  then $\{\lambda_i,{1\over\lambda_i}\},i=1,2,3$ remain on $\U\backslash\{\pm1\}$ for $t\in(t_1,t_2)$, see the top figure in Figure \ref{eliminate_collisions}.

\begin{figure}[ht]
	\centering
	\vskip -2 mm
	\includegraphics[height=15.0cm]{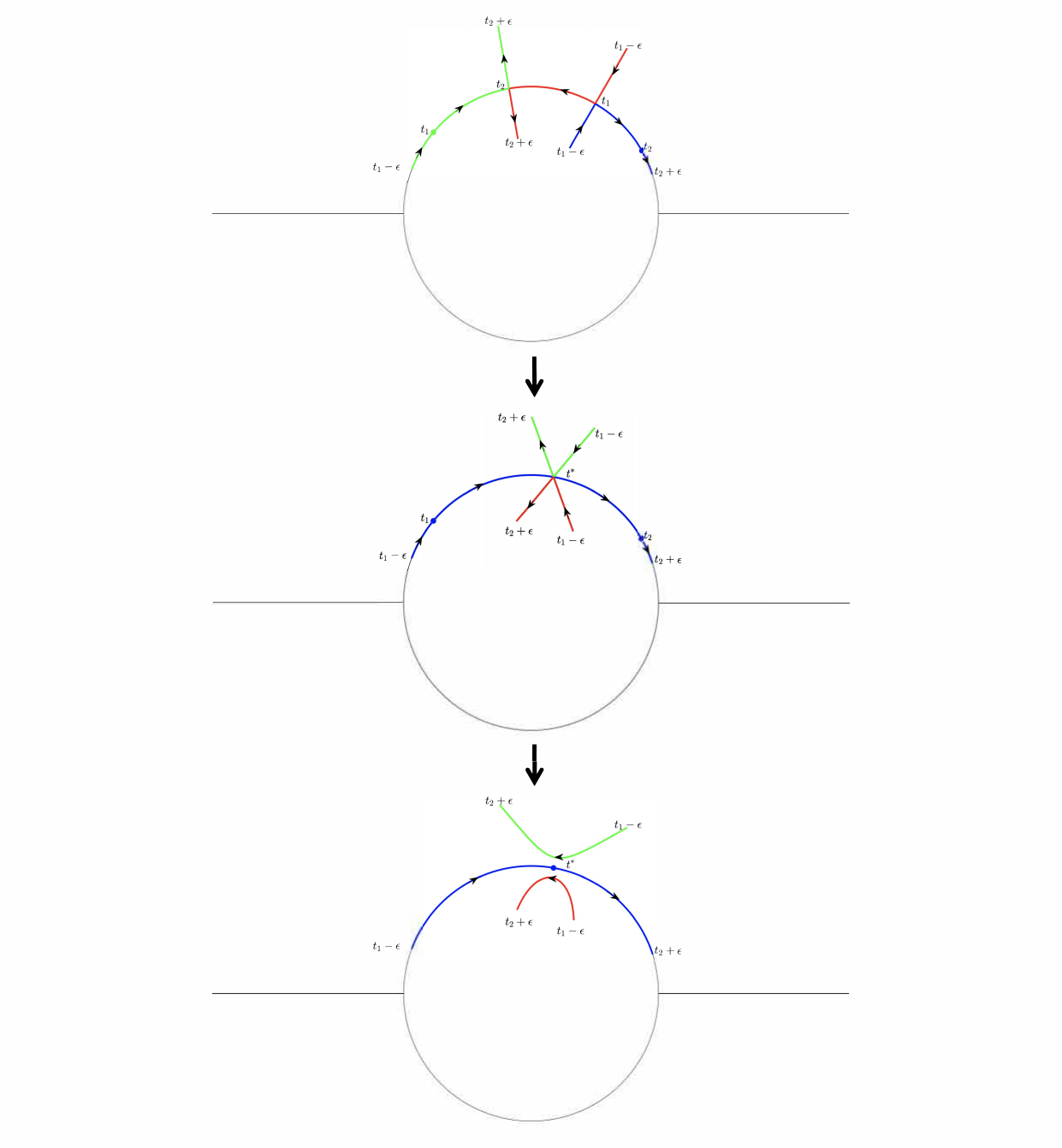}
	\vskip -0.5 cm
	\caption{Eliminate the two double-collisions through a triple-collision path. The curves with different colors represent three
 different pairs of eigenvalues, respectively.}
 \label{eliminate_collisions}
\end{figure}

If the first case occurs, 
together with the Positive Decomposition Lemma,
we can handle the collision in a similar way to $\Sp(4)$. Thus, we only need to address the second case.

For Case (ii), generally, on the time interval $t \in (t_1, t_2 + 2\epsilon)$ for some sufficiently small $\epsilon > 0$, the third pair of eigenvalues ${\lambda_3, {1\over\lambda_3}}$ may collide with eigenvalues other than ${\lambda_i, {1\over\lambda_i}}, i=1,2$. By adjusting the speed of certain eigenvalues (see Lemma \ref{Lemma:change.part.time} below), we can move these collisions involving ${\lambda_3, {1\over\lambda_3}}$ between times $t_2+\epsilon$ and $t_2+2\epsilon$ via a positive homotopy. Consequently, the other eigenvalues do not collide with ${\lambda_i, {1\over\lambda_i}}, i=1,2,3$ within the time interval $(t_1-\epsilon, t_2+\epsilon)$. Hence, based on the Positive Decomposition Lemma,
there exists a symplectic path $X(t), t\in(t_1-\epsilon, t_2+\epsilon)$ such that

$$
\gamma(t)=X(t)^{-1}\left(\matrix{\gamma_1(t)& O\cr O& \gamma_2(t)}\right)X(t).
$$

Here $\gamma_1\in\Sp(6), \gamma_2\in(\Sp(2(n-3)))$ 
are positive paths, and
$$
\sigma(\gamma_1(t))
=\{\lambda_1(t),{1\over\lambda_1(t)},
   \lambda_2(t),{1\over\lambda_2(t)},
   \lambda_3(t),{1\over\lambda_3(t)}\}.
$$
We refer to the positive path
$\mu=\gamma_1|_{[t_1-\epsilon,t_2+\epsilon]}$
as an {\it elementary two double-collisions path} (see Definition \ref{elementary.two.double-collision.path} below).

Now, similar to the argument for $\Sp(4)$, the proof contains three steps.

In the first step, 
for Case (i), we will move the collision from $\U\backslash\{\pm1\}$ to $\R\backslash\{0,\pm1\}$,
while for Case (ii), we will eliminate
the two collisions directly on $\U$
through a triple-collision path
(refer to Figure \ref{eliminate_collisions} for an illustration, and for a detailed proof,
see Theorem \ref{Thm:Sp6.remove.collisions.on.U} below).
Repeating these two procedures,
we obtain a positive path which
only has collisions on $\R\backslash\{0,\pm1\}$.
In the second step, we will eliminate all
the collisions on $\R\backslash\{0,\pm1\}$.
At last, according to the Positive Decomposition Lemma, 
such a loop can be decomposed into the direct sum
of $n$ positive paths in $\Sp(2)$;
and hence, according to the result of $\Sp(2)$ by Slimowitz, we complete the proof.

\bigskip


This paper is organized as follows:
\begin{itemize}
    \item In Section \ref{sec:2}, we provide the normal forms of symplectic matrices, discuss families of symplectic matrices in general position, and present bifurcation diagrams for generic families that reflect the class partition structure in the space of $\Sp(2n)$. 
    \item Section \ref{sec:3} focuses on basic properties of positive paths and positive homotopies,
    as well as useful lemmas frequently employed in our proof. 
    We prove Theorem \ref{Thm:pos.homotopy.of.loops.has.no.collisions}.
    \item Section \ref{sec:4} delves into the study of positive homotopy in the truly hyperbolic set.
    \item In Section \ref{sec:5}, we explore positive homotopy in $\Sp(4)$. We prove Theorem \ref{Th:transfer.collisions} and Theorem \ref{Thm.remove.collisions.on.R.of.Sp4} for $\Sp(4)$.
    \item Section \ref{sec:6} examines positive homotopy in $\Sp(2n)$ for $n > 2$. We prove Theorem \ref{Thm:Sp6.remove.collisions.on.U} and Theorem \ref{Thm.remove.collisions.on.R.of.Sp2n} for $\Sp(2n)$. 
    \item The appendix contains the proofs of some technical lemmas and facts.
\end{itemize}

\setcounter{equation}{0}
\section{Preliminaries}
\label{sec:2}

\subsection{The normal forms of symplectic matrices}
\label{subsec:2.1}

Consider the standard symplectic vector space $(\R^{2n}, \omega)$ with coordinates $(x_1,y_1,\cdots,x_n,y_n)$ and the symplectic form $\omega=\sum_{i=1}^ndx_i\wedge dy_i$. Let $J_{2n}=\diag(J_2,\cdots,J_2)$ be the standard symplectic matrix, where $J_2=\left(\matrix{0& -1\cr 1& 0}\right)$. For simplicity, we will omit the subscript and denote it by $J$ when there is no confusion.

As usual, the symplectic group $\Sp(2n)$ is defined by
$$ \Sp(2n) = \{M\in {\rm GL}(2n,\R)\,|\,M^TJM=J\}, $$
whose topology is induced from that of $\R^{4n^2}$. A path in $\Sp(2n)$ is a continuous map $\gamma: I \rightarrow \Sp(2n)$ defined on a nontrivial finite closed (or open, or half-closed) interval $I$, for example, $I=[a,b]$, $(a,b)$, $[a,b)$, or $(a,b]$, where $a < b$. For clarity, we will use either $\{\gamma(t)\}_{t \in I}$ or $\{\gamma_t\}_{t \in I}$ to represent this path. Sometimes, when it doesn't cause ambiguity, we omit $t \in I$.

The Lie algebra ${\bf sp}(2n)$ of $\Sp(2n)$ is
the set of all matrices which satisfy the equation
$$A^TJ+JA=0.$$
Here $A\in{\bf sp}(2n)$ if and only if $JA$ is symmetric.

An $M\in\Sp(2n)$ is {\bf truly hyperbolic} if
none of its eigenvalues belongs to $\U$.
We denote by $\Sp^{th}(2n)$ the subsets of
all truly hyperbolic matrices in $\Sp(2n)$.
For $\omega\in\U$ and $M\in\Sp(2n)$, we define
$$
D_{\omega}(M)=(-1)^{n-1}\omega^{-n}\det(M-\omega I_{2n}).
$$
According to Lemma 2 on pp.37 of \cite{Lon},
$D$ is a real and real smooth function
on $\U\times\Sp(2n)$.

Then we define the $\om$-singular surface by
$\Sp(2n)_{\omega}^0 = \{M\in\Sp(2n)\,|\, D_{\omega}(M)=0\}$ and the $\om$-regular set by
$\Sp(2n)_{\omega}^{\ast} = \Sp(2n)\setminus \Sp(2n)_{\omega}^0$. The orientation of $\Sp(2n)_{\omega}^0$ at any of its point
$M$ is defined to be the positive direction $\frac{d}{dt}Me^{t J}|_{t=0}$ of the path $Me^{t J}$ with $t>0$ small
enough. Let $\nu_{\omega}(M)=\dim_{\bf C}\ker_{\bf C}(M-\omega I_{2n})$ and
$\mathcal{P}_{1}(2n) = \{\gamma\in C([0,1],\Sp(2n))\;|\;\gamma(0)=I_{2n}\}$. We denote
$$\xi(t)={\rm diag}(2-t, (2-t)^{-1})\quad\mathrm{and}\quad\xi^n(t)=\diag(\underbrace{\xi(t),\cdots,\xi(t)}_n) \quad \mathrm{for}\quad 0\le t\le1.$$
For any $\gamma\in \mathcal{P}_{1}(2n)$ we define $\nu_\omega(\gamma)=\nu_\omega(\gamma(1))$ and
$$  i_\omega(\gamma)=[\Sp(2n)_\omega^0:\gamma\ast\xi^n], \qquad {\rm if}\;\;\gamma(1)\not\in \Sp(2n)_{\omega}^0,  $$
i.e., the usual homotopy intersection number, and the orientation of the joint path $\gamma\ast\xi^n$ is
its positive time direction under homotopy with fixed end points. When $\gamma(1)\in \Sp(2n)_{\omega}^0$,
we define $i_{\omega}(\gamma)$ be the index of the left rotation perturbation path $\gamma_{-\epsilon}$ with $\epsilon>0$
small enough (cf. Def. 5.4.2 on pp. 129 of \cite{Lon}). The pair
$(i_{\omega}(\gamma), \nu_{\omega}(\gamma)) \in {\bf Z}\times \{0,1,\cdots,2n\}$ is called the (Maslov-type) index function of $\gamma$
at $\omega$. When $\nu_{\omega}(\gamma)=0$ ($\nu_{\omega}(\gamma)>0$), the path $\gamma$ is called
$\omega$-{\it non-degenerate} ($\omega$-{\it degenerate}).
For more details we refer to \cite{Lon}.

As in \cite{Lon}, for $\lm\in\R\bs\{0\}$, $a\in\R$, $\th\in (0,\pi)\cup (\pi,2\pi)$,
$b=\left(\matrix{b_1 & b_2\cr
	b_3 & b_4\cr}\right)$ with $b_i\in\R$ for $i=1, \cdots, 4$, and $c_j\in\R$
for $j=1, 2$, we denote respectively some normal forms by
\bea
&& D(\lm)=\left(\matrix{\lm & 0\cr
	0  & \lm^{-1}\cr}\right), \qquad
R(\th)=\left(\matrix{\cos\th & -\sin\th\cr
	\sin\th  & \cos\th\cr}\right),  \qquad
 N_1(\lm, a)=\left(\matrix{\lm & a\cr
	0   & \lm\cr}\right), \nn\\
&&N_2(\th,b)=\left(\matrix{\cos\th &   b_1 &       -\sin\th &         b_2 \cr
	0 & \cos\th &       0 & -\sin\th \cr
	\sin\th &   b_3 &  \cos\th &         b_4 \cr
	0 &   \sin\th & 0 &  \cos\th \cr}\right), \qquad
M_2(\lm,c)=\left(\matrix{\lm &   c_1 &       1 &         0 \cr
	0 & \lm^{-1} &       0 & 0 \cr
	0 &   c_2 &  \lm &         (-\lm)c_2 \cr
	0 &   -\lm^{-2} & 0 &  \lm^{-1} \cr}\right). \nn\eea
Note that we use $\th$ instead of $\om=e^{\sqrt{-1}\th}$
in the representation of $N_2(\th,b)$,
and the normal forms have been adjusted due to the different standard symplectic matrices in our paper and \cite{Lon}.
Moreover, $N_2(\theta,b)$ is {\bf trivial} if $(b_2-b_3)\sin\th>0$, or {\bf non-trivial}
if $(b_2-b_3)\sin\th<0$, following the Definition 1.8.11 on pp. 41 of \cite{Lon}. 

Given any two $2m_k\times 2m_k$ matrices of square block form
$M_k=\left(\matrix{A_k&B_k\cr
                   C_k&D_k\cr}\right)$ with $k=1, 2$,
the ``$\diamond$"-sum of $M_1$ and $M_2$ is defined (cf. \cite{Lon} called product there) by
the following $2(m_1+m_2)\times 2(m_1+m_2)$ matrix $M_1\dm M_2$:
$$
M_1\dm M_2=\left(\matrix{A_1 &   0 & B_1 &   0\cr
                             0   & A_2 &   0 & B_2\cr
                             C_1 &   0 & D_1 &   0\cr
                             0   & C_2 &   0 & D_2\cr}\right),
$$
and $M^{\dm k}$ denotes the $k$-fold  $\dm$-sum of $M$.

Note that if $b=0$, we have
\begin{equation}
N_2(\th,0) = R(\th)\diamond R(\th).
\end{equation}
Also, according to Theorem 1.5.1 on pp. 24-25 and (1.4.7)-(1.4.8) on pp. 18 of \cite{Lon}, when $\lm=-1$ there hold
\bea
c_2 \not= 0 &{\rm if\;and\;only\;if}\;& \dim\ker(M_2(-1,c)+I)=1, \nn\\
c_2 = 0 &{\rm if\;and\;only\;if}\;& \dim\ker(M_2(-1,c)+I)=2. \nn\eea

In \cite{LaM} and \cite{Sli}, Lalonde, McDuff and Slimowitz
used slightly different normal forms by

\begin{equation}
N_1^-=\left(\matrix{1 & 0\cr
	-1  & 1\cr}\right), \qquad
N_1^+=\left(\matrix{1 & 0\cr
	1   & 1\cr}\right).
\end{equation}
It can be easily verified that
\begin{equation}
	N_1^-=J^TN_1(1,1)J,\qquad
	N_1^+=J^TN_1(1,-1)J.
\end{equation}

	Suppose the eigenvalues of a symplectic matrix $M\in\Sp(4)$ are
	\begin{equation}
	\sigma(M)=\{\lambda_1,{1\over \lambda_1},
                \lambda_2,{1\over\lambda_2}\},
	\end{equation}
	where
	$\mathrm{Im}(\lambda_i)>0$ (or $|\lambda_i|>1$ for $\lambda_i\in\R$).
	Moreover,
	let $\sigma_1$ and $\sigma_2$ represent the first two
	elementary symmetric functions of $\sigma(M)$.
	
	Let
	\begin{equation}
	\mu_i=\lambda_i+{1\over\lambda_i},\quad i=1,2.
	\end{equation}
	Then we have
	\begin{eqnarray}
	\mu_1+\mu_2&=&\sigma_1,
	\\
	\mu_1\mu_2&=&\sigma_2-2,
	\end{eqnarray}
	and consequently, $\mu_1$ and $\mu_2$ are the two roots of 
	the following quadratic polynomial
	\begin{equation}\label{quadratic.poly}
	x^2-\sigma_1x+(\sigma_2-2)=0.
	\end{equation}
	The discriminant of (\ref{quadratic.poly}) is given by
	\begin{eqnarray}
	\Delta
    =\sigma_1^2-4\sigma_2+8.
	\end{eqnarray}
	
	Then, the map $\Delta:\Sp(4)\rightarrow\R$ defined by
    $\Delta(M)=\sigma_1^2-4\sigma_2+8=4({\sigma_1^2\over4}+2-\sigma_2)$,
    is a multivariate polynomial
	with respect to the entries of the symplectic matrix.
    We have:

\hangafter 1
\hangindent 3.5em
\;\;(i) If $\Delta>0$, 
    the quadratic polynomial (\ref{quadratic.poly}) has two real roots,
	indicating that $M$ has two different pairs of 
    eigenvalues on $\U$ (or $\R$);

\hangafter 1
\hangindent 4em
\;(ii) If $\Delta=0$, the quadratic polynomial (\ref{quadratic.poly}) has one real root
    with multiplicity 2,
	indicating that $M$ has one pairs of  eigenvalues on $\U$ (or $\R$)
 with multiplicity 2.
 Furthermore, if $M$ is non-diagonalizable,
 it is a double-collision on $\U$ (or $\R$);

\hangafter 1
\hangindent 4em
(iii) If $\Delta<0$, 
    the quadratic polynomial (\ref{quadratic.poly}) has a pair of conjugate complex roots,
	indicating that $M$ has a quadruple of
    $4$ distinct
    eigenvalues in $\C\backslash(\U\cup\R)$.
	
    \noindent Note that in Remark 3.7(i) of \cite{LaM},
    Lalonde and McDuff discussed the properties of
    the matrices in $\mathcal{O}_{\mathcal{C}}$
    that satisfy the condition $\sigma_2>{\sigma_1^2\over4}+2$,
    while in $\mathcal{O}_{\mathcal{R}}$
    , matrices satisfy the condition $\sigma_2<{\sigma_1^2\over4}+2$.
    Additionally, in Lemma 2 on pp. 43 of \cite{Lon}, Long
    used $\psi=4A^2+2-B$ to study the perturbed paths,
    and the relation $\psi={1\over4}\Delta$ holds.

\begin{lemma}[pp. 43 of \cite{Lon}]\label{lemma:B_U}
	
	For $\om=e^{\th\sqrt{-1}}$ with
	 $\th\in(0,2\pi)\backslash\{\pi\}$ and $(b_1,b_2,b_3,b_4)\in\R^4$, 
	 consider the matrix $M=N_2(\th,b)$ and $M_\alpha=M\cdot\diag(R(\alpha),I_2)$
  for some small $\alpha$.
	 Then
	 
	 $1^\circ$ $M\in\Sp(4)$ if and only if
	  $(b_2-b_3)\cos\th+(b_1+b_4)\sin\th=0$.
	  In the following we always suppose $M\in\Sp(4)$.
	  
	  $2^\circ$ $\om$ and $\bar\om$ are double eigenvalues of $M$.
	  
	  $3^\circ$ $M\in\Sp(4)$, $\dim_{\C}\ker_{\C}(M-\om I)=1$ if and only if ${\rm tr}(J_2b)=b_2-b_3\ne0$.
	  
	  $4^\circ$ $M\in\Sp(4)$, $\dim_{\C}\ker_{\C}(M-\om I)=2$ if and only if ${\rm tr}(J_2b)=b_2-b_3=0$, and hence ${\rm tr}(b)=b_1+b_4=0$.
	  
	  $5^\circ$ $M\in\Sp(4)$, suppose $\dim_{\C}\ker_{\C}(M-\om I)=1$;
	  then there exists $\alpha_0>0$ small enough such that
	  $D_\om(M_\alpha)\ne0$ if $0<|\alpha|<\alpha_0$.
	  Let $\psi(\alpha):=\frac{1}{4}\Delta(M_\alpha)=4A^2+2-B$ for $\alpha\in\R$.
	  Then $\psi(0)=0$ and $\psi'(0)=(b_2-b_3)\sin\th$.
\end{lemma}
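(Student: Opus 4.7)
My plan is to exploit the near-triangular block structure of $N_2(\theta,b)$. Conjugating by the permutation $P$ that swaps $e_2$ and $e_3$ transforms $N_2(\theta,b)$ into
$$
\tilde M \;=\; \left(\matrix{R(\theta) & b \cr 0 & R(\theta)}\right),\qquad b=\left(\matrix{b_1 & b_2 \cr b_3 & b_4}\right),
$$
while sending $J=\diag(J_2,J_2)$ to the form $\tilde J=\left(\matrix{0 & -I_2 \cr I_2 & 0}\right)$. Since spectrum, trace and kernel dimensions are conjugation-invariant, I will work with $\tilde M$ throughout.

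For $1^\circ$ and $2^\circ$: a block-wise computation of $\tilde M^T\tilde J\tilde M$ shows that three of the four resulting blocks are automatic (thanks to $R(\theta)^TR(\theta)=I_2$), and the only remaining constraint is that $R(\theta)^Tb$ be symmetric; expanding the off-diagonal entries of $R(\theta)^Tb$ yields precisely $(b_2-b_3)\cos\theta+(b_1+b_4)\sin\theta=0$. For $2^\circ$, block-triangularity gives $\det(\tilde M-\lambda I)=(\lambda^2-2\cos\theta\cdot\lambda+1)^2$, so $\omega=e^{\sqrt{-1}\theta}$ and $\bar\omega$ are algebraically double.

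For $3^\circ$ and $4^\circ$: I would solve $(\tilde M-\omega I)(u,w)^T=0$ block-wise with $u,w\in\C^2$. The lower block forces $w\in\ker(R(\theta)-\omega I)=\C\cdot(1,-\sqrt{-1})^T$, and the upper block is solvable iff $bw\in{\rm range}(R(\theta)-\omega I)=\C\cdot(1,\sqrt{-1})^T$. Substituting $w=c(1,-\sqrt{-1})^T$ unpacks this membership condition into the two real equations $b_2-b_3=0$ and $b_1+b_4=0$, which are equivalent under the symplectic condition of $1^\circ$ since $\sin\theta\ne 0$ on our range of $\theta$. Hence $\dim_\C\ker(\tilde M-\omega I)=2$ iff $b_2=b_3$ and $=1$ otherwise.

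The substantive work is in $5^\circ$. From $\sigma_2=\frac{1}{2}(\sigma_1^2-{\rm tr}((\cdot)^2))$ one obtains $\psi'(0)=-\frac{1}{2}\sigma_1(0)\sigma_1'(0)+\frac{1}{2}T'(0)$ with $T(\alpha)={\rm tr}(M_\alpha^2)$. Direct multiplication gives $\sigma_1(\alpha)=2\cos\theta(1+\cos\alpha)+b_1\sin\alpha$, hence $\sigma_1'(0)=b_1$. Since $S'(0)=\diag(J_2,0)$, cyclicity of trace yields $T'(0)=2\,{\rm tr}(M^2\diag(J_2,0))$; using the block form $\tilde M^2=\left(\matrix{R(2\theta) & R(\theta)b+bR(\theta)\cr 0 & R(2\theta)}\right)$ this reduces, after the block arithmetic, to the $(1,1)$-entry of $R(\theta)b+bR(\theta)$, namely $2b_1\cos\theta+(b_2-b_3)\sin\theta$. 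The $b_1\cos\theta$ contributions cancel, leaving $\psi'(0)=(b_2-b_3)\sin\theta$. For the non-vanishing of $D_\omega(M_\alpha)$: were $\omega$ still an eigenvalue of $M_\alpha$, one of the two $\mu$-roots would equal $2\cos\theta$, forcing $\psi(\alpha)=(\sigma_1(\alpha)/2-2\cos\theta)^2=O(\alpha^2)$; but our formula makes $\psi(\alpha)=(b_2-b_3)\sin\theta\cdot\alpha+O(\alpha^2)$ of exact order $\alpha$ under the hypothesis $b_2\ne b_3$, a contradiction for small $\alpha\ne 0$. I expect the main technical obstacle to be the bookkeeping for $T'(0)$, which the block-triangular reduction handles cleanly.
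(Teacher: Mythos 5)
Your proof is correct. The paper itself gives no proof of this lemma—it is quoted directly from Long's book (pp.~43, Lemma 2)—so there is no in-paper argument to compare against. Your device of conjugating by the permutation swapping $e_2$ and $e_3$, so that $N_2(\theta,b)$ becomes the block-upper-triangular matrix $\left(\begin{smallmatrix}R(\theta)&b\\0&R(\theta)\end{smallmatrix}\right)$ and $J$ becomes $\left(\begin{smallmatrix}0&-I_2\\I_2&0\end{smallmatrix}\right)$, is a clean way to package everything: the symplectic condition in $1^\circ$ collapses to the symmetry of $R(\theta)^Tb$, and the kernel analysis in $3^\circ$--$4^\circ$ becomes a one-line range/kernel computation for the rank-one matrix $R(\theta)-\omega I$, whose membership condition $(b_2-b_3,\,b_1+b_4)=(0,0)$ you correctly observe is a single condition under $1^\circ$ since $\sin\theta\neq 0$. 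The computation in $5^\circ$ is the only substantive part and you carry it out correctly: $\psi'(0)=-\tfrac12\sigma_1(0)\sigma_1'(0)+\tfrac12T'(0)$ with $\sigma_1(0)=4\cos\theta$, $\sigma_1'(0)=b_1$, and $T'(0)=2\,\mathrm{tr}(M^2 S'(0))=2\big(R(\theta)b+bR(\theta)\big)_{11}=2\big(2b_1\cos\theta+(b_2-b_3)\sin\theta\big)$, so the $b_1\cos\theta$ terms cancel and $\psi'(0)=(b_2-b_3)\sin\theta$; the nonvanishing of $D_\omega(M_\alpha)$ for small $\alpha\neq 0$ then follows from your order-of-vanishing argument since $\psi(\alpha)$ would otherwise have to equal $(\sigma_1(\alpha)/2-2\cos\theta)^2=O(\alpha^2)$ whereas it has exact order $\alpha$ when $b_2\neq b_3$.
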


\begin{remark}\label{remark:B_U}
	If $b_2-b_3\ne0$, then $N_2(\th,b)\in\mathcal{B}_\mathcal{U}$;
	if $b_2-b_3=0$, then $N_2(\th,b)\in\mathcal{O}_\mathcal{U}$.
\end{remark}

	 
	 
	 
	 
	 

\begin{lemma}\label{lemma:B_U.perturbation}
	For $\th\in(0,2\pi)\backslash\{\pi\}$ and $(b_1,b_2,b_3,b_4)\in\R^4$, 
	consider the matrix $N=N_2(\th,b)\in\Sp(4)$ and $N_\alpha=N\cdot\diag(R(\alpha),R(\alpha))$.
    Then

    $1^\circ$
    $\Delta(N_\alpha)=8(b_2-b_3)\sin\th\cos\alpha\sin\alpha
+[16\sin^2\th+(b_1-b_4)^2+4b_2b_3]\sin^2\alpha$;

    $2^\circ$ If $N$ is non-trivial, i.e., $(b_2-b_3)\sin\th<0$,
    then there exists
    $\alpha_0>0$ small enough such that
    $\Delta(N_\alpha)>0$ if $\alpha\in(0,\alpha_0)$,
    and $\Delta(N_\alpha)<0$ if $\alpha\in(-\alpha_0,0)$;

    $3^\circ$ If $N$ is trivial, i.e., $(b_2-b_3)\sin\th>0$,
    then there exists
    $\alpha_0>0$ small enough such that
    $\Delta(N_\alpha)<0$ if $\alpha\in(0,\alpha_0)$,
    and $\Delta(N_\alpha)>0$ if $\alpha\in(-\alpha_0,0)$.
\end{lemma}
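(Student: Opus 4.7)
The plan is to reduce both statements to one explicit computation: derive the closed-form formula in $1^\circ$ by direct matrix algebra, and then read off $2^\circ$ and $3^\circ$ from its first-order Taylor expansion at $\alpha = 0$. For any $4\times 4$ matrix $M$, one has $\sigma_1(M) = \mathrm{tr}\,M$ and $\sigma_2(M) = \frac{1}{2}\bigl[(\mathrm{tr}\,M)^2 - \mathrm{tr}(M^2)\bigr]$, whence $\Delta(M) = \sigma_1(M)^2 - 4\sigma_2(M) + 8 = 2\,\mathrm{tr}(M^2) - (\mathrm{tr}\,M)^2 + 8$. This reduces the problem to computing $\mathrm{tr}(N_\alpha)$ and $\mathrm{tr}(N_\alpha^2)$.

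For $1^\circ$, I would expand the block product $N_\alpha = N_2(\theta,b)\cdot\mathrm{diag}(R(\alpha),R(\alpha))$ entry by entry, writing $c = \cos\alpha$ and $s = \sin\alpha$, then extract the diagonal for $\mathrm{tr}(N_\alpha)$ and obtain $\mathrm{tr}(N_\alpha^2)$ as a sum over pairs of entries. Assembling $\Delta(N_\alpha)$ via the identity above produces a polynomial in $c, s, \cos\theta, \sin\theta, b_1, \ldots, b_4$; I would then invoke the symplectic constraint $(b_2-b_3)\cos\theta + (b_1+b_4)\sin\theta = 0$ from Lemma \ref{lemma:B_U}$\,1^\circ$ together with $c^2 + s^2 = 1$ to collapse this to
$$
\Delta(N_\alpha) = 8(b_2-b_3)\sin\theta\cdot cs + \bigl[16\sin^2\theta + (b_1-b_4)^2 + 4 b_2 b_3\bigr]\,s^2.
$$
To keep the bookkeeping manageable, I would verify first that the constant term (in $c, s$) vanishes, which is consistent with $\Delta(N_0) = 0$ since $N$ has the double eigenvalues $e^{\pm i\theta}$ on $\U$, and then collect the $cs$ and $s^2$ coefficients separately.

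For $2^\circ$ and $3^\circ$, the Taylor expansions $cs = \alpha + O(\alpha^3)$ and $s^2 = O(\alpha^2)$ combined with the formula above give
$$
\Delta(N_\alpha) = 8(b_2-b_3)\sin\theta \cdot \alpha + O(\alpha^2)
$$
near $\alpha = 0$. When $(b_2-b_3)\sin\theta \ne 0$, the linear term dominates for small $|\alpha|$, so there exists $\alpha_0 > 0$ such that on $(-\alpha_0,\alpha_0) \setminus \{0\}$ the sign of $\Delta(N_\alpha)$ is determined entirely by $(b_2-b_3)\sin\theta \cdot \alpha$; splitting on the two possible signs of $(b_2-b_3)\sin\theta$ then yields the dichotomies asserted in $2^\circ$ and $3^\circ$. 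The main obstacle is purely the algebraic bookkeeping in $1^\circ$: $N_\alpha$ has on the order of sixteen nonzero entries, and the cancellations leading from the raw polynomial down to the compact two-term form rely essentially on the symplectic relation among $b_1, \ldots, b_4$; once $1^\circ$ is in hand, $2^\circ$ and $3^\circ$ follow at once.
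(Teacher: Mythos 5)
Your plan follows the paper's own route: compute $N_\alpha$ entry by entry, extract $\sigma_1$ and $\sigma_2$ (you do so via $\operatorname{tr} N_\alpha$ and $\operatorname{tr} N_\alpha^2$, the paper lists $\sigma_1$ and $\sigma_2$ directly; the two are equivalent bookkeeping), assemble $\Delta$, and then read off the sign from the leading $\cos\alpha\sin\alpha$ term. One inaccuracy in the plan: the collapse to the two-term formula does \emph{not} in fact use the symplectic constraint $(b_2-b_3)\cos\theta+(b_1+b_4)\sin\theta=0$. The cross term $8(b_1+b_4)\cos\theta\,\cos\alpha\sin\alpha$ coming from $\sigma_1^2$ cancels identically against the matching term in $-4\sigma_2$, so the identity in $1^\circ$ holds for arbitrary $(b_1,\dots,b_4)$; the constraint is only needed to know $N\in\Sp(4)$ in the first place, not to derive the polynomial identity. (The paper's proof also never invokes it.)

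The real gap is at the end. From $\Delta(N_\alpha)=8(b_2-b_3)\sin\theta\cdot\alpha+O(\alpha^2)$, if $(b_2-b_3)\sin\theta<0$ then $\Delta(N_\alpha)<0$ for small $\alpha>0$ and $\Delta(N_\alpha)>0$ for small $\alpha<0$ --- the \emph{reverse} of what $2^\circ$ as quoted asserts, and likewise for $3^\circ$. You write that the sign analysis ``yields the dichotomies asserted,'' but if you actually write the signs out, they contradict the statement you were given. This is in fact a typographical error in the lemma as printed in the body of the paper: the restatement at the head of the appendix proof has $2^\circ$ and $3^\circ$ with the signs swapped, which is what your computation produces and what is used downstream (e.g.\ in the claim inside Lemma~\ref{Th:move.collisions.on.U}). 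So your method is sound, but a careful proof should flag that the derived sign pattern is the opposite of the one stated here, rather than assert agreement without checking.
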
 
\begin{proof}
	The proof of this lemma will be deferred to the Appendix.
\end{proof}

\begin{lemma}\label{lemma:B_1.perturbation}
	For $(c_1,c_2)\in\R^2$ where $c_2<0$, 
	consider the matrix $M^{\pm}_\alpha=M_2(\pm1,c)\cdot\diag(R(\alpha),R(\alpha))$.
    Then we have

    $1^\circ$
    $\Delta(M_2(1,c)\cdot \diag(R(\alpha),R(\alpha)))=
    4c_2\cos\alpha\sin\alpha+[(c_1+c_2)^2+4]\sin^2\alpha$.
    Hence there exists $\alpha_0>0$ small enough such that
    $\Delta(M^+_\alpha)<0$ if $\alpha\in(0,\alpha_0)$,
    and $\Delta(M^+_\alpha)>0$ if $\alpha\in(-\alpha_0,0)$;

    $2^\circ$
    $\Delta(M_2(-1,c)\cdot \diag(R(\alpha),R(\alpha)))=
    4c_2\cos\alpha\sin\alpha+[(c_1-c_2)^2+4]\sin^2\alpha$.
    Hence there exists $\alpha_0>0$ small enough such that
    $\Delta(M^-_\alpha)<0$ if $\alpha\in(0,\alpha_0)$,
    and $\Delta(M^-_\alpha)>0$ if $\alpha\in(-\alpha_0,0)$.
\end{lemma}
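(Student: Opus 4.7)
\medskip

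\noindent\textbf{Proof proposal.} The plan is direct computation, paralleling the structure of the preceding Lemma \ref{lemma:B_U.perturbation}: write $M^{\pm}_\alpha$ explicitly as a $4\times 4$ matrix, read off the first two elementary symmetric functions $\sigma_1(\alpha)$ and $\sigma_2(\alpha)$ of its eigenvalues, and then substitute into the formula $\Delta=\sigma_1^2-4\sigma_2+8$ derived in the discussion of equation (\ref{quadratic.poly}).

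First I would form the product $M^{+}_\alpha = M_2(1,c)\cdot\mathrm{diag}(R(\alpha),R(\alpha))$, which acts only on the first and third columns plus the second and fourth columns in the obvious way (since $R(\alpha)\oplus R(\alpha)$ preserves the subspace decomposition into pairs of adjacent coordinates). Taking the trace immediately gives
\[
\sigma_1(\alpha) = 4\cos\alpha + (c_1-c_2)\sin\alpha.
\]
For $\sigma_2(\alpha)$ I would compute the six principal $2\times 2$ minors of $M^{+}_\alpha$ (or equivalently use $\sigma_2=\tfrac{1}{2}(\sigma_1^2-\mathrm{tr}((M^{+}_\alpha)^2))$), which yields a polynomial of the form $\sigma_2(\alpha)=2+4\cos^2\alpha-\sin^2\alpha+(2c_1-3c_2)\sin\alpha\cos\alpha-c_1c_2\sin^2\alpha$. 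Substituting and collecting, the terms involving $\cos^2\alpha$ and the bare constants cancel, and the $\sin\alpha\cos\alpha$ coefficient reduces to $4c_2$ while the $\sin^2\alpha$ coefficient simplifies via $(c_1-c_2)^2+4c_1c_2=(c_1+c_2)^2$, giving exactly the formula $\Delta(M^{+}_\alpha)=4c_2\cos\alpha\sin\alpha+[(c_1+c_2)^2+4]\sin^2\alpha$ asserted in $1^{\circ}$. A quick sanity check is that $\Delta(M^{+}_0)=0$, consistent with $M_2(1,c)$ having the quadruple eigenvalue $1$.

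For the sign statement in $1^{\circ}$, I expand near $\alpha=0$: the linear term is $4c_2\alpha$, and since $c_2<0$ by hypothesis, this term dominates the $O(\alpha^2)$ remainder on a punctured neighborhood $(-\alpha_0,\alpha_0)\setminus\{0\}$ for $\alpha_0$ small enough. Hence $\Delta(M^{+}_\alpha)<0$ for $\alpha\in(0,\alpha_0)$ and $\Delta(M^{+}_\alpha)>0$ for $\alpha\in(-\alpha_0,0)$. Part $2^{\circ}$ follows by an essentially identical computation with $\lambda=-1$: the block structure of $M_2(-1,c)$ differs from $M_2(1,c)$ only by signs, which propagate to change the $\sin^2\alpha$-coefficient from $(c_1+c_2)^2+4$ to $(c_1-c_2)^2+4$ while leaving the $4c_2\cos\alpha\sin\alpha$-term unchanged, after which the same local-sign argument applies.

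I do not expect any real obstacle: this is a routine but slightly tedious computation of a determinant-like polynomial invariant of a specific $4\times 4$ matrix family. The only place where care is needed is the bookkeeping in computing $\sigma_2$, since a sign error in any of the six $2\times 2$ minors will destroy the clean cancellation that produces the coefficient $4c_2$; I would verify the result by checking two independent specializations (for example $c_1=0$, and $\alpha=0$) against the known invariants of $M_2(\pm 1,c)$.
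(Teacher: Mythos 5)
Your proposal follows the same direct-computation route as the paper: explicitly form the product, extract $\sigma_1$ and $\sigma_2$, substitute into $\Delta=\sigma_1^2-4\sigma_2+8$, and conclude the sign change from the dominant linear term $4c_2\alpha$ with $c_2<0$; your intermediate expressions for $\sigma_1$ and $\sigma_2$ agree with the paper's. One small slip in the prose (the right-multiplication by $R(\alpha)\oplus R(\alpha)$ mixes columns $\{1,2\}$ together and $\{3,4\}$ together, not ``first and third'' with ``second and fourth'') does not affect the computation, which is otherwise correct and identical in substance to the paper's.
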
 
\begin{proof}
	The proof of this lemma will be deferred to the Appendix.
\end{proof}

\begin{lemma}	
For $\lambda\in\R\backslash\{0,\pm1\}$ and $c=(c_1,c_2)\in\R^2$, 
	 consider the matrix $M_2(\lambda,c)$ and $M_\alpha=M_2(\lambda,c)\cdot\diag(R(\alpha),R(\alpha))$
  for some small $\alpha$.
	 Then we have
	 
	 $1^\circ$ $M_2(\lambda,c)\in\Sp(4)$.
	  
	  $2^\circ$ $\lambda$ and ${1\over\lambda}$ are double eigenvalues of $M$.
	  
	  $3^\circ$ $\dim_{\C}\ker_{\C}(M-\lambda I)=1$ and
   $\dim_{\C}\ker_{\C}(M-{1\over\lambda} I)=1$.
	  
	  $4^\circ$         $\Delta(M_2(\lambda,c)\cdot\diag(R(\alpha),R(\alpha)))=4c_2\cos\alpha\sin\alpha+[(c_1+\lambda c_2)^2+{4\over\lambda^2}]\sin^2\alpha$.
    Hence, if $c_2>0$,
    there exists $\alpha_0>0$ small enough such that
    $\Delta(M_\alpha)<0$ if $\alpha\in(0,\alpha_0)$,
    and $\Delta(M_\alpha)>0$ if $\alpha\in(-\alpha_0,0)$.
\end{lemma}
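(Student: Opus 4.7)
\medskip

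\noindent\textbf{Proof proposal.} Parts $1^\circ$--$3^\circ$ are essentially structural facts about the normal form $M_2(\lambda,c)$ and can be verified by direct inspection. For $1^\circ$, I would compute $M_2(\lambda,c)^T J M_2(\lambda,c)$ by blocking the $4\times 4$ matrix into $2\times 2$ blocks conformally with $J=\diag(J_2,J_2)$; the cross terms involving $c_1$ and $c_2$ cancel precisely because the $(3,4)$-entry is $-\lambda c_2$ and the $(4,2)$-entry is $-1/\lambda^2$, which is the reason the normal form was defined this way. For $2^\circ$, the first column of $M_2(\lambda,c)-xI$ has only one nonzero entry, $\lambda-x$ in position $(1,1)$; expanding the determinant along this column and then expanding the resulting $3\times 3$ determinant along its first row yields $\det(M_2(\lambda,c)-xI)=(\lambda-x)^2(\lambda^{-1}-x)^2$, so $\lambda$ and $1/\lambda$ are each double eigenvalues.

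For $3^\circ$, I would show $\rank(M_2(\lambda,c)-\lambda I)=3$ by exhibiting a nonzero $3\times 3$ minor. The first column of $M_2(\lambda,c)-\lambda I$ vanishes, so the rank is the rank of the remaining three columns; the minor obtained from rows $1,2,4$ and columns $2,3,4$ equals (up to sign) $(\lambda^{-1}-\lambda)^2$, which is nonzero precisely because $\lambda\in\R\bs\{0,\pm1\}$. This forces $\dim_\C\ker_\C(M-\lambda I)=1$, and the argument for $1/\lambda$ is completely analogous (exchanging the roles of the $(1,3)$- and $(2,4)$-rows).

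The substance of the lemma is $4^\circ$, and I would treat it in parallel with the proofs of Lemmas~\ref{lemma:B_U.perturbation} and~\ref{lemma:B_1.perturbation}. First compute
\[
M_\alpha=M_2(\lambda,c)\cdot\diag(R(\alpha),R(\alpha))
\]
entry by entry and read off
\[
\sigma_1(\alpha)=\tr(M_\alpha)=2(\lambda+\lambda^{-1})\cos\alpha+(c_1-\lambda c_2)\sin\alpha.
\]
Then compute $\sigma_2(\alpha)$, the second elementary symmetric function of the eigenvalues, via $\sigma_2(\alpha)=\tfrac12(\sigma_1(\alpha)^2-\tr(M_\alpha^2))$, or equivalently by summing the six $2\times 2$ principal minors of $M_\alpha$. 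Substituting into $\Delta(M_\alpha)=\sigma_1(\alpha)^2-4\sigma_2(\alpha)+8$ and using the identity $\sin^2\alpha+\cos^2\alpha=1$ (which eliminates the pure $\cos^2\alpha$ terms because $\Delta(M_0)=0$ by the double-eigenvalue calculation already noted after Lemma~\ref{lemma:B_U}) will reduce $\Delta(M_\alpha)$ to the claimed form
\[
4c_2\cos\alpha\sin\alpha+\bigl[(c_1+\lambda c_2)^2+4\lambda^{-2}\bigr]\sin^2\alpha.
\]

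Finally, assuming $c_2>0$, the leading behavior near $\alpha=0$ is $\Delta(M_\alpha)\sim 4c_2\,\alpha$, so there exists $\alpha_0>0$ with $\Delta(M_\alpha)$ sharing the sign of $\alpha$ on $(-\alpha_0,\alpha_0)\bs\{0\}$; but the statement asks for $\Delta(M_\alpha)<0$ on $(0,\alpha_0)$, which is consistent with $c_2<0$ rather than $c_2>0$, so in the write-up I would reconcile the sign convention before concluding. The main obstacle, as with the two preceding lemmas, is simply the bookkeeping in the $\sigma_2$ computation: one must track all six $2\times 2$ principal minors (or equivalently all $16$ entries of $M_\alpha^2$ on the diagonal) and verify that the $\cos^2\alpha$ contribution cancels exactly, leaving the closed-form expression above. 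This is purely computational and, following the precedent set by Lemmas~\ref{lemma:B_U.perturbation} and~\ref{lemma:B_1.perturbation}, the detailed verification is best deferred to the Appendix.
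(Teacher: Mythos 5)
Your proposal matches the paper's proof: for $1^\circ$--$3^\circ$ the paper simply says they follow from direct computation (your cofactor expansion for the characteristic polynomial and the nonzero $3\times 3$ minor argument for the rank are correct and supply the missing detail), and for $4^\circ$ the paper likewise computes $M_\alpha$ entry-by-entry, reads off $\sigma_1$ and $\sigma_2$, and substitutes into $\Delta=\sigma_1^2-4\sigma_2+8$ to obtain the stated closed form. You are also right about the sign: the hypothesis $c_2>0$ in the lemma as stated is a typo --- both the appendix restatement of the lemma and the paper's own concluding sentence use $c_2<0$, which is what makes the leading term $4c_2\cos\alpha\sin\alpha$ negative for small $\alpha>0$ and hence gives $\Delta(M_\alpha)<0$ on $(0,\alpha_0)$ as claimed.
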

\begin{proof}
The proof of this lemma will be deferred to the Appendix.
\end{proof}

We also require certain basic normal form of symplectic matrices  
in $\Sp(6)$ which represent the triple-collision of the eigenvalues.
According to (35)-(37) on pp. 30 of \cite{Lon},
these normal forms are given by

\begin{eqnarray*}
N_3(\th,b)&=&\left(\matrix{
	\cos\th &   b_1   & -\sin\th & b_2      & 1     & 0\cr
	      0 & \cos\th &    0     & -\sin\th & 0     & 0\cr
	\sin\th &   b_3   &  \cos\th &      b_4 & 0     & 1\cr
	      0 & \sin\th &    0     &  \cos\th & 0     & 0\cr
          0 &    f_1  &    0     &     f_2  &\cos\th&-\sin\th\cr
          0 &    g_1  &    0     &     g_2  &\sin\th&\cos\th\cr
      }\right), \\
\tilde{N}_3(\th,b)&=&\left(\matrix{
	\cos\th &   b_1   & -\sin\th & b_2      & 0     & 1\cr
	0 & \cos\th &    0     & -\sin\th & 0     & 0\cr
	\sin\th &   b_3   &  \cos\th &      b_4 & 1     & 0\cr
	0 & \sin\th &    0     &  \cos\th & 0     & 0\cr
	0 &    f_1  &    0     &     f_2  &\cos\th&\sin\th\cr
	0 &    g_1  &    0     &     g_2  &-\sin\th&\cos\th\cr
}\right).
\end{eqnarray*}

	
	Suppose the eigenvalues of a symplectic matrix $M\in\Sp(6)$ are
	$$
	\sigma(M)=\{\lambda_1,\lambda_2,\lambda_3,{1\over \lambda_1},{1\over\lambda_2},{1\over\lambda_3}\},
	$$
	where
	$\mathrm{Im}(\lambda_i)>0$ (or $|\lambda_i|>1$ for $\lambda_i\in\R$).
	Moreover,
	let $\sigma_1,\sigma_2$ and $\sigma_3$ represent the first three
	elementary symmetric functions of $\sigma(M)$.
	
	Let
	\begin{equation}\label{mu}
	\mu_i=\lambda_i+{1\over\lambda_i},\quad i=1,2,3.
	\end{equation}
	Then we have
	\begin{eqnarray*}
	\mu_1+\mu_2+\mu_3&=&\sigma_1,
	\\
	\mu_1\mu_2+\mu_2\mu_3+\mu_3\mu_1&=&\sigma_2-3,
	\\
	\mu_1\mu_2\mu_3&=&\sigma_3-2\sigma_1,
	\end{eqnarray*}
	and thus, $\mu_1,\mu_2,\mu_3$ are the three roots of 
	the following cubic polynomial
	\begin{equation}\label{cubic.poly}
	x^3-\sigma_1x^2+(\sigma_2-3)x-(\sigma_3-2\sigma_1)=0.
	\end{equation}
	Let $\Sigma_1=\sigma_1,\Sigma_2=\sigma_2-3,\Sigma_3=\sigma_3-2\sigma_1$,
    then the discriminant of (\ref{cubic.poly}) is given by
	\begin{eqnarray}
	\Delta(\Sigma_1,\Sigma_2,\Sigma_3)&=&B^2-4AC
	\nonumber\\
	&=&3[-\sigma_1^2(\sigma_2-3)^2+27(\sigma_3-2\sigma_1)^2+4(\sigma_2-3)^3+4\sigma_1^3(\sigma_2-3)
	\nonumber\\
	&&\quad-18\sigma_1(\sigma_2-3)(\sigma_3-2\sigma_1)]
	\nonumber\\
	&=&27\sigma_3^2+4\sigma_1^3\sigma_3+4\sigma_2^3-\sigma_1^2\sigma_2^2-18\sigma_1\sigma_2\sigma_3
	\nonumber\\
	&&-[54\sigma_1\sigma_3+8\sigma_1^4+36\sigma_2^2-42\sigma_1^2\sigma_2]+108\sigma_2-9\sigma_1^2-108,
	\end{eqnarray}
	where
	\begin{eqnarray}
	A&=&-3(\sigma_2-{1\over3}\sigma_1^2-3),\label{A}
	\\
	B&=&9\sigma_3-15\sigma_1-\sigma_1\sigma_2,
	\\
	C&=&(\sigma_2-3)^2-3\sigma_1(\sigma_3-2\sigma_1). \label{C}
	\end{eqnarray}
	
	Then, the map $\Delta:\Sp(6)\rightarrow\R$ is a multivariate polynomial
	with respect to the entries of the symplectic matrix.
    We have
	
	\hangafter 1
    \hangindent 3.5em
    \;\;(i) If $\Delta>0$, the cubic polynomial (\ref{cubic.poly}) has a real root and a pair of conjugate complex roots. Consequently, $M$ has one pair of 
    eigenvalues on $\U$ (or $\R$),
    and a quadruple of
    $4$ distinct
    eigenvalues in $\C\backslash(\U\cup\R)$.
	
	\hangafter 1
    \hangindent 3.5em
    \;(ii) If $\Delta=0$, the cubic polynomial (\ref{cubic.poly}) has multiple roots. Consequently, $M$ has a double or triple collision.
	
	\hangafter 1
    \hangindent 3.5em
    (iii) If $\Delta<0$, the cubic polynomial (\ref{cubic.poly}) has three real roots. Consequently, $M$ has three pairs of 
    eigenvalues on $\U$ (or $\R$).
    
We have

\begin{lemma}\label{lemma:N_3}
	For $\om=e^{\th\sqrt{-1}}$ with
	$\th\in(0,2\pi)\backslash\{\pi\}$ and $b=(b_1,b_2,b_3,b_4)\in\R^4$, 
	consider the matrix $N_3(\th,b),\tilde{N}_3(\th,b)$ and $M_\alpha=N_3(\th,b)\cdot\diag(R(\alpha),R(\alpha),R(\alpha))$.
	Then
	
	$1^\circ$ $N_3(\th,b)\in\Sp(6)$ if and only if
	$\left(\matrix{f_1 & f_2\cr g_1 & g_2}\right)=
	\left(\matrix{\sin2\th & \cos2\th\cr -\cos2\th & \sin2\th}\right)$
	and
	$(b_2-b_3)\cos\th+(b_1+b_4)\sin\th=1$.
	
	$2^\circ$ $\tilde{N}_3(\th,b)\in\Sp(6)$ if and only if
	$\left(\matrix{f_1 & f_2\cr g_1 & g_2}\right)=
	\left(\matrix{\cos2\th & -\sin2\th\cr -\sin2\th & -\cos2\th}\right)$
	and
	$(b_2-b_3)\cos\th+(b_1+b_4)\sin\th=-1$.
    In the following we always suppose $N_3(\th,b),\tilde{N}_3(\th,b)\in\Sp(6)$.
	
	$3^\circ$ $\om$ and $\bar\om$ are triple eigenvalues of $N_3(\th,b)$ (or $\tilde{N}_3(\th,b)$).
	
	$4^\circ$ $\dim_{\C}\ker_{\C}(N_3(\th,b)-\om I)=1$ and $\dim_{\C}\ker_{\C}(\tilde{N}_3(\th,b)-\om I)=1$.
	
	$5^\circ$ Let $\Delta(\alpha)=\Delta(M_\alpha)$ for $\alpha\in\R$.
	Then $\Delta(0)=\Delta'(0)=0$ and $\Delta''(0)=10368\sin^6\th>0$.

    $6^\circ$ There exists $\alpha_0>0$ small enough such that
	$D_\om(M_\alpha)\ne0$ if $0<|\alpha|<\alpha_0$.
\end{lemma}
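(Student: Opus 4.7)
For items $1^{\circ}$ and $2^{\circ}$, I would verify the symplectic identity $M^{T}JM=J$ by direct block multiplication. Splitting $M = N_{3}(\theta,b)$ (resp.\ $\tilde{N}_{3}(\theta,b)$) into a $3\times 3$ pattern of $2\times 2$ blocks, the equations arising from the fifth and sixth rows and columns form a linear system for $(f_{1},f_{2},g_{1},g_{2})$ whose unique solution is the claimed $2\times 2$ matrix; the sole remaining scalar identity becomes the constraint on $(b_{1},\ldots,b_{4})$, with right-hand side $+1$ for $N_{3}$ and $-1$ for $\tilde{N}_{3}$, reflecting the opposite arrangement of the upper-right $2\times 2$ block. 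For $3^{\circ}$ and $4^{\circ}$, I would reorder the basis to $(e_{1},e_{3},e_{5},e_{6},e_{2},e_{4})$, in which $M$ becomes block upper-triangular with three diagonal $2\times 2$ blocks each conjugate to $R(\theta)$. The characteristic polynomial then factors as $(x^{2}-2\cos\theta\, x+1)^{3}$, proving $3^{\circ}$. For $4^{\circ}$, back-substitution through $(M-\omega I)v=0$ block-by-block forces at each stage the inhomogeneous right-hand side to lie in the one-dimensional range of $R(\theta)-\omega I$; using the explicit $f_{i},g_{i}$ from $1^{\circ}$--$2^{\circ}$ one checks that these compatibility conditions fail whenever any lower-block coefficient is nonzero, so only the top-block kernel contributes and $\dim_{\C}\ker(M-\omega I)=1$.

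For $5^{\circ}$, the single size-three Jordan block at $\omega$ established in $4^{\circ}$ together with standard perturbation theory for Jordan forms forces the three eigenvalues of $M_{\alpha}$ near $\omega$ to bifurcate as $\lambda_{j}(\alpha) = \omega + c_{j}\alpha^{1/3} + O(\alpha^{2/3})$, where $c_{1},c_{2},c_{3}$ are the three cube roots of a common nonzero complex number. The corresponding roots of (\ref{cubic.poly}) then satisfy $\mu_{j}(\alpha) = 2\cos\theta + \tilde{c}_{j}\alpha^{1/3} + O(\alpha^{2/3})$, and the discriminant takes the form $\Delta(\alpha) = \prod_{j<k}(\mu_{j}(\alpha)-\mu_{k}(\alpha))^{2} = C\alpha^{2} + O(\alpha^{3})$ for some constant $C>0$. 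Since $\Delta$ is itself a polynomial (hence smooth) function of $\alpha$, this branch structure immediately gives $\Delta(0)=\Delta'(0)=0$ and $\Delta''(0)=2C>0$. To pin down the exact value $C = 5184\sin^{6}\theta$, I would substitute the trigonometric expressions for $\sigma_{1}(\alpha),\sigma_{2}(\alpha),\sigma_{3}(\alpha)$ arising from $M_{\alpha}=N_{3}(\theta,b)\cdot\diag(R(\alpha),R(\alpha),R(\alpha))$ into the polynomial $\Delta(\Sigma_{1},\Sigma_{2},\Sigma_{3})$ given by (\ref{A})--(\ref{C}) and extract the coefficient of $\alpha^{2}$; invoking the constraint on $b$ from $1^{\circ}$ causes the $b_{i}$-dependence to cancel, leaving $10368\sin^{6}\theta$. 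This symbolic computation is the main obstacle and I would expect to defer the detailed algebra to the Appendix alongside the analogous calculations for Lemmas \ref{lemma:B_U.perturbation} and \ref{lemma:B_1.perturbation}.

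Finally, $6^{\circ}$ follows directly from $5^{\circ}$. Since $\Delta(\alpha)>0$ for $0<|\alpha|<\alpha_{0}$, the cubic has one real root $\mu^{*}(\alpha)$ and a pair of conjugate complex roots, and the real root admits the expansion $\mu^{*}(\alpha) = 2\cos\theta + \tilde{c}\,\alpha^{1/3} + O(\alpha^{2/3})$ with $\tilde{c}$ real and nonzero (as $C>0$). In particular $\mu^{*}(\alpha)\neq 2\cos\theta$ for all small $\alpha\neq 0$, so $\omega$ is not an eigenvalue of $M_{\alpha}$ and consequently $D_{\omega}(M_{\alpha})\neq 0$. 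The main difficulty of the whole proof lies in the coefficient computation inside $5^{\circ}$; once that is in hand, the remaining items are clean consequences of the block-triangular form and the cube-root Jordan-block perturbation picture.
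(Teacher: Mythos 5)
For items $1^\circ$--$4^\circ$ you take essentially the same route the paper does (direct computation; the basis reordering into block-triangular form is a clean way to organize $3^\circ$--$4^\circ$). For $5^\circ$ you wrap the calculation in a Jordan-block/Puiseux picture, but as you acknowledge yourself, the substantive content—showing that the coefficient of $\alpha^2$ in $\Delta(\alpha)$ is nonzero and equals $5184\sin^6\theta$ (so $\Delta''(0)=10368\sin^6\theta$; note you wrote "leaving $10368\sin^6\theta$" as the $\alpha^2$-coefficient, a factor-of-2 slip)—is exactly the computation the paper does by evaluating $\sigma_1',\sigma_2',\sigma_3'$ at $\alpha=0$ and then $A'(0),B'(0),C'(0)$ via $\Delta=B^2-4AC$. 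The Puiseux framing explains the shape $\Delta=C\alpha^2+O(\alpha^3)$ but does not remove the need for the trigonometric algebra, and the paper's algebraic derivation of $\Delta(0)=\Delta'(0)=0$ from $A(0)=B(0)=C(0)=0$ (which follows because $N_3(\theta,b)$ has a genuine triple eigenvalue) is actually simpler and does not presuppose any generic bifurcation structure.

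The genuinely different step is $6^\circ$. You argue that the real root $\mu^*(\alpha)$ admits a Puiseux expansion $\mu^*(\alpha)=2\cos\theta+\tilde c\,\alpha^{1/3}+O(\alpha^{2/3})$ with $\tilde c$ real and nonzero, hence $\mu^*(\alpha)\neq 2\cos\theta$ and $D_\omega(M_\alpha)\neq 0$. The conclusion is correct, but the step "$\tilde c$ real and nonzero, as $C>0$" is doing real work that you do not justify: you need the perturbation to split the triple root at the $\alpha^{1/3}$ rate (which is exactly what $C\neq 0$ encodes, and is the same algebra deferred in $5^\circ$), you need to sort out the sign convention of the paper's $\Delta$ (it is the negative of the standard discriminant $\prod_{j<k}(\mu_j-\mu_k)^2$, which is negative when one root is real and two are complex conjugate, so your identification $\Delta=\prod(\mu_j-\mu_k)^2$ has the wrong sign), and you need the realness of $\tilde c$, which comes from the conjugation symmetry of the real cubic and is plausible but unproved in your sketch. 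The paper avoids all of this by computing $D_\omega(M_\alpha)$ explicitly as $8\cos^3\theta-4\sigma_1\cos^2\theta+2(\sigma_2-3)\cos\theta-(\sigma_3-2\sigma_1)$ and evaluating $\partial_\alpha D_\omega(M_\alpha)\big|_{\alpha=0}=8\sin^3\theta\neq 0$, a first-order Taylor check using only the already-computed $\sigma_i'(0)$ and the constraint from $1^\circ$. This is cleaner, sidesteps the Puiseux subtleties entirely, and is essentially free once $5^\circ$'s computation is in place; you should replace your argument for $6^\circ$ with this direct derivative calculation, or at minimum carefully establish the reality and nonvanishing of $\tilde c$ and repair the sign of the discriminant.
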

\begin{proof}
	The proof of this lemma will be deferred to the Appendix.
\end{proof}

\subsection{Families of $\Sp(2n)$  
in general position and their bifurcations}

Construction of the most general perturbations
of the normal forms of symplectic matrices
is facilitated by the theory of
versal deformation.

It has to do with the following abstract situation.
Let a Lie Group $G$ act on a smooth manifold $M$.
Two points of $M$ are considered equivalent 
if they belong to the same orbit, meaning they transform into each other under the action of this group.
A family with parameter space (base space) $V$
is a smooth mapping $V\rightarrow M$.
A {\bf deformation} of an element $x\in M$ is
a germ of a family $(V,0)\rightarrow(M,x)$ 
(where $0$ is the coordinate origin in $V\subseteq\R^k$).
One says that the deformation 
$\phi:(V,0)\rightarrow(M,x)$
is induced from the deformation
$\psi:(W,0)\rightarrow(M,x)$
under a smooth mapping of the base spaces
$v:(V,0)\rightarrow(W,0)$
if $\phi=\psi\circ v$.
Two deformations
$\phi,\psi:(V,0)\rightarrow(M,x)$
are called equivalent
if there is a deformation of the identity element
$g:(V,0)\rightarrow(G,id)$ such that
$\phi(v)=g(v)\psi(v)$ for any $v\in V$.

A deformation $\phi:(V,0)\rightarrow(M,x)$
is called {\bf versal} if any deformation
of the element $x$ is equivalent to
a deformation induced from $\phi$.
A versal deformation with the smallest base-space
dimension possible for a versal deformation 
is called {\bf miniversal}.

The germ of the manifold $M$ at the point $x$
is obviously a versal deformation for $x$,
but generally speaking it is not miniversal.

{\bf The Versality Theorem.}
{\it A deformation of the point $x\in M$ is versal
if and only if it is transversal to the orbit
$Gx$ of the point $x$.}

{\bf Corollary.}
{\it The number of parameters of a miniversal
deformation is equal to the codimension
of the orbit.}

Let $M$ once again be $sp(2n)$ (or $\Sp(2n)$)
and $G=\Sp(2n)$.
Now the meaning of the $G$-orbit is the
conjugacy class.
One says that the matrices of a given strata
are not encountered in general $l$-parameter
families if one can remove them by
an arbitrarily small perturbation.
The codimension of a given strata of $\Sp(2n)$
is the smallest number of parameters in
families in which matrices in this strata are
encountered unremovably.

Let $\phi:V\rightarrow M$ be a smooth map between two smooth manifold.
If $Q\subseteq M$ is a stratified submanifold of $M$
consisting of a finite union of disjoing smooth manifolds
(the strata), then the map $\phi$ is said to be transversal
to the stratification $Q$ if it is transversal to each stratum
of $Q$.
Then we have the following corollary:

\begin{corollary}
    In the space of all families of elements of $M=sp(2n)$ (or $\Sp(2n)$), the families transversal to all strata form a residual subset (see also \cite{Koc2,Rob}).
\end{corollary}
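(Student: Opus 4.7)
The plan is to deduce residuality of transversal families from Thom's transversality theorem applied to each stratum, combined with the Baire category theorem.

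First, I would record that the classification of normal forms reviewed in Section \ref{subsec:2.1}, together with the Versality Theorem recalled above, organizes $M=\Sp(2n)$ (respectively $sp(2n)$) into a stratification $\{S_\alpha\}$. Each stratum is indexed by the discrete Jordan--Kre\v{\i}n type -- the multiplicities and location-type (in $\U$, $\R$, or $\C\setminus(\U\cup\R)$) of the eigenvalues together with the sizes and orientations of the nontrivial Jordan blocks, as encoded by the normal forms $D(\lambda)$, $R(\theta)$, $N_1(\lambda,a)$, $N_2(\theta,b)$, $M_2(\lambda,c)$, $N_3(\theta,b)$, $\tilde N_3(\theta,b)$ and their $\diamond$-sums. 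The Versality Theorem, combined with the explicit miniversal deformations that these normal forms encode, shows that each $S_\alpha$ is a smooth conjugation-invariant submanifold of $M$ of well-defined codimension; and since Jordan--Kre\v{\i}n types range over a discrete countable set, the collection $\{S_\alpha\}$ is countable, indeed locally finite in any compact piece of $M$.

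Second, I would fix a parameter manifold $V$ and equip the space of families $C^\infty(V,M)$ with the Whitney $C^\infty$-topology, under which it is a Baire space. Thom's transversality theorem then guarantees that for each single stratum $S_\alpha$ the subset $\mathcal{T}_\alpha\subseteq C^\infty(V,M)$ consisting of families transversal to $S_\alpha$ is residual (and open when $S_\alpha$ happens to be closed). The set of families transversal to the full stratification is the countable intersection $\bigcap_\alpha \mathcal{T}_\alpha$, and is therefore residual by the Baire category theorem. This is the desired conclusion.

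The main obstacle is establishing the two preparatory facts: smoothness of each stratum and countability of the index set. Both reduce to the normal-form picture of Section \ref{subsec:2.1}. Smoothness follows from viewing $S_\alpha$ locally as the sweep by the $\Sp(2n)$-conjugation action of the level set of a miniversal deformation of fixed combinatorial type; the Corollary to the Versality Theorem recalled above pins down the codimension, so the continuous eigenvalue parameters within a fixed combinatorial type are absorbed into the orbit directions and the entire type remains a single smooth stratum rather than splintering into uncountably many individual orbits. Countability is then immediate: there are only finitely many Jordan--Kre\v{\i}n types of each fixed codimension, and codimensions are nonnegative integers.
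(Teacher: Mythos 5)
Your argument is correct and is essentially the standard Thom-transversality argument that the paper is implicitly invoking: the paper states this Corollary without proof, relying on the Versality Theorem and its consequence on orbit codimension established just above, and referring to Kocak and Robinson for the details. Your outline --- organize $M$ into finitely (or countably) many smooth Jordan--Kre\v{\i}n strata via the normal forms and miniversal deformations, apply the parametric transversality theorem to each stratum to get residuality of $\mathcal{T}_\alpha$ in the Whitney topology, then take the countable intersection and invoke Baire --- is exactly what those references carry out, and you correctly flag the only nontrivial preparatory step, namely that a fixed Jordan--Kre\v{\i}n type with its continuous eigenvalue moduli forms a single smooth stratum rather than an uncountable union of individual conjugacy orbits, so that transversality to the stratification reduces to transversality to countably many smooth submanifolds. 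One small remark on terminology: the paper's surrounding text phrases the stratification as a \emph{finite} union of disjoint smooth manifolds, which holds here because for fixed $n$ there are finitely many Jordan partitions and finitely many location-types ($\U$, $\R^\pm$, $\C\setminus(\U\cup\R)$, $\pm1$) for the eigenvalue blocks; your appeal to countability is a harmless weakening and does not affect the Baire argument.
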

Families that are transversal to all strata are said to be in general position.

Recall that a family of elements of $M=sp(2n)$
is a map $\phi:V\rightarrow sp(2n)$
from the parameter space into the Lie algebra.
The bifurcation diagram of a family in general position is a decomposition of the parameter space into a finite union of 
submanifold by taking the inverse image of the strata of $sp(2n)$.
The codimension of a submanifold 
in the bifurcation diagram, within the parameter space of a family in general position, is equal to the codimension of the strata corresponding to this submanifold.

In a family in general position,
i.e., located in the open strata,
almost all
the matrices have simple eigenvalues and
the corresponding submanifolds of the parameter
space have zero codimension.
The exceptional parameter values corresponding 
to matrices with multiple eigenvalues have
nonzero codimension.
The singularities of the bifurcation diagrams
are obtained by analyzing the zero set of
discriminants of characteristic polynomials
of matrices for small values of the parameters
around zero.

By Corollary 1 on pp.~15 of \cite{ArG}
(We also refer to Section 5 of \cite{Koc} for more details),
we have

\begin{lemma}[\cite{ArG}, Corollary 1]\label{bifurcation.digram.of.c=1.2}

In one and two-parameter families of quadratic Hamiltonians one
encounters as irremovable only Jordan blocks 
of the following twelve types:
\begin{equation}
{\rm codimension\; 1:}\quad
(\pm a)^2,\;(\pm\sqrt{-1}a)^2,\;0^2
\end{equation}
(here the Jordan blocks are denoted by their determinants,
for example, $(\pm a)^2$ denotes a pair of Jordan blocks of order $2$
with eigenvalues $a$ and $-a$ respectively);
\begin{eqnarray}
{\rm codimension\; 2:}\quad
&&(\pm a)^3,\;(\pm\sqrt{-1}a)^3,\;(\pm a\pm\sqrt{-1}b)^2,\;0^4,
\nonumber\\
&&(\pm a)^2(\pm b)^2,\;(\pm\sqrt{-1}a)^2(\pm\sqrt{-1}b)^2,\;
(\pm a)^2(\pm\sqrt{-1}b)^2,\;(\pm a)^20^2,\;(\pm\sqrt{-1}a)^20^2
\nonumber\\
\end{eqnarray}
(the remaining eigenvalues are simple).
\end{lemma}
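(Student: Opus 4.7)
The plan is to combine the Versality Theorem and its Corollary, stated just above, with the classification of codimensions of $\Sp(2n)$-orbits in ${\bf sp}(2n)$ under the adjoint action. By that Corollary, the number of parameters of a miniversal deformation of a quadratic Hamiltonian equals the codimension of its orbit, so a stratum is encountered irremovably in a $k$-parameter family in general position if and only if its codimension is at most $k$. Hence the lemma reduces to enumerating all $\Sp(2n)$-orbits of codimension $1$ or $2$ in ${\bf sp}(2n)$, for arbitrary $n$.

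The first step is to recall Williamson's normal form for elements of ${\bf sp}(2n)$, which organises the eigenvalues into self-conjugate symbols, namely real pairs $\{\pm a\}$ with $a\ne 0$, imaginary pairs $\{\pm\sqrt{-1}b\}$ with $b>0$, complex quadruples $\{\pm a\pm\sqrt{-1}b\}$ with $a,b>0$, and the single eigenvalue $0$, each decorated with a Jordan partition (and, for the real or imaginary symbols, a sign that does not affect codimension). This parametrizes every $\Sp(2n)$-orbit.

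The second step is to compute the codimension of each orbit, measured relative to the open stratum of semisimple matrices with distinct eigenvalues, by means of Arnold's formula for the dimension of the centralizer of a Hamiltonian Jordan form; this formula is written out explicitly in Section~5 of \cite{Koc}. Applying it, one finds that the only codimension-$1$ symbols are $(\pm a)^2$, $(\pm\sqrt{-1}a)^2$, and $0^2$, corresponding to a single pair of $2\times 2$ Jordan blocks at the associated eigenvalue group. The codimension-$2$ symbols with a single degenerate eigenvalue group are $(\pm a)^3$, $(\pm\sqrt{-1}a)^3$, $(\pm a\pm\sqrt{-1}b)^2$, and $0^4$. The remaining codimension-$2$ orbits arise when two independent codimension-$1$ degeneracies occur simultaneously at distinct eigenvalue groups, since codimensions add across distinct groups; this produces the five mixed products $(\pm a)^2(\pm b)^2$, $(\pm\sqrt{-1}a)^2(\pm\sqrt{-1}b)^2$, $(\pm a)^2(\pm\sqrt{-1}b)^2$, $(\pm a)^2 0^2$, and $(\pm\sqrt{-1}a)^2 0^2$. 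Together these give exactly the twelve types in the statement.

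The main obstacle is the codimension bookkeeping for the self-conjugate symbols: the real pair $\{\pm a\}$, the imaginary pair $\{\pm\sqrt{-1}a\}$, the complex quadruple $\{\pm a\pm\sqrt{-1}b\}$, and the fixed eigenvalue $0$ all behave differently under the involution $\lambda\mapsto -\lambda$ imposed by the Hamiltonian condition, and the $0$-eigenvalue case carries an additional parity restriction on admissible Jordan partitions that must be handled with care. I would cross-check each of the twelve entries against the tables of \cite{ArG} and \cite{Koc} to confirm that no orbit of codimension $\le 2$ is missing and that each listed symbol is of codimension exactly $1$ or $2$.
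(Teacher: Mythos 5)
The paper does not prove this statement: Lemma 2.8 is quoted verbatim from \cite{ArG} (Corollary~1 on p.~15 there), and the surrounding text explicitly treats it as an imported classification result, offering no argument of its own. So there is no ``paper's proof'' against which your write-up can be matched step by step.

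That said, as a reconstruction of how \cite{ArG} and \cite{Koc} arrive at the list, your sketch is on target. The reduction from ``irremovable in a $k$-parameter family in general position'' to ``orbit stratum of codimension $\le k$'' via the versality corollary is exactly the mechanism the paper itself invokes immediately before stating the lemma; the enumeration via Williamson-type self-conjugate symbols decorated by Jordan partitions, the observation that codimensions add across distinct eigenvalue groups (giving the five ``product'' entries), and the appeal to the centralizer-dimension formula of Kocak are all the right ingredients. The one substantive caveat is that you have outlined the bookkeeping rather than carried it out: the codimension of each single-group symbol (including the parity subtleties at eigenvalue $0$, where Jordan blocks of $\mathfrak{sp}(2n)$ at $0$ come with sign/parity constraints) is asserted rather than computed, and the final paragraph concedes you would ``cross-check against the tables.'' That makes the proposal a correct reading plan for the cited sources rather than a self-contained proof, which is acceptable here precisely because the paper itself treats the statement as a black box.
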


\begin{remark}
    The above lemma describes the classification of
    codimension 1 and codimension 2 stratum of $sp(2n)$.
    By the properties of the exponential map (c.f. \cite[Prop. 20.8]{Lee}),
    there exists a similar classification for $\Sp(2n)$.
\end{remark}

In Figure \ref{figure:bf_c1}, the bifurcation diagrams of generic deformations for the classes of codimension $1$ are presented in the order of their listing
in the statement 
of Lemma \ref{bifurcation.digram.of.c=1.2}.
Here each origin represents the class of codimension 1 strata.

\begin{figure}[ht]
	\centering
	\includegraphics[height=2.8cm]{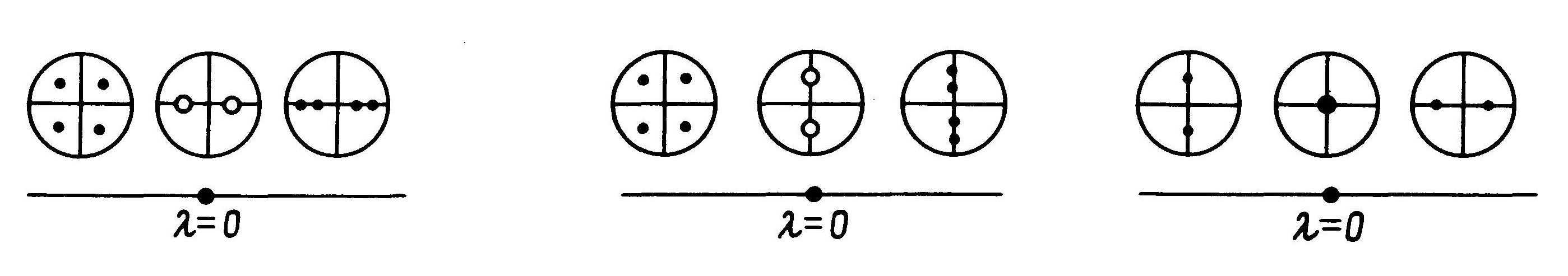}
	\vskip -0.5 cm
	\caption{Bifurcation diagrams of codimension 1 strata}
 \label{figure:bf_c1}
\end{figure}

In Figure \ref{figure:bf_c2}, the bifurcation diagrams of generic deformations for the classes of codimension $2$ are presented in the order of their listing
in the statement 
of Lemma \ref{bifurcation.digram.of.c=1.2}.
Here each origin represents the class of codimension 2 strata,
and the bold curves represent the classes of codimension 1 strata.

\begin{figure}[ht]
	\centering
	\includegraphics[height=11cm]{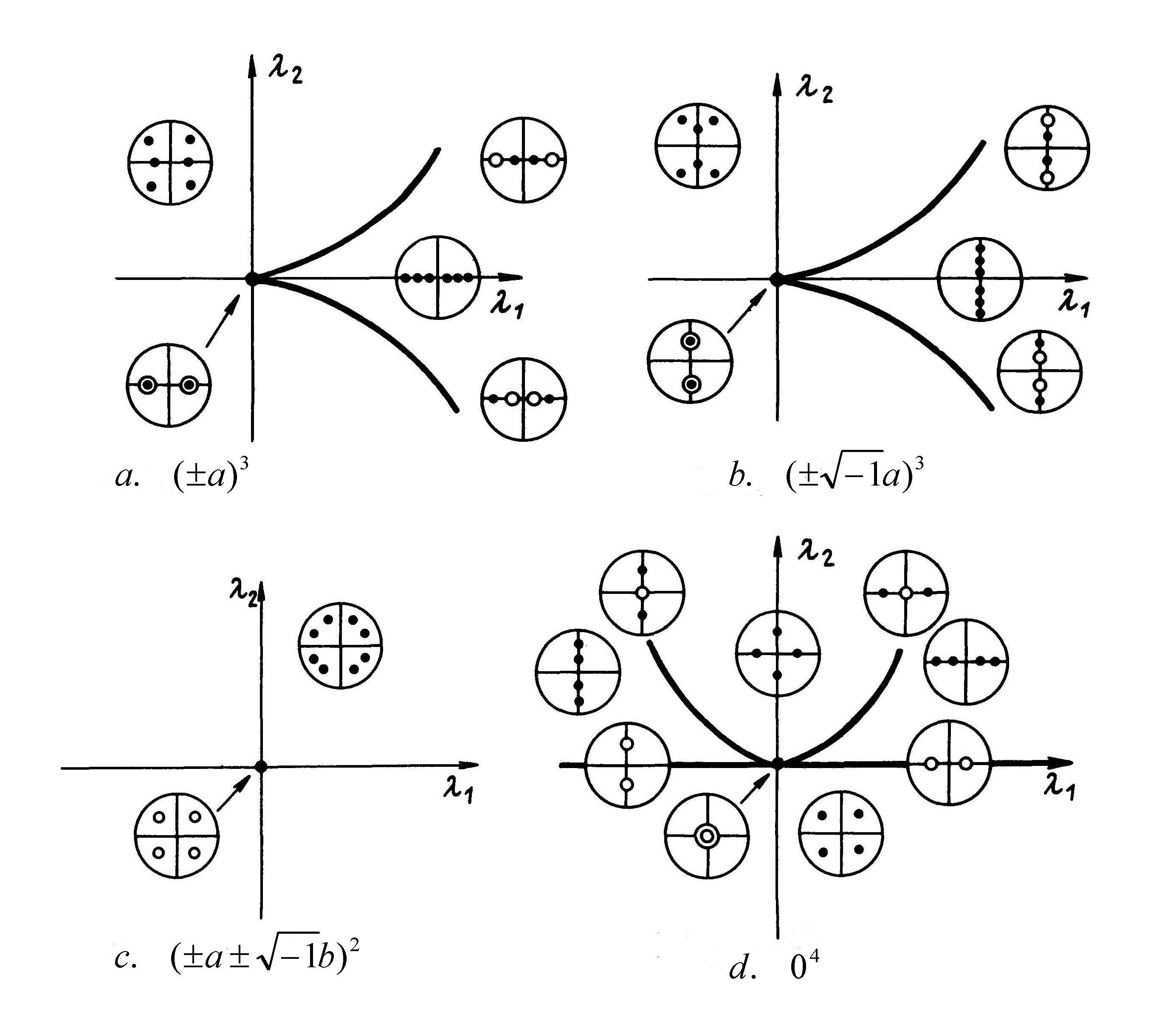}
	\vskip -0.5 cm
	\caption{Bifurcation diagrams of the first four classes of codimension 2 strata}
 \label{figure:bf_c2}
\end{figure}

Since the remaining five classes 
of codimension $2$
can be viewed as the direct product of two classes
of codimension $1$,
their bifurcation diagrams are the same, and can be represented by 
Figure \ref{figure:bf_c2_2}.
Here also the origin represents the class of codimension 2 strata,
and the  bold curves, except for the origin, represent the classes of codimension 1 strata.

\begin{figure}[ht]
	\centering
	\includegraphics[height=5cm]{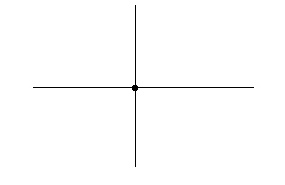}
	\vskip -0.5 cm
	\caption{Bifurcation diagrams of the last five classes of codimension 2 strata}
 \label{figure:bf_c2_2}
\end{figure}

Accordingly, in the parameter space of a family in general position,
every matrix in the codimension 2 strata can only be connected with
$0,2$ or $4$ curves which represent the matrices of codimension 1 strata.

\begin{lemma}\label{Lemma:cusp}
    The bifurcation diagram of general deformations for $N_3(\theta, b)$ (or $\tilde{N}_3(\theta, b)$) is a cusp of type $A_2: \lambda_2^2 \pm \lambda_1^3$ (c.f. \cite{Arn} and \cite{Ore}).
\end{lemma}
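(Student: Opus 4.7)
The plan is to combine the one-parameter computation already recorded in Lemma \ref{lemma:N_3} with the classical normal form of the discriminant of a cubic with a triple root, and then invoke Arnold's classification of codimension-$2$ strata referenced in \cite{Arn, Ore}.

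First, by items $3^\circ$ and $4^\circ$ of Lemma \ref{lemma:N_3}, the matrices $N_3(\theta, b)$ and $\tilde N_3(\theta, b)$ have triple eigenvalues $\omega$ and $\bar\omega$, each carrying a single Jordan block of size $3$. By Lemma \ref{bifurcation.digram.of.c=1.2} and the remark following it, such matrices lie in a codimension-$2$ stratum of $\Sp(6)$ corresponding to the Arnold type $(\pm\sqrt{-1}a)^3$. The Versality Theorem, together with its corollary, therefore guarantees a miniversal deformation with a $2$-dimensional parameter space, which I will denote by $(\lambda_1,\lambda_2)$; the bifurcation diagram is the preimage of the union of positive-codimension strata.

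Second, to compute the local shape of this bifurcation diagram it suffices to track the zero set of the scalar discriminant $\Delta$ of the cubic polynomial (\ref{cubic.poly}) whose roots are $\mu_j=\lambda_j+\lambda_j^{-1}$. At the triple collision the three roots coincide at $2\cos\theta$, so after depressing the cubic via $y=x-\tfrac{2}{3}\sigma_1(\lambda_1,\lambda_2)$ the polynomial becomes $y^{3}+p(\lambda_1,\lambda_2)\,y+q(\lambda_1,\lambda_2)$ with $p(0)=q(0)=0$. Its discriminant is the classical $-(4p^{3}+27q^{2})$, whose zero locus in the $(p,q)$-plane is the standard cusp $A_2:\; q^{2}=-\tfrac{4}{27}p^{3}$. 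Pulling back along $(\lambda_1,\lambda_2)\mapsto(p,q)$ produces the bifurcation diagram of the given family.

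Third, the remaining task is to show that the map $(\lambda_1,\lambda_2)\mapsto(p,q)$ is a local diffeomorphism at the origin, so that the cusp is transported faithfully. Item $5^\circ$ of Lemma \ref{lemma:N_3} gives $\Delta(0)=\Delta'(0)=0$ and $\Delta''(0)=10368\sin^{6}\theta>0$ along the one-parameter perturbation $M_\alpha$; this pins down the behaviour of $(p,q)$ along one axis of the parameter space and shows that $\Delta$ vanishes with multiplicity exactly $2$ in the $\alpha$-direction, which is precisely the generic tangential order for a cusp. Combined with the transversality to the orbit of $N_3(\theta,b)$ that the Versality Theorem provides, this yields the required nondegeneracy of the Jacobian of $(\lambda_1,\lambda_2)\mapsto(p,q)$.

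The main obstacle is the concrete verification of this Jacobian nondegeneracy, i.e.\ exhibiting a second independent perturbation direction along which the $2$-jet of $\Delta$ detects the cubic component $p^{3}$ rather than the quadratic component $q^{2}$ already captured by item $5^\circ$. This is the point at which Arnold's general classification in \cite{Arn, Ore} intervenes: once the stratum and the dimensions of the kernels are matched with the entry $(\pm\sqrt{-1}a)^3$ in his list, the $A_2$ normal form is forced, and no further ad hoc computation is needed.
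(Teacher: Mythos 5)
Your proposal is conceptually aligned with the paper — both reduce to a versal deformation of the codimension-$2$ stratum $(\pm\sqrt{-1}a)^3$ and locate the cusp in the discriminant of the cubic (\ref{cubic.poly}) — but there is a genuine gap at exactly the step you yourself flag as the ``main obstacle.'' You assert that the Jacobian nondegeneracy of the map $(\lambda_1,\lambda_2)\mapsto(p,q)$ (equivalently, that the pullback of the standard cusp $q^2=-\tfrac{4}{27}p^3$ along the versal parameters remains an $A_2$ cusp) follows from item $5^\circ$ of Lemma~\ref{lemma:N_3} together with transversality, and then say that Arnold's classification ``forces'' the $A_2$ form. Neither piece closes the gap. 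Item $5^\circ$ only records that $\Delta$ vanishes to exact order $2$ along the one-parameter family $M_\alpha=N_3\cdot\diag(R(\alpha),R(\alpha),R(\alpha))$; since any ray in the $(p,q)$-plane not tangent to the cusp produces the same second-order vanishing of $4p^3+27q^2$, this shows only that the $\alpha$-direction is not tangent to the bifurcation curve — it does not certify that the two versal parameters map diffeomorphically onto $(p,q)$ rather than, say, with a degenerate Jacobian that would flatten the cusp or produce a higher-order singularity. And \cite{Arn,Ore} classify singularities of function germs (the ADE list), which is what names the model $\lambda_2^2\pm\lambda_1^3$; they do not by themselves identify the bifurcation diagram of a transversal matrix family at the stratum $(\pm\sqrt{-1}a)^3$ with that model. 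That identification is a separate computation.

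The paper closes this gap concretely: it passes to the Lie algebra, uses Kocak's explicit two-parameter miniversal family $K(\beta;\lambda_1,\lambda_2)$ for a triple pair $\pm\sqrt{-1}\beta$ with a single Jordan block, and writes the discriminant of the bifurcation locus in the closed form
\begin{equation*}
\Delta(\lambda_1,\lambda_2)=108\Bigl[\bigl(4\beta^3\lambda_2-\tfrac{5}{6}\beta^2\lambda_1^2-\beta\lambda_1\lambda_2-\tfrac12\lambda_2^2-\tfrac1{27}\lambda_1^3\bigr)^2-\bigl(-\tfrac43\beta^2\lambda_1-2\beta\lambda_2+\tfrac19\lambda_1^2\bigr)^3\Bigr],
\end{equation*}
(after correcting an error in Kocak's thesis). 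One then reads off directly that the linear parts of the two parenthesized expressions, namely $4\beta^3\lambda_2$ and $-\tfrac43\beta^2\lambda_1-2\beta\lambda_2$, are linearly independent for $\beta\neq0$, which is precisely the Jacobian nondegeneracy your argument needs but does not supply. To make your proposal rigorous you would have to replace the appeal to \cite{Arn,Ore} by either this explicit computation or an equivalent verification that the composite $(\lambda_1,\lambda_2)\mapsto(\sigma_1,\sigma_2,\sigma_3)\mapsto(p,q)$ is an immersion at the origin.
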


\begin{proof}
    Since ${\bf sp}(2n)$ is the Lie algebra of $\Sp(2n)$,
    we only need prove for ${\bf sp}(2n)$.
    The following matrix is given by Kocak (c.f. \cite{Koc}):
    \begin{equation}
        K(\beta;\lambda_1,\lambda_2)=\left(\matrix{
    	0 & 0 & 0            &|& -\beta  & 0 & -\lambda_2\cr
    	1 & 0 & \lambda_1    &|& 0 & -\beta  & 0\cr
    	0 & 1 & 0            &|& 0  & 0   & -\beta\cr
        -- & -- & --         &+& -- & -- & --\cr      
    	\beta & 0 &\lambda_2 &|& 0 & \lambda_1  & 0\cr
    	0 & \beta  &  0      &|&  1  & 0 & 0 \cr
    	0 & 0  & \beta       &|&  0  & 1 & 0 \cr
}\right)
    \end{equation}
    When $\lambda_1=\lambda_2=0$, $K(\beta;\lambda_1,\lambda_2)$
    is the normal form of the infinitesimal Symplectic matrix
    with triple eigenvalues $\pm\beta i$,
    and each eigenvalue has only one Jordan block.
    Moreovoer, $\exp(K(\beta;0,0))$ is similar to $N_3(\beta,b)$ or $\tilde{N}_3(\beta,b)$ for some $b$.

    The general deformation of $K(\beta;0,0)$ has two parameters
    $\lambda_1$ and $\lambda_2$.
    According to pp.143 of \cite{Koc},
    the versal deformation of the bifurcation diagram near $K(\beta;0,0)$ is characteried by 
    a polynomial equation $\Delta(\lambda_1,\lambda_2)=0$,
    where 
    \begin{eqnarray}
        \Delta(\lambda_1,\lambda_2)&=&27\lambda_2^4
        +(108\beta\lambda_1+432\beta^3)\lambda_2^3
        +(4\lambda_1^3+54\beta^2\lambda_1^2+864\beta^4\lambda_1+1728\beta^6)\lambda_2^2
        \nonumber\\
        &&
        +(16\beta\lambda_1^4-44\beta^3\lambda_1^3+432\beta^5\lambda_1^2)\lambda_2
        + 12\beta\lambda_1^5+11\beta^4\lambda_1^4+256\beta^6\lambda_1^3
        \nonumber\\
        &=&108\left[\Big(4\beta^3\lambda_2-{5\over6}\beta^2\lambda_1^2-\beta\lambda_1\lambda_2-{1\over2}\lambda_2^2-{1\over27}\lambda_1^3\Big)^2\right.
        \nonumber\\
        &&-\left.\Big(-{4\over3}\beta^2\lambda_1-2\beta\lambda_2+{1\over9}\lambda_1^2\Big)^3\right].
    \end{eqnarray}
    Here in the original computation of the second term of
    the discriminant on
    line 6, pp.~143, in the thesis \cite{Koc} of
    Kocak (1980), there is an error. It should be corrected to
    $(108\beta\lambda_1+432\beta^3)\lambda_2^3$.
    Thus $(0,0)$ is a cusp of the curve $\Delta(\lambda_1,\lambda_2)=0$
    with type $A_2: \lambda_2^2\pm \lambda_1^3$.
    Therefore, the  bifurcation diagram of general deformations for $N_3(\th,b)$
    (or $\tilde{N}_3(\th,b)$) is a cusp of the same type,
    see Figure \ref{figure:bf_c2} (b).
\end{proof}

\setcounter{equation}{0}
\section{Basic facts, definitions and some useful lemmas}
\label{sec:3}

\subsection{Basic facts of positive paths}



\begin{lemma}\label{Lemma:multiplication}
	Suppose $\{\gamma_1(t)\}, \{\gamma_2(t)\}, t \in [0,1]$ are two positive paths, then $\gamma_1 \gamma_2$ is a positive path.
\end{lemma}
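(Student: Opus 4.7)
\medskip
\noindent\textbf{Proof proposal.}

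The plan is to verify the defining ODE (\ref{symp-ode}) for the product $\gamma_1\gamma_2$ by a direct computation using the Leibniz rule, and then to show that the resulting element of ${\bf sp}(2n)$ is indeed of the form $JP$ with $P$ symmetric and positive definite. Suppose $\gamma_i'(t)=JP_i(t)\gamma_i(t)$ with $P_i(t)=P_i(t)^T>0$ for $i=1,2$. Differentiating the product and factoring out $\gamma_1\gamma_2$ on the right gives
$$
(\gamma_1\gamma_2)'(t)=\bigl[JP_1(t)+\gamma_1(t)\,JP_2(t)\,\gamma_1(t)^{-1}\bigr](\gamma_1\gamma_2)(t),
$$
so I just have to check that the bracketed infinitesimal generator has the form $JP(t)$ with a symmetric positive definite $P(t)$.

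The key step is to analyze the conjugated term $\gamma_1 JP_2 \gamma_1^{-1}$. Using that $\gamma_1\in\Sp(2n)$ satisfies $\gamma_1^TJ\gamma_1=J$, i.e.\ $\gamma_1^{-1}=J^{-1}\gamma_1^TJ$, I would rewrite
$$
J^{-1}\gamma_1\,JP_2\,\gamma_1^{-1}=\bigl(J^{-1}\gamma_1 J\bigr)P_2\bigl(J^{-1}\gamma_1^TJ\bigr)=A(t)P_2(t)A(t)^T,
$$
where $A(t):=J^{-1}\gamma_1(t)J$, since one checks $(J^{-1}\gamma_1 J)^T=J^{-1}\gamma_1^TJ$ from $J^T=J^{-1}=-J$. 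Thus, setting
$$
P(t):=P_1(t)+A(t)P_2(t)A(t)^T,
$$
we have $JP(t)=JP_1(t)+\gamma_1(t)JP_2(t)\gamma_1(t)^{-1}$ and $(\gamma_1\gamma_2)'(t)=JP(t)\,(\gamma_1\gamma_2)(t)$.

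Finally, I would note that $A(t)$ is invertible (as $\gamma_1(t)$ is), so $A(t)P_2(t)A(t)^T$ is symmetric positive definite; adding $P_1(t)$, another symmetric positive definite matrix, yields that $P(t)$ is symmetric positive definite. Hence $\gamma_1\gamma_2$ satisfies (\ref{symp-ode}) and is a positive path. There is no real obstacle here; the only point requiring care is the bookkeeping with $J^T=J^{-1}=-J$ to recognize the conjugated generator as $J$ times a symmetric positive definite matrix. \hb
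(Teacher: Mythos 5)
Your proposal is correct and takes essentially the same approach as the paper: differentiate the product, conjugate the generator back to $J\cdot(\text{symmetric})$, and observe that $P_1+\gamma_1^{-T}P_2\gamma_1^{-1}$ (which equals your $P_1+AP_2A^T$ since $A=J^{-1}\gamma_1 J=\gamma_1^{-T}$) is symmetric positive definite. The paper just writes the conjugate directly as $\gamma_1^{-T}P_2\gamma_1^{-1}$ rather than introducing the auxiliary matrix $A$, but the argument is the same.
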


\begin{proof}
Suppose
$$
{d\gamma_1\over dt}=JP_1(t)\gamma_1,\quad {d\gamma_2\over dt}=JP_2(t)\gamma_2
$$
where $P_i(t),i=1,2$ are positive definite for any $t\in[0,1]$.
Then we have
\begin{eqnarray}
{d(\gamma_1 \gamma_2)\over dt}&=&{d\gamma_1\over dt}\gamma_2
           +\gamma_1{d\gamma_2\over dt}  \nonumber\\
&=&JP_1(t)\gamma_1 \gamma_2+\gamma_1JP_2(t)\gamma_2  \nonumber\\
&=&J[P_1(t)+J^{-1}{\gamma_1}JP_2(t){\gamma_1}^{-1}]\gamma_1 \gamma_2  \nonumber\\
&=&J[P_1(t)+{\gamma_1}^{-T}P_2(t){\gamma_1}^{-1}]\gamma_1 \gamma_2.
\end{eqnarray}
Since $P_1(t)+{\gamma_1}^{-T}P_2(t){\gamma_1}^{-1}$ is positive definite for any $t\in[0,1]$,
thus $\gamma_1 \gamma_2$ is a positive path.
\end{proof}

\begin{remark}\label{rk:multiplication}
If $\gamma_1$ and $\gamma_2$ are non-negative paths, and at least one of them is positive, then $\gamma_1\gamma_2$ is also positive.
Specially, $\gamma_1 M_0$ and $M_0\gamma_1$ are positive paths for some constant symplectic matrix $M_0$.
\end{remark}

Recall 
$$
R(\theta)=\left(\matrix{\cos\theta&-\sin\theta\cr \sin\theta&\cos\theta}\right)
$$
and define $R_{2n}(\theta)={\rm diag}(\underbrace{R(\theta),\cdots,R(\theta)}_{n})$.
We have

\begin{lemma}\label{Prop:perturbation}
	If $\gamma$ is a positive path in $\Sp(2n)$, then there exists $\epsilon>0$ such that, for any $\th\in(-\epsilon,\epsilon)$,
	$\gamma R_{2n}(\th t)$ is also a positive path.
\end{lemma}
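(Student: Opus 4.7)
The plan is to set $\mu(t) := \gamma(t) R_{2n}(\theta t)$ and compute $\mu'(t)\mu(t)^{-1}$ directly, showing that it has the form $JQ(t,\theta)$ with $Q(t,\theta)$ symmetric and positive definite provided $|\theta|$ is small enough. Since $R_{2n}(\alpha) = \exp(\alpha J)$, differentiating gives $\frac{d}{dt} R_{2n}(\theta t) = \theta J R_{2n}(\theta t)$. Combining this with $\gamma'(t) = JP(t)\gamma(t)$, where $P(t)$ is symmetric and positive definite, and factoring out $\mu$ on the right, I obtain
$$\mu'(t)\mu(t)^{-1} = JP(t) + \theta\, \gamma(t) J \gamma(t)^{-1}.$$

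The key algebraic step is the identity $J\gamma J \gamma^{-1} = -\gamma^{-T}\gamma^{-1}$ for $\gamma \in \Sp(2n)$, which follows from $\gamma^T J \gamma = J$: this relation rewrites as $J\gamma = \gamma^{-T} J$, so $J\gamma J = \gamma^{-T}J^2 = -\gamma^{-T}$, and right-multiplying by $\gamma^{-1}$ yields the claim. Using it, I can reorganize the formula above as
$$\mu'(t)\mu(t)^{-1} = J\bigl( P(t) + \theta\, \gamma(t)^{-T}\gamma(t)^{-1}\bigr) =: J Q(t,\theta).$$
The matrix $\gamma(t)^{-T}\gamma(t)^{-1}$ is manifestly symmetric and positive definite, since $v^T \gamma^{-T}\gamma^{-1}v = \|\gamma^{-1} v\|^2 > 0$ for every nonzero $v$; hence $Q(t,\theta)$ is symmetric for every $\theta$, and $\mu$ is automatically a path in $\Sp(2n)$.

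To finish I would invoke a compactness argument. The continuous function $t \mapsto \lambda_{\min}(P(t))$ is strictly positive on the compact interval $[0,1]$, so it admits a uniform lower bound $c > 0$; similarly $\|\gamma(t)^{-T}\gamma(t)^{-1}\|$ is bounded above by some $M > 0$ on $[0,1]$. Choosing $\epsilon := c/(M+1)$, for every $(t,\theta) \in [0,1] \times (-\epsilon, \epsilon)$ the matrix $Q(t,\theta)$ is a symmetric perturbation of $P(t)$ of operator norm strictly less than $c$, hence positive definite. Therefore $\mu = \gamma R_{2n}(\theta t)$ is a positive path, as required. The argument contains no real obstacle: once the symplectic identity $J\gamma J\gamma^{-1} = -\gamma^{-T}\gamma^{-1}$ is in hand, the rest is a routine continuity-plus-compactness estimate.
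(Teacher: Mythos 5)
Your proof is correct and follows essentially the same route as the paper's: both reduce $\mu'\mu^{-1}$ to $J(P(t) + \theta\,\gamma(t)^{-T}\gamma(t)^{-1})$ and conclude by choosing $|\theta|$ small. The only differences are presentational: the paper obtains the formula by invoking its Lemma 3.1 (the product-of-positive-paths computation) rather than redoing the product rule, and it leaves the choice of $\epsilon$ implicit, whereas you spell out the compactness estimate on $[0,1]$ that makes $\epsilon$ uniform in $t$ — a detail the paper glosses over but which is indeed needed since $\theta$ may be negative.
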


\begin{proof}
Suppose
\begin{equation}
{d\gamma\over dt}=JP_1(t)\gamma,
\end{equation}
where $P_1(t)$ are positive definite for any $t\in[0,1]$.
By ${d\over dt}R(\theta(t))=\dot{\theta}(t)J_2R(\theta(t))$,
we have ${d\over dt}R_{2n}(\theta(t))=\dot{\theta}(t)JR_{2n}(\theta(t))$.
Let now $\theta(t)=\theta t$. By Lemma \ref{Lemma:multiplication}, we have
\begin{equation}
	P=-J{d(\gamma(t)R_{2n}(\th t))\over dt}(\gamma(t)R_{2n}(\th t))^{-1}=P_1(t)+\th  \gamma(t)^{-T}\gamma(t)^{-1}.
\end{equation}
Since $P_1(t)$ and $\gamma(t)^{-T}\gamma(t)^{-1}$ are positive definite,
then for $\th$ small enough, $P$ ia also positive definite.
Therefore, $\{\gamma R_{2n}(\th t)\}$ is a family of positive paths 
for $\th\in(-\epsilon,\epsilon)$.
\end{proof}

We now give a simple proof of the following result which is stated in \cite{LaM}.

\begin{proposition}[\cite{LaM}, Proposition 1.1]\label{Prop:joint}
	 Any two elements in $\Sp(2n)$ may be jointed by a positive path.
\end{proposition}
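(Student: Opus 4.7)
The plan is to reduce to an explicit ``rotation-twist'' argument in the spirit of Lemma \ref{Prop:perturbation}, but with a winding number large enough to force positivity without relying on any initial positivity of the path. Given $A,B\in\Sp(2n)$, I will first use the (classical) path-connectedness of $\Sp(2n)$ to pick an arbitrary smooth path $\alpha:[0,1]\to\Sp(2n)$ with $\alpha(0)=A$ and $\alpha(1)=B$. Writing $\alpha'(t)\alpha(t)^{-1}=JP_\alpha(t)$ for a continuous family of symmetric matrices $P_\alpha(t)$, the only obstruction to $\alpha$ being positive is that $P_\alpha(t)$ need not be positive definite.

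Then I will set $\tilde\alpha(t):=\alpha(t)\,R_{2n}(2\pi k t)$ for a positive integer $k$ to be chosen. Because $R_{2n}(2\pi k)=I_{2n}$, one has $\tilde\alpha(0)=A$ and $\tilde\alpha(1)=B$, so the endpoints are preserved no matter how large $k$ is taken. Repeating the computation from the proof of Lemma \ref{Prop:perturbation} gives
$$
\tilde\alpha'(t)\tilde\alpha(t)^{-1}
\;=\;J\bigl(P_\alpha(t)+2\pi k\,\alpha(t)^{-T}\alpha(t)^{-1}\bigr),
$$
where the identity $\alpha J\alpha^{-1}=J\,\alpha^{-T}\alpha^{-1}$ follows at once from $\alpha^TJ\alpha=J$. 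Now $S(t):=\alpha(t)^{-T}\alpha(t)^{-1}$ is symmetric and strictly positive definite for every $t$ (since $v^TS(t)v=\|\alpha(t)^{-1}v\|^2$) and depends continuously on $t\in[0,1]$, so by compactness there exists $\lambda>0$ with $S(t)\ge\lambda I_{2n}$; meanwhile $\|P_\alpha(t)\|\le C$ is bounded. Choosing $k$ with $2\pi k\lambda>C$ then forces $P_\alpha(t)+2\pi k S(t)>0$ uniformly in $t$, so $\tilde\alpha$ is a positive path from $A$ to $B$.

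I do not anticipate any serious obstacle. The point is simply that the ``$|\theta|<\epsilon$'' restriction in Lemma \ref{Prop:perturbation} is required only because $\theta$ is allowed to have either sign; here $2\pi k$ is a large positive parameter whose positive-definite contribution $2\pi kS(t)$ eventually dominates any fixed continuous $P_\alpha(t)$, and winding through $k$ full revolutions preserves the endpoints by the $2\pi$-periodicity of $R_{2n}$. The whole argument needs nothing beyond the proof of Lemma \ref{Prop:perturbation} together with the connectedness of $\Sp(2n)$.
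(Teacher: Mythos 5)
Your proof is correct and is essentially identical to the paper's own argument: both start from an arbitrary smooth path joining the two matrices, twist it by $R_{2n}(2\pi k t)$, and observe that the positive-definite term $2\pi k\,\alpha^{-T}\alpha^{-1}$ dominates for $k$ large, with the endpoints preserved by $2\pi$-periodicity. Your version is a bit more explicit about the uniform-compactness step, but the route and the key computation coincide with the paper's.
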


\begin{proof}
Since $\Sp(2n)$ is path connected, there is a $C^1$ path $\{\gamma(t)\},t\in[0,1]$ which joint two given elements $M_0$ and $M_1$.
We define
\begin{equation}
\mu(t)=\gamma(t)R_{2n}(2k\pi t)
\end{equation}
for some $k\in\N$. Then $\{\mu(t)\},t\in[0,1]$ is also a $C^1$ path joint $M_0$ and $M_1$.

By ${d\over dt}R(\theta)=\dot{\theta}J_2R(\theta)$,
we have ${d\over dt}R_{2n}(\theta)=\dot{\theta}JR_{2n}(\theta)$,
and hence by Lemma \ref{Lemma:multiplication}, we have
\begin{eqnarray}
P=-J{d\mu\over dt}\mu^{-1}=
-J{d\gamma\over dt}\gamma^{-1}+2k\pi \gamma^{-T}\gamma^{-1}.
\end{eqnarray}
Since $\gamma^{-T}\gamma^{-1}$ is a positive definite matrix,
then for $k$ large enough, $P$ is also positive definite.
Therefore, $\mu$ is the required positive path.
\end{proof}

Although any two elements in $\Sp(2n)$ can be joint 
by a positive path, but the eigenvalues of such a path maybe have to rotate on $\U$.
Sometimes, we need to connect two elements in $\Sp(2n)$
within a proper subset of $\Sp(2n)$, such as
the truly hyperbolic set. 
Under such constraints, a positive path may not exist.
For example, we have

\begin{fact}\label{Prop:unconnecable.n=2}
    There does not exist a positive path which is entirely
    in the truly hyperbolic set of $\Sp(2)$ with endpoints $D({1\over2})$ and $D(2)$.
\end{fact}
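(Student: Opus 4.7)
The plan is to argue by contradiction, exploiting the \emph{monotone angular rotation} property of positive paths in $\Sp(2)$: if $\gamma'=JP\gamma$ with $P$ positive definite symmetric, then for any nonzero $v_0\in\R^2$ the curve $v(t)=\gamma(t)v_0=(v_1,v_2)$ satisfies $v'=JPv$ and hence
\begin{equation*}
\theta'(t)=\frac{v_1v_2'-v_2v_1'}{v_1^2+v_2^2}=\frac{v^TPv}{|v|^2}>0,
\end{equation*}
so the angle of $v(t)$ is strictly increasing. This is a one-line computation in dimension two.

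Suppose for contradiction such a positive path $\gamma\colon[0,1]\to\Sp^{th}(2)$ exists. Since $D(1/2)$ and $D(2)$ both have trace $5/2>2$ and $\Sp^{th}(2)$ decomposes into the two components $\{\mathrm{tr}>2\}$ and $\{\mathrm{tr}<-2\}$, the path remains in the positive-trace component. In particular, every $\gamma(t)$ has two distinct positive real eigenvalues, so its unstable eigendirection $e_u(t)\in\RP^1$ is a continuous function of $t$.

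The main step is to interpret each $\gamma(t)$ as a homeomorphism of $\RP^1=\R/\pi\Z$ and take the continuous family of lifts $\tilde f_t\colon\R\to\R$ normalized so $\tilde f_0(0)=0$; the fixed directions $\{0,\pi/2\}$ of $D(1/2)$ then give $\tilde f_0(\pi/2)=\pi/2$. The monotonicity above forces $\tilde f_t(\alpha)$ to be strictly increasing in $t$ for every $\alpha$. Since $\tilde f_1$ is some lift of $D(2)$, whose fixed directions in $\RP^1$ are again $\{0,\pi/2\}$, we have $\tilde f_1(0)\in\pi\Z$ and $\tilde f_1(\pi/2)\in\pi/2+\pi\Z$; strict monotonicity forces $\tilde f_1(0)\ge\pi$ and $\tilde f_1(\pi/2)\ge 3\pi/2$. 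Consequently $\tilde f_1=\tilde D_2+k\pi$ for some integer $k\ge1$, where $\tilde D_2$ denotes the lift of $D(2)$ fixing $0$.

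For the contradiction, lift $e_u(t)$ continuously to $\tilde\alpha_u(t)\in\R$ with $\tilde\alpha_u(0)=\pi/2$. Since $\gamma(t)$ fixes $e_u(t)$ in $\RP^1$, the integer $\bigl(\tilde f_t(\tilde\alpha_u(t))-\tilde\alpha_u(t)\bigr)/\pi$ is continuous in $t$, hence constant, with value $0$ at $t=0$. Therefore $\tilde f_1(\tilde\alpha_u(1))=\tilde\alpha_u(1)=m\pi$ for some $m\in\Z$ (a lift of the unstable direction of $D(2)$, which is angle $0$), which gives $\tilde D_2(m\pi)+k\pi=m\pi$ and hence $k=0$, contradicting $k\ge1$. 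The main subtlety is reconciling the two sources of integer data on the same lift: the strict monotonicity pushes $k\ge1$, while the continuous tracking of the preserved fixed point $e_u(t)$ forces $k=0$; once the continuous lifts of both the circle action and the eigendirection are properly set up, the integer mismatch is immediate.
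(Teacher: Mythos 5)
Your argument is correct, and it reaches the conclusion by a genuinely different route from the paper. The paper works directly with the cylinder (polar) coordinates of $\Sp(2)=SL(2,\R)$, writing $\gamma(t)=M(t)R(\theta(t))$ with $M$ symmetric positive definite of determinant one, and showing that positivity forces $\theta'(t)>0$ by a determinant estimate $\det\bigl(M^TJ^{-1}M'\bigr)\le 0$; since $D(1/2),D(2)$ both have trivial rotation part, $\theta$ must run from $0$ to $2k\pi$ with $k\ge 1$, hence crosses a value at which $\gamma(t_0)=M(t_0)R(\theta(t_0))$ has trace $0$ and leaves the truly hyperbolic set (the paper's displayed matrix corresponds to $\theta(t_0)=\pi/2$, so the ``$\theta(t_0)=\pi$'' there is a typo). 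Your proof instead uses the equivalent elementary fact that the angle of $\gamma(t)v_0$ is strictly increasing, reinterprets $\gamma(t)$ as homeomorphisms of $\RP^1$, and extracts an integer from lifts: strict monotonicity forces the lift $\tilde f_1$ of $D(2)$ to be shifted by at least $\pi$, while continuously tracking the (necessarily preserved) unstable eigendirection $e_u(t)$ forces the shift to be $0$. What the paper's approach buys is an explicit bad time $t_0$ and an argument that sits entirely inside linear algebra with no reference to the eigendirection; what yours buys is a coordinate-free rotation-number phrasing that cleanly isolates the obstruction as a winding invariant and makes minimal use of the explicit normal form. Both hinge on the same positivity mechanism, and both are correct; the student should simply be aware that, as the paper's Fact 3.2 shows, neither argument generalizes directly to $\Sp(2n)$ for $n\ge 2$, where the truly hyperbolic case is handled by entirely different means (Theorem A itself).
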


Similar results also hold in higher dimensional space:
\begin{fact}\label{Prop:unconnecable}
    There exist two truly hyperbolic matrices
    such that connecting them by any positive truly
    hyperbolic path is impossible.
\end{fact}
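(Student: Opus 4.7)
The plan is to settle this existence statement by exhibiting two truly hyperbolic matrices that lie in distinct connected components of $\Sp^{th}(2n)$; any such pair then fails to be joined by any continuous path inside $\Sp^{th}(2n)$, and hence a fortiori by no positive truly hyperbolic path. Concretely I would take
$$M_0 = D(2)^{\diamond n}, \qquad M_1 = D(2)^{\diamond (n-1)} \diamond (-D(2)).$$
A quick computation gives $-D(2)\in\Sp(2)$, since $(-D(2))^T J (-D(2)) = D(2)^T J D(2) = J$; the eigenvalue multisets are $\{2,\tfrac12\}^n$ for $M_0$ and $\{2,\tfrac12\}^{n-1}\cup\{-2,-\tfrac12\}$ for $M_1$, and none of these eigenvalues lies on $\U$, so both matrices are truly hyperbolic.

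The core of the argument is to establish a locally constant invariant on $\Sp^{th}(2n)$ that distinguishes $M_0$ and $M_1$. I propose to use the parity $\nu(M)\in\Z/2\Z$ of the number of negative real reciprocal pairs among the eigenvalues of $M$. To show invariance I would invoke the classification of codimension one strata in Lemma \ref{bifurcation.digram.of.c=1.2}: the only codimension one Jordan types meeting $\Sp^{th}(2n)$ are $(\pm a)^2$ for $a\in\R\setminus\{0,\pm 1\}$ and $(\pm\sqrt{-1}a)^2$ for $a\neq 1$. Reading off the associated bifurcation diagrams in Figure \ref{figure:bf_c1}, the transverse crossings possible inside $\Sp^{th}(2n)$ fall into three types: two real positive reciprocal pairs exchanging with a complex quadruple whose real part is positive; two real negative pairs exchanging with a complex quadruple whose real part is negative; and a complex quadruple crossing the imaginary axis between $\mathrm{Re}>0$ and $\mathrm{Re}<0$. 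In each case the count of negative real pairs changes by $0$ or by $\pm 2$, so $\nu$ is preserved under the crossing. A crucial point is that no codimension one transition in $\Sp^{th}(2n)$ converts a single real positive pair directly into a real negative one, because such a transition would force an eigenvalue to pass through $0$ or through $\U$, contradicting the truly hyperbolic hypothesis.

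Since $\nu(M_0)=0$ and $\nu(M_1)=1$, the matrices lie in different components of $\Sp^{th}(2n)$ and the fact follows. The main obstacle is extending invariance of $\nu$ from generic to arbitrary continuous paths: given any hypothetical continuous path $\gamma:[0,1]\to\Sp^{th}(2n)$ with $\gamma(0)=M_0$ and $\gamma(1)=M_1$, one must first perturb $\gamma$ to a smooth path transverse to all strata of codimension one and missing all strata of codimension $\geq 2$ in the stratification summarised in Section \ref{sec:2}; this is a standard transversality argument in this setting. The bifurcation analysis above then applies along the perturbed path, giving a contradiction with $\nu(M_0)\neq\nu(M_1)$ and completing the proof.
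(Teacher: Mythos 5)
Your proof is correct and takes a genuinely different, and substantially more elementary, route than the paper's. The paper's proof works inside $\Sp(4)$ with $A_0=\diag(D(2),D(3))$ and a conjugate $B_0=X^{-1}A_0X$ of it: it supposes a positive truly hyperbolic path from $B_0$ to $A_0$ existed, closes it up to a positive loop, and reaches a contradiction between the loop's Maslov index (which is $2$ by construction) and the lower bound $i_1\geq 4$ forced by Theorem A for positive loops in $\Sp(4)$; the paper explicitly remarks that this argument depends on the main theorem. Your argument bypasses Theorem A entirely: you exhibit a locally constant $\Z/2$-valued invariant $\nu$ on $\Sp^{th}(2n)$ — the parity of the number of negative real reciprocal pairs, equivalently the parity of the number of eigenvalues in $(-1,0)$ — and place $M_0$, $M_1$ in components where $\nu$ differs. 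This proves a strictly stronger statement (no continuous truly hyperbolic path exists at all, positive or otherwise), works uniformly in every dimension, and needs none of the positive-homotopy machinery. The two proofs illustrate different phenomena: yours a coarse topological disconnection of $\Sp^{th}(2n)$, the paper's the subtler point that positivity already obstructs connectability between matrices in the \emph{same} component and even the same conjugacy class (which is what Fact 3.1 shows constructively for $\Sp(2)$). Both are valid proofs of the stated existence.

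One imprecision in your stratification bookkeeping: $(\pm\sqrt{-1}a)^2$ is a codimension-one type whose exponential has eigenvalues on $\U$ (it is $\mathcal{B}_{\mathcal{U}}$), so it does \emph{not} meet $\Sp^{th}(2n)$; the only codimension-one wall inside $\Sp^{th}(2n)$ is the non-diagonalizable double collision on $\R\setminus\{0,\pm 1\}$, i.e.\ type $(\pm a)^2$. Likewise, a complex quadruple crossing the imaginary axis is an interior move within the open stratum $\mathcal{O}_{\mathcal{C}}$, not a transverse wall crossing. Neither issue hurts the argument, and in fact you can drop the transversality step entirely: along any continuous family in $\Sp^{th}(2n)$ the eigenvalues never meet $\{0,-1\}$, and real eigenvalues can only leave the real line by coalescing into complex-conjugate pairs, so the number of eigenvalues in $(-1,0)$ changes by an even amount; hence $\nu$ is locally constant by direct inspection, with no need to perturb to a generic path.
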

The detailed proofs of Facts \ref{Prop:unconnecable.n=2} and \ref{Prop:unconnecable} will be deferred to Appendix \ref{proofs.facts}. We note that the proof of Fact \ref{Prop:unconnecable.n=2} is constructive.
However, the proof of Fact \ref{Prop:unconnecable} is difficult and relies on the main result of our paper, namely Theorem A.

\subsection{Generic homotopy and $C^1$-connectable homotopy}
Let $\{\alpha_t\}$ and $\{\beta_t\}$ (where $t\in[0,1]$) be two paths in $\mathrm{Sp}(2n)$. Recall that a continous map $H(s,t):[0,1]\times[0,1]\rightarrow\mathrm{Sp}(2n)$ is a homotopy from $\{\alpha_t\}$ to $\{\beta_t\}$ if $H(0,t)=\alpha_t$ and $H(1,t)=\beta_t$.
If $\{\alpha_t\}$ and $\{\beta_t\}$ are $C^1$-smooth, we typically assume that the homotopies in our paper are $C^0$ with respect to $s$ and $C^1$ with respect to $t$, unless we specifically emphasize. If $H(s,t)$ is such a homotopy, sometimes, as needed, we may suppose that it is also $C^1$ with respect to $s$ after perturbing it slightly (This is known as the Whitney Approximation Theorem, see, e.g., \cite[pp. 142, Theorem 6.29]{Lee}). If the paths $\{\alpha_t\}$ and $\{\beta_t\}$ are disjoint except at their endpoints\footnote{Sometimes, we have to consider the cases $\alpha_0=\beta_0=I$ or $\alpha_1=\beta_1=I$. In these cases, note that $I$ is a nonzero-codimention boundary point, but it is isolated in the cases that we consider. We can still obtain the same results.}, we can always view the homotopy, if necessary by perturbation, as an embedding  surface (with boundaries and possibly with cusps at the boundaries, such as when $H(s,0)=I$ or $H(s,1)=I$ for all $s$) in $\mathrm{Sp}(2n)$.

\begin{definition}\label{def:generic homotopy}
    A point $A$ in $\mathrm{Sp}(2n)$ is called a generic point if all of its eigenvalues have multiplicity 1. A path $\{\alpha_t\}$ (where $t\in[0,1]$, or $t\in(0,1]$, or $t\in[0,1)$, or $t\in(0,1)$) in $\mathrm{Sp}(2n)$ is called a generic path if all of its points are generic or lie on the codimension 1 boundary part of a generic region, and the codimension 1 boundary points are finite. If $\alpha_0=I$ (or $\alpha_1=I$, or $\alpha_0=\alpha_1=I$), $I$ is the only point that lies on the nonzero-codimension boundary near $0$ (resp., near $1$, near $0$ and $1$) with respect to $t$, and if the path $\{\alpha_t\}$ (where $t\in(0,1]$) (resp., $t\in[0,1)$, $t\in(0,1)$) is generic, we also call it generic. A homotopy $H(s,t)$ is called a generic homotopy if all points in $H([0,1]\times(0,1))$ only contain finite codimension 2 boundary points $\{P_j\}_{1\leq j\leq k}$ and no boundary points of codimention $\geq 3$\footnote{We note that the definition includes the cases where $H(s,0)=I$ or $H(s,1)=I$ for all $s$.}, and for every $s\in [0,1]$, except for finitely many $\{s_i\}_{i=1,2,\cdots,k'}\subset [0,1]$ with $k'\leq k$ and $\{H(s_i,t)\}$ passing  through some of the points $\{P_j\}_{1\leq j\leq k}$, $\{H(s,t)\}$ is a generic path.
\end{definition}

The existence of generic points and generic paths is ensured by the transversal regularity. Readers may refer to \cite{LaM,Sli}. In the following, we establish the existence of the generic homotopy in the context of general position.

\begin{lemma}\label{lem:generic homotopy}
    If a homotopy $H(s,t)$ satisfies that $\{H(0,t)\}$ and $\{H(1,t)\}$ are smooth generic paths, then there is a generic homotopy $\bar{H}(s,t)$ near $H(s,t)$ (i.e., $\bar{H}(s,t)$ is in a small neighborhood of $H(s,t)$) from $\{H(0,t)\}$ to $\{H(1,t)\}$.
\end{lemma}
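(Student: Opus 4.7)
The plan is to apply a relative version of the Thom Transversality Theorem to perturb $H$ into general position with respect to the stratification of $\Sp(2n)$ by eigenvalue--multiplicity type, while keeping the already-generic boundary paths $\{H(0,t)\}$ and $\{H(1,t)\}$ fixed.

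First, I would formalize the stratification: by Section \ref{subsec:2.1} and Lemma \ref{bifurcation.digram.of.c=1.2}, $\Sp(2n)$ decomposes into smooth $\Sp(2n)$-invariant submanifolds $\{S_\alpha\}$, where the open top stratum $S_0$ consists of generic matrices (all eigenvalues simple), the codimension 1 strata correspond to the simple Jordan-block collisions listed in Lemma \ref{bifurcation.digram.of.c=1.2}, and strata of codimension $\geq 2$ correspond to higher or simultaneously occurring coincidences.

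Second, I would apply Thom transversality to the map $H:[0,1]\times[0,1]\to\Sp(2n)$ relative to the closed subset $\{s=0\}\cup\{s=1\}$, on which $H$ is already transverse to every stratum: the boundary paths are generic, hence avoid all strata of codimension $\geq 2$ and cross the codimension 1 strata transversely at finitely many isolated times. Using a cutoff bump function supported in $s\in(0,1)$, one produces a $C^1$-small perturbation $\bar H$ that coincides with $H$ on a neighborhood of $s\in\{0,1\}$ and is transverse to every stratum $S_\alpha$. With transversality in hand, dimensional counting on the $2$-dimensional parameter space delivers exactly the conditions of Definition \ref{def:generic homotopy}: $\bar H^{-1}(S_\alpha)=\emptyset$ when $\mathrm{codim}\,S_\alpha\geq 3$; $\bar H^{-1}(S_\alpha)$ is a $0$-dimensional submanifold, hence finite by compactness of $[0,1]^2$, when $\mathrm{codim}\,S_\alpha=2$, giving the finite point set $\{P_j\}$; and $\bar H^{-1}(S_\alpha)$ is a smooth properly embedded $1$-manifold when $\mathrm{codim}\,S_\alpha=1$. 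A further application of Sard's theorem to the projection $(s,t)\mapsto s$ restricted to the $1$-dimensional locus $\bigcup_{\mathrm{codim}\,S_\alpha=1}\bar H^{-1}(S_\alpha)$ shows that for all but a finite set $\{s_i\}$ of exceptional values, the slice $\{\bar H(s,t)\}$ meets the codimension $1$ strata transversely at isolated values of $t$ and avoids every $P_j$; each such slice is therefore a generic path.

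The hardest part will be accommodating the allowed cuspidal behavior at the edges $t\in\{0,1\}$ when $H(\cdot,0)$ or $H(\cdot,1)$ is the identity $I$, which is an infinite-codimension point of the stratification but enters Definition \ref{def:generic homotopy} only as an isolated cusp. To handle this, I would confine the transversality perturbation to the compact subrectangle $[0,1]\times[\epsilon,1-\epsilon]$ for a suitably small $\epsilon>0$, exploiting that the two given generic paths emerge from $I$ transversely into the open stratum $S_0$, so that by $C^1$-closeness the perturbed homotopy preserves the cusp structure near $t=0$ and $t=1$ without introducing any new intersections with non-open strata. Finally, gluing the unperturbed collar $\{s\in\{0,1\}\}$ and the transverse interior perturbation produces the desired $\bar H$ arbitrarily $C^0$-close (and $C^1$-close in $t$) to $H$.
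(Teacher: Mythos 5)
Your overall strategy matches the paper's: first use transversality theory to perturb $H$ into general position with respect to the stratification of $\Sp(2n)$ (keeping the boundary paths fixed), then argue that the resulting codimension-$2$ preimages are finitely many points, the codimension-$1$ preimages form a $1$-manifold $C$, and finally analyze how the horizontal slices $\{s=\mathrm{const}\}$ interact with $C$. Your handling of the cusp at $t\in\{0,1\}$ when $H(s,0)\equiv I$ or $H(s,1)\equiv I$ is a sensible addition that the paper leaves mostly to a footnote.

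The gap is in the last step. You invoke Sard's theorem for the projection $(s,t)\mapsto s$ restricted to the $1$-manifold $C$ and conclude ``for all but a finite set $\{s_i\}$.'' Sard only yields a \emph{measure zero} set of critical values, not a finite one: a smooth map from a compact $1$-manifold to $[0,1]$ can have a Cantor set of critical values. But Definition~\ref{def:generic homotopy} requires the exceptional $s$-values to be finitely many, and moreover to be exactly those $s_i$ at which the slice passes through a codimension-$2$ point $P_j$ (so $k'\le k$); no tangency between a slice and the codimension-$1$ locus $C$ is allowed off those finitely many slices. Your argument does not rule out an infinite (or uncontrolled) set of tangencies.

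The paper closes this gap differently: instead of leaving the slicing direction fixed and trying to control tangencies via Sard, it \emph{reparametrizes the domain}. After obtaining a $2$-parameter family $\tilde{H}$ in general position, it lets $V$ be the horizontal field and $W$ the tangent field of $C'=C\setminus\tilde H^{-1}\{P_j\}$, then perturbs $V$ to $V'$ so that $V'$ and $W$ are collinear at only finitely many points; integrating $V'$ produces a diffeomorphism $v$ of $[0,1]^2$ with $\bar H=\tilde H\circ v$. Since the image is unchanged, transversality to strata is preserved, and the new horizontal slices meet $C$ transversely outside finitely many points by construction. Your proposal could be repaired by adding a similar step -- either a further perturbation making the projection $C\to[0,1]$ Morse, or a reparametrization of the domain as in the paper -- but as written, the finiteness claim is unjustified.
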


\begin{proof}
  By the transversality theory of manifolds and Lemma \ref{bifurcation.digram.of.c=1.2}, we may suppose that $\tilde{H}(s,t)$ is in general position with respect to $H(s,t)$, meaning we perturb $H(s,t)$ slightly so that it transversely intersects the codimension 1 and 2 stratum, see Lemma \ref{bifurcation.digram.of.c=1.2}. $\tilde{H}(s,t)$ only contains finite 2-codimention boundary points, denoted as $P_i$ ($1\leq i\leq k$), and finitely many 1-submanifolds (with boundaries or not), denoted as $S_i$ ($1\leq i\leq m$), which arise from the intersections of codimension 1 strata with the 2-manifold $\tilde{H}(s,t)$. Here, the parameters $(s,t)\in[0,1]\times(0,1)$ if $H(s,0)=H(s,1)\equiv I$, $(s,t)\in[0,1]\times(0,1]$ if $H(s,0)\equiv I$, $(s,t)\in[0,1]\times[0,1)$ if $H(s,1)\equiv I$, and $(s,t)\in[0,1]\times[0,1]$ otherwise. If the 1-submanifold $S_i$ has boundaries, then the boundary point is either $P_j$ for some $j$ or a non-generic point of $\{H(0,t)\}$ or $\{H(1,t)\}$. 
  \begin{figure}[ht]
	\centering
	\includegraphics[height=6.5cm]{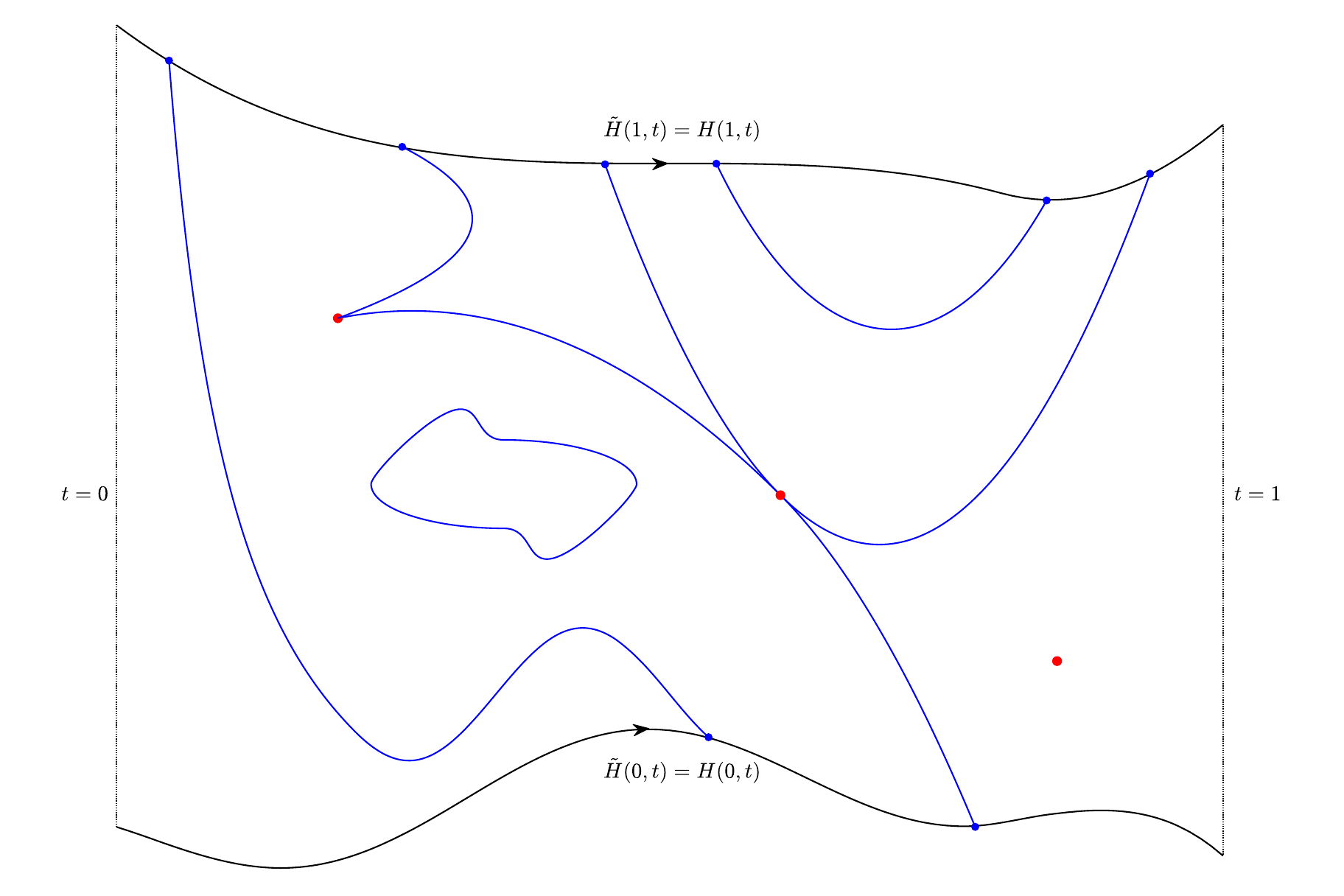}
	\vskip -0.0 cm
	\caption{The illustration of $\tilde{H}$}
 \label{figure:generic.homotopy}
\end{figure}
  
  Let us consider the parameters space $(s,t)\in[0,1]\times[0,1]$, and let $V(s,t)$ be the horizontal vector field on it. Denote $C=\cup_{i=1}^m\tilde{H}^{-1}\{S_i\}\subset [0,1]\times[0,1]$. The tangent lines of $S_i$ induced a vector field $W(s,t)$ on $C'=C\setminus \cup_{i=1}^k\tilde{H}^{-1}\{P_i\}$. Note that, by the definition of the generic path, $W(s,t)$ and $V(s,t)$ must be transversal to each other at the finite non-generic points of $\{H(0,t)\}$ and $\{H(1,t)\}$. It is easy to obtain a vector field $V'(s,t)$ on $[0,1]\times[0,1]$ by perturbing $V(s,t)$ such that $V'(s,t)$ and $W(s,t)$ are collinear at only finitely many points of $C'$.  After reparametrization, the integral curves of the new vector field $V'(s,t)$ define a smooth homeomorphism $v(s,t)$ of $[0,1]\times[0,1]$. Then the homotopy $\bar{H}=\tilde{H}\circ v$ is what we want.
\end{proof}

We will construct the positive homotopies piece by piece and then glue them together. Since the positive path is $C^1$ differentiable, the concatenation of two positive paths requires additional constraints.

Let	$\ga$ and $\mu$ be two positive paths.
	A {\bf positive homotopy with fixed end-points and end-speeds from $\ga$ to $\mu$} is a positive homotopy $H: [0,1]\times[0,1]\rightarrow\Sp(2n)$ satisfying the following conditions except $H(0,t)=\ga(t), H(1,t)=\mu(t)$:
	\begin{eqnarray}\label{fix.condition}
	H(s,0)&\equiv&\gamma(0),\quad 
	\partial_tH(s,0)\equiv\gamma'(0),
	\nonumber\\
	H(s,1)&\equiv&\gamma(1),\quad 
	\partial_tH(s,1)\equiv\gamma'(1).
	\end{eqnarray}
We denote this as $\ga \sim_{+, fix} \mu$. 


	Furthermore,
	a positive homotopy
	$H: [0,1]\times[0,1]\rightarrow\Sp(2n)$ from $\ga$ to $\mu$ is called {\bf positive homotopy with fixed end-segments from $\ga$ to $\mu$} if
	 there exists some $\epsilon>0$ such that
	 $H(\cdot,t)\equiv\ga(t)$ 
	 for any $t\in[0,\epsilon)\cup(1-\epsilon,1]$.
	 We denote it by $\ga\sim_{+,seg}\mu$.
It is evident that $\gamma \sim_{+, seg} \mu$ implies $\gamma \sim_{+, fix} \mu$. Futhermore, we can not answer the following question

\begin{question}
    Given two positive paths entirely within the hyperbolic set with the same endpoints (which are certainly homotopic), does there exist a positive homotopy
    with fixed end-points between them?
\end{question}
If we relax the condition of a positive homotopy from fixed endpoints to allowing the endpoints to lie in conjugacy classes, then the answer to the question above is positive (see Theorem \ref{Thm:remove.hyperbolic.collisions} in Section 4). In fact, in the intermediate steps of our constructive proof, we only require the endpoints to lie in the same conjugacy classes. 
It is a reason why we define the following $C^1$-connectable positive homotopy instead of the positive homotopy with fixed end-points and end-speeds.


\begin{definition}\label{pos.connectable.homotopy}
	Let $\ga$ and $\mu$ be two (positive) paths.
	A {\bf $C^1$-connectable (positive) homotopy} is a (positive) homotopy
	$H: [0,1]\times[0,1]\rightarrow\Sp(2n)$
    from $\ga$ to $\mu$ that satisfies the following conditions: there exist continuous matrices $X_s, Y_s \in \Sp(2n)$ for $s \in [0,1]$, with $X_0 = Y_0 = I$, such that:
	\begin{eqnarray}\label{C1.connectable.homotopy}
	H(s,0)&=&X_s^{-1}\gamma(0)X_s,\quad 
	\partial_tH(s,0)=X_s^{-1}\gamma'(0)X_s,
	\nonumber\\
	H(s,1)&=&Y_s^{-1}\gamma(1)Y_s,\quad 
	\partial_tH(s,1)=Y_s^{-1}\gamma'(1)Y_s.
	\end{eqnarray}
	We denote it by $\ga\sim_{con}\mu$ ($\ga\sim_{+,con}\mu$).
\end{definition}
Certainly, a necessary condition of $\ga\sim_{con}\mu$ is given by
\begin{equation}\label{C1.condition}
\left\{
\begin{array}{ll}
    X_1^{-1}\gamma(0)X_1=\mu(0),
    &X_1^{-1}\gamma'(0)X_1=\mu'(0);
    \\
    Y_1^{-1}\gamma(1)Y_1=\mu(1),
    &Y_1^{-1}\gamma'(1)Y_1=\mu'(1),
\end{array}
\right.
\end{equation}
for some matrices $X_1,Y_1\in\Sp(2n)$.
We refer to (\ref{C1.condition}) as the {\bf $C^1$-connectable conditions}.

Let $X_s\equiv Y_s\equiv I$ in (\ref{C1.connectable.homotopy}). Then it is easy to see that $\ga\sim_{+,fix}\mu$ implies $\ga\sim_{+,con}\mu$.) For positive path $\gamma$ and $\gamma_i=Z_i^{-1}\gamma Z_i$ where $Z_i\in \Sp(2n)\; (i=0,1)$,
the positive homotopy $H(s,t)=Z(s)^{-1}\gamma(t)Z(s)$ is
$C^1$-connectable, where $Z(s),s\in[0,1]$ is a continuous path with $Z(0)=Z_0$ and $Z(1)=Z_1$.\bigskip

From Lemma 2.3 in \cite{LaM}, we have the following result
\begin{lemma}
Given two  positive homotopies $H_1(s,t)$ ($t_0\leq t\leq t_1$) and $H_2(s,t)$ ($t_1\leq t\leq t_2$) where $0\leq t_0<t_1<t_2\leq 1$,  which satisfy that 
\begin{itemize}
\item $H_1(s,t)$  (resp. $ H_2(s,t)$) is $C^1$ w.r.t. to $t$ and right (resp. left) continuous at $t_1$;
\item  if we write $\frac{\partial H_1(s,t)}{\partial t}|_{t=t_1^+}=JA_sH_1(s,t_1)$, then there is $X_s\in \mathrm{Sp}(2n)$ such that $H_2(s,t_1)=X_s^{-1}H_1(s,t_1)X_s$ and $\frac{\partial H_2(s,t)}{\partial t}|_{t=t_1^-}=J(X_s^TA_sX_s)H_2(s,t_1)$ for every $s\in[0,1]$.
\end{itemize}
Let $\bar{H}(s,t)=X_s^{-1}H_1(s,t)X_s$ when $t_0\leq t\leq t_1$ and $\bar{H}(s,t)=H_2(s,t)$ when $t_1\leq t\leq t_2$. Then $\bar{H}(s,t)$ ($t_0\leq t\leq t_2$) is a  positive homotopy.
\end{lemma}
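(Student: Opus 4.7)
The plan is to verify the three defining properties of a positive homotopy for the concatenated family $\bar{H}(s,t)$: joint continuity in $(s,t)$, $C^1$-differentiability in $t$ across the junction $t=t_1$, and positive-definiteness of the generating symmetric matrix for each fixed $s\in[0,1]$. On each half-interval these reduce to properties we already have for $H_1$ and $H_2$; the only real content is showing that the two pieces fit together at $t=t_1$ without spoiling positivity.

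First I would handle the left piece. Since $\bar{H}(s,t)=X_s^{-1}H_1(s,t)X_s$ on $[t_0,t_1]$ with $X_s$ constant in $t$, writing $\partial_tH_1(s,t)=JP_1(s,t)H_1(s,t)$ with $P_1(s,t)$ symmetric positive definite and differentiating gives
\begin{equation*}
\partial_t\bar{H}(s,t)=X_s^{-1}JP_1(s,t)H_1(s,t)X_s=J(X_s^TP_1(s,t)X_s)\bar{H}(s,t),
\end{equation*}
using the symplectic identity $X_s^{-1}J=JX_s^T$. The new generator $X_s^TP_1(s,t)X_s$ is still symmetric positive definite, so $\bar{H}(s,\cdot)$ is positive on $[t_0,t_1]$. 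Positivity on $[t_1,t_2]$ is immediate from $\bar{H}=H_2$, and continuity at $t=t_1$ is exactly the matching hypothesis $H_2(s,t_1)=X_s^{-1}H_1(s,t_1)X_s$.

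For $C^1$-smoothness across $t=t_1$, I would specialize the calculation above to $t=t_1$ (from the left) with $P_1(s,t_1)=A_s$ and compare with the one-sided derivative of $H_2$ on the right; both equal $J(X_s^TA_sX_s)H_2(s,t_1)$ by the second hypothesis, so the derivatives agree and the generators of the two pieces coincide at $t=t_1$ with the positive-definite value $X_s^TA_sX_s$. Joint continuity of $\bar{H}$ in $(s,t)$ follows from that of $H_1$, $H_2$, and $s\mapsto X_s$. I do not foresee any serious obstacle here: the whole argument rests on the single identity $X^{-1}J=JX^T$ for $X\in\Sp(2n)$, which is the infinitesimal statement that conjugation by a symplectic matrix preserves the positive cone in the Lie algebra. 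The mildly delicate point is just the bookkeeping that the generator extends \emph{continuously} through $t_1$ (not merely that both one-sided limits are positive), and this is exactly what the second hypothesis is designed to ensure.
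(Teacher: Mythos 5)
Your proof is correct. The paper itself does not prove this lemma but defers to Lemma~2.3 of Lalonde--McDuff; your argument fills in the direct verification, and the key step is exactly the symplectic conjugation identity $X_s^{-1}J=JX_s^{T}$, which shows that $P\mapsto X_s^{T}PX_s$ carries the generator of the left piece to a new positive-definite generator and matches the one-sided derivatives at $t=t_1$ via $A_s\mapsto X_s^{T}A_sX_s$. (Two minor bookkeeping remarks, neither a gap: the superscripts $t_1^{+}$ and $t_1^{-}$ in the paper's statement are transposed relative to the domains of $H_1$ and $H_2$, and your observation that continuity of $s\mapsto X_s$ is needed for joint continuity of $\bar H$ is indeed an implicit hypothesis of the lemma as stated.)
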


If we cut $\ga$ and $\mu$ into several smaller paths
$\ga_1,\ga_2,\ldots,\ga_n$ and $\mu_1,\mu_2,\cdots,\mu_n$, respectively,
by time intervals $0=t_0<t_1<\cdots<t_{n-1}<t_n=1$.
Moreover, we suppose that $\ga_i\sim_{+,con}\mu_i$ for $1\le i\le n$,
i.e., there exits $C^1$-connectable positive homotopies
$H_i: [0,1]\times[t_{i-1},t_i]\rightarrow\Sp(2n)$
which satisfying
\begin{eqnarray}
	H_i(s,t_{i-1})&=&(X_s^{(i)})^{-1}\gamma(t_{i-1})X_s^{(i)},\quad 
	\partial_tH(s,t_{i-1})=(X_s^{(i)})^{-1}\gamma'(t_{i-1})X_s^{(i)},
	\nonumber\\
	H_i(s,t_i)&=&(Y_s^{(i)})^{-1}\gamma(t_i)Y_s^{(i)},\qquad\quad
	\partial_tH(s,t_i)=(Y_s^{(i)})^{-1}\gamma'(t_i)Y_s^{(i)},
\end{eqnarray}
for $i=1,2,\cdots,n$.
We denote $Y_s^0=I$. Then we can glue them together:
$$H(s,t)=[\Pi_{j=1}^i(Y_s^{(j-1)})^{-1}X_s^{(j)}]H_i(s,t)[\Pi_{j=1}^i(Y_s^{(j-1)})^{-1}X_s^{(j)}]^{-1}
    \quad \mathrm{if}\;\, t\in[t_{i-1},t_i].$$
Now $H$ is  a positive homotopy, and hence $\ga\sim_{+,con}\mu$.

\subsection{Smoothing Lemma and the connectedness of the set of
the positive paths in the same conjugacy class}

We have the following simple but useful formula. 

\begin{lemma}\label{lem:conj}
Suppose that $\{\gamma_t\}_{t\in [a,b]}$, $\{X_t\}_{t\in [a,b]}\subset \mathrm{Sp}(2n)$ satisfy $\frac{d }{dt}\gamma_t=JP_t\gamma_t$ and $\frac{d }{dt}X_t=JY_tX_t$ where $P_t$ is a positive definite matrix and $Y_t$ is a real symmetry matrix. We have
$$\frac{d}{dt}(X_t^{-1}\gamma_tX_t)=J[X_t^T(P_t-Y_t+(\gamma_t^{-1})^TY_t\gamma_t^{-1})X_t](X^{-1}_t\gamma_tX_t). $$ In particular, when  $X_{t_0}=I$ where $t_0\in [a,b]$, the equation becomes $$\frac{d}{dt}(X_t^{-1}\gamma_tX_t)|_{t=t_0}=J[(P_{t_0}-Y_{t_0}+(\gamma_{t_0}^{-1})^TY_{t_0}\gamma_{t_0}^{-1})]\gamma_{t_0}.$$ 
\end{lemma}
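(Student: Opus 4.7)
The plan is to treat the statement as a direct computation based on the product rule and two standard identities coming from symplecticity. There is no conceptual obstacle; the care is entirely in the bookkeeping.

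First I would differentiate the conjugate $X_t^{-1}\gamma_tX_t$ using the Leibniz rule. The derivative of the inverse factor is obtained from $X_tX_t^{-1}=I$, giving
$$\frac{d}{dt}X_t^{-1}=-X_t^{-1}\Bigl(\frac{d}{dt}X_t\Bigr)X_t^{-1}=-X_t^{-1}JY_t.$$
Substituting this together with $\frac{d}{dt}\gamma_t=JP_t\gamma_t$ and $\frac{d}{dt}X_t=JY_tX_t$, I obtain
$$\frac{d}{dt}(X_t^{-1}\gamma_tX_t)=X_t^{-1}J(P_t-Y_t)\gamma_tX_t+X_t^{-1}\gamma_tJY_tX_t.$$

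Next I would rewrite each term using the symplectic identities. From $X_t^TJX_t=J$ one gets $JX_t^T=X_t^{-1}J$, and applied to $\gamma_t^{-1}$ (equivalently, from $\gamma_t^TJ\gamma_t=J$) one gets $\gamma_tJ=J\gamma_t^{-T}$. Using the first identity on the first term and both identities on the second, the right-hand side becomes
$$J\,X_t^{T}(P_t-Y_t)\gamma_tX_t+J\,X_t^T\gamma_t^{-T}Y_t X_t=J\bigl[X_t^T\bigl(P_t-Y_t+(\gamma_t^{-1})^TY_t\gamma_t^{-1}\bigr)X_t\bigr]\bigl(X_t^{-1}\gamma_tX_t\bigr),$$
after inserting $\gamma_t^{-1}\gamma_t=I$ in the second summand. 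This proves the general formula.

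Finally, for the special case $X_{t_0}=I$, setting $t=t_0$ collapses all the factors $X_t^T$ and $X_t$ to the identity and $X_t^{-1}\gamma_tX_t$ to $\gamma_{t_0}$, yielding the claimed formula at $t_0$. The only step requiring any vigilance is keeping straight which of the identities $JX^T=X^{-1}J$ and $J\gamma^{-T}=\gamma J$ is applied where; once these are established at the outset the rest is a single line of algebra.
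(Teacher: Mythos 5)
Your proof is correct and follows essentially the same route as the paper's: differentiate $X_t^{-1}\gamma_t X_t$ by the product rule, substitute the given ODEs, and apply the symplectic identities $X^{-1}J = JX^{T}$ and $\gamma J = J\gamma^{-T}$ (together with insertions of $X_tX_t^{-1}$ and $\gamma_t^{-1}\gamma_t$) to collect everything into the form $J[\,\cdot\,](X_t^{-1}\gamma_tX_t)$. The specialization at $X_{t_0}=I$ is then immediate, exactly as in the paper.
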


\begin{proof}

\begin{eqnarray}\label{eq:conj}
\frac{d}{dt}(X_t^{-1}\gamma_tX_t)&=&-X^{-1}_{t}\frac{d X_t}{dt}X^{-1}_{t}\gamma_{t}X_{t}+X_{t}^{-1}\gamma_{t}\frac{d X_t}{dt}+X^{-1}_{t}\frac{d \gamma_t}{dt}X_{t}\nonumber\\
&=&-X^{-1}_{t}JY_{t}X_{t}(X^{-1}_{t}\gamma_{t}X_{t})+X_{t}^{-1}\gamma_{t}JY_{t}X_{t}+X^{-1}_{t}JP_{t}\gamma_{t}X_{t}\nonumber\\
&=&-JX^{T}_{}Y_{t}X_{t}(X^{-1}_{t}\gamma_{t}X_{t})+JX_{t}^{T}(\gamma_{t}^{-1})^TY_{t}\gamma_{t}^{-1}X_{t}(X^{-1}_{t}\gamma_{t}X_{t})\nonumber\\&&\qquad\qquad +JX^{T}_{t}P_{t}X_{t}(X^{-1}_{t}\gamma_{t}X_{t})\nonumber\\
&=&J[X_{t}^T(P_{t}-Y_{t}+(\gamma_{t}^{-1})^TY_{t}\gamma_{t}^{-1})X_{t}](X^{-1}_{t}\gamma_{t}X_{t}). 
\end{eqnarray}

\end{proof}

As a corollary, we immediately obtain the following result, although we already know it through other method, e.g. \cite{CLW}.

\begin{lemma}
Suppose that $\{X_t\}_{t\in [a,b]}\subset\mathrm{Sp}(2n)$ is $C^1$ w.r.t. $t$ and $A\in\Sp(2n)$. If $\mathrm{Spec}(A)\cap \mathcal{U}\neq \emptyset$, then the path $\{X_t^{-1}AX_t\}$ is not a positive path.
\end{lemma}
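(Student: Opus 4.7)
The plan is to apply Lemma~\ref{lem:conj} to the constant curve $\ga_t\equiv A$ and reduce the assertion to a purely linear-algebraic statement about symplectic matrices with an eigenvalue on $\U$. Since $A$ is constant we may take $P_t\equiv 0$ in the notation of Lemma~\ref{lem:conj}. The hypothesis $X_t\in\Sp(2n)$ forces $\frac{dX_t}{dt}=JY_tX_t$ for some real symmetric $Y_t$, because $T_{X_t}\Sp(2n)=\{JY\cdot X_t:Y=Y^T\}$ is the infinitesimal form of $X_t^TJX_t=J$. Substituting into the formula of Lemma~\ref{lem:conj} yields
\begin{equation*}
\frac{d}{dt}\bigl(X_t^{-1}AX_t\bigr)=J\bigl[X_t^T\bigl(A^{-T}Y_tA^{-1}-Y_t\bigr)X_t\bigr]\bigl(X_t^{-1}AX_t\bigr).
\end{equation*}
Hence $\{X_t^{-1}AX_t\}$ would be a positive path if and only if $X_t^T(A^{-T}Y_tA^{-1}-Y_t)X_t$ were symmetric and positive definite for every $t$. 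Symmetry is automatic (since $Y_t^T=Y_t$), and invertibility of $X_t$ reduces the positivity requirement to $A^{-T}Y_tA^{-1}-Y_t>0$ at every $t$.

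The heart of the argument is therefore the purely algebraic claim: if $A\in\Sp(2n)$ admits an eigenvalue $\lm\in\U$, then no real symmetric matrix $Y$ can satisfy $A^{-T}YA^{-1}-Y>0$. To see this, pick an eigenvector $v\in\C^{2n}\bs\{0\}$ with $Av=\lm v$. Then $A^{-1}v=\lm^{-1}v$ and $v^*A^{-T}=\bar\lm^{-1}v^*$, so
\begin{equation*}
v^*\bigl(A^{-T}YA^{-1}-Y\bigr)v=\bigl(|\lm|^{-2}-1\bigr)\,v^*Yv=0,
\end{equation*}
because $|\lm|=1$. On the other hand, for any real symmetric positive definite $M$ and any nonzero $v=u+iw$ with $u,w\in\R^{2n}$, one has $v^*Mv=u^TMu+w^TMw>0$ (the cross terms cancel by symmetry of $M$). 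These two statements are incompatible, so no such $Y$ exists, contradicting positivity of $\{X_t^{-1}AX_t\}$.

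I do not expect a serious obstacle. The only subtle points are the passage from $X_t\in\Sp(2n)$ to the representation $\dot X_t=JY_tX_t$ with $Y_t$ symmetric, and the passage from real positive definiteness to positivity of the associated Hermitian form on $\C^{2n}$; both are standard. Notably, this approach avoids any appeal to Krein signature or to the monotonicity of eigenvalue rotation along positive paths, which would be heavier alternatives.
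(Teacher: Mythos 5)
Your proof is correct and follows essentially the same route as the paper: both apply Lemma~\ref{lem:conj} to the constant path $\gamma_t\equiv A$ to reduce positivity to $A^{-T}Y_tA^{-1}-Y_t>0$, and both kill this by testing against an eigenvector of $A$ with eigenvalue on $\U$, where the Hermitian form collapses to $(|\lambda|^{-2}-1)\langle Y_t v,v\rangle=0$. Your additional justifications (the tangent-space description of $\Sp(2n)$ and the positivity of the associated Hermitian form on $\C^{2n}$) are worth having but do not change the argument.
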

\begin{proof}
From Formula (\ref{eq:conj}), we have
\begin{eqnarray*}
\frac{d}{dt}(X_t^{-1}AX_t)=J[X_t^T(-Y_t+(A^{-1})^TY_tA^{-1})X_t](X^{-1}_tAX_t).
\end{eqnarray*}
Obviously, $X_t^T(-Y_t+(A^{-1})^TY_tA^{-1})X_t$ is a real symmetry matrix. If $Av=\lambda v$ for some $\lambda\in\mathbf{C}$ with $|\lambda|=1$ and $0\neq v\in \mathbf{C}^{2n}$, we have that
$$\langle (-Y_t+(A^{-1})^TY_tA^{-1})v,v\rangle
=-\langle Y_t v, v\rangle +\langle (A^{-1})^TY_tA^{-1})v, v\rangle=(|\lambda|^2-1)\langle Y_t v,v\rangle=0.$$ 
\end{proof}

Recall $\pi:\Sp(2n)\to\mathcal{C}onj(\Sp(2n)$ denote
the projection:
$$
    \pi(A)=\bigcup_{X}
    \Big\{
    XAX^{-1}:\;X\in\Sp(2n)
    \Big\}\in\mathcal{C}onj(\Sp(2n)).
$$
The following lemma, as stated by Lalonde and McDuff in \cite{LaM}, presents a general result on positive paths.
It is noteworthy that we require a modified version of (iii) of this lemma, denoted by (iii$'$).

\begin{lemma}[\cite{LaM}, Lemma 2.1]\label{lem:c0c1}\quad
\begin{description}
\item[(i)] The set of positive paths is open in the $C^1$ topology.
\item[(ii)] Any piecewise positive path may be $C^0$ approximated by a positive path.
\item[(iii)] Let $\{\alpha_t\}_{t\in[0,1]},\{\beta_t\}_{t\in[0,1]}$ be two positive paths with the same initial point
$\alpha_0 =\beta_0$. Then, given $\epsilon>0$, there is $\delta>0$ and a $C^0$-small deformation of $\{\alpha_t\}$ to a positive path $\{\bar{\alpha}_t\}$ which coincides with $\{\beta_t\}$ for $t\in[0,\delta]$ and with $\{\alpha_t\}$ for $t>\epsilon$.
\item[(iii$'$)] Let $\{\alpha_t\}_{t\in[0,1]},\{\beta_t\}_{t\in[0,1]}$ be two positive paths with the same initial point $\alpha_0 =\beta_0$ and $\pi(\alpha_t)=\pi(\beta_t)$ for all $t$.  Then, given $\epsilon>0$, there is $\delta>0$ and a $C^0$-small deformation of $\{\alpha_t\}$ to a positive path $\{\bar{\alpha}_t\}$ which coincides with $\{\beta_t\}$ for $t\in[0,\delta]$ and with $\{\alpha_t\}$ for $t>\epsilon$, and satisfies that $\pi(\alpha_t)=\pi(\bar{\alpha}_t)$ for all $t$. That is the deformation occurs within the fibers $\cup_{t\in[0,1]}\pi^{-1}(\pi(\alpha_t))$.
\end{description}
\end{lemma}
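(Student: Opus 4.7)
Parts (i)--(iii) are the original formulation from \cite{LaM}, so I would record the standard arguments only briefly: (i) because the map $\gamma\mapsto -J\gamma'\gamma^{-1}$ is continuous in the $C^1$-topology and positive-definiteness is open in the space of symmetric matrices; (ii) by using Lemma~\ref{Prop:perturbation} and Remark~\ref{rk:multiplication} to interpolate through small positive arcs at each breakpoint; (iii) by the bump-function interpolation given in Lalonde--McDuff, deforming $\alpha_t$ to coincide with $\beta_t$ on $[0,\delta]$ while leaving it unchanged past $\epsilon$, with positivity guaranteed by making the interpolation slow.

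The new content is (iii$'$), and my strategy is to realise the deformation as a conjugation-in-time. Since $\alpha_0=\beta_0$ and $\pi(\alpha_t)=\pi(\beta_t)$, for every $t$ there exists $Z_t\in\Sp(2n)$ with $\beta_t=Z_t^{-1}\alpha_t Z_t$. The first step is to choose such a $Z_t$ continuously on a neighbourhood of $0$ with $Z_0=I$: the orbit map $X\mapsto X^{-1}\alpha_0X$ is a smooth submersion onto the conjugacy class of $\alpha_0$ near $X=I$ (after quotienting by the stabiliser), so for $t$ sufficiently small one obtains a continuous local section $t\mapsto Z_t$ of $X\mapsto X^{-1}\alpha_t X$ lying in a prescribed complement to the stabiliser. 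Shrinking $\epsilon$ if necessary, we may assume this $Z_t$ is defined on $[0,\epsilon]$.

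Next, pick $0<\delta<\epsilon$ and a continuous (smooth after mollification) path $\{X_t\}_{t\in[0,1]}$ with $X_t=Z_t$ for $t\in[0,\delta]$, $X_t=I$ for $t\geq\epsilon$, and $X_0=I$. Define
\begin{equation*}
\bar\alpha_t=X_t^{-1}\alpha_t X_t.
\end{equation*}
By construction $\bar\alpha_t=\beta_t$ on $[0,\delta]$, $\bar\alpha_t=\alpha_t$ for $t\geq\epsilon$, and $\pi(\bar\alpha_t)=\pi(\alpha_t)$ for all $t$, so the deformation takes place within the fibers $\cup_t\pi^{-1}(\pi(\alpha_t))$ and is $C^0$-small provided $X_t$ stays close to $I$.

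It remains to check that $\bar\alpha_t$ is still a positive path. Writing $\alpha_t'=JP_t\alpha_t$ with $P_t>0$ and $X_t'=JY_tX_t$ with $Y_t$ symmetric, Lemma~\ref{lem:conj} gives
\begin{equation*}
\frac{d\bar\alpha_t}{dt}=J\bigl[X_t^T\bigl(P_t-Y_t+(\alpha_t^{-1})^TY_t\alpha_t^{-1}\bigr)X_t\bigr]\bar\alpha_t.
\end{equation*}
Since conjugation by the invertible $X_t$ preserves positive-definiteness, positivity of $\bar\alpha_t$ reduces to positive-definiteness of
$P_t-Y_t+(\alpha_t^{-1})^TY_t\alpha_t^{-1}$.
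As $P_t$ is positive definite and bounded below on $[0,1]$ with a uniform constant $c>0$, it suffices to ensure that $\|Y_t\|$ is smaller than a constant depending only on $c$ and $\sup_t\|\alpha_t^{-1}\|$. This is arranged by a time-rescaling trick: replace the transition of $X_t$ on $[\delta,\epsilon]$ by a reparametrisation that slows it down, which multiplies $Y_t$ by an arbitrarily small factor without changing the image of $X_t$. Hence the positivity constraint is satisfied on all of $[0,1]$.

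The main obstacle is the first step, namely obtaining a continuous $Z_t$ with $Z_0=I$ for all $t$ close to $0$, because the conjugacy class of $\alpha_0$ need not be a manifold if $\alpha_0$ is non-generic. However, $\alpha_t$ is only required to land in the fiber over $\pi(\alpha_t)$, not to realise the full orbit, so a local-section argument for the Lie group action $X\cdot A=X^{-1}AX$ at $(I,\alpha_0)$ transverse to the stabiliser suffices; this is a standard consequence of the submersion theorem applied to the smooth map $\Sp(2n)\to\Sp(2n)$, $X\mapsto X^{-1}\alpha_0X$, restricted to a complement of the Lie algebra of the stabiliser. Everything else is a routine estimate.
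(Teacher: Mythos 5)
Your approach to (iii$'$) is genuinely different from the paper's, and it contains a real gap. The paper's proof proceeds in two steps: first it produces a \emph{piecewise} positive path $\tilde\alpha_t$ that equals $\beta_t$ on $[0,\delta']$ and equals a conjugate $X_t^{-1}\alpha_tX_t$ for $t\geq\delta'$, accepting a single corner at $t=\delta'$; then it smooths the corner by a vector-field blending argument on the fiber $Y=\pi^{-1}(a)$, in the same spirit as Lemma~\ref{lem: smoothing}. Crucially, the paper only ever needs a single conjugator $X$ at the one time $t=\delta'$, never a continuous family of conjugators defined all the way back to $t=0$. You instead propose a one-shot conjugation $\bar\alpha_t=X_t^{-1}\alpha_t X_t$ whose data on $[0,\delta]$ is a $C^1$ family $\{Z_t\}$ satisfying $Z_0=I$ and $\beta_t=Z_t^{-1}\alpha_t Z_t$. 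That family need not exist, and the submersion-theorem remark does not supply it.

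Here is the concrete obstruction. Write $\alpha_t=\alpha_0+t\,\alpha'(0)+O(t^2)$, $\beta_t=\alpha_0+t\,\beta'(0)+O(t^2)$, and suppose $Z_t=I+tW+O(t^2)$ for some $W\in\mathbf{sp}(2n)$. Then
$$Z_t^{-1}\alpha_t Z_t=\alpha_t-t\,[W,\alpha_0]+O(t^2)=\alpha_t+O(t^2)$$
whenever $[W,\alpha_0]=0$; in particular if $\alpha_0=I$ (the case actually used throughout the paper) the commutator vanishes for every $W$, so $Z_t^{-1}\alpha_tZ_t=\alpha_t+O(t^2)$ for \emph{any} $C^1$ family with $Z_0=I$. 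But the lemma's hypotheses only give $\pi(\alpha_t)=\pi(\beta_t)$, not $\alpha'(0)=\beta'(0)$, so generically $\beta_t-\alpha_t=t(\beta'(0)-\alpha'(0))+O(t^2)$ has a nonzero linear term, and no $C^1$ conjugating family with $Z_0=I$ can exist. The submersion statement you cite is a statement about the orbit map at the fixed matrix $\alpha_0$; it says nothing about a continuous section over the moving family $t\mapsto\alpha_t$, which for $t>0$ lies in a different (typically higher-dimensional) stratum whose stabilizer has dropped. This stratum-crossing at $t=0$ is exactly where the local section degenerates. A secondary issue: mollifying $\{X_t\}$ cannot be done on $[0,\delta]$ without destroying the required identity $\bar\alpha_t=\beta_t$ there, so you would need $Z_t$ to be $C^1$ up to $\delta$ with one-sided derivative matching the chosen extension — another place the regularity of $Z_t$ is essential but unjustified. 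The paper's piecewise-then-smooth route is precisely designed to avoid both of these problems.
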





Now, we provide further details of the proof of Proposition 2.4 in \cite{LaM}, which is also crucial for the proof of Lemma \ref{lem: smoothing} below since the idea of its proof roots in the proof of Proposition 2.4. In \cite{LaM}, to streamline the proof of Proposition 2.4, they utilized Lemma \ref{lem:c0c1}(iii). However, Lemma \ref{lem:c0c1}(iii) represents a $C^0$-small deformation, and through their proof, the deformation may not necessarily remain within the conjugacy class. To establish Proposition 2.4, it is necessary to adjust Lemma \ref{lem:c0c1}(iii) so that the deformation remains within the fibers, i.e., Lemma \ref{lem:c0c1}(iii$'$). Refer to the end of this subsection for the proof of (iii$'$). 

Let $\|\cdot\|$ be a norm on the space of matrices (e.g., $\|A-B\|=\max_{i,j}|a_{i,j}-b_{i,j}|$ if $A=(a_{i,j})$ and $B=(b_{i,j})$). Let $G=\mathrm{Sp}(2n)$. For every $A\in G$, recall that $\mathcal{P}_A\subset T_AG$ is the positive cone on $A$: $\mathcal{P}_A=\{JPA \mid P=P^T, P>0\}$.

\begin{proposition}[\cite{LaM}, Proposition 2.4]\label{prop:lift}
Let $\alpha=\{\alpha_t\}\subset \mathrm{Sp}(2n)$ be a generic positive path joining two generic points $\alpha_0$ and $\alpha_1$\footnote{It is not necessary that $\alpha_0$ or $\alpha_1$ is non-generic; in particular, it is posible that $\alpha_0=\beta_0=I$ or $\alpha_1=\beta_1=I$.}. Then the set of positive paths in $\mathrm{Sp}(2n)$ that lift $a=\{\pi(\alpha_t)\}\subset \textrm{Conj}(\Sp(2n))$ is path-connected.
Moreover, if two smooth lifts $\alpha$ and
$\beta$ of $a$ satisfy the $C^1$-connectable condition, then $\alpha\sim_{+,con}\beta$.
\end{proposition}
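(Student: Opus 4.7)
The plan is to construct the $C^1$-connectable positive homotopy by iterating Lemma \ref{lem:c0c1}(iii$'$), after first handling the endpoint matching via conjugation-based preliminary homotopies.

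For the path-connectedness claim, I begin by aligning the initial points. Since $\alpha_0$ and $\beta_0$ lie in the same conjugacy class, conjugate $\alpha$ by a slowly varying continuous family $\{Z_s\}\subset\Sp(2n)$ from $I$ to some $X$ with $\beta_0=X^{-1}\alpha_0 X$. By Lemma \ref{lem:conj} and the $C^1$-openness of positive paths (Lemma \ref{lem:c0c1}(i)), the conjugated paths remain positive provided the conjugation is slow enough, which can be arranged by subdividing. This produces a positive lift $\alpha^{(0)}$ of $a$ with $\alpha^{(0)}_0=\beta_0$. Now apply Lemma \ref{lem:c0c1}(iii$'$) to the pair $(\alpha^{(0)},\beta)$: this yields a $C^0$-small positive deformation $\alpha^{(1)}$ that coincides with $\beta$ on $[0,\delta_1]$ and with $\alpha^{(0)}$ on $[\epsilon_1,1]$, lifts $a$ throughout, and stays in the fibers of $\pi$. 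Since $\alpha^{(1)}_{\delta_1}=\beta_{\delta_1}$, applying (iii$'$) on $[\delta_1,1]$ yields $\alpha^{(2)}$ matching $\beta$ on $[0,\delta_2]$ for some $\delta_2>\delta_1$. By $C^1$-openness of positive paths and the compactness of $[0,1]$, the increments $\delta_i-\delta_{i-1}$ can be uniformly bounded below, so finitely many iterations suffice to reach $\delta_N=1$. Concatenating these $C^0$-small deformations gives a continuous path in the space of positive lifts of $a$ from $\alpha$ to $\beta$.

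For the moreover statement, the $C^1$-connectable condition provides $X_1,Y_1\in\Sp(2n)$ with $\beta_0=X_1^{-1}\alpha_0 X_1$, $\beta'_0=X_1^{-1}\alpha'_0 X_1$, and the analogous identities at $t=1$. Choose continuous paths $\{X_s\},\{Y_s\}$ from $I$ to $X_1,Y_1$, and a smooth family $Z_s(t)\in\Sp(2n)$ with $Z_s(0)=X_s$, $Z_s(1)=Y_s$, $Z_0(t)\equiv I$. Set $H_0(s,t)=Z_s(t)^{-1}\alpha_t Z_s(t)$. By Lemma \ref{lem:conj}, each slice $H_0(s,\cdot)$ lifts $a$ and has slicewise derivative $J[Z_s(t)^T(P_t-Y(s,t)+(\alpha_t^{-1})^T Y(s,t)\alpha_t^{-1})Z_s(t)]\,H_0(s,t)$, where $\partial_t Z_s(t)=JY(s,t)Z_s(t)$. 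By keeping $\|Y(s,t)\|$ uniformly small (via subdivision of the $s$-variable if necessary), the bracketed symmetric matrix stays positive definite, so $H_0$ is a $C^1$-connectable positive homotopy from $\alpha$ to a lift $\tilde\alpha$ of $a$ that matches $\beta$ to first order at both endpoints. Applying the iterative argument above to $\tilde\alpha$ and $\beta$, but arranging each use of Lemma \ref{lem:c0c1}(iii$'$) to be supported away from the endpoints so that the endpoint matchings are preserved, and concatenating with $H_0$, yields $\alpha\sim_{+,con}\beta$.

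The main obstacle is maintaining positivity throughout the preliminary conjugation $H_0$ and, more generally, throughout every $C^0$-small deformation in the iteration. The derivative formula above shows that positivity requires the symmetric matrix $P_t-Y+(\alpha_t^{-1})^T Y\alpha_t^{-1}$ to be positive definite, forcing $\|Y(s,t)\|$ to be controlled by the lower spectral bound of $P_t$ (uniform on $[0,1]$) and the operator norm of $\alpha_t^{-1}$. This is precisely why Lemma \ref{lem:c0c1}(iii$'$) is a $C^0$-small local statement; globally large conjugations must be realized by concatenating many small ones via subdivision, which is the technical heart of both the preliminary homotopy and the iteration. A secondary technical point is the compatibility of the concatenation, which is automatic from the framework of Definition \ref{pos.connectable.homotopy}.
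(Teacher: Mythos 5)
Your proof takes a genuinely different route from the paper's. The paper exploits the convexity of the positive cone: after covering $Y=\pi^{-1}(a)$ with vector fields $\xi_\alpha,\xi_\beta$ (sections of the positive cone projecting onto $\dot{a}$, with $\alpha,\beta$ as integral curves), it forms $V_s=(1-s)\xi_\alpha+s\xi_\beta$ — again a positive section, by convexity — and integrates $V_s$ over each interval $[t_i,t_{i+1}]$ cut out by the codimension-1 crossings of $\alpha$; smooth dependence on parameters gives the entire homotopy in one pass. Your approach instead iterates Lemma \ref{lem:c0c1}(iii$'$) (a $C^0$-small deformation near $t=0$) and prepends a conjugation homotopy $H_0$. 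This route has two genuine gaps.

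First, the assertion that ``the increments $\delta_i-\delta_{i-1}$ can be uniformly bounded below'' is not justified. Lemma \ref{lem:c0c1}(iii$'$) produces, for each $\epsilon>0$, \emph{some} $\delta>0$, but $\delta$ depends on the particular pair of positive paths involved; after the $i$-th application the ``$\alpha$-side'' path has changed to $\alpha^{(i)}$, and you have no a priori control on how the admissible $\delta$ behaves along this sequence. An appeal to $C^1$-openness of positivity alone does not furnish the required uniform estimate; one would have to argue that the positivity modulus of the $\alpha^{(i)}$ remains uniformly bounded (which is nontrivial, since (iii$'$) is only $C^0$-small). Compactness of $[0,1]$ in $t$ is not enough to close this loop because the base path being deformed changes at every step.

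Second, and more seriously, the preliminary homotopy $H_0(s,t)=Z_s(t)^{-1}\alpha_t Z_s(t)$ is not positive in general, and subdividing in $s$ does not repair it. By Lemma \ref{lem:conj}, positivity of $H_0(s,\cdot)$ requires $P_t-Y(s,t)+\alpha_t^{-T}Y(s,t)\alpha_t^{-1}>0$ where $\partial_t Z_s(t)=JY(s,t)Z_s(t)$, so one needs $\|Y(s,t)\|$ small, i.e., $Z_s(\cdot)$ must move slowly in $t$. But at $s$ near $1$ the path $t\mapsto Z_s(t)$ must travel from $X_s\approx X_1$ to $Y_s\approx Y_1$ in time $1$, and if $X_1\ne Y_1$ (the generic situation), $\|\partial_t Z_s(t)\|$ is bounded \emph{below} at some $t$ independently of how finely one subdivides in $s$ — the obstruction is at a fixed $s$, not along the $s$-direction. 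The paper sidesteps this entirely because it never conjugates by a $t$-dependent family; the convex-combination vector field is automatically in the positive cone, and the endpoint matching is handled by choosing the vector fields $\xi_\alpha,\xi_\beta$ to agree on $\pi^{-1}(\pi(\alpha_0))$ and $\pi^{-1}(\pi(\alpha_1))$.

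Both gaps could, with substantial extra work, perhaps be patched (e.g., by inserting enough intermediate lifts between $\alpha$ and $\beta$ so that each $H_0$ stage only moves the conjugating family a little, combined with a continuity/compactness argument for the $\delta$-increments), but as written the proof does not close. The paper's convexity argument is the cleaner — and apparently the essential — idea here.
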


\begin{proof}
We follow the argument of the proof of Proposition 2.4 in \cite{LaM} and initiate the deformation within the fibers by utilizing our modified Lemma \ref{lem:c0c1}(iii$'$). 

 Suppose $\{\beta_t\}$ is another path which is a lift of $a$ and  satisfies $\pi(\alpha_t)=\pi(\beta_t)$ for every $t$. (In fact, it is not necessary that $\{\beta_t\}$ is disjoint from $\{\alpha_t\}$, as shown in the proof below, although it is required in the proof in \cite{LaM}.) Recall the finite sequence of times $0=t_0<t_1<t_2<\cdots<t_k<t_{k+1}=1$ where $\{t_i\}_{i=1,\cdots,k}$ are the non-generic points of $\{\alpha_t\}$ at which $\{\alpha_t\}$ crosses a codimension 1 stratum. Let $Y:=\pi^{-1}(a)$ and $Y_i:=\{\pi^{-1}(\pi(\alpha_t)):t_i\leq t\leq t_{i+1}\}$. Recall the vector fields $\xi_\alpha$ and $\xi_\beta$ defined on $Y_i$ in the proof of \cite[Proposition 2.4]{LaM} (using the notations $\xi_A$ and $\xi_B$ there).  We recall that $\xi_\alpha$ (resp. $\xi_\beta$) has the form $JP_\alpha(q)q$ (resp. $JP_\beta(q)q$) where $q\in Y$, $P_\alpha(q)$ (resp. $P_\beta(q)$) $\in \mathcal{P}_q$ and $\pi_*(JP_\alpha(q)q)=\pi_*(\frac{d \alpha_t}{dt})$ (resp. $\pi_*(JP_\beta(q)q)=\pi_*(\frac{d \beta_t}{dt})$) when $q\in\pi^{-1}\{\pi(\alpha_t)\}$.
In the proof of \cite[Proposition 2.4]{LaM}, we note that we should require that $\xi_\alpha$ and $\xi_\beta$ are uniformly bounded under the norm $\|\cdot\|$ in the following sense: there exists $N>0$ such that $\|P_\alpha(q)\|\leq N$ and $\|P_\beta(q)\|\leq N$ for all $q\in Y$. Otherwise, the vector fields $\xi_\alpha$ and $\xi_\beta$ are not necessary integrable since $Y$ is not compact (the integral curves may go to $\infty$ along the fibers). 


Fix some $0\leq i\leq k$. Let $\{V_s(q)\}\subset T_{q}Y_i$ be a family of vector fields of the form $(1-s)\xi_\alpha+s\xi_\beta$, where the parameters $s\in[0,1]$ and $q\in Y_i$. Obviously, $V_s(q)$ is smooth w.r.t. $s$ and $q$. For any $q\in Y_i$, we write $T_qG:=T_q\pi^{-1}(q)\oplus (T_q\pi^{-1}(q))^{\bot}$. By the argument of the proof in \cite[Proposition 2.4]{LaM}, $\frac{d \alpha_t}{dt}\not\in T_{\alpha_t}\pi^{-1}(\alpha_t)$ and $\frac{d \beta_t}{dt}\not\in T_{\beta_t}\pi^{-1}(\beta_t)$. Then the vector field $V_s$ is integrable for every $s$. Let $\Gamma^i_s(t)$ be the integral curve of $V_s$ (on $Y_i$) defined by $\frac{d \Gamma^i_s(t)}{dt}=V_s(\Gamma^i_s(t))$. Let $\{\delta_i(s)\}_{s\in [0,1]}$ be a path from $\alpha_{t_i}$ to $\beta_{t_i}$ (recall that the fiber $\pi^{-1}\{\pi(\alpha_{t_i})\}$ is path connected). Note that $V_0(\alpha_t)=\frac{d \alpha_t}{dt}$ and $V_1(\beta_t)=\frac{d \beta_t}{dt}$. By the dependence of the smoothness of solutions of ODE on the initial value and parameters, the family of solutions $\{\Gamma^i_s(t)\}_{s\in[0,1],t_i\leq t\leq t_{i+1}}$ of the equation
\[
\left\{
    \begin{array}{l}
\frac{d \Gamma^i_s(t)}{dt}=V_s(\Gamma^i_s(t))\\ \Gamma^i_s(t_i)=\delta_i(s) 
\end{array}\right.
\]
gives a homotopy from $\{\alpha_t\}_{t_i\leq t\leq t_{i+1}}$ to $\{\beta_t\}_{t_i\leq t\leq t_{i+1}}$.

We will complete the proof by induction. We start with $Y_0$ and the path $\{\delta_0(s)\}_{s\in [0,1]}$, where $\{\delta_0(s)\}_{s\in [0,1]}$ is a path from $\alpha_{0}$ to $\beta_{0}$, and we set $\delta_0(s)\equiv I$ if $\alpha_0=\beta_0=I$. Solving the ODE above, we obtain a homotopy $\{\Gamma^0_s(t)\}_{s\in[0,1],0\leq t\leq t_{1}}$ from $\{\alpha_t\}_{0\leq t\leq t_{1}}$ to $\{\beta_t\}_{0\leq t\leq t_{1}}$. Next, we consider $Y_1$ and $\delta_1(s):=\Gamma^0_s(t_1)$ to obtain $\{\Gamma^1_s(t)\}_{s\in[0,1],t_1\leq t\leq t_{2}}$. Assuming we have completed the $(i-1)$-th step, we let $\delta_{i}(s):=\Gamma^{i-1}_s(t_{i})$. We then consider $Y_{i}$ and $\delta_{i}(s)$ to obtain $\{\Gamma^i_s(t)\}_{s\in[0,1],t_i\leq t\leq t_{i+1}}$. By induction, we obtain $(k+1)$ homotopies $\{\Gamma^i_s(t)\}_{s\in[0,1],t_i\leq t\leq t_{i+1}}$ ($i=0,\cdots,k$). Finally, we construct our homotopy by concatenating them together.

Furthermore, if $\alpha$ and
$\beta$ satisfy the $C^1$-connectable condition, that is, there are $X_1,Y_1\in \Sp(2n)$ such that 
$$
\left\{
\begin{array}{ll}
    X_1^{-1}\alpha(0)X_1=\beta(0),
    &X_1^{-1}\alpha'(0)X_1=\beta'(0),
    \\
    Y_1^{-1}\alpha(1)Y_1=\beta(1),
    &Y_1^{-1}\alpha'(1)Y_1=\beta'(1).
\end{array}
\right.
$$
For every $X\in \Sp(2n)$, we may choose $\xi_\alpha$ and $\xi_\beta$ satisfy that
$\xi_\alpha|_{X^{-1}\alpha(0)X}=\xi_\beta|_{X^{-1}\alpha(0)X}=X^{-1}\alpha'(0)X$, $\xi_\alpha|_{X^{-1}\alpha(1)X}=\xi_\beta|_{X^{-1}\alpha(1)X}=X^{-1}\alpha'(1)X$.
So, according to the process of the proof, we obtain $\alpha\sim_{+,con}\beta$.

\end{proof}

Suppose $a:[0,1]\rightarrow \mathcal{C}onj(\Sp(2n))$ is a path. We define $a(t_0)$, where $t_0\in (0,1)$, as a smooth point of $a$ if there exists a lift path $\gamma: [0,1]\rightarrow \mathrm{Sp}(2n)$ of $a$ (i.e., $\pi(\gamma(t))=a(t)$ for all $t\in[0,1]$) such that there exists $\epsilon>0$ satisfying the condition that the sub-path $\gamma|_{[t_0-\epsilon, t_0+\epsilon]}$ is smooth in the space $\mathrm{Sp}(2n)$. Similarly, we can define the smoothness of $a(0)$ and $a(1)$. A path $a$ in $\mathcal{C}onj(\mathrm{Sp}(2n))$ is said to be smooth if every point of $a$ is smooth.  We have the following smoothing lemma:

\begin{lemma} [Smoothing Lemma]\label{lem: smoothing}
Let $\gamma:[0,1]\rightarrow \mathrm{Sp}(2n)$ be a 2-piecewise positive path which is not $C^1$ at the time $t_0\in(0,1)$, i.e., $\lim_{t\rightarrow t_0^-}\frac{\gamma(t)-\gamma(t_0)}{t-t_0}\not=\lim_{t\rightarrow t_0^+}\frac{\gamma(t)-\gamma(t_0)}{t-t_0}$. If $\gamma(t_0)$ is a generic point and $\pi(\gamma(t_0))$ is a smooth point of $\pi(\gamma)$, then for any $\epsilon>0$, there is a $C^1$-smooth positive path $\bar{\gamma}$ from $\gamma(0)$ to $\gamma(1)$ such that $\pi(\gamma(t))=\pi(\bar{\gamma}(t))$ for all $t\in [0,1]$ and $d_{C^0}(\gamma,\bar{\gamma})<\epsilon$. 
\end{lemma}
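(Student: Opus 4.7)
The plan is to localize the correction to a small interval $[t_0-\delta,t_0+\delta]$: set $\bar\gamma=\gamma$ outside this interval and replace $\gamma|_{[t_0-\delta,t_0+\delta]}$ by a smooth positive path which lifts $\pi\circ\gamma|_{[t_0-\delta,t_0+\delta]}$ and whose $C^1$-data at $t_0\pm\delta$ agree with those of $\gamma$. I take $\delta>0$ small enough that $\gamma(t)$ stays in the generic stratum throughout $[t_0-\delta,t_0+\delta]$ (possible because genericity is an open condition) and small enough that any continuous change supported on this interval produces $C^0$-error less than $\epsilon$; this handles the $C^0$ estimate automatically.

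The heart of the proof is the construction of a smooth positive lift $\mu:[t_0-\delta,t_0+\delta]\to\Sp(2n)$ of $\pi\circ\gamma|_{[t_0-\delta,t_0+\delta]}$ matching the left-endpoint data $(\gamma(t_0-\delta),\gamma'(t_0-\delta))$. Writing the one-sided derivatives as $\gamma'(t_0^{\pm})=JP_{\pm}\gamma(t_0)$ with $P_{\pm}=P_{\pm}^T>0$, the smoothness of $\pi\circ\gamma$ at $t_0$ forces $\pi_{\ast}(JP_-\gamma(t_0))=\pi_{\ast}(JP_+\gamma(t_0))$, so that $J(P_+-P_-)\gamma(t_0)$ is a vertical vector. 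Exploiting convexity and openness of the positive cone $\mathcal{P}_A$ together with the surjectivity of $\pi_{\ast}:\mathcal{P}_A\to T_{\pi(A)}\mathcal{C}onj(\Sp(2n))$ at every generic point $A$ (the generic analogue of Proposition D, valid because at a generic point the centralizer has minimal dimension and $\mathcal{P}_A$ is open in $T_A\Sp(2n)$), I would select a continuous family $\{P(t)\}_{t\in[t_0-\delta,t_0+\delta]}$ of symmetric positive-definite matrices with $P(t_0\pm\delta)=P_{\pm}$ such that the solution of the ODE $\mu'=JP(t)\mu$ with $\mu(t_0-\delta)=\gamma(t_0-\delta)$ satisfies $\pi_{\ast}(JP(t)\mu(t))=(\pi\circ\gamma)'(t)$ pointwise.

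The resulting $\mu$ is $C^1$-matched to $\gamma$ at the left endpoint but its right-endpoint data $(\mu(t_0+\delta),\mu'(t_0+\delta))$ a priori only lies in the correct fiber. Applying Proposition \ref{prop:lift} to $\mu$ and a second smooth positive lift produced by running the same construction inward from the right endpoint, I obtain a $C^1$-connectable positive homotopy within the fibers which deforms $\mu$ into a smooth positive lift $\tilde\mu$ whose $C^1$-data at both endpoints coincide with those of $\gamma$. Gluing $\tilde\mu$ with $\gamma|_{[0,t_0-\delta]}$ and $\gamma|_{[t_0+\delta,1]}$ yields the required $\bar\gamma$. I expect the main obstacle to be the explicit selection of the family $P(t)$: one must simultaneously interpolate between the boundary data $P_-$ and $P_+$ and arrange that $\pi_{\ast}(JP(t)\mu(t))$ equals the prescribed horizontal velocity $(\pi\circ\gamma)'(t)$ at every interior $t$. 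This is a convex selection problem in the positive-cone bundle along $\mu$, solvable in principle because $\mathcal{P}_{\mu(t)}$ is convex and its projection is surjective at generic points, but its rigorous execution requires carefully combining this cone geometry with the smooth dependence of the time-dependent flow of $\mu'=JP(t)\mu$ on the coefficient $P(t)$.
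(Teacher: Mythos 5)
Your high-level plan — localize to a small interval, keep $\gamma$ fixed outside, and replace the two-piece corner by a smooth positive lift of the same conjugacy-class path — matches the paper's strategy, but the two places where you acknowledge uncertainty are precisely where the argument breaks down as stated. First, the construction of $P(t)$ is circular: you want $\pi_{\ast}(JP(t)\mu(t))=(\pi\circ\gamma)'(t)$, but $\mu(t)$ is itself the solution of the ODE driven by $P(s)$ for $s<t$, so you cannot ``select'' $P$ before knowing $\mu$ and cannot solve for $\mu$ before knowing $P$. The paper sidesteps this entirely: it never picks a family $P(t)$ along an unknown curve. Instead it first $C^1$-extends $\gamma|_{[0,t_0]}$ forward and $\gamma|_{[t_0,1]}$ backward to smooth positive lifts $\alpha$ and $\beta$ of the same $a=\pi(\gamma)$, builds on the whole fiber manifold $Y=\pi^{-1}(a)$ two globally defined vector fields $\xi_\alpha,\xi_\beta$ (sections of the intersection of the positive cone field with $TY$ projecting to $\dot{a}$, with $\alpha,\beta$ as integral curves), interpolates $\xi_\alpha\to\xi_\beta$ across a window $[t_0-1/j,t_0+1/j]$, and then defines $\mu$ as the integral curve of the interpolated field. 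The ODE is solved once, with a field that is already determined on all of $Y$, so the circularity disappears. This vector-field-on-$Y$ trick is the one essential idea missing from your proposal; ``solvable in principle'' is not a substitute for it, since any iterative fixed-point scheme would still need a separate convergence argument.

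Second, your use of Proposition~\ref{prop:lift} to fix the right endpoint does not do what you need. A $C^1$-connectable positive homotopy between your left-matched $\mu$ and a right-matched $\mu_2$ moves the endpoints only along conjugation orbits $s\mapsto X_s^{-1}\mu(t_0\pm\delta)X_s$; there is no cross-section $s^*$ at which \emph{both} endpoint data simultaneously equal those of $\gamma$. The paper instead exploits that the integral curve $\tau$ lands at $\tau(t_0+1/j_0)=X^{-1}\gamma(t_0+1/j_0)X$ with $X$ close to $I$, continues with $X^{-1}\gamma X$, and then undoes the conjugation by a smooth time-dependent $X(t)$ supported near $t_0$; this preserves positivity by Lemma~\ref{lem:conj} because $\|X'(t)\|$ can be made arbitrarily small by taking $j$ large, and it preserves the fiber condition $\pi(\bar\gamma)=\pi(\gamma)$ automatically. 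Finally, a smaller point: your justification ``$\pi_{\ast}:\mathcal{P}_A\to T_{\pi(A)}\mathcal{C}onj(\Sp(2n))$ is surjective at every generic point $A$'' is false --- generic points with eigenvalues on $\U$ (e.g.\ $\mathcal{O}_{\mathcal{U}}$) are exactly where Proposition~D fails, since Krein signs constrain the rotation direction on $\U$. The argument does not actually need surjectivity, only that $(\pi\circ\gamma)'(t)$ lies in $\pi_{\ast}(\mathcal{P}_{\gamma(t)})$, which is automatic because $\gamma$ is positive; you should cite positivity, not a surjectivity claim that the paper explicitly restricts to truly hyperbolic matrices.
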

\begin{proof}

Since the problem is local, without loss of generality, we may suppose that there is $\delta>0$ small enough such that $[t_0-\delta,t_0+\delta]\subset (0,1)$ and $\gamma(t)$ is generic for all $t\in[t_0-\delta,t_0+\delta]$. We extend the positive path $\gamma|_{[0,t_0]}$ (resp., $\gamma|_{t\in[t_0,1]}$) a little bit to a $C^1$-smooth positive path $\alpha: [0,t_0+1/j_0]\rightarrow \mathrm{Sp}(2n)$ (resp., $\beta: [t_0-1/j_0,1]\rightarrow \mathrm{Sp}(2n)$) for some $j_0>2/\delta$ which satisfies that $\alpha(t)=\gamma(t)$ when $t\in[0,t_0]$ and $\pi(\alpha(t))=\pi(\gamma(t))$ when $t\in[t_0,t_0+1/j_0]$ (resp., $\beta(t)=\gamma(t)$ when $t\in[t_0,1]$ and $\pi(\beta(t))=\pi(\gamma(t))$ when $t\in[t_0-1/j_0,t_0]$). It is possible by the definition of smoothness and Lemma \ref{lem:c0c1}(i). Furthermore, we can also request that $\|\frac{d \alpha(t)}{dt}\|,\|\frac{d \beta(t)}{dt}\|<C$ for some constant $C>0$, where $C$ only depends on $\gamma$.

\begin{figure}[ht]
	\centering
	\includegraphics[height=7.5cm]{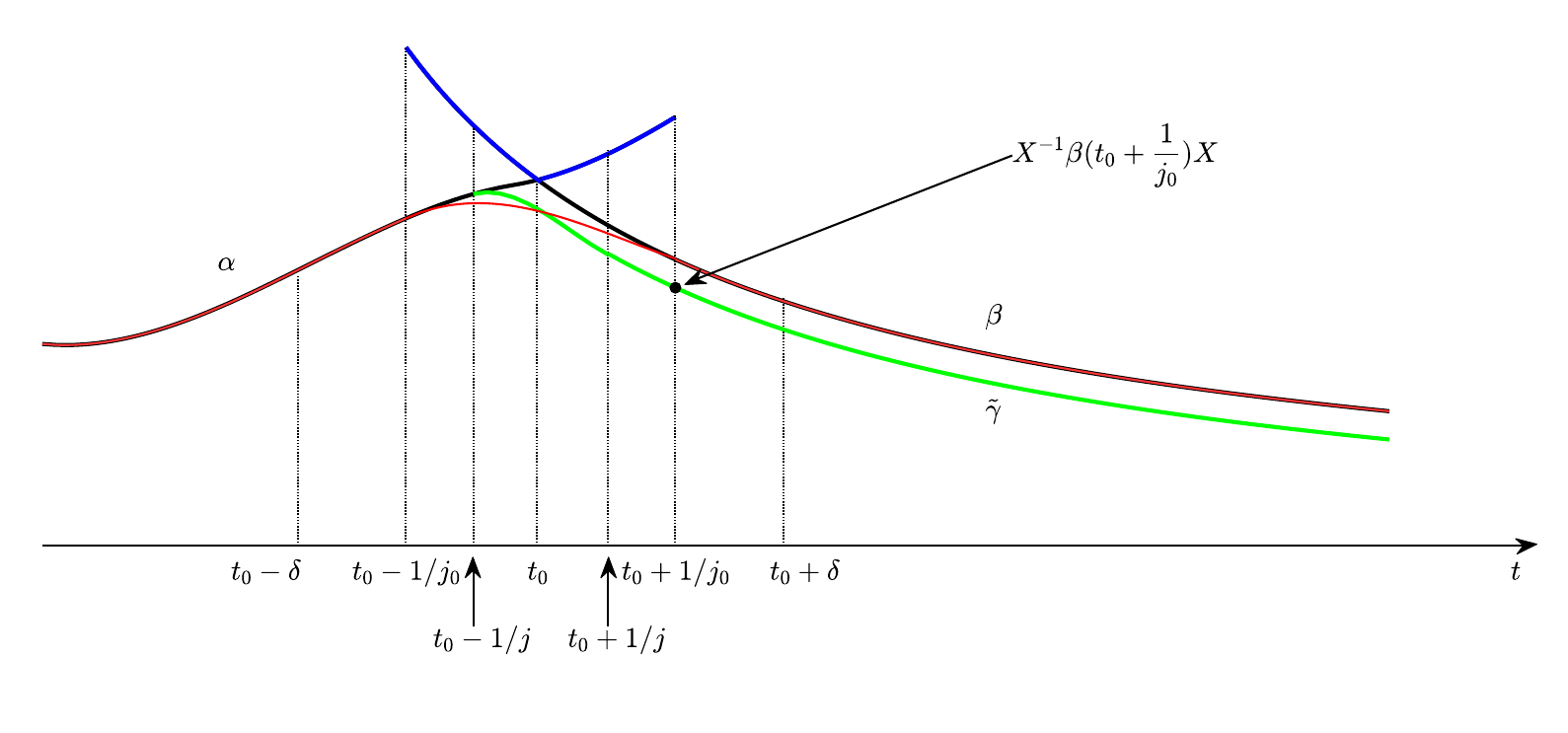}
	\vskip -1 cm
	\caption{Smoothing lemma.}
    \label{psi0}
\end{figure}

Let $a=\pi(\{\gamma(t)\}_{t\in [t_0-1/j_0, t_0+1/j_0]})$. We consider the submanifold $Y=\pi^{-1}(a)=\{\pi^{-1}(\pi(\gamma(t))): t_0-1/j_0\leq t\leq t_0+1/j_0\}$. Let $\xi_{\alpha}$ and $\xi_{\beta}$ be the vector fields on $Y$ which are sections of the intersection of the positive cone field with $TY$ that project to the tangent vector field of $a$, and such that $\{\alpha(t)\}_{t\in  [t_0-1/j_0,t_0+1/j_0]}$ and $\{\beta(t)\}_{t\in  [t_0-1/j_0,t_0+1/j_0]}$ are respectively integral curves of  $\xi_{\alpha}$ and $\xi_{\beta}$. This is possible because this intersection forms an open convex cone, as explained in the proof of \cite[Prop. 2.4]{LaM}.

We consider the following family of vector fields $\{\xi_j\}$ on $Y$: For $j\in\mathbf{N}$ with $j>j_0$, $t\in[t_0-1/j,t_0+1/j]$ and $E\in\pi^{-1}\{\gamma(t)\}$, we let $\xi_j|_{E}=(1-\frac{t-(t_0-1/j)}{2/j})\xi_\alpha|_{E}+(\frac{t-(t_0-1/j)}{2/j})\xi_{\beta}|_{E}$. For $t\in [t_0-1/j_0, t_0-1/j]$ and $E\in\pi^{-1}\{\gamma(t)\}$, let $\xi_j|_{E}=\xi_\alpha|_{E}$. For $t\in [t_0+1/j, t_0+1/j_0]$ and $E\in\pi^{-1}\{\gamma(t)\}$, let $\xi_j|_{E}=\xi_{\beta}|_{E}$. Obviously, $\{\xi_j\}$ are continuous for every $j>j_0$. 

For the given $\epsilon$, we can assume that the $C^0$-distance between the integral curve $\{\tau(t)\}$ (where $t\in[t_0-1/j_0,t_0+1/j_0]$) of $\xi_j$ starting at $\gamma(t_0-1/j_0)$ and ending at $\gamma(t_0+1/j_0)$, and the sub-path $\gamma|_{[t_0-1/j_0,t_0+1/j_0]}$, is less than $\epsilon/2$ when $j$ is sufficiently large.. 

We now suppose that $\tau(t_0+1/j_0)=X^{-1}\beta(t_0+1/j_0)X$ for some $X\in\mathrm{Sp}(2n)$ which is close to $I$. Let $\tilde{\gamma}$ be the concatenation of the path $\gamma|_{[0,t_0-1/j_0]}$, the integral curve $\tau$, and the path $X^{-1}\beta|_{[t_0+1/j_0,1]}X$. In a small neighborhood of $I$, we choose a smooth path $X:[0,1]\rightarrow \mathrm{Sp}(2n)$ from $I$ to $X$ that satisfies that $X(t)\equiv I$ when $t\in [0, t_0-1/j_0]$ and $X(t)\equiv X$ when $t\in[t_0+1/j_0,1]$, and $\|X'(t)\|$ is small enough (it is possible when we let $j$ be large enough).  By Formula (\ref{eq:conj}), the path $\bar{\gamma}(t)=X(t)^{-1}\tilde{\gamma}(t)X(t)$ is the positive path that we want.
\end{proof}

Similar to the proof of Lemma \ref{lem: smoothing}, we provide the proof of Lemma \ref{lem:c0c1}(iii$'$).

\begin{proof}[Proof of Lemma \ref{lem:c0c1}(iii$'$)]
Step 1. For any $0<\epsilon<1$, by (\ref{eq:conj}), it is easy to prove that there exist $0<\delta'<<1$ and a $C^0$-small deformation of $\{\alpha_t\}$ to a 2-piecewise positive path $\{\tilde{\alpha}_t\}$ which coinsides with $\{\beta_t\}$ for $t\in[0,\delta']$ and with $\{\alpha_t\}$ for $t>\epsilon$ (Obviously, $t=\delta'$ is the only cusp of $\tilde{\alpha}_t$). More precisely, we suppose that $\frac{d\alpha_t}{dt}=JP_t\alpha_t$. We can take $\delta'$ small enough (compared to $\epsilon$) so that $X$ is close to $I$ where $X$ satisfies that $\beta(\delta')=X^{-1}\alpha(\delta')X$. In a neighborhood of $I$ we choose a smooth path $\{X_t\}_{\delta'\leq t\leq 1}$ from $X$ to $I$. Suppose that $\frac{d X_t}{dt}=JY_tX_t$. We consider the path $\{X_t^{-1}\alpha_t X_t\}$. Decreasing $\delta'$ if necessary, we may further require that $\{X_t\}$ in (\ref{eq:conj}) (here we let $\gamma_t=\alpha_t$) satisfies that $\|Y_t\|<<\|P_t\|$ for $t\in[\delta', \epsilon]$ and $X_t=I$ when $t\geq \epsilon$. Let $\tilde{\alpha}$ be the concatenation of the path $\beta|_{[0,\delta']}$ and the path $\{X_t^{-1}\alpha_tX_t\}_{\delta'\leq t\leq1}$.\\

\begin{figure}[ht]
	\centering
	\includegraphics[height=6.0cm]{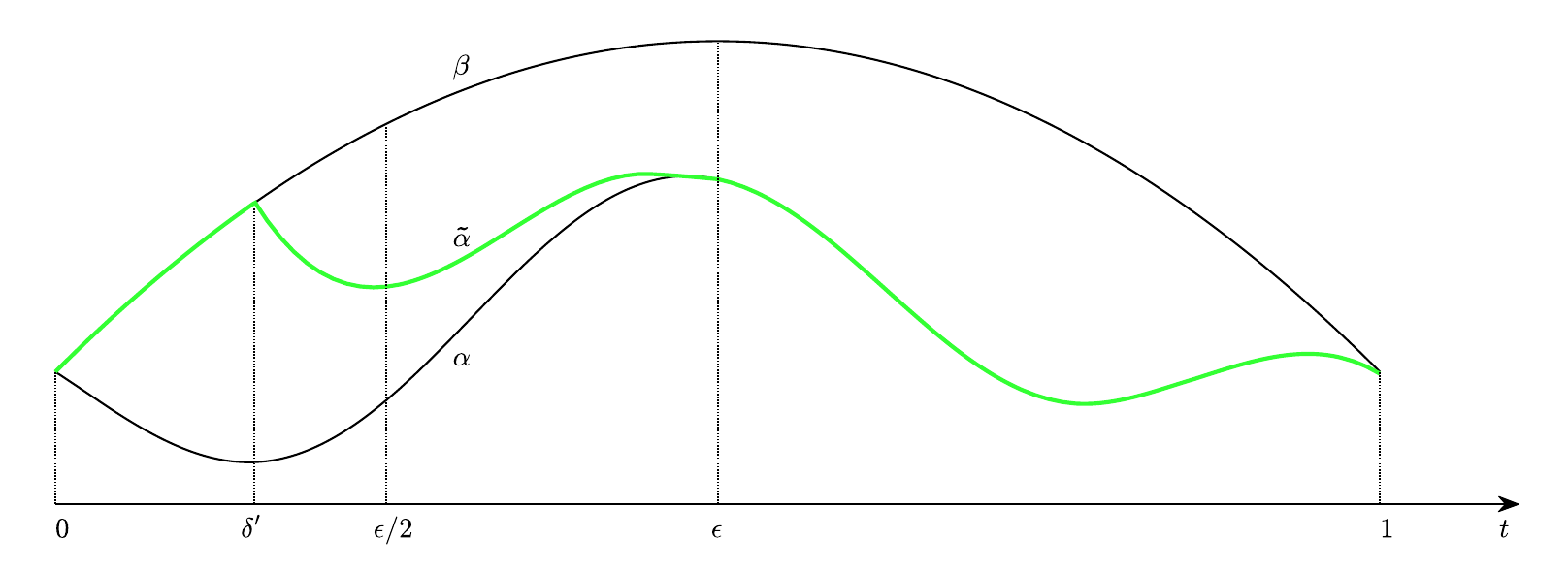}
	\vskip -0.5 cm
	\caption{Step 1}
    \label{step1}
\end{figure}

Step 2. (Smoothing) Fix $j_0> 2/\delta'$. We choose a $C^1$-positive path $\{\hat{\alpha}_t\}_{t\in[\delta'-1/j_0, 1]}$ which satisfies that $\hat{\alpha}_t=\tilde{\alpha}_t$ for $t\in[\delta', 1]$, $\pi(\hat{\alpha}_t)=\pi(\alpha_t)$ for $t\in[\delta'-1/j_0,\delta']$, and $\|\frac{d \hat{\alpha}_t}{dt}\|<C$ for some constant $C$, where $C$ only depends on $\{\alpha_t\}$ and $\{\beta_t\}$. Since the problem is local and the choice of $\delta'$ is arbitrary, w.l.o.g., we may suppose that $\{\alpha_t\}_{t\in [\delta'-1/j_0, \frac{1}{2}\epsilon]}$ is generic. Let $a=\pi(\{\alpha_t\}_{t\in [\delta'-1/j_0, \frac{1}{2}\epsilon]})$. We consider the submanifold $Y=\{\pi^{-1}(\pi(\alpha_t)): \delta'-1/j_0\leq t\leq \frac{1}{2}\epsilon\}$. Let $\xi_{\hat{\alpha}}$ and $\xi_\beta$ be the vector fields on $Y$ which are sections of the intersection of the positive cone field with $TY$ which project to the tangent vector field of $a$, and that $\{\hat{\alpha}_t\}_{t\in [\delta'-1/j_0, \frac{1}{2}\epsilon]}$ and $\{\beta_t\}_{t\in  [\delta'-1/j_0, \frac{1}{2}\epsilon]}$ are respectively integral curves of $\xi_{\hat{\alpha}}$ and $\xi_\beta$. (This is possible because this intersection is an open convex cone, see the proof of Prop. 2.4, on the page 367 in \cite{LaM}.) We consider the following family of vector fields $\{\xi_j\}$ on $Y$: For $j\in\mathbf{N}$ with $j>j_0$, $t\in[\delta'-1/j,\delta'+1/j]$ and $E\in\pi^{-1}\{\alpha_t\}$, we let $\xi_j|_{E}=(1-\frac{t-(\delta'-1/j)}{2/j})\xi_\beta|_{E}+(\frac{t-(\delta'-1/j)}{2/j})\xi_{\hat{\alpha}_t}|_{E}$. For $t\in [\delta'-1/j_0, \delta'-1/j]$ and $E\in\pi^{-1}\{\alpha_t\}$, let $\xi_j|_{E}=\xi_\beta|_{E}$. For $t\in [\delta'+1/j, \frac{1}{2}\epsilon]$ and $E\in\pi^{-1}\{\alpha_t\}$, let $\xi_j|_{E}=\xi_{\hat{\alpha}}|_{E}$. Obviously, $\{\xi_j\}$ are continuous for every $j>j_0$. Let $\delta=\delta'-1/j_0$. When $j$ is large enough, similar to the last two paragraphs in the proof of Smoothing Lemma, we may get a positive path $\bar{\alpha}_t$ that satisfies the conditions.

\begin{figure}[ht]
	\centering
	\includegraphics[height=7.0cm]{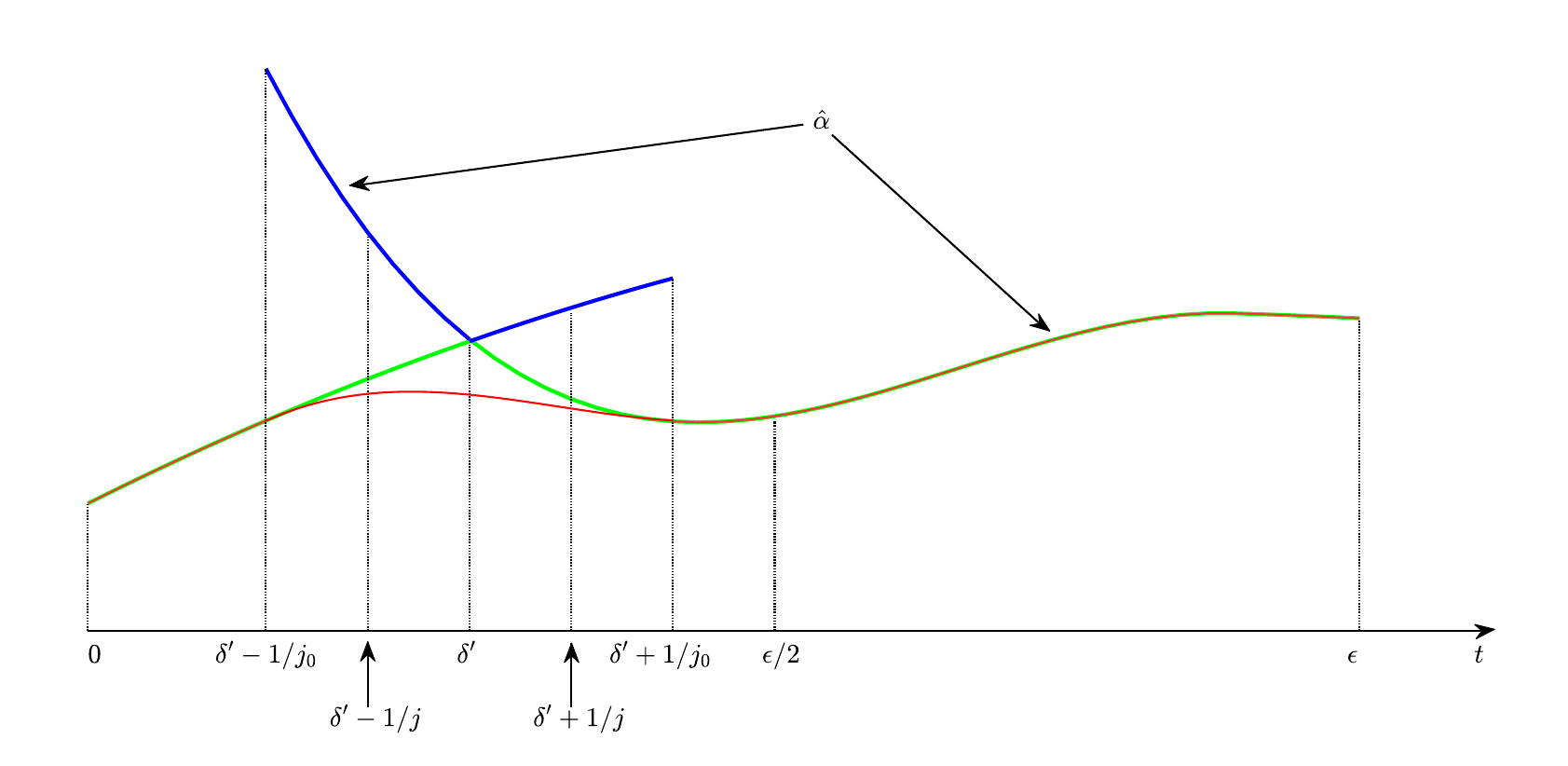}
	\vskip -0.5 cm
	\caption{Step 2}
    \label{step2}
\end{figure}
\end{proof}

\begin{remark}
    In Step 2 of the above proof, to achieve a smooth modification of the path, we require an additional constraint: the path must remain within the same conjugacy class as the original path. Therefore, the proof differs slightly from that in the Smoothing Lemma.
\end{remark}


\subsection{Speed Changing Lemmata and Positive Decomposition Lemma}

The next lemma demonstrates that if we adjust the time scale of a positive path, the adjusted path remains positive, and both paths are positively homotopic.
Let $\sim$ and $\sim_+$ mean homotopic and positively homotopic,
respectively.

\begin{lemma}[Total Speed Changing Lemma]\label{Lemma:change.the.time}

	If $\gamma(t),t\in[0,1]$ is a positive path, then for any differential function $\tau(\cdot)$ with respect to $t$ such that $\tau(0)=0,\tau(1)=1$ and $\tau'(t)>0,\tau\in[0,1]$,
	$\gamma({\tau(t)})$ is a positive path with respect to $t$,
	and $\gamma\sim_+\gamma(\tau(\cdot))$.
	Moreover, if $\tau'(0)=\tau'(1)=1$, we have
	$\gamma\sim_{+,con}\gamma(\tau(\cdot))$.
\end{lemma}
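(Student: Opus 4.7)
The plan is to verify the positivity by a direct chain-rule computation and to exhibit the homotopy via linear interpolation of the reparametrizations.

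First, writing $\gamma'(t)=JP(t)\gamma(t)$ with $P(t)$ positive definite, the chain rule gives
\[
    \frac{d}{dt}\gamma(\tau(t)) = \tau'(t)\gamma'(\tau(t)) = J\bigl[\tau'(t)P(\tau(t))\bigr]\gamma(\tau(t)).
\]
Since $\tau'(t)>0$ and $P(\tau(t))$ is positive definite, the matrix $\tau'(t)P(\tau(t))$ is symmetric and positive definite, so $\gamma(\tau(\cdot))$ is a positive path.

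Next, to produce the positive homotopy, I would introduce the linear interpolation
\[
    \tau_s(t) := (1-s)\,t + s\,\tau(t), \qquad s\in[0,1],\; t\in[0,1].
\]
Then $\tau_s(0)=0$, $\tau_s(1)=1$, and $\tau_s'(t) = (1-s) + s\tau'(t) > 0$ for every $s\in[0,1]$. Setting $H(s,t) := \gamma(\tau_s(t))$, the same chain-rule computation as above shows that for each fixed $s$, the path $\{H(s,t)\}_{t\in[0,1]}$ is positive (with generator $J[\tau_s'(t)P(\tau_s(t))]$), while $H$ is clearly continuous in $s$ and $C^1$ in $t$. Since $H(0,t)=\gamma(t)$ and $H(1,t)=\gamma(\tau(t))$, this gives $\gamma\sim_+ \gamma(\tau(\cdot))$.

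Finally, under the extra assumption $\tau'(0)=\tau'(1)=1$, we have $\tau_s'(0) = (1-s)+s\cdot 1 = 1$ and likewise $\tau_s'(1)=1$ for every $s$. Consequently
\[
    H(s,0) \equiv \gamma(0),\quad \partial_t H(s,0) \equiv \gamma'(0),\qquad
    H(s,1) \equiv \gamma(1),\quad \partial_t H(s,1) \equiv \gamma'(1),
\]
so the $C^1$-connectable conditions (\ref{C1.connectable.homotopy}) are satisfied trivially with $X_s\equiv Y_s\equiv I$, yielding $\gamma\sim_{+,con}\gamma(\tau(\cdot))$. There is no substantive obstacle here; the entire argument is a direct chain-rule verification together with an affine interpolation of time changes.
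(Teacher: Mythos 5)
Your proof is correct and takes essentially the same route as the paper: chain rule for positivity, then the affine interpolation $H(s,t)=\gamma\bigl((1-s)t+s\tau(t)\bigr)$ for the homotopy. You spell out the endpoint-speed verification for the $C^1$-connectable claim a bit more explicitly than the paper, but the underlying argument is identical.
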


\begin{proof}
By the positiveness of $\gamma(t)$, we have that
$$
P_1(t)=-J{d\gamma(t)\over dt}\gamma^{-1}
$$
is positive definite,
and hence
\begin{equation}
P_2=-J{d\gamma_{\tau(t)}\over dt}\gamma^{-1}=\tau'(t)P_1(\tau(t))
\end{equation}
is positive definite.
Therefore, ${d\gamma_{\tau(t)}\over dt}=JP_2\gamma_{\tau(t)}$ is a positive path with respect to $t$.

Similarly, $H(s,t)=\gamma((1-s)t+s\cdot \tau(t))$ is the required positive homotopy between $\gamma$ and $\gamma(\tau(\cdot))$.
\end{proof}

%
%
%

\begin{proposition}\label{Prop:positive.decomposition}
    If $A\in\Sp(2n)$ is conjugate to an element
    which preserves the splitting 
    $\R^{2n}=\R^{2k}\oplus\R^{2(n-k)}$,
    then $\pi_*$ maps the subcone in $\mathcal{P}_A$
    formed by vectors preserving this splitting onto the entire image $\pi_*(\mathcal{Q})$,
    where $\mathcal{Q}\subseteq\mathcal{P}_A$ consists of vectors preserving the splitting within the conjugacy class.
    
    In other words, for a given positive path $\gamma$
    with $\gamma(0)=A$, such that 
    $X(t)^{-1}\gamma(t)X(t)=\diag(\gamma_1,\gamma_2)$
    hold as $t$ near $0$
    for some path $\{X(t)\}\subset\Sp(2n)$ and 
    $\gamma_1\subset\Sp(2k),\gamma_2\subset\Sp(2(n-k))$,
    then we have another decomposition
    $\bar{X}(t)^{-1}\gamma(t)\bar{X}(t)=\diag(\bar\gamma_1,\bar\gamma_2)$
    holds as $t$ near $0$
    for some path $\{\bar{X}(t)\}\subset\Sp(2n)$ and positive paths
    $\bar\gamma_1\subset\Sp(2k),\bar\gamma_2\subset\Sp(2(n-k))$.
    Such a decomposition is denoted by
    {\bf positive decomposition}.
\end{proposition}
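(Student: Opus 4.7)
The plan is to prove the proposition in two stages: first establish the surjectivity on tangent cones by a direct linear-algebraic computation, then upgrade to the path-level decomposition using openness of positivity.

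After conjugation we may assume $A=\mathrm{diag}(A_1,A_2)$ with $A_i\in\Sp(2n_i)$ ($n_1=k$, $n_2=n-k$), so that $J$ is block-diagonal with respect to the splitting. Introduce the symmetric linear operator
\begin{equation*}
L_A(Y):=A^{-T}YA^{-1}-Y, \qquad Y=Y^T.
\end{equation*}
A direct application of Lemma \ref{lem:conj} to $X_t^{-1}AX_t$ with $X_0=I$ shows that the tangent space to the conjugacy orbit of $A$ is $\{JL_A(Y)A:Y=Y^T\}$, which is precisely $\ker\pi_*$ at $A$. The subcone under consideration consists of vectors $JP'A$ with $P'=\mathrm{diag}(P_1',P_2')>0$. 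On the other hand, a vector $JPA\in\mathcal{Q}$ is by definition tangent at $A$ to a curve of the form $\gamma(t)=X(t)^{-1}\mathrm{diag}(\gamma_1(t),\gamma_2(t))X(t)$ with $X(0)=I$; Lemma \ref{lem:conj} then yields the key identity
\begin{equation*}
P=\mathrm{diag}(\tilde P_1,\tilde P_2)+L_A(Y),
\end{equation*}
where $\gamma_i'(0)=J\tilde P_iA_i$ (with $\tilde P_i$ symmetric but possibly indefinite) and $X'(0)=JY$ (with $Y$ symmetric).

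The crucial observation is that $P>0$ forces each diagonal block $P_{ii}$ to be positive definite, since principal submatrices of a positive definite matrix are positive definite. Decomposing $Y=\tilde Y+Y^{\mathrm{off}}$ with $\tilde Y=\mathrm{diag}(Y_{11},Y_{22})$, the block structure of $A$ gives $L_A(\tilde Y)=\mathrm{diag}(L_{A_1}(Y_{11}),L_{A_2}(Y_{22}))$ (no off-diagonal contribution), while $L_A(Y^{\mathrm{off}})$ is purely off-diagonal. Hence $P_{ii}=\tilde P_i+L_{A_i}(Y_{ii})>0$ and
\begin{equation*}
P-\mathrm{diag}(P_{11},P_{22})=L_A(Y^{\mathrm{off}}),
\end{equation*}
so $P':=\mathrm{diag}(P_{11},P_{22})>0$ lies in the block-diagonal positive cone and $JPA-JP'A\in\ker\pi_*$. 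This settles the tangent-cone surjectivity.

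To upgrade to the path-level statement, set $\bar X(t):=X(t)\,\mathrm{diag}\bigl(e^{tJY_{11}},e^{tJY_{22}}\bigr)$; since each $JY_{ii}\in\mathfrak{sp}(2n_i)$, one has $\bar X(t)\in\Sp(2n)$ with $\bar X(0)=I$, and a computation gives
\begin{equation*}
\bar X(t)^{-1}\gamma(t)\bar X(t)=\mathrm{diag}(\bar\gamma_1(t),\bar\gamma_2(t)),\qquad \bar\gamma_i(t):=e^{-tJY_{ii}}\gamma_i(t)e^{tJY_{ii}}.
\end{equation*}
A second application of Lemma \ref{lem:conj} shows $\bar\gamma_i'(0)=JP_{ii}A_i$ with $P_{ii}>0$, so each $\bar\gamma_i$ is positive at $t=0$; openness of positivity in the $C^1$-topology (Lemma \ref{lem:c0c1}(i)) then makes each $\bar\gamma_i$ positive for $t$ in a neighborhood of $0$, completing the proof. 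The main obstacle is extracting the algebraic characterization of $\mathcal{Q}$ as $\{JPA:P=\mathrm{diag}(\tilde P_1,\tilde P_2)+L_A(Y)\}$; once this normal form is in hand, the positive-definiteness of the diagonal blocks of $P>0$ provides the required block-diagonal positive representative essentially for free.
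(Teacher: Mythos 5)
Your proof is correct and follows essentially the same route as the paper's: block-diagonalize $A$, apply Lemma~\ref{lem:conj} to obtain the relation $P=\diag(\tilde P_1,\tilde P_2)+L_A(Y)$ with $L_A(Y)=A^{-T}YA^{-1}-Y$, split $Y$ into its diagonal and off-diagonal blocks, and observe that the off-diagonal contribution $L_A(Y^{\mathrm{off}})$ is what produces $P_{12}$, so that $\diag(P_{11},P_{22})>0$ is the sought block-diagonal positive representative. The one place where you are actually more careful than the paper is the path-level upgrade: the paper takes any $\bar X(t)$ with $\bar X'(0)=J\bar Y$ (where $\bar Y$ is the off-diagonal block of $Y$) and verifies only that $\mu'(0)=J\diag(P_{11},P_{22})A$ is block-diagonal and positive, without checking that $\mu(t)=\bar X(t)^{-1}\gamma(t)\bar X(t)$ is actually block-diagonal for $t\neq 0$ near $0$. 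Your choice $\bar X(t)=X(t)\,\diag(e^{tJY_{11}},e^{tJY_{22}})$ makes the block-diagonal structure of the conjugated path manifest for all small $t$ and identifies the blocks explicitly as $\bar\gamma_i(t)=e^{-tJY_{ii}}\gamma_i(t)e^{tJY_{ii}}$, with positivity near $t=0$ then following from openness of the positive cone. This is the natural way to complete the ``in other words'' half of the statement and is a genuine (if modest) improvement in explicitness over the paper's version.
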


\begin{proof}
    Without loss of generality, we can suppose
    $A=\diag(A_1,A_2)$ where $A_1\in\Sp(2k)$ and
    $A_2\in\Sp(2(n-k))$.
    
    Let $\gamma$ be a $C^1$ path such that $\gamma(0)=A$ 
	and $\gamma'(0)=JPA$ for some positive definite matrix $P$.
	There exists a positive path $\tilde\gamma=\diag(\gamma_1,\gamma_2)$ 
    such that $\tilde\gamma(0)=A$ 
	and $\tilde\gamma'(0)=J\tilde{P}A$ for some symmetric matrix $\tilde P$ (not necessarily positive definite),
	and satisfying:
	\begin{equation}\label{same.conj.1}
	\tilde\gamma(t)=X(t)^{-1}\gamma(t)X(t),
    \quad\forall t\in(-\epsilon,\epsilon)
	\end{equation}
 	for some path $\{X(t)\}\subset\Sp(2n)$ with $X(0)=I_{2n}$ and some $\epsilon>0$.	
	Suppose that $\frac{d }{dt}X(t)\Big|_{t=0}=JY$ where $Y$ is a real symmetry matrix. 
    By Lemma \ref{lem:conj}, we have
    \begin{equation}
	\tilde{P}=P+[A^{-T}YA^{-1}-Y].
	\end{equation}
  
    On the other hand, 
    since $\tilde\gamma=\diag(\gamma_1,\gamma_2)$,
    there exists two symmetric matrices $\tilde{P}_1,\tilde{P}_2$ such that
    $\tilde\gamma_1'(0)=J\tilde{P}_1A_1$,
    $\tilde\gamma_2'(0)=J\tilde{P}_2A_2$;
    and hence $\tilde{P}=\diag(\tilde{P}_1,\tilde{P}_2)$.
    Letting $P=\left(\matrix{P_{11}& P_{12}\cr P_{12}^T& P_{22}}\right)$ and
    $Y=\left(\matrix{Y_{11}& Y_{12}\cr Y_{12}^T& Y_{22}}\right)$,
    we then have
    \begin{equation}
	\left(\matrix{\tilde{P}_1& O\cr O& \tilde{P}_2}\right)
    =\tilde{P}
    =\left(\matrix{P_{11}& P_{12}\cr P_{12}^T& P_{22}}\right)
    +\left(\matrix{A_{1}^{-T}Y_{11}A_{1}^{-1}-Y_{11}& A_{1}^{-T}Y_{12}A_{2}^{-1}-Y_{12}\cr A_{2}^{-T}Y_{12}^TA_{1}^{-1}-Y_{12}^T& A_{2}^{-T}Y_{22}A_{2}^{-1}-Y_{22}}\right).
	\end{equation}
Thus $P_{12}+[A_{1}^{-T}Y_{12}A_{2}^{-1}-Y_{12}]=O$ holds.

Now let $\bar{Y}=\left(\matrix{O& Y_{12}\cr Y_{12}^T& O}\right)$,
and $\{\bar{X}(t)\}\subset\Sp(2n)$ be a symplectic matrix path
    with $\bar{X}(0)=I_{2n}$ and $\frac{d }{dt}\bar{X}(t)\Big|_{t=0}=J\bar{Y}$.
    Hence, $\mu=\bar{X}(t)^{-1}\gamma(t)\bar{X}(t)$
    is in the conjugacy class of $\gamma$
    and $\mu'(0)=J\bar{P}A$ for some symmetric matrix $\bar{P}$.
    At last, by Lemma \ref{lem:conj},
    we have
    \begin{equation}
	\bar{P}=P+[A^{-T}\bar{Y}A^{-1}-\bar{Y}]
    =\left(\matrix{P_{11}& P_{12}\cr P_{12}^T& P_{22}}\right)
    +\left(\matrix{O& A_{1}^{-T}Y_{12}A_{2}^{-1}-Y_{12}\cr A_{2}^{-T}Y_{12}^TA_{1}^{-1}-Y_{12}^T& O}\right)
    =\left(\matrix{P_{11}& O\cr O& P_{22}}\right)
	\end{equation}
    which is positive definite.
\end{proof}

\begin{lemma}[Positive Decomposition Lemma]\label{Lm:positive.decomposition}

	Let $\gamma$ be a generic positive path in $\Sp(2n)$ such that
	\begin{equation}\label{decomposition}
	\gamma(t)=X(t)^{-1}\left(\matrix{ \gamma_1(t)& O\cr O& \gamma_2(t)}\right)X(t),
    \quad t\in[t_0,t_1]
	\end{equation}
	for some paths
    $\{X(t)\}\subset\Sp(2n)$,$\gamma_1\subset\Sp(2k),\gamma_2\subset\Sp(2(n-k))$.
	Then there exists positive paths $\mu_1$ and $\mu_2$
    such that $\pi(\gamma_1)=\pi(\mu_1)$ and
    $\pi(\gamma_2)=\pi(\mu_2)$.
\end{lemma}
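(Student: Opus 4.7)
The plan is to apply Proposition~\ref{Prop:positive.decomposition} locally at every time $t^{*}\in[t_{0},t_{1}]$ in order to obtain a local positive block-diagonal decomposition lying in the same block-wise conjugacy classes as $(\gamma_{1},\gamma_{2})$, and then to use compactness together with Proposition~\ref{prop:lift} and the Smoothing Lemma~\ref{lem: smoothing} to glue these local pieces into a single global positive path $\diag(\mu_{1},\mu_{2})$ over $[t_{0},t_{1}]$.

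First I would carry out the local construction. Fix $t^{*}\in[t_{0},t_{1}]$ and consider the conjugated path $\widetilde{\gamma}^{t^{*}}(s):=X(t^{*})^{-1}\gamma(s)X(t^{*})$, which is again positive and satisfies $\widetilde{\gamma}^{t^{*}}(t^{*})=\diag(\gamma_{1}(t^{*}),\gamma_{2}(t^{*}))$. Setting $Z^{t^{*}}(s)=X(s)X(t^{*})^{-1}$ gives $Z^{t^{*}}(t^{*})=I$ and $(Z^{t^{*}})^{-1}\widetilde{\gamma}^{t^{*}}Z^{t^{*}}=\diag(\gamma_{1},\gamma_{2})$, so the hypothesis of Proposition~\ref{Prop:positive.decomposition} is met at $s=t^{*}$. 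This yields a conjugation $\bar{Z}^{t^{*}}(s)$ with $\bar{Z}^{t^{*}}(t^{*})=I$ and positive paths $\bar{\gamma}_{1}^{t^{*}}\subset\Sp(2k)$, $\bar{\gamma}_{2}^{t^{*}}\subset\Sp(2(n-k))$ defined on some neighbourhood $I_{t^{*}}\ni t^{*}$, such that $(\bar{Z}^{t^{*}})^{-1}\widetilde{\gamma}^{t^{*}}\bar{Z}^{t^{*}}=\diag(\bar{\gamma}_{1}^{t^{*}},\bar{\gamma}_{2}^{t^{*}})$ and $\bar{\gamma}_{i}^{t^{*}}(t^{*})=\gamma_{i}(t^{*})$. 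Since $\gamma$ is generic, continuity of spectra together with this initial identification pins down the block-labelling, so $\pi(\bar{\gamma}_{i}^{t^{*}}(s))=\pi(\gamma_{i}(s))$ for every $s\in I_{t^{*}}$.

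Next I would cover the compact interval $[t_{0},t_{1}]$ by finitely many such neighbourhoods $\{I_{j}\}$ with local positive lifts $\diag(\mu_{1}^{(j)},\mu_{2}^{(j)})$. On every non-empty overlap $I_{j}\cap I_{j+1}$ the two lifts project to the same conjugacy-class path in $\mathcal{C}onj(\Sp(2k))\times\mathcal{C}onj(\Sp(2(n-k)))$, so Proposition~\ref{prop:lift} applied inside the fibre supplies a $C^{1}$-connectable positive homotopy between them within the block-diagonal subgroup. After passing to a common lift on each overlap and absorbing any remaining $C^{1}$-cusp at a junction point by invoking the Smoothing Lemma~\ref{lem: smoothing}, an induction on $j$ produces a single $C^{1}$ positive path $\diag(\mu_{1},\mu_{2})$ on $[t_{0},t_{1}]$; its two block entries are the required positive paths $\mu_{i}$ with $\pi(\mu_{i})=\pi(\gamma_{i})$.

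The main obstacle lies in the bookkeeping of eigenvalues across the whole interval. A priori, at a time when a pair of eigenvalues of $\gamma_{1}$ crosses a pair from $\gamma_{2}$, the local application of Proposition~\ref{Prop:positive.decomposition} could swap which eigenvalues land in which block and thereby alter the conjugacy-class path of each factor, obstructing the patching. The generic hypothesis is exactly what is used to rule this out: generic crossings between the two blocks are of the lowest codimension, and the normalization $\bar{Z}^{t^{*}}(t^{*})=I$ combined with continuity of eigenvalues forces the correct labelling near each $t^{*}$. Verifying rigorously that this correct labelling propagates through every such crossing is the delicate part of the argument; once that is done, the remaining gluing steps are routine applications of the tools assembled earlier in Section~\ref{sec:3}.
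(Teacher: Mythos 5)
Your proposal is correct and follows essentially the same route as the paper: apply Proposition~\ref{Prop:positive.decomposition} locally, cover the interval by compactness, glue, and smooth with Lemma~\ref{lem: smoothing}. One remark: invoking Proposition~\ref{prop:lift} on overlaps is heavier than needed and slightly imprecise (a $C^1$-connectable positive homotopy is not itself a ``common lift'' you can pass to) — the paper simply conjugates adjacent pieces by a block-diagonal symplectic matrix at each junction time, producing a piecewise positive path which the Smoothing Lemma then repairs, and the block-labelling worry you raise is already settled by the continuous family $(X(t),\gamma_1(t),\gamma_2(t))$ given in the hypothesis together with the fact that the conjugation produced by Proposition~\ref{Prop:positive.decomposition} starts at the identity.
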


\begin{proof}
Without loss of generality, we assume $[t_0,t_1]=[0,1]$. By Proposition \ref{Prop:positive.decomposition}
(also by Lemma B.1 of \cite{CLW} if $\sigma(\gamma_1(t))\cap\sigma(\gamma_2(t))=\emptyset$),
for any $t\in[0,1]$, there exists $\epsilon(t)>0$ such that
$\gamma_1,\gamma_2$ in the decomposition (\ref{decomposition}) are
positive on interval $I_t=(t-\epsilon(t),t+\epsilon(t))\cap[0,1]$.
Here we $C^1$-extend the time interval of $\gamma$ from $[0,1]$
to $(-\ep_0,1+\ep_0)$ for some $\ep_0>0$.

    Since $\cup_{t\in[0,1]}I_t=[0,1]$,
    there exists finitely many intervals, says $I_i,i=1,2,\ldots,m$
    such that
	$\cup_{i=1}^m(s_i-\epsilon(s_i),s_i+\epsilon(s_i))=[0,1]$,
	Therefore, $[0,1]$ can be partitioned into finitely many
	closed sub-intervals $\tilde{I}_i\subset(s_i-\epsilon(s_i),s_i+\epsilon(s_i))$
	such that on each of these intervals, we have decomposition
    \begin{equation}
	\gamma(t)=(X^{(i)}(t))^{-1}\left(\matrix{ \gamma_1^{(i)}(t)& O\cr O& \gamma_2^{(i)}(t)}\right)X^{(i)}(t),
    \quad t\in \tilde{I}_i,
	\end{equation}
    where $\gamma_1^{(i)}$ and $\gamma_2^{(i)}$ are positive on time interval $\tilde{I}_i$ for $i=1,2,\cdots,m$.
 
	Then we glued these positive paths together, up to a conjugation (by (\ref{decomposition}))
	between the adjoint connected points,
	we obtain two piece wise positive paths $\tilde\gamma_1$ and
    $\tilde\gamma_2$ which satisfy
	$\pi(\gamma_i)=\pi(\tilde\gamma_i),i=1,2$.
    Now using Smoothing Lemma, we can find two positive paths
    $\mu_1$ and $\mu_2$ such that $\pi(\mu_i)=\pi(\tilde\gamma_i)=\pi(\gamma_i),i=1,2$.
    Therefore, $\diag(\mu_1,\mu_2)$ is a positive path which satisfying $\pi(\diag(\mu_1,\mu_2))=\pi(\diag(\gamma_1,\gamma_2))=\pi(\gamma)$.
\end{proof}

\begin{lemma}[Part Speed Changing Lemma]\label{Lemma:change.part.time}
Let $\gamma$ be a generic positive path in $\Sp(2n)$ satisfying the condition
(\ref{decomposition}).
If $\tau$ is a monotonic function such that $\tau(t_0)=t_0,\tau(t_1)=t_1,\tau'(t_0)=\tau'(t_1)=1$ and $\tau'(t)>0$ for $t\in[t_0,t_1]$.
Then
\begin{equation}
	\mu(t)=\left(\matrix{\mu_1(t)& O\cr O& \mu_2(\tau(t))}\right),
\end{equation}
where $\mu_1(t)$ and $\mu_2(t)$ are given in Lemma \ref{Lm:positive.decomposition}, is also a generic positive path,
and we have $\gamma\sim_{+,con}\mu$.
\end{lemma}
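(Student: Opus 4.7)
The plan is to establish $\gamma \sim_{+,con} \mu$ by a two-step chain of $C^1$-connectable positive homotopies passing through an intermediate direct-sum path:
$$
\gamma \;\sim_{+,con}\; \diag(\mu_1(t), \mu_2(t)) \;\sim_{+,con}\; \diag(\mu_1(t), \mu_2(\tau(t))) \;=\; \mu(t).
$$
Before constructing the homotopies I would verify that $\mu$ is itself a positive path. Writing $\mu_1'(t) = J P_1(t)\, \mu_1(t)$ and $\mu_2'(s) = J P_2(s)\, \mu_2(s)$ with $P_1, P_2$ positive definite symmetric, a direct computation gives
$$
\mu'(t) = J\, \diag\bigl(P_1(t),\, \tau'(t) P_2(\tau(t))\bigr)\, \mu(t),
$$
and since $\tau'(t) > 0$ throughout $[t_0, t_1]$ the block-diagonal coefficient is positive definite symmetric. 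Genericity of $\mu$ follows because $\mu_1, \mu_2$ inherit the generic structure of $\gamma_1, \gamma_2$ (their images under $\pi$ coincide), and reparametrizing one block by a strictly monotone smooth $\tau$ only translates the codimension-one crossings in time without creating higher-codimension degeneracies.

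For the first link in the chain, I would invoke the Positive Decomposition Lemma to obtain positive paths $\mu_1 \in \Sp(2k)$ and $\mu_2 \in \Sp(2(n-k))$ with $\pi(\mu_i) = \pi(\gamma_i)$, chosen so that the $1$-jets of $\diag(\mu_1, \mu_2)$ at $t = t_0$ and $t = t_1$ are related to those of $\gamma$ by conjugation in $\Sp(2n)$. Since $\diag(\mu_1, \mu_2)$ is then a smooth positive lift of $\pi(\gamma)$ satisfying the $C^1$-connectable condition at both endpoints, Proposition \ref{prop:lift} yields $\gamma \sim_{+,con} \diag(\mu_1, \mu_2)$.

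For the second link, interpolate the time changes by $\tau_s(t) = (1-s)t + s\tau(t)$, which satisfies $\tau_s(t_0) = t_0$, $\tau_s(t_1) = t_1$, $\tau_s'(t) > 0$ on $[t_0, t_1]$, and crucially $\tau_s'(t_0) = \tau_s'(t_1) = 1$ for every $s \in [0,1]$. Define
$$
H(s, t) = \diag\bigl(\mu_1(t),\, \mu_2(\tau_s(t))\bigr).
$$
The same positive-definiteness computation, with $\tau_s$ in place of $\tau$ (which is precisely the Total Speed Changing Lemma applied fiberwise to $\mu_2$), shows that $H(s, \cdot)$ is a positive path for each $s$. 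Because $\tau_s(t_i) = t_i$ and $\tau_s'(t_i) = 1$ independent of $s$, both $H(s, t_i)$ and $\partial_t H(s, t_i)$ are independent of $s$ for $i = 0,1$. Thus $H$ is a positive homotopy with fixed end-points and end-speeds, giving $\diag(\mu_1, \mu_2) \sim_{+,\mathit{fix}} \mu$ and hence $\sim_{+,con} \mu$. Concatenating the two stages produces $\gamma \sim_{+,con} \mu$.

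The main obstacle, and essentially the only nontrivial point, is the arrangement in Stage~1: to apply Proposition \ref{prop:lift} in the $\sim_{+,con}$ form, the representatives $\mu_1, \mu_2$ furnished by the Positive Decomposition Lemma must have compatible $1$-jets at the two endpoints, meaning the joint $1$-jet of $\diag(\mu_1, \mu_2)$ at $t = t_0, t_1$ must be conjugate in $\Sp(2n)$ to that of $\gamma$. This is achieved using the freedom to conjugate $\mu_1, \mu_2$ separately within their conjugacy classes and to adjust the infinitesimal generators within the positive cones over the endpoints; the block-diagonal structure preserved by conjugations of the form $\diag(Z_1, Z_2)$ reduces this to the block-diagonal analogue of the $C^1$-connectable condition already handled in the proof of Proposition \ref{prop:lift}.
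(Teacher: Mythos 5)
Your proof is correct and follows essentially the same route as the paper: the paper also passes through the intermediate path $\diag(\mu_1,\mu_2)$ via Proposition 3.12, then applies the Total Speed Changing Lemma to $\mu_2$ alone, and concludes by transitivity. Your version simply spells out the interpolation $\tau_s(t)=(1-s)t+s\tau(t)$ and the block positivity computation that the paper leaves implicit in its citation of Lemma 3.13.
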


\begin{proof}
Using the same notations as in the proof
of Lemma \ref{Lm:positive.decomposition},
by Proposition \ref{prop:lift}, we have $\gamma\sim_{+,con}\diag(\mu_1,\mu_2)$.
By changing the time variable via Lemma \ref{Lemma:change.the.time},
we have $\mu_2\sim_{+,con}\mu_2(\tau(\cdot))$, and hence
\begin{equation}
\mu(t)=\left(\matrix{\mu_1(t)& O\cr O& \mu_2(\tau(t))}\right)
\end{equation}
is a positive path, and $\diag(\gamma_1,\gamma_2)\sim_{+,con}\mu$.
Accordingly, we have $\gamma\sim_{+,con}\mu$.
\end{proof}

\begin{remark}
Using the Part Speed Changing Lemma, we can move collisions over specific time intervals.
As a result, we can partition the eigenvalues into several groups
where collisions do not occur between different groups,
ensuring that collisions only happen within each group.
Combined with the Positive Decomposition Lemma,
we can decompose the original positive path into several
positive paths within lower-dimensional symplectic spaces.
\end{remark}


	



If the loops have no collisions of eigenvalues except for $\pm1$, they can be decomposed into positive loops in $\Sp(2)$; thus, they are positively homotopic.  

\begin{theorem}\label{Thm:pos.homotopy.of.loops.has.no.collisions}
	Let $\gamma$ and $\mu$ be two positive loops that start from the identity. Suppose both loops have no collisions of eigenvalues except for $\pm1$. Then, if $\gamma \sim \mu$, it implies $\gamma \sim_+ \mu$.
\end{theorem}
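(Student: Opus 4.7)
The plan is to reduce Theorem 3.11 to Slimowitz's theorem for $\Sp(2)$ via a block decomposition into $n$ two-by-two symplectic blocks, using the hypothesis on collisions to guarantee that such a decomposition is globally well defined.

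First, using Lemma 3.1(i) and the Smoothing Lemma, I would replace $\gamma$ and $\mu$ by generic positive loops. Since all eigenvalue collisions occur only at $\pm1$, for every $t\in[0,1]$ the spectrum $\sigma(\gamma(t))$ splits into $n$ disjoint pairs, each carrying a continuously varying real $\gamma(t)$-invariant symplectic 2-plane; at the endpoints $t=0,1$ (where $\gamma=I$) fix any symplectic splitting $\R^{2n}=\bigoplus_{i=1}^n\R^2$ compatible with the one-sided limits of these planes. Cover $[0,1]$ by finitely many closed sub-intervals on each of which $\gamma(t)=X(t)^{-1}\diag(\gamma_1(t),\ldots,\gamma_n(t))X(t)$ with $\gamma_j\in\Sp(2)$; apply the Positive Decomposition Lemma (Lemma 3.10) on each sub-interval to replace the blocks by positive paths with the same conjugacy classes, and glue via Proposition 3.8 and the Smoothing Lemma. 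This yields $\gamma\sim_{+,con}\diag(\gamma_1,\ldots,\gamma_n)$ with each $\gamma_j$ a positive loop at $I\in\Sp(2)$, and analogously $\mu\sim_{+,con}\diag(\mu_1,\ldots,\mu_n)$.

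Second, by Slimowitz's theorem for $\Sp(2)$, each $\gamma_j$ is positively homotopic to the pure rotation loop $R^{a_j}$, $t\mapsto R(2\pi a_jt)$, where $a_j=\mathrm{ind}(\gamma_j)\geq 1$; similarly $\mu_j\sim_+R^{b_j}$ with $b_j\geq 1$. Direct sums of positive homotopies remain positive (the direct sum of positive definite matrices is positive definite), so $\gamma\sim_+\diag(R^{a_1},\ldots,R^{a_n})$ and $\mu\sim_+\diag(R^{b_1},\ldots,R^{b_n})$. The assumption $\gamma\sim\mu$ forces $\sum_j a_j=\sum_j b_j=:N$ via the Maslov index. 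It then suffices to show that any two integer vectors $(a_j),(b_j)$ with all entries $\geq 1$ and equal sum give positively homotopic block-rotation loops in $\Sp(2n)$, which I would reduce to the \emph{unit-shift} statement
$\diag(\ldots,R^{a_i},\ldots,R^{a_j},\ldots)\sim_+\diag(\ldots,R^{a_i+1},\ldots,R^{a_j-1},\ldots)$, valid whenever $a_j\geq 2$, and iterate. The shift itself would be realized by conjugating the two relevant blocks along a continuous family of symplectic rotations that mixes the two 2-planes while reparametrising angular velocities, which is essentially the content of Lemma 4.7 of \cite{Sli} applied to the $\Sp(4)$-sub-block.

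The main obstacle is the unit shift. Steps 1 and 2 are formal consequences of machinery already developed in the paper (the Positive Decomposition Lemma together with Proposition 3.8 and the Smoothing Lemma) and of Slimowitz's $\Sp(2)$-theorem, but the unit shift necessarily leaves the subgroup $\Sp(2)^n$: any homotopy through block-diagonal loops would force each block's Maslov index to vary continuously in an integer, and is therefore constant. The essential point is that the positive cone of $\Sp(2n)$ is strictly larger than the direct sum of the positive cones of the $n$ blocks, so one has extra room to mix two invariant symplectic 2-planes; the technical heart of the proof is the careful construction of this mixing positive homotopy (or, equivalently, the invocation of Slimowitz's $\Sp(4)$-analysis applied block-pairwise inside $\Sp(2n)$).
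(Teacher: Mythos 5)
Your proposal is correct and follows essentially the same route as the paper's proof: both reduce to diagonal $\Sp(2)^{\times n}$ loops via the Positive Decomposition Lemma (with the Part Speed Changing Lemma to handle the gluing), then apply Slimowitz's $\Sp(2)$ result to replace each block by a pure rotation loop, match total Maslov index, and finally shift winding numbers between blocks one unit at a time via Lemma 4.7 of Slimowitz applied to $\Sp(4)$ sub-blocks. The only difference is that the paper states the last step as a single invocation of Slimowitz's Lemma 4.7 plus induction, whereas you unpack it into the explicit "unit-shift" move — but these are the same argument.
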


\begin{proof}
	By assumption, there exists the following decomposition:

    \begin{equation}\label{Sp_2n.decomposition}
    \gamma(t)=X_t^{-1}\left(\matrix{ \gamma_1(t)& O& \cdots& O\cr 
                                      O& \gamma_2(t)& \cdots& O\cr
                                      \cdots& \cdots& \cdots& \ldots\cr
                                      O& O& \cdots& \gamma_n(t)}
                                      \right)X_t.
    \end{equation}
    Here $\gamma_i\subset\Sp(2), i=1,2,\cdots,n$ 
    and $\{X_t\}$ are smooth symplectic paths.
    Moreover, by Lemma \ref{Lm:positive.decomposition},
    Lemma \ref{Lemma:change.part.time} and induction,
    we can assume $\gamma_i\subset\Sp(2), i=1,2,\cdots,n$ are positive paths.

    Since homotopy implies positive homotopy for positive paths in $\Sp(2)$, $\gamma_i\sim_+ e^{2\pi k_itJ_2}$ where
    $2k_i$ is the Maslov-type index of $\gamma_i$.
    Then the Maslov-type index of $\gamma$ is $2k_1+2k_2+\cdots+2k_n$.
    
    Similarly, we have
    \begin{eqnarray}
    \mu(t)&=&Y_t^{-1}\diag(\mu_1(t),\mu_2(t),\cdots,\mu_n(t))Y_t
    \nonumber\\
    &\sim_+&\diag(\mu_1(t),\mu_2(t),\cdots,\mu_n(t))
    \nonumber\\
    &\sim_+&\diag(e^{2\pi l_1tJ_2},e^{2\pi l_2tJ_2},\cdots,e^{2\pi l_ntJ_2}),
    \end{eqnarray}
    where $\{Y_t\}$ is a proper symplectic path, $\mu_i(t)$ is a positive path,
    and $2l_i$ is the Maslov-type index of $\mu_i$
    for $i=1,2,\cdots,n$.
    Now the Maslov-type index of $\mu$ is $2l_1+2l_2+\cdots+2l_n$.

    $\gamma\sim\mu$ implies the same Maslov-type indices of $\gamma$
    and $\mu$,
    i.e., $2(k_1+k_2+\cdots+k_n)=2(l_1+l_2+\cdots+l_n)$.
    By Lemma 4.7 of \cite{Sli} and induction, we have
    \begin{equation}
        \diag(e^{2\pi k_1tJ_2},e^{2\pi k_2tJ_2},\cdots,e^{2\pi k_ntJ_2})
        \sim_+\diag(e^{2\pi l_1tJ_2},e^{2\pi l_2tJ_2},\cdots,e^{2\pi l_ntJ_2})
    \end{equation}
    Therefore, we have $\gamma\sim_+\mu$.
\end{proof}

\setcounter{equation}{0}
\section{The positive homotopy within the truly hyperbolic set}
\label{sec:4}

\subsection{No constraints on positive paths in $\mathcal{C}onj(\Sp(2n)$ within the truly hyperbolic set}\label{sec:4.1}

An important observation is that the movement of eigenvalues on $\U$ along positive paths is constrained, but the movement of eigenvalues on $\C\backslash(\{0\}\cup\U)$ is unconstrained (see Proposition D in Section 1.1). Such an observation inspires us to consider the (positive) paths entirely located in the truly hyperbolic set (the definition is given in Section \ref{subsec:2.1}); and for brevity, we refer to such a path as a {\bf (positive) truly hyperbolic path}. Now, we proceed to prove Proposition D.

\begin{proof}[Proof of Proposition D]
	Let $\gamma$ be a $C^1$ path such that $\gamma(0)=A$ 
	and $\gamma'(0)=JQA$ for some symmetric matrix $Q$.
	To demonstrate the surjectivity of $\pi_*(\mathcal{P}_A)$ at $A$,
	it suffices to establish the existence of a positive path $\mu$ such that $\mu(0)=A$ 
	and $\mu'(0)=JPA$ for some positive definite matrix $P$,
	and satisfying:
	\begin{equation}\label{same.conj}
	X(t)^{-1}\gamma(t)X(t)-\mu(t)=o(t),
	\end{equation}
	for some path $\{X(t)\}\in\Sp(2n)$ with $X(0)=I_{2n}$.
	
	Suppose that $\frac{d }{dt}X(t)\Big|_{t=0}=JY$ where $Y$ is a real symmetry matrix. 
	Expanding the first-order term of (\ref{same.conj}), we have
$$
	(I-tJY)(I+tJQ)A(I+tJY)-(I+tJP)A=o(t),
$$
	which implies
$$
	P=Q+J^{-1}[AJY-JYA]A^{-1}=Q+[A^{-T}YA^{-1}-Y].
$$
	
	We now use the normal forms of $A$ (c.f. \cite{Lon}) to find 
 an appropriate $Z$ such that
	$A^{-T}ZA^{-1}-Z$ is positive definite, 
	and hence $Y=kZ$ for $k>0$ large enough 
	will guarantee the positive difinite of $P$.

    Note that the standard symplectic matrix in \cite{Lon} is different from
    the one we have previously used.
   However, it's crucial to emphasize that the proof of this theorem doesn't hinge on the specific forms of the standard symplectic matrix.
For the sake of convenience, in this proof only,
we utilize the original normal forms without providing further justification.
	Referring to Theorem 1 on pp. 34 and Theorem 2 on pp. 36 of \cite{Lon},
	there exists $T\in\Sp(2n)$ such that 
$$
	T^{-1}AT=B_1\diamond\cdots\diamond B_m,
$$
	where $B_i$ takes the form
$$
    M_k(\lambda)=\left(\matrix{A_k(\lambda) & 0\cr 0 & A_k(\lambda)^{-T}}\right),
$$
or the from
$$
    N_{2k}(\rho,\th)=\left(\matrix{A_{2k}(\rho,\theta) & 0\cr 0 & A_{2k}(\rho,\theta)^{-T}}\right),
$$
	where
	\begin{eqnarray}
	A_k(\lambda)&=&\left(\matrix{
		\lambda &   \epsilon   & 0 & \cdots      & 0     & 0\cr
		0 & \lambda &    \epsilon     & \cdots & 0     & 0\cr
		0 &   0   &  \lambda &    \cdots & 0     & 0\cr
		\cdot & \cdot &    \cdot     &  \cdots & \cdot  & \cdot\cr
		0 &    0  &    0     &     \cdots  &\lambda&\epsilon\cr
		0 &    0  &    0     &     \cdots  &0      &\lambda\cr
	}\right), \quad\lambda>1,
 \nonumber\\
	A_{2k}(\rho,\th)&=&\left(\matrix{
		\rho{R}(\th) &   \epsilon{I}_2   & 0 & \cdots   & 0   & 0\cr
		0 & \rho{R}(\th) & \epsilon{I}_2  & \cdots & 0     & 0\cr
		0 &  0  &  \rho{R}(\th) & \cdots & 0    & 0\cr
		\cdot & \cdot &    \cdot     & \cdots & \cdot   & \cdot\cr
		0 &  0  &  0   & \cdots  &\rho{R}(\th)&\epsilon{I}_2\cr
		0 &  0  &  0   & \cdots  &0&\rho{R}(\th)\cr
	}\right),\quad\rho>1.\nonumber
	\end{eqnarray}
	Here, note that we substitute $\epsilon$ and $\epsilon{I}_2$ for $1$ and $I_2$, respectively,
in the Jordan block of $A_k(\lambda)$ and $A_{2k}(\rho,\theta)$.
	
If $B_i=M_k(\lambda)$, 
 then letting $Z_i=\diag(-I_k,I_k)$,
	we have
	\begin{eqnarray}
	B_i^{-T}Z_iB_i^{-1}-Z_i&=&
	\left(\matrix{A_k(\lambda)^{-T} & 0\cr 0 & A_k(\lambda)}\right)
	\left(\matrix{-I_k & 0\cr 0 & I_k}\right)
	\left(\matrix{A_k(\lambda)^{-1} & 0\cr 0 & A_k(\lambda)^{T}}\right)
	-\left(\matrix{-I_k & 0\cr 0 & I_k}\right)
	\nonumber\\
	&=&
	\left(\matrix{-A_k(\lambda)^{-T}A_k(\lambda)^{-1}+I_k & 0\cr 
		0 & A_k(\lambda)A_k(\lambda)^{T}-I_k}\right).\nonumber
	\end{eqnarray}
Since $-A_k(\lambda)^{-T}A_k(\lambda)^{-1}+I_k=A_k(\lambda)^{-T}[A_k(\lambda)^{T}A_k(\lambda)-I_k]A_k(\lambda)^{-1}$
	and
	\begin{eqnarray}
	A_k(\lambda)A_k(\lambda)^{T}-I_k&=&
	\left(\matrix{
		\lambda^2-1+\epsilon^2 &   \lambda\epsilon   & 0 & \cdots      & 0     & 0\cr
		\lambda\epsilon& \lambda^2-1+\epsilon^2& \lambda\epsilon& \cdots& 0& 0\cr
		0& \lambda\epsilon& \lambda^2-1+\epsilon^2& \cdots& 0 & 0\cr
		\cdot & \cdot &    \cdot     &  \cdots & \cdot  & \cdot\cr
		0 & 0& 0&\cdots& \lambda^2-1+\epsilon^2& \lambda\epsilon\cr
		0 &   0  &  0  &  \cdots  &\lambda\epsilon &\lambda^2-1\cr
	}\right)
    \nonumber
	\end{eqnarray}
is positive definite for $\epsilon$ small enough,
	it can be conclude that $B_i^{-T}Z_iB_i^{-1}-Z_i$ is positive definite.
	Similarly, if $B_i=N_{2k}(\rho,\th)$, letting $Z_i=\diag(-I_{2k},I_{2k})$,
	we also have that $B_i^{-T}Z_iB_i^{-1}-Z_i$ is positive definite.
	
	Now the proof is complete.	
\end{proof}

\begin{lemma}\label{Lemma:hyperbolic.positive.path}
	If $\gamma$ is a $C^1$ truly hyperbolic path (not necessarily positive) in $\Sp(2n)$,
	there exists a positive truly hyperbolic path $\mu$ such that
	$\pi(\gamma)=\pi(\mu)$.
\end{lemma}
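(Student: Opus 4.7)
The strategy is to find a smooth path $X\colon[0,1]\to\Sp(2n)$ with $X(0)=I$ such that $\mu(t):=X(t)^{-1}\gamma(t)X(t)$ is positive. This automatically enforces $\pi(\mu)=\pi(\gamma)$ and keeps $\mu$ in the (conjugation-invariant) truly hyperbolic set. Write $\gamma'(t)=JQ(t)\gamma(t)$ with $Q(t)$ a continuous family of symmetric matrices (a presentation available for any $C^1$ path in $\Sp(2n)$, not just positive ones), and take $X'(t)=JY(t)X(t)$ for a smooth symmetric $Y(t)$ to be chosen. By Lemma \ref{lem:conj},
\[
\mu'(t)=J\bigl[X(t)^T\bigl(Q(t)-Y(t)+(\gamma(t)^{-1})^T Y(t)\gamma(t)^{-1}\bigr)X(t)\bigr]\mu(t),
\]
so the positivity of $\mu$ is equivalent to $Q(t)+[(\gamma(t)^{-1})^T Y(t)\gamma(t)^{-1}-Y(t)]$ being positive definite for every $t\in[0,1]$.

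Next, I would invoke the proof of Proposition D, which produces, for each truly hyperbolic $A\in\Sp(2n)$, a symmetric matrix $Z$ with $A^{-T}ZA^{-1}-Z>0$, built blockwise from the normal form of $A$. The set
\[
\mathcal{Z}_A:=\{Z\in\mathrm{Sym}(2n):A^{-T}ZA^{-1}-Z>0\}
\]
is thus a nonempty open convex cone. For each $t_0\in[0,1]$, I would fix any $Z_{t_0}\in\mathcal{Z}_{\gamma(t_0)}$; since the positive-definite cone is open and $\gamma$ is continuous, the constant matrix $Z_{t_0}$ remains in $\mathcal{Z}_{\gamma(t)}$ on a whole open neighborhood $I_{t_0}$ of $t_0$. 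Covering $[0,1]$ by finitely many such intervals $I_{t_1},\dots,I_{t_m}$ with associated constant matrices $Z_1,\dots,Z_m$, I fix a smooth partition of unity $\{\rho_j\}$ subordinate to this cover and set $Z(t):=\sum_j\rho_j(t)Z_j$. Convexity of $\mathcal{Z}_{\gamma(t)}$ at each $t$ gives $Z(t)\in\mathcal{Z}_{\gamma(t)}$ for every $t$, and compactness of $[0,1]$ yields a uniform constant $c>0$ with $(\gamma(t)^{-1})^T Z(t)\gamma(t)^{-1}-Z(t)\geq cI$.

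To conclude, I set $Y(t):=kZ(t)$ with $k>0$ chosen large enough that $Q(t)+k[(\gamma(t)^{-1})^T Z(t)\gamma(t)^{-1}-Z(t)]$ is positive definite uniformly on $[0,1]$; this is possible since $Q$ is bounded on the compact interval. Symmetry of $Y(t)$ ensures $JY(t)\in{\bf sp}(2n)$, so the linear ODE $X'(t)=JY(t)X(t)$ with $X(0)=I$ has a smooth solution $X\colon[0,1]\to\Sp(2n)$, and $\mu(t)=X(t)^{-1}\gamma(t)X(t)$ is the desired positive truly hyperbolic lift. The main obstacle will be producing a smooth \emph{global} $Z(t)$ from the pointwise and noncanonical recipe in the proof of Proposition D; this is precisely what forces the argument to go through the convex-cone structure of $\mathcal{Z}_A$ and a partition-of-unity patching, rather than attempting a direct block-by-block construction along $\gamma$, which would break down whenever the multiplicities or Jordan types of the eigenvalues of $\gamma(t)$ change.
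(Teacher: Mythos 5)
Your proof is correct and takes a genuinely different route from the paper's. The paper works pointwise: for each $s$ it produces a symmetric $Y_s$ (from Proposition D) making $Q(s)+\gamma(s)^{-T}Y_s\gamma(s)^{-1}-Y_s$ positive definite, conjugates by $X_s(t)=e^{tJY_s}$ to get positivity on a small interval $I_s$ around $s$, covers $[0,1]$ by finitely many such intervals, glues the resulting piecewise-positive lifts together up to conjugation at the junctions, and finally invokes the Smoothing Lemma (Lemma~\ref{lem: smoothing}) to remove the corners. You instead observe that $\mathcal{Z}_A=\{Z\in\mathrm{Sym}(2n):A^{-T}ZA^{-1}-Z>0\}$ is a nonempty open convex cone and that openness in $A$ lets a single constant $Z$ work on a whole time interval; patching these local choices with a partition of unity and using convexity of $\mathcal{Z}_{\gamma(t)}$ fiberwise yields a \emph{globally smooth} $Z(t)$ with $\gamma(t)^{-T}Z(t)\gamma(t)^{-1}-Z(t)\geq cI$ uniformly, after which a single large scalar $k$ and one linear ODE $X'=JkZ(t)X$ finish the job. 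Your route thus bypasses the gluing-and-smoothing step entirely and produces a $C^1$ lift directly; the paper's route is slightly more elementary at each step but requires Lemma~\ref{lem: smoothing}, which is itself a nontrivial piece of machinery. Both rely on the same essential input, namely the pointwise construction of $Z$ from Proposition~D. One small point worth stating explicitly in your write-up: for the partition-of-unity step you need the subordination property (that $\mathrm{supp}\,\rho_j\subset I_{t_j}$) so that, at each $t$, every $Z_j$ entering the sum with nonzero weight actually lies in $\mathcal{Z}_{\gamma(t)}$; you gesture at this but it is the crux of why convexity applies.
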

\begin{proof}
	Suppose $\gamma'(t)=JQ(t)\gamma(t)$.
	For any $s\in[0,1]$,
	since $\pi_*(\mathcal{P}_A)$ is surjective at $\gamma(s)$ 
	by Proposition D,
	there exists a symmetric matrix $Y_s$ such that
	$Q(s)+[\gamma(s)^{-T}Y_s\gamma(s)^{-1}-Y_s]>0$.
	Letting $X_s(t)=e^{tJY_s}$ and $\gamma_s(t)=X_s(t)^{-1}\gamma(t)X_s(t)$,
	then $\pi(\gamma)=\pi(\gamma_s)$; 
    and hence, by Lemma \ref{lem:conj}, we have
	\begin{equation}
	-J{d\gamma_s(t)\over dt}\gamma_s(t)^{-1}=
	X_s(t)^{T}[Q(t)+(\gamma(t)^{-T}Y_s\gamma(t)^{-1}-Y_s)]X_s(t),
	\end{equation}
	which is positive definite if $t=s$,
	and hence it is also positive definite on interval $I_s=(s-\epsilon(s),s+\epsilon(s))\cap[0,1]$	
	for some $\epsilon(s)>0$ which 
    may depends on $s$.
	
	Since $\cup_{s\in[0,1]}I_s=[0,1]$,
	there exist finitely many $s_i,i=1,2,\cdots,m$ such that
	$\cup_{i=1}^m I_{s_i}=[0,1]$,
	Therefore, $[0,1]$ can be partitioned into finitely many
	closed sub-intervals $I_i\subset I_{s_i}$
	such that $\gamma|_{I_i}$ is a positive path.
	Then, by gluing these positive paths together, allowing for conjugation
	between the adjoint endpoints,
	we obtain a piece wise positive path $\mu$.
	Furthermore, $\pi(\gamma)=\pi(\mu)$.
	
	Finally, by the Smoothing Lemma (Lemma \ref{lem: smoothing}), we can find a $C^1$ positive
	path in the same conjugation class $\pi(\mu)$.
	
\end{proof}

\subsection{The positive homotopy
of truly hyperbolic paths}


We have shown the existence of generic homotopies in Lemma \ref{lem:generic homotopy}. Additionally, we demonstrate the following existence of positive homotopies within the hyperbolic region.

\begin{lemma}\label{Lemma:positive.homotopy}
Given a generic homotopy $H(s,t)$ ($t_0\leq t\leq t_1$) within the
truly hyperbolic region, where $0\leq t_0<t_1\leq 1$,  which is $C^0$ with respect to $s$ and $C^1$ with respect to $t$. If $\{H(0,t)\}$ and $\{H(1,t)\}$ are positive paths, 
then there exists a positive homotopy $\bar{H}(s,t)$ ($t_0\leq t\leq t_1$) from $\{H(0,t)\}$ to $\{H(1,t)\}$.
Moreover, if $H(s,t)$ is a $C^1$-connectable homotopy, then the positive homotopy $\bar{H}(s,t)$ is also $C^1$-connectable.
\end{lemma}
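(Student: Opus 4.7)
\emph{Plan.} The strategy is to produce $\bar{H}$ by conjugating $H$ pointwise through a continuous family $X(s,t)\in\Sp(2n)$ chosen so that the conjugated $t$-tangent lies in the positive cone everywhere. Writing $\partial_t H(s,t)=JQ(s,t)H(s,t)$ with $Q(s,t)$ continuous and symmetric, Proposition~D asserts that for every $(s,t)$ the set
\[
\mathcal{Y}(s,t):=\bigl\{Y\in\mathrm{Sym}(2n)\;:\;Q(s,t)+H(s,t)^{-T}YH(s,t)^{-1}-Y>0\bigr\}
\]
is nonempty, since its elements are precisely the symmetric matrices produced in the proof of Lemma~\ref{Lemma:hyperbolic.positive.path}. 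As the preimage of the open convex cone of positive definite matrices under an affine map in $Y$, each $\mathcal{Y}(s,t)$ is open and convex. Because $H(0,\cdot)$ and $H(1,\cdot)$ are already positive, $0\in\mathcal{Y}(0,t)\cap\mathcal{Y}(1,t)$ for every $t$.

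\emph{Continuous selection of $Y$.} I next construct a continuous $Y:[0,1]\times[t_0,t_1]\to\mathrm{Sym}(2n)$ with $Y(s,t)\in\mathcal{Y}(s,t)$ and $Y(0,t)\equiv Y(1,t)\equiv 0$. Around every point $(s_*,t_*)$ I pick any $Y_*\in\mathcal{Y}(s_*,t_*)$; by openness, $Y_*$ lies in $\mathcal{Y}(s,t)$ throughout some neighborhood $U$. For neighborhoods meeting $\{s=0\}\cup\{s=1\}$ I take $Y_U=0$, which is permissible because $0\in\mathcal{Y}$ there. Passing to a finite subcover of $[0,1]\times[t_0,t_1]$ and choosing a subordinate partition of unity $\{\phi_U\}$, convexity of $\mathcal{Y}(s,t)$ guarantees that $Y(s,t):=\sum_U \phi_U(s,t)Y_U$ lies in $\mathcal{Y}(s,t)$, and the prescribed vanishing on $\{s=0,1\}$ is built in.

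\emph{Building the homotopy.} For each $s$, let $X(s,t)\in\Sp(2n)$ be the solution of $\partial_t X(s,t)=JY(s,t)X(s,t)$ with $X(s,t_0)=I$. Continuous dependence on parameters makes $X$ continuous in $(s,t)$ and $C^1$ in $t$. Define $\bar{H}(s,t):=X(s,t)^{-1}H(s,t)X(s,t)$. Lemma~\ref{lem:conj} yields
\[
\partial_t\bar{H}(s,t)=J\bigl[X(s,t)^T\bigl(Q(s,t)+H(s,t)^{-T}Y(s,t)H(s,t)^{-1}-Y(s,t)\bigr)X(s,t)\bigr]\bar{H}(s,t),
\]
and the bracketed matrix is symmetric positive definite by the choice of $Y$, so every $\bar{H}(s,\cdot)$ is a positive path. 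Since $Y(0,\cdot)\equiv Y(1,\cdot)\equiv 0$, ODE uniqueness forces $X(0,t)=X(1,t)=I$, hence $\bar{H}(0,\cdot)=H(0,\cdot)$ and $\bar{H}(1,\cdot)=H(1,\cdot)$, so $\bar{H}$ is the desired positive homotopy.

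\emph{$C^1$-connectable case and main obstacle.} If $H$ is $C^1$-connectable with matrices $\hat{X}_s,\hat{Y}_s$ at $t_0,t_1$, then $Q(s,t_i)$ is a symmetric conjugate of $Q(0,t_i)>0$ for $i=0,1$, hence itself positive definite; consequently $0\in\mathcal{Y}(s,t_0)\cap\mathcal{Y}(s,t_1)$ for every $s$. I enlarge the partition of unity above so that $Y$ also vanishes on neighborhoods of $\{t=t_0\}$ and $\{t=t_1\}$. Then $X(s,t_0)=I$ (initial condition), while $\bar{H}(s,t_1)=(\hat{Y}_sX(s,t_1))^{-1}\bar{H}(0,t_1)(\hat{Y}_sX(s,t_1))$, with the corresponding conjugacy relations for the $t$-derivatives (direct substitution, using the vanishing of $Y$ at $t_1$); thus $\bar{H}$ is $C^1$-connectable with new matrices $\hat{X}_s$ and $\hat{Y}_sX(s,t_1)$. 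The principal technical point of the whole argument is this continuous selection of $Y$ subject to multiple boundary-vanishing requirements; convexity and openness of the fibers $\mathcal{Y}(s,t)$, both inherited from Proposition~D, reduce it to a standard partition-of-unity construction and pose the only real difficulty.
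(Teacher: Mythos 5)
Your proof is correct and takes a genuinely different route from the paper's. The paper applies Lemma~\ref{Lemma:hyperbolic.positive.path} slice-by-slice: for each $r$ it produces a conjugator $\{X^r_t\}$ making $H(r,\cdot)$ positive, uses openness of positivity to cover the $s$-interval by finitely many intervals on which the same conjugator works, and then bridges adjacent conjugators by Proposition~\ref{prop:lift} (path-connectedness of positive lifts of a fixed generic path in $\mathcal{C}onj(\Sp(2n))$), which is why the lemma assumes $H$ is generic; the $C^1$-connectable clause is handled by first modifying $H$ near $t=t_0,t_1$ so the endpoint conditions are preserved. You instead globalize Proposition~D: the fibers $\mathcal{Y}(s,t)$ of positivity-restoring symmetric generators form a nonempty, open, convex, lower-hemicontinuous correspondence, so a partition of unity gives a continuous selection $Y(s,t)$ that is forced to vanish on the boundary strata where $0\in\mathcal{Y}$, and one ODE then produces the whole conjugating family $X(s,t)$ at once. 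This bypasses both the covering-and-stitching argument and Proposition~\ref{prop:lift}, avoids the genericity hypothesis on $H$ altogether, and yields a more explicit positive homotopy; the price is that all boundary-matching (in $s$, and in $t$ for the $C^1$-connectable variant) must be built directly into the selection of $Y$. Two small points: when you argue $Q(s,t_i)>0$, the computation gives $Q(s,t_i)=\hat{X}_s^T Q(0,t_i)\hat{X}_s$, which is a \emph{congruence} rather than a conjugate (the conclusion is of course the same); and the partition-of-unity step implicitly uses joint continuity of $\partial_t H$ in $(s,t)$, so, as in the paper's proof, you should either record this as the working interpretation of ``$C^0$ in $s$, $C^1$ in $t$'' or invoke the Whitney approximation step that the paper performs first.
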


\begin{proof}
As before, by the Whitney Approximation Theorem and a similar argument in the proof of Lemma \ref{lem:generic homotopy}, we may assume that the homotopy is also $C^1$ with respect to $s$. By hypothesis, $\{H(0,t)\}$ and $\{H(1,t)\}$ are positive paths. For any $r\in (0,1)$, thanks to Lemma \ref{Lemma:hyperbolic.positive.path} above, there exists a positive path $\gamma_{r}(t)$ such that $\pi(\gamma_{r}(t))=\pi(H(r,t))$. 
We define $\gamma_{r}(t)=(X_{t}^{r})^{-1}H(r,t)X_{t}^{r}$ (without loss of generality, we may assume that $X_{t_0}^{r}=I$). 
According to Lemma \ref{lem:c0c1}(i), there is $\epsilon_{r}>0$ small enough such that for every $s\in (r-\epsilon_{r},r+\epsilon_{r})$, $(X_{t}^{r})^{-1}H(s,t)X_{t}^{r}$ is also a positive path. 
For $s=0$ and $s=1$, there are $\epsilon_0$ and $\epsilon_1$ such that when $s\in[0,\epsilon_0)\cup(\epsilon_1,1]$, $\{H(s,t)\}$ is a positive path. 
In these cases, $X^0_t=X^1_t=I$ for all $t_0\leq t\leq t_1$. 
Given  $H(s,t)$ ($t_0\leq t\leq t_1$), there are $s_0=0<s_1<\cdots<s_{k-1}<s_k=1$ and $\{\epsilon_{s_i}\}_{0\leq i\leq k}$ satisfying that

\begin{itemize}
\item $0<s_1-\epsilon_{s_1}<\epsilon_0<s_2-\epsilon_{s_2}<s_1+\epsilon_{s_1}<s_3-\epsilon_{s_3}<s_2+\epsilon_{s_2}<\cdots<1-\epsilon_1<s_{k-1}+\epsilon_{s_{k-1}}<1$;

\item  for every $s\in (s_i-\epsilon_{s_i},s_i+\epsilon_{s_i})$, $\{(X_{t}^{s_i})^{-1}H(s,t)X_{t}^{s_i}\}$ is  a positive path.

\end{itemize}

Since $H(s,t)$ is a generic homotopy, by Definition \ref{def:generic homotopy}, for every open set $(s_{i+1}-\epsilon_{s_{i+1}}, s_i+\epsilon_{s_i})$ ($0\leq i\leq k-1$), we can choose $s'_i$ in it such that $\{H(s'_i,t)\}$ is a generic path. For every $0\leq i\leq k-1$, we consider $\{(X_{t}^{s_i})^{-1}H(s'_i,t)X_{t}^{s_i}\}$ and $\{(X_{t}^{s_{i+1}})^{-1}H(s'_i,t)X_{t}^{s_{i+1}}\}$. 
According to Proposition \ref{prop:lift}, there exists a positive homotopy $H_i(s,t)$ from $\{(X_{t}^{s_i})^{-1}H(s'_i,t)X_{t}^{s_i}\}$ to $\{(X_{t}^{s_{i+1}})^{-1}H(s'_i,t)X_{t}^{s_{i+1}}\}$. 
The homotopy $\bar{H}(s,t)$ is then constructed as the concatenation of these homotopies
\begin{eqnarray}
H(s,t) (0\leq s\leq s'_0), H_0, (X_{t}^{s_1})^{-1}H(s,t)X_{t}^{s_1}  (s'_0\leq s \leq s'_1), H_1, \cdots, & &\nonumber\\
(X_{t}^{s_{k-1}})^{-1}H(s,t)X_{t}^{s_{k-1}} (s'_{k-2}\leq s \leq s'_{k-1}), H_{k-1}, H(s,t) (s'_{k-1}\leq s\leq 1)\nonumber
\end{eqnarray}

 From the construction, it easy to see that  $\bar{H}(s,t)$ is a positive homotopy from $\{H(0,t)\}$ to $\{H(1,t)\}$.

Furthermore, if $H(s,t)$ is a $C^1$-connectable homotopy, that is, there exist two paths of matrices $\{X_s\}, \{Y_s\} \subset \Sp(2n)$ where $s \in [0,1]$, with $X_0 = Y_0 = I$, such that:
	\begin{eqnarray*}
	H(s,t_0)&=&X_s^{-1}H(0,t_0)X_s,\quad 
	\partial_tH(s,t_0)=X_s^{-1}\partial_tH(0,t_0)X_s,
	\nonumber\\
	H(s,t_1)&=&Y_s^{-1}H(0,t_1)Y_s,\quad 
	\partial_tH(s,t_1)=Y_s^{-1}\partial_tH(0,t_1)Y_s.
	\end{eqnarray*}

We can first find two smooth paths of matrices $\{\tilde{X}_s\}_{s\in [0,1]}, \{\tilde{Y}_s\}_{s\in [0,1]} \subset \Sp(2n)$ with $\tilde{X}_0 = \tilde{Y}_0 = I$ and $\tilde{X}_1=X_1, \tilde{Y}_1=Y_1$ that are close to the paths $\{X_s\}$ and $\{Y_s\}$, respectively.

We will find a homotopy $\tilde{H}(s,t)$ in a small neighborhood of $H(s,t)$ satisfying that
\begin{equation}\label{C1.connectable.homotopy.proof1}
\tilde{H}(s,t_0)=\tilde{X}_s^{-1}H(0,t_0)\tilde{X}_s,\quad 
	\tilde{H}(s,t_1)=\tilde{Y}_s^{-1}H(0,t_1)\tilde{Y}_s\end{equation}
 and 
\begin{eqnarray}\label{C1.connectable.homotopy.proof2}
	\partial_t\tilde{H}(s,t_0)=\tilde{X}_s^{-1}\partial_t H(0,t_0)\tilde{X}_s,\quad \tilde{H}(0,t)=H(0,t),
	\nonumber\\
	\partial_t\tilde{H}(s,t_1)=\tilde{Y}_s^{-1}\partial_tH(0,t_1)\tilde{Y}_s,\quad \tilde{H}(1,t)=H(1,t).
	\end{eqnarray}
In fact, for some small $\ep>0$,
letting
\begin{eqnarray*}
    P(t)&=&J^{-1}\partial_t H(0,t)\cdot H(0,t)^{-1},\\
 Q(t)&=&J^{-1}\partial_t H(1,t)\cdot H(1,t)^{-1},
\end{eqnarray*}
for $(s,t)\in[0,1]\times[t_0,t_0+\epsilon]$,
we define $\tilde{H}(s,t)$ as the following
\begin{eqnarray*}
\left\{\matrix{
	\hat{X}_s^{-1}H(0,t)\hat{X}_s,
        &s\le{1\over2},t\in[t_0,t_0+(1-2s)\epsilon],\cr 
	\hat{X}_s^{-1}e^{JP(t_0+(1-2s)\epsilon)[t-t_0-(1-2s)\epsilon]}H(0,t_0+(1-2s)\epsilon)\hat{X}_s,
        &s\le{1\over2},t\in[t_0+(1-2s)\epsilon,t_0+\ep],\cr
    \hat{X}_s^{-1}\hat{X}_1H(1,t)\hat{X}_1^{-1}\hat{X}_s,
        &s\ge{1\over2},t\in[t_0,t_0+(2s-1)\epsilon],\cr 
    \hat{X}_s^{-1}\hat{X}_1e^{JQ(t_0+(2s-1)\epsilon)[t-t_0-(2s-1)\epsilon]}H(1,t_0+(2s-1)\epsilon)\hat{X}_1^{-1}\hat{X}_s,
        &s\ge{1\over2},t\in[t_0+(2s-1)\epsilon,t_0+\ep].}\right.
\end{eqnarray*}
Similarly, we can define $\tilde{H}(s,t)$ on the range $[0,1]\times[t_1-\ep,t_1]$. Using this approach, we can easily obtain a $C^0$ homotopy $\tilde{H}(s,t)$ defined on $[0,1]\times[t_0,t_1]$ that satisfies (\ref{C1.connectable.homotopy.proof1}) and (\ref{C1.connectable.homotopy.proof2}).

 Let $$A=\left(\bigcup_{s\in[0,1], t\in[t_0,t_0+\frac{\epsilon}{2}]\cup[t_1-\frac{\epsilon}{2},t_1]}\tilde{H}(s,t)\right)\cup\left(\bigcup_{t\in[t_0,t_1]}(\tilde{H}(0,t)\cup\tilde{H}(1,t))\right).$$ 
 By the Whitney Approximation Theorem (see e.g. \cite[pp. 141, Theorem 6.26]{Lee}) and the argument of the proof of Lemma \ref{lem:generic homotopy}, we can get a smooth generic homotopy $\hat{H}(s,t)$ from $\{H(0,t)\}$ to $\{H(1,t)\}$ close to $\tilde{H}(s,t)$ (and hence close to $H(s,t)$) such that $\hat{H}|_A=\tilde{H}|_A$. After the same argument as above, we can obtain a positive homotopy $\bar{H}(s,t)$ that is also $C^1$-connectable.
\end{proof}

\begin{theorem}
\label{Thm:remove.hyperbolic.collisions}
    Any two positive truly hyperbolic generic paths
    within the same connected component of the truly hyperbolic set
    are positively homotopic.
    Moreover, if these two paths satisfy the $C^1$-connectable condition,
    they are $C^1$-connectable homotopic.
\end{theorem}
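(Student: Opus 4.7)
The plan is to reduce the statement to Lemma~\ref{Lemma:positive.homotopy}, which upgrades a generic homotopy within the truly hyperbolic region to a positive homotopy when the boundary paths are already positive. Let $C$ denote the connected component of the truly hyperbolic set containing both $\alpha$ and $\beta$. Since $C$ is an open, hence locally path-connected, subset of the smooth manifold $\Sp(2n)$, it is path-connected.

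First I would construct a continuous homotopy $H:[0,1]\times[0,1]\to C$ with $H(0,t)=\alpha(t)$ and $H(1,t)=\beta(t)$. Path-connectedness of $C$ implies that its free-endpoint path space is path-connected, which produces such an $H$. In the $C^1$-connectable setting, the hypothesis supplies continuous families $\{X_s\},\{Y_s\}\subset\Sp(2n)$ with $X_0=Y_0=I$ realizing the prescribed endpoint conjugations, and I would additionally require that $H(s,0)=X_s^{-1}\alpha(0)X_s$ and $H(s,1)=Y_s^{-1}\alpha(1)Y_s$; these conjugations remain in $C$ since conjugation preserves the spectrum. By the Whitney Approximation Theorem, $H$ may be assumed smooth, and openness of $C$ ensures that a sufficiently small smoothing still takes values in $C$.

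Next I would apply Lemma~\ref{lem:generic homotopy} to perturb $H$ to a generic homotopy $\tilde H$ close to $H$; since $\tilde H([0,1]^2)$ lies in the open set $C$ at positive distance from $\partial C$, the perturbation stays in $C$, and in the $C^1$-connectable version it can be arranged to preserve both the prescribed $t$-boundary data and the $t$-derivatives along $t=0,1$. Lemma~\ref{Lemma:positive.homotopy} applied to $\tilde H$, whose boundary paths $\alpha$ and $\beta$ are already positive by hypothesis, then yields a positive homotopy $\bar H$ from $\alpha$ to $\beta$, and the ``Moreover'' clause of that lemma transfers the $C^1$-connectable property from $\tilde H$ to $\bar H$.

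The hard part will be justifying the filling in the first step. With free endpoints, path-connectedness of $C$ suffices. In the $C^1$-connectable setting, however, the conjugation families $X_s, Y_s$ prescribe the $t$-boundary of the homotopy square, and one must verify that the resulting map $\partial([0,1]^2)\to C$ is null-homotopic in $C$. The freedom to modify $X_s$ and $Y_s$ within the fibres of $\pi$ (afforded by the surjectivity in Proposition~D, combined with Lemma~\ref{Lemma:hyperbolic.positive.path}) should provide the additional flexibility needed to produce this extension, but formalizing the filling is the most delicate point of the argument.
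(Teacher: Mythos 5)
Your proposal reproduces the paper's three-step proof exactly: construct a free homotopy within the truly hyperbolic component, perturb it to a generic homotopy via Lemma~\ref{lem:generic homotopy}, and apply Lemma~\ref{Lemma:positive.homotopy}. The paper's terse appeal to ``Proposition D'' for the first step plays the role that you supply more transparently via path-connectedness of the open set $C$.

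One small correction: the $C^1$-connectable condition (\ref{C1.condition}) supplies only the endpoint matrices $X_1, Y_1$, not the full families $\{X_s\},\{Y_s\}$ --- you must construct these yourself as paths in $\Sp(2n)$ from $I$ to $X_1$ and $Y_1$. As for the filling concern you flag for the $C^1$-connectable case (showing that the boundary loop built from $\alpha$, $\beta$, $\{X_s^{-1}\alpha(0)X_s\}$ and $\{Y_s^{-1}\alpha(1)Y_s\}$ is null-homotopic in $C$): your instinct that this is the delicate point is sound, and the paper's three-sentence proof does not spell it out either. The route the paper's citation of Proposition~D is evidently meant to license is to build the homotopy first in $\mathcal{C}onj(\Sp^{th}(2n))$ --- where each component is parametrized by eigenvalue configurations lying in simply connected sets, so a rel-endpoint homotopy from $\pi(\alpha)$ to $\pi(\beta)$ exists --- and then lift it through the submersion $\pi$ (using Proposition~D to stay inside the positive cone at each stage); the lift automatically places $H(s,0)$ and $H(s,1)$ in the conjugacy-class fibers, bypassing the null-homotopy question in $C$ itself. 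Filling in that lifting step would tighten both your sketch and the paper's proof.
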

\begin{proof}For any two truly hyperbolic generic paths, we may construct a homotopy between the two paths based on Proposition D. 
Furthermore, we can perturb it to a generic homotopy by Lemma \ref{lem:generic homotopy}. 
Finally, it immediately follows from Lemma \ref{Lemma:positive.homotopy}.
\end{proof}

According to Theorem \ref{Thm:remove.hyperbolic.collisions}, any two positive truly hyperbolic generic paths that satisfy the $C^1$-connectable condition are $C^1$-connectable positive homotopic. Furthermore, given a positive truly hyperbolic generic path $\gamma$, if there exists another positive truly hyperbolic path $\mu$ with no collisions such that the $C^1$-connectable conditions between $\gamma$ and $\mu$ hold, then we have $\gamma\sim_{+,con}\mu$. Thus, the collisions of such $\gamma$ can be completely removed in the sense of $C^1$-connectable positive homotopy.



\setcounter{equation}{0}
\section{Positive paths in $\Sp(4)$}
\label{sec:5}

\subsection{The structure of elements in $\Sp(4)$}
\label{subsec:5.1}

For convenience,
we will list all the open regions 
and its codimension 1 boundaries 
of $\mathcal{C}onj(\Sp(4))$ which we will used later
(c.f. \cite{LaM} and \cite{Sli}).
A general element of $\Sp(4)$ lies in one of the following open regions:

\hangafter 1
\hangindent 4em
\;\;\;(i) $\mathcal{O}_{\mathcal{C}}$,
consisting of all matrices with $4$ distinct eigenvalues in
$\C\backslash(\U\cup\R)$;
one conjugacy class for each quadruple;

\hangafter 1
\hangindent 4em
\;\;(ii) $\mathcal{O}_{\mathcal{U}}$,
consisting of all matrices with eigenvalues on $\U\backslash\{\pm1\}$
where each eigenvalues has multiplicity 1 
(or multiplicity $2$ with non-zero splitting number);
four (or two) conjugacy classes for each quadruple
corresponding to the splitting numbers;

\hangafter 1
\hangindent 4em
\;(iii) $\mathcal{O}_{\mathcal{R}}$,
consisting of all matrices whose eigenvalues 
have multiplicity $1$ and lie on
$\R\backslash\{0,\pm1\}$;
and a conjugate pair of eigenvalues on
one conjugacy class for each quadruple;

\hangafter 1
\hangindent 4em
\;\;(iv) $\mathcal{O}_{\mathcal{U,R}}$,
consisting of all matrices with  $4$ distinct eigenvalues, one pair on $\U\backslash\{\pm1\}$
and the other on $\R\backslash\{0,\pm1\}$;
two conjugacy classes for each quadruple
corresponding to the possible splitting numbers
of the pair on $\U\backslash\{\pm1\}$.

\noindent The codimension $1$ part of the boundaries of the above
regions are:

\hangafter 1
\hangindent 4em
\;\;\;(v) $\mathcal{B}_{\mathcal{U}}$,
consisting of all non-diagonalizable matrices whose spectrum is a pair of conjugate points 
$\U\backslash\{\pm1\}$
each of multiplicity $2$ and splitting number $0$;
two conjugacy class for each quadruple:
$\mathcal{B}_{\mathcal{U}}^-$ containing those matrices
from which positive paths enter $\mathcal{O}_{\mathcal{C}}$ 
and $\mathcal{B}_{\mathcal{U}}^+$ containing those matrices
from which positive paths enter $\mathcal{O}_{\mathcal{U}}$;

\hangafter 1
\hangindent 4em
\;\;(vi) $\mathcal{B}_{\mathcal{R}}$,
consisting of all non-diagonalizable matrices whose spectrum is a pair of distinct points 
$\lambda,{1\over\lambda}\in\R\backslash\{0,\pm1\}$
each of multiplicity $2$;
one conjugacy class for each quadruple;

\hangafter 1
\hangindent 4em
\;(vii) $\mathcal{B}_{\mathcal{U},1}$ (rep. $\mathcal{B}_{\mathcal{U},-1}$),
consisting of all non-diagonalizable matrices with eigenvalues $\{\lambda,{\bar\lambda},1,1\}$
(or eigenvalues $\{\lambda,{\bar\lambda},-1,-1\}$)
with $\lambda\in\U\backslash\{\pm1\}$;
two conjugacy class for each quadruple,
corresponding to $N_1^-$ 
and $N_1^+$;


\hangafter 1
\hangindent 4em
(viii) $\mathcal{B}_{\mathcal{R},1}$ (rep. $\mathcal{B}_{\mathcal{R},-1}$),
consisting of all non-diagonalizable matrices with eigenvalues $\{\lambda,{1\over\lambda},1,1\}$
(or eigenvalues $\{\lambda,{1\over\lambda},-1,-1\}$)
with $\lambda\in\R\backslash\{0,\pm1\}$;
two conjugacy class for each quadruple,
corresponding to $N_1^-$ 
and $N_1^+$.

\noindent The codimension $2$ part of the boundaries of the above
region is:

\hangafter 1
\hangindent 4em
\;\;(ix) $\mathcal{B}_{1}$ (rep. $\mathcal{B}_{-1}$),
consisting of all  matrices with eigenvalues $\{1,1,1,1\}$ (or eigenvalues $\{-1,-1,-1,-1\}$)
which has only one Jordan block;
two conjugacy class for each quadruple.

\noindent In addition, there are two important strata of higher codimension:

\hangafter 1
\hangindent 4em
\;\;\;(x) $\mathcal{B}_{\mathcal{U,D}}$,
consisting of all diagonalizable matrices with a
conjugate pair of eigenvalues on $\U\backslash\{\pm1\}$,
each of multiplicity two with $0$ splitting number;
one conjugacy class for each quadruple;

\hangafter 1
\hangindent 4em
\;\;(xi) $\mathcal{B}_{\mathcal{R,D}}$,
consisting of all diagonalizable matrices with a
conjugate pair of eigenvalues on $\R\backslash\{0,\pm1\}$,
each of multiplicity two;
one conjugacy class for each quadruple.

\noindent Here in (ix), all the codimension $2$ part
in $\Sp(4)$ are $\mathcal{B}_{1}$
and $\mathcal{B}_{-1}$ which follows by
Lemma \ref{bifurcation.digram.of.c=1.2}.


\subsection{The constraints
of the collisions of positive paths in $\Sp(4)$}

	
	

As emphasized in introduction, a key observation is the constraints
of the collisions,
which raised from the positivity of paths in $\Sp(4)$.

\begin{lemma}\label{toplogical.constraint.of.Sp4}
	Let $\gamma$ be a generic positive path in $\Sp(4)$. We have
	
	(i) if $\gamma$ leaves $\mathcal{O}_\mathcal{U}$, enters $\mathcal{O}_\mathcal{C}$
	at time $t^*$,
	then after the previous collision (if exist),
	there must exist a pair of eigenvalues of the path which comes from $\{1,1\}$ or $\{-1,-1\}$ at time $t_0 < t^*$;
	
	(ii) if $\gamma$ leaves $\mathcal{O}_\mathcal{C}$, enters $\mathcal{O}_\mathcal{U}$
	at time $t^*$, then before the next collision (if exist), there must exist a pair of eigenvalues of the path which enters $\{1,1\}$ or $\{-1,-1\}$ at time $t_0 > t^*$.
\end{lemma}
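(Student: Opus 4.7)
The plan is to prove, for part (i), that the codim-1 crossing immediately before $t^*$ (call it $t_{prev}$, with the convention $t_{prev}:=0$ if no previous crossing exists) must lie in $\mathcal{B}_{\mathcal{U},1}$ or $\mathcal{B}_{\mathcal{U},-1}$, at which a pair of eigenvalues is located at $\{1,1\}$ or $\{-1,-1\}$; the time $t_0:=t_{prev}$ then does the job. Part (ii) will follow from the analogous argument applied to the right of $t^*$.

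First I identify the stratum that $\gamma$ crosses at $t^*$. Since $\gamma$ is generic and transits from $\mathcal{O}_\mathcal{U}$ into $\mathcal{O}_\mathcal{C}$, the adjacency structure in Section~\ref{subsec:5.1} together with the sign of $\Delta(N_\alpha)$ computed in Lemma~\ref{lemma:B_U.perturbation}(2)--(3) show that this crossing is of type $\mathcal{B}_\mathcal{U}^-$, i.e.\ at a non-trivial normal form $N_2(\theta,b)$ with $(b_2-b_3)\sin\theta<0$. Next I analyze the motion in $\mathcal{O}_\mathcal{U}$ on $(t_{prev},t^*)$: each complex-conjugate eigenvalue pair $\{e^{\pm i\theta_j(t)}\}$ carries a Krein signature that is locally constant on this open stratum, and under the positive ODE \eqref{symp-ode} the upper-half-plane angle $\theta_j(t)$ is strictly increasing when the signature is $+$ and strictly decreasing when $-$. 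For the collision at $t^*$ to occur in $\mathcal{B}_\mathcal{U}^-$, the two pairs must carry opposite Krein signatures throughout $(t_{prev},t^*)$.

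The crux is a case analysis on $t_{prev}$. If $t_{prev}=0$, the Hamiltonian $JP(0)$ has pure-imaginary spectrum $\pm i a_1,\pm i a_2$ with $a_j>0$; a direct computation of the Krein form $v\mapsto -i \bar v^T J v$ on the eigenvectors (parallel to the one in the proof of Proposition D) shows that both upper-half eigenvalues have Krein signature $+$, contradicting the opposite-signature requirement. If $t_{prev}$ is a $\mathcal{B}_\mathcal{U}^+$ crossing, then just after $t_{prev}$ the two pairs coincide at a common angle $\theta_0$ with opposite Krein signatures, so monotonicity gives $\theta_1(t)>\theta_0>\theta_2(t)$ strictly throughout $(t_{prev},t^*)$, precluding the merger $\theta_1(t^*)=\theta_2(t^*)$ required at a $\mathcal{B}_\mathcal{U}^-$ crossing. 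Only a $\mathcal{B}_{\mathcal{U},\pm 1}$ crossing at $t_{prev}$ remains consistent, and at such a crossing a pair of eigenvalues is exactly at $\{1,1\}$ or $\{-1,-1\}$.

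The main obstacle is ruling out ``hidden'' intermediate passages through $\{1,1\}$ or $\{-1,-1\}$ in the first two subcases without already counting them as $t_{prev}$; this is handled by observing that, by Lemma~\ref{lemma:B_1.perturbation} and the adjacency lists in Section~\ref{subsec:5.1}, any such passage is by definition itself a codim-1 crossing of $\mathcal{B}_{\mathcal{U},\pm 1}$, and hence if it occurs it must \emph{be} the true $t_{prev}$, returning us to the third case. Part (ii) is the mirror-image argument: just after the $\mathcal{B}_\mathcal{U}^+$ crossing at $t^*$ the two pairs have opposite Krein signatures at a common angle and separate strictly monotonically, so the first codim-1 stratum they can subsequently meet inside $\mathcal{O}_\mathcal{U}$ is $\mathcal{B}_{\mathcal{U},\pm 1}$, giving the required $t_0>t^*$.
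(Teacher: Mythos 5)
The paper itself gives no proof for this lemma --- the text following the statement goes directly to setting up the elementary collide-out path --- evidently treating it as known from Krein stability theory and the stratum structure of $\mathcal{C}onj(\Sp(4))$ inherited from Lalonde--McDuff and Slimowitz. Your Krein-signature proof is correct and is, in effect, the omitted argument. The ingredients you invoke all check out: the splitting number of a simple eigenvalue pair is locally constant on $\mathcal{O}_\mathcal{U}$ and fixes the sign of $\dot\theta_j$ under $\gamma'=JP\gamma$ with $P>0$; every $\mathcal{B}_\mathcal{U}$ crossing forces an incoming (resp.\ outgoing) $(+,-)$ signature pair, since the Krein form on the rank-$2$ generalized eigenspace of $N_2(\theta,b)$ has signature $(1,1)$; for $\gamma(0)=I$ the spectrum of $JP(0)$ is purely imaginary and both upper-half-plane eigenvalues of $e^{tJP(0)}$ for small $t>0$ carry positive Krein sign; and after a $\mathcal{B}_\mathcal{U}^+$ crossing the two angles separate strictly monotonically and cannot re-merge at $\mathcal{B}_\mathcal{U}^-$ without an intermediate $\pm1$ passage, which would itself be the true $t_{prev}$. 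This exhausts the possible types of $t_{prev}$ and, by the mirror argument, the type of the first crossing after $t^*$, proving both parts. Two minor remarks: your citation of Lemma~\ref{lemma:B_1.perturbation} for the ``hidden passage'' observation is a bit loose --- what is actually used is just that touching $\{\pm1,\pm1\}$ is by definition a $\mathcal{B}_{\mathcal{U},\pm1}$ crossing --- and your proof only needs the $(1,1)$-signature fact at $\mathcal{B}_\mathcal{U}$, not the finer trivial/non-trivial identification, which is exactly why the same computation handles (i) and (ii) symmetrically.
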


Now, suppose $\gamma$ leaves $\mathcal{O}_\mathcal{U}$ and enters $\mathcal{O}_\mathcal{C}$ at time $t^*$. Then, there must exist a pair of eigenvalues of the path that comes from $N_1^+$ or $N_{-1}^+$ at time $t_0 < t^*$. Up to a small perturbation, we can assume there is only one pair of such eigenvalues. When $t\in(t_0, t^*)$, the two pairs of eigenvalues do not collide or pass through $\pm1$. In other words, there exists an $\epsilon > 0$ small enough such that
\begin{equation}
\gamma(t)\in\left\{\matrix{
	\mathcal{O}_{\mathcal{U},\mathcal{R}},&t\in[t_0-\epsilon,t_0),\cr 
	\mathcal{B}_{\mathcal{U},1},&t=t_0,\cr
    \mathcal{O}_{\mathcal{U}},&t\in(t_0,t^*),\cr 
    \mathcal{B}_{\mathcal{U}},&t=t^*,\cr
    \mathcal{O}_{\mathcal{C}},&t\in(t^*,t^*+\epsilon].\cr }\right.
\end{equation}

\begin{figure}[ht]
	\centering
	\includegraphics[height=8.0cm]{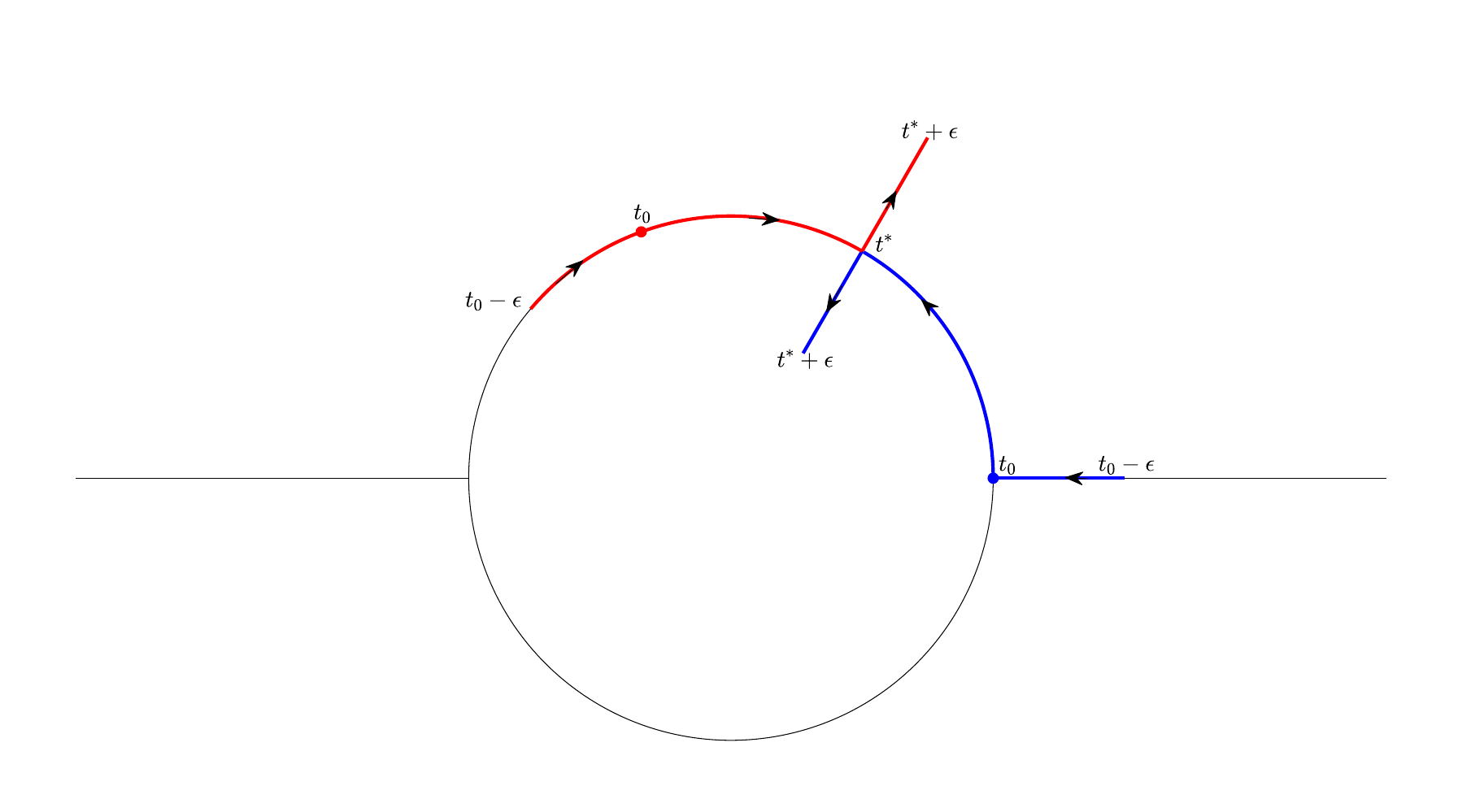}
	\vskip -0.5 cm
	\caption{An elementary collide-out path. 
    The red curve represents a pair of eigenvalues that comes from $N_1^+$. The blue curve represents the another pair of eigenvalues that remain on the unit circle for $t\in[t_0-\epsilon,t^*)$.}
 \label{collide-out.path}
\end{figure}
Then $\mu=\gamma|_{[t_0-\epsilon,t^*+\epsilon]}$ is a positive path connects $\gamma({t_0-\epsilon})\in\mathcal{O}_{\mathcal{U},\mathcal{R}}$ 
and $\gamma({t^*+\epsilon})\in\mathcal{O}_{\mathcal{C}}$.
We define

\begin{definition}\label{elementary.collide.path}
	An {\bf elementary collide-out path} is a symplectic path in $\Sp(4)$
	that follows the trajectory:
	\begin{equation}
	\mathcal{O}_{\mathcal{U},\mathcal{R}}
	\rightarrow\mathcal{B}_{\mathcal{U},1}
	\rightarrow\mathcal{O}_{\mathcal{U}}
	\rightarrow\mathcal{B}_{\mathcal{U}}
	\rightarrow\mathcal{O}_{\mathcal{C}}.
	\end{equation}
	
	Similarly, an {\bf elementary collide-in path} is a symplectic path in $\Sp(4)$
	that follows the trajectory:
	\begin{equation}
	\mathcal{O}_{\mathcal{C}}
	\rightarrow\mathcal{B}_{\mathcal{U}}
	\rightarrow\mathcal{O}_{\mathcal{U}}
	\rightarrow\mathcal{B}_{\mathcal{U},1}
	\rightarrow\mathcal{O}_{\mathcal{U},\mathcal{R}}.
	\end{equation}
\end{definition}

\begin{lemma}\label{Th:positive.homotopy.of.two.elementary.path}
 Any two elementary collide-out (collide-in) path with the same conjugate collision points
 which satisfy the $C^1$-connectable conditions are positively homotopic.
\end{lemma}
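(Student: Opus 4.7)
The plan is to build the $C^1$-connectable positive homotopy from $\mu_1$ to $\mu_2$ in pieces matching the three open strata $\mathcal{O}_{\mathcal{U},\mathcal{R}}$, $\mathcal{O}_{\mathcal{U}}$, $\mathcal{O}_{\mathcal{C}}$ traversed by an elementary collide-out path, and then concatenate the pieces across the two codimension-one collision crossings at $\mathcal{B}_{\mathcal{U},1}$ and $\mathcal{B}_{\mathcal{U}}$.

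First, I would apply the Total Speed Changing Lemma to reparametrize $\mu_2$ so that it crosses $\mathcal{B}_{\mathcal{U},1}$ at the same time $t_0$ and $\mathcal{B}_{\mathcal{U}}$ at the same time $t^*$ as $\mu_1$. Since by hypothesis $\mu_1(t_0)$ and $\mu_2(t_0)$ lie in the same conjugacy class inside $\mathcal{B}_{\mathcal{U},1}$, and likewise $\mu_1(t^*), \mu_2(t^*)$ inside $\mathcal{B}_{\mathcal{U}}$, I would conjugate $\mu_2$ by continuously varying families of symplectic matrices (supported away from the two endpoints) to arrange $\mu_1(t_0)=\mu_2(t_0)$ and $\mu_1(t^*)=\mu_2(t^*)$ as matrices. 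The $C^1$-connectable conditions at the endpoints are preserved throughout.

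Next, I would treat each of the three open-stratum pieces separately. On $[t_0-\epsilon, t_0]$ and $[t_0, t^*]$, both paths preserve the splitting $\R^4=\R^2\oplus\R^2$ corresponding to the two pairs of eigenvalues, so by the Positive Decomposition Lemma each sub-path decomposes as a direct sum of two positive $\Sp(2)$ paths. The positive homotopy on each factor then follows from Slimowitz's $\Sp(2)$ result; in particular, the $\mathcal{B}_{\mathcal{U},1}$-crossing at $t_0$ is absorbed into the $\Sp(2)$ factor whose eigenvalues pass through $N_1^+$ at $\{1,1\}$. On $[t^*, t^*+\epsilon]$, the path lies entirely in $\mathcal{O}_{\mathcal{C}}$, which is a truly hyperbolic open region, and Theorem \ref{Thm:remove.hyperbolic.collisions} then directly yields a $C^1$-connectable positive homotopy.

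The main obstacle is joining these pieces across the $\mathcal{B}_{\mathcal{U}}$-collision at $t^*$, where the splitting breaks and the Positive Decomposition Lemma no longer applies. To overcome this, I would restrict to a small window $[t^*-\delta, t^*+\delta]$ and use the normal form $N_2(\theta,b)$ from Lemma \ref{lemma:B_U} together with the perturbation result Lemma \ref{lemma:B_U.perturbation}: the non-triviality condition $(b_2-b_3)\sin\theta<0$ characterizes the positive direction of transit from $\mathcal{O}_{\mathcal{U}}$ into $\mathcal{O}_{\mathcal{C}}$, and the positive tangent cone at $N_2(\theta,b)$ contains a convex open subcone of perturbations producing precisely this transit. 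The derivatives of $\mu_1$ and $\mu_2$ at $t^*$ both lie in this subcone, so a straight-line interpolation yields a one-parameter family of positive transit paths across the $\mathcal{B}_{\mathcal{U}}$-stratum. Finally, I would glue the three open-stratum homotopies with the collision-window homotopy using the concatenation procedure for $C^1$-connectable positive homotopies from Section \ref{sec:3}, matching the conjugating families at the interfaces to produce a global $C^1$-connectable positive homotopy from $\mu_1$ to $\mu_2$. The elementary collide-in case is entirely analogous after time reversal.
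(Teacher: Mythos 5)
Your overall structure — decompose the path into the non-hyperbolic piece, handled via the Positive Decomposition Lemma, and the truly hyperbolic piece, handled via Theorem \ref{Thm:remove.hyperbolic.collisions}, and then glue — matches the paper's. But your treatment of the key obstacle, the $\mathcal{B}_{\mathcal{U}}$-collision at $t^*$, diverges from the paper and contains a genuine gap.

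The paper handles the collision by a different trick: after conjugating $\gamma$ to $\tilde\gamma$ so that $\tilde\gamma(t^*)=\mu(t^*)$ as actual matrices, it invokes Lemma \ref{lem:c0c1}(iii) to $C^0$-small deform $\tilde\gamma$ so that it \emph{literally coincides} with $\mu$ on an interval $(t^*-\epsilon, t^*+\epsilon)$ around the collision. This makes the homotopy constant there, so no homotopy across $\mathcal{B}_{\mathcal{U}}$ needs to be built at all. One then only has to split at $t^*\pm\frac{\epsilon}{2}$: on $[0,t^*-\frac{\epsilon}{2}]$ the splitting is preserved (the $\mathcal{B}_{\mathcal{U},1}$-crossing sits inside a single $\Sp(2)$ factor, so no split at $t_0$ is needed), and after reparametrizing so that $\pi(\tilde\gamma_i)=\pi(\mu_i)$ one applies Proposition \ref{prop:lift} (not Slimowitz's loop result, which doesn't apply to open paths); on $[t^*+\frac{\epsilon}{2},1]$ the paths are truly hyperbolic and Theorem \ref{Thm:remove.hyperbolic.collisions} applies.

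Your alternative — interpolate $\mu_1'(t^*)$ and $\mu_2'(t^*)$ in the convex positive cone at $N_2(\theta,b)$ — correctly identifies that the cone is convex, but a straight-line interpolation of two tangent vectors at a single time $t^*$ does not by itself produce a family of positive \emph{paths}. Even if one upgrades this to interpolating the generators $P_s(t)=(1-s)P_{\mu_1}(t)+sP_{\mu_2}(t)$ on $[t^*-\delta,t^*+\delta]$ and solving the ODE $\eta_s'=JP_s\eta_s$, $\eta_s(t^*)=N_2(\theta,b)$, the resulting endpoints $\eta_s(t^*\pm\delta)$ for $0<s<1$ have no reason to stay in the fixed conjugacy classes of $\mu_1(t^*\pm\delta)$ — so the $C^1$-connectable condition (\ref{C1.connectable.homotopy}) at the interface fails, and the proposed concatenation with the decomposition and hyperbolic homotopies doesn't go through as stated. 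The paper's Lemma \ref{lem:c0c1}(iii) trick is precisely what sidesteps this matching problem. Additionally, your assertion that the $\mathcal{B}_{\mathcal{U},1}$-crossing points $\mu_1(t_0),\mu_2(t_0)$ are conjugate is not part of the hypothesis (only the $\mathcal{B}_{\mathcal{U}}$-collision points are assumed conjugate); this step is unnecessary anyway since the splitting persists across $\mathcal{B}_{\mathcal{U},1}$.
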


\begin{proof}
Suppose $\gamma$ and $\mu$ be two elementary collide-out paths
that satisfy the $C^1$-connectable condition.
Moreover, by changing the time scale, 
we suppose the two paths are collide at $t={1\over2}$,
and $\mu({1\over2})=X_0^{-1}\gamma({1\over2})X_0$ for some $X_0\in\Sp(4)$.

Letting $\tilde\gamma(t)=X_0^{-1}\gamma({t})X_0$,
then we have $\gamma\sim_{+,con}\tilde\gamma$.
Since $\mu({1\over2})=\tilde\gamma({1\over2}$),
by Lemma \ref{lem:c0c1}(iii), we can suppose that
$\tilde\gamma(t)=\mu(t)$ for $t\in({1\over2}-\epsilon,{1\over2}+\epsilon)$
with some $\epsilon>0$, please see Figure \ref{figure:tilde.gamma}.
\begin{figure}[ht]
	\centering
	\includegraphics[height=8cm]{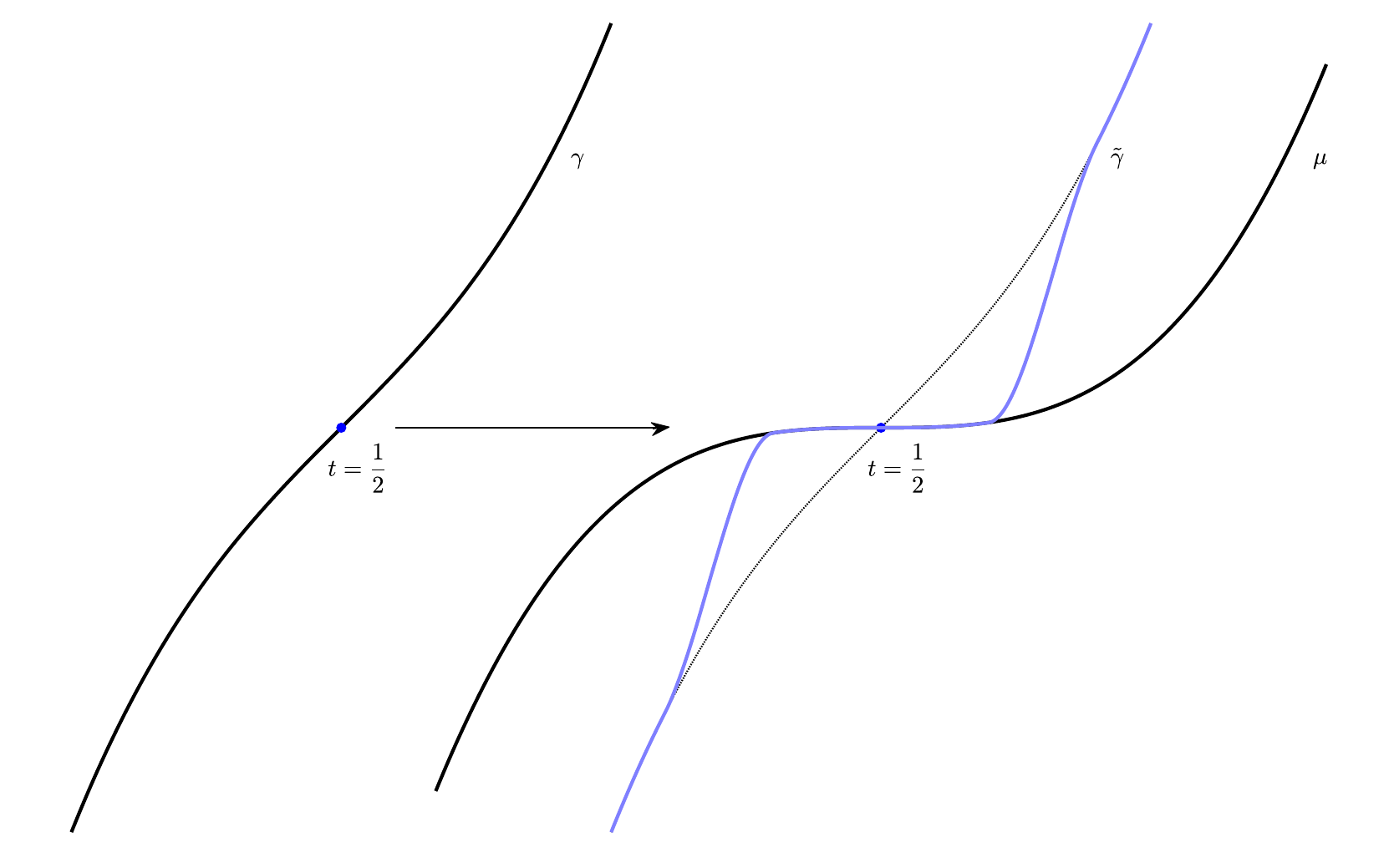}
	\vskip -0.5 cm
	\caption{The illustration of $\tilde{\gamma}$}
 \label{figure:tilde.gamma}
\end{figure}

Now the sub-paths $\tilde{\gamma}|_{[0,{1\over2}-{\epsilon\over2}]}$
and $\mu|_{[0,{1\over2}-{\epsilon\over2}]}$ have no collisions,
and satisfy the $C^1$-connectable conditions.
By Lemma \ref{Lm:positive.decomposition},
there exist positive decompositions
$\tilde{\gamma}=Y_t^{-1}\diag(\tilde\gamma_1,\tilde\gamma_2)Y_t$
and $\mu=Z_t^{-1}\diag(\mu_1,\mu_2)Z_t$
for $t\in[0,{1\over2}-{\epsilon\over2}]$.
Here $\tilde\gamma_1,\tilde\gamma_2$ and $\mu_1,\mu_2$
are four positive paths in $\Sp(2)$.
By changing the speed of the eigenvalues of $\tilde\gamma_1$ and $\mu_1$,
we have $\pi(\tilde\gamma_1)=\pi(\mu_1)$;
and hence $\tilde\gamma_1\sim_{+,con}\mu_1$.
Similarly, we have
$\tilde\gamma_2\sim_{+,con}\mu_2$.
Therefore, $\tilde{\gamma}|_{[0,{1\over2}-{\epsilon\over2}]}\sim_{+,con}\mu|_{[0,{1\over2}-{\epsilon\over2}]}$

Note that $\tilde\gamma|_{[{1\over2}+{\epsilon\over2},1]}$
and $\mu|_{[{1\over2}+{\epsilon\over2},1]}$
are two positive truly hyperbolic paths with $C^1$-connectable conditions,
by Theorem \ref{Thm:remove.hyperbolic.collisions},
we have $\tilde\gamma|_{[{1\over2}+{\epsilon\over2},1]}\sim_{+,con}\mu|_{[{1\over2}+{\epsilon\over2},1]}$.

At last, by gluing the three positive homotopies, 
we have $\gamma\sim_{+,con}\mu$.
\end{proof}

\subsection{Move the collisions along the unit circle $\U$}

The following theorem demonstrates that, for a given elementary collide-in (or collide-out) path, its single collision can be moved along an open subset of $\U\backslash\{\pm1\}$ through a $C^1$-connectable positive homotopy.

To describe more precisely,
given an elementary collide-out path $\gamma(t),t\in[0,1]$ (the argument for elementary collide-in path is similar),
we can suppose that
$\gamma$ leaves $\mathcal{O}_\mathcal{U}$, enters $\mathcal{O}_\mathcal{C}$
at time ${1\over2}$.
Let $\Delta(\gamma(t)):=\sigma_1(\gamma(t))^2-4\sigma_2(\gamma(t))+8$ be  the discriminant of collisions of $\gamma$.
    Then we have
	\begin{equation}
	\Delta(\gamma(t))\left\{\matrix{
		>0&t\in[0,{1\over2})\cr
		=0&t={1\over2}\cr
		<0&t\in({1\over2},1]\cr }\right.
	\end{equation}
Furthermore,
	we suppose
	\begin{eqnarray}
	\sigma(\gamma(0))&=&\{e^{\sqrt{-1}\th_0},e^{-\sqrt{-1}\th_0},
    \lambda_0,{1\over\lambda_0}\},
	\\
	\sigma(\gamma({1\over2}))&=&\{e^{\sqrt{-1}\th^*},e^{-\sqrt{-1}\th^*},
	e^{\sqrt{-1}\th^*},e^{-\sqrt{-1}\th^*}\},
	\end{eqnarray}
	where $0<\th^*<\th_0<\pi$ (the case $0<\th_0<\th^*<\pi$ is similar) and $\lambda_0>1$ (see Figure \ref{collide-out.path}).

 Note that in Figure \ref{collide-out.path},
 the pair of eigenvalues that represented
 by the red color
 travels clockwise.
 Thus, for $t\in[0,t_0)$,
 the normal form of
 $\gamma(t)$ is given by $\diag(R(2\pi-\th(t)),D(\lambda(t))$ where $\th(0)=\th_0,\lambda(0)=\lambda_0$ and
 $\th(t)$
 is the angle of the red pair of eigenvalues of $\gamma(t)$ lying in the upper half-plane.

\begin{lemma}\label{Th:move.collisions.on.U}
Using the notations above,
for any $\th\in(0,\th_0)$,
there exists an en elementary collide-out path $\mu$ which collides at
$\{e^{\sqrt{-1}\th},e^{-\sqrt{-1}\th}\}$,
such that $\gamma\sim_{+,con}\mu$ holds.
\end{lemma}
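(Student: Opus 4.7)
The plan is to construct an explicit one-parameter family $\{\mu_s\}_{s \in [0,1]}$ of elementary collide-out paths, each colliding at time $t = 1/2$ at the pair $\{e^{\pm \sqrt{-1}\,\phi(s)}\}$, where $\phi:[0,1]\to(0,\theta_0)$ is a continuous monotone interpolation with $\phi(0)=\theta^*$ and $\phi(1)=\theta$. Setting $\mu := \mu_1$, the family will furnish a $C^1$-connectable positive homotopy $\mu_0 \sim_{+,con} \mu_1$. Lemma \ref{Th:positive.homotopy.of.two.elementary.path}, applied to $\gamma$ and $\mu_0$ (both elementary collide-out paths colliding at $\theta^*$, with identical $C^1$-endpoint data enforced by construction), then gives $\gamma \sim_{+,con} \mu_0$, and concatenation of the two homotopies yields $\gamma \sim_{+,con} \mu$.

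Each $\mu_s$ is built in three pieces. On the pre-collision piece $t \in [0,1/2)$ the path traverses the decomposable strata $\mathcal{O}_{\mathcal{U,R}} \cup \mathcal{B}_{\mathcal{U},1} \cup \mathcal{O}_{\mathcal{U}}$, so the Positive Decomposition Lemma (Lemma \ref{Lm:positive.decomposition}) lets me take $\mu_s$ to be a conjugate of the diagonal sum of two positive $\Sp(2)$ paths: a blue rotation from $R(2\pi-\theta_0)$ down to $R(2\pi-\phi(s))$, and a red $\Sp(2)$ path traveling from $D(\lambda_0)$ through $N_1^+$ at some intermediate time $t_0(s)$ and then rotating to $R(2\pi-\phi(s))$ at $t=1/2$; both $\Sp(2)$ pieces are manifestly positive. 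Across the collision $t=1/2$ I use the non-trivial $\mathcal{B}_{\mathcal{U}}^-$ normal form $N_2(\phi(s),b)$ (so $(b_2-b_3)\sin\phi(s) < 0$) perturbed by $\diag(R(\alpha),R(\alpha))$ for small $\alpha>0$; by case $2^\circ$ of Lemma \ref{lemma:B_U.perturbation} the discriminant satisfies $\Delta>0$ on one side of $\alpha=0$ and $\Delta<0$ on the other, giving a positive arc that crosses from $\mathcal{O}_{\mathcal{U}}$ into $\mathcal{O}_{\mathcal{C}}$. On the post-collision piece $t \in [1/2+\epsilon,1]$ the path lies in the truly hyperbolic region $\mathcal{O}_{\mathcal{C}}$, and Theorem \ref{Thm:remove.hyperbolic.collisions} supplies a $C^1$-connectable positive homotopy ending at the target endpoint $Y_s^{-1}\gamma(1)Y_s$ for a continuous family $Y_s \in \Sp(4)$ with $Y_0=I$.

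The main obstacle is to glue these three pieces so that each $\mu_s$ is $C^1$-smooth and positive, and so that the whole family $\{\mu_s\}$ is a single $C^1$-connectable positive homotopy. Smoothness at the junctions is supplied by the Smoothing Lemma (Lemma \ref{lem: smoothing}), since by construction the junctions occur at generic points of the respective sub-paths, and the gluing of $C^1$-connectable positive homotopies follows the concatenation construction described immediately after Definition \ref{pos.connectable.homotopy}. Continuous dependence on $s$ is automatic from the smooth dependence of the normal-form parameters $\phi(s)$, $t_0(s)$, $b$, and $\alpha$, together with the continuous families of conjugations produced by Theorem \ref{Thm:remove.hyperbolic.collisions}. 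The $C^1$-connectable conditions at the overall endpoints are then enforced by choosing $\mu_s(0)\equiv\gamma(0)$ with matching derivative and letting $\mu_s(1)=Y_s^{-1}\gamma(1)Y_s$.
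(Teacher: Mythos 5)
Your plan to exhibit an explicit one-parameter family $\{\mu_s\}$ with collision angle $\phi(s)$ sweeping monotonically from $\theta^*$ to $\theta$ has a genuine gap at the gluing step, and it is precisely the gap the paper's open-closed argument is designed to avoid. The difficulty is the post-collision piece. At the junction $t=\tfrac12+\epsilon$, the collision piece hands off a matrix $A_s := \mu_s^{col}(\tfrac12+\epsilon)$ whose eigenvalues depend on $\phi(s)$, so the $A_s$ lie in \emph{different conjugacy classes} as $s$ varies. But Theorem~\ref{Thm:remove.hyperbolic.collisions} (and the underlying Lemma~\ref{Lemma:positive.homotopy} and Proposition~\ref{prop:lift}) only produce positive homotopies whose time-endpoints stay within a fixed conjugacy class for every $s$ --- that is exactly what the $C^1$-connectable condition (\ref{C1.connectable.homotopy}) encodes. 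So these results cannot supply a family of post-collision positive hyperbolic paths whose initial point tracks $A_s$. Likewise, the Smoothing Lemma is a per-path statement and gives no control over continuity in $s$. Your assertion that ``continuous dependence on $s$ is automatic'' is therefore unjustified, and the three $s$-families do not glue to a single positive homotopy by the tools you cite.

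The paper sidesteps this by never attempting a global parametric construction. It defines $\Theta\subset(0,\theta_0)$ as the set of reachable collision angles, shows $\Theta\ni\theta^*$, shows openness by a $C^1$-small perturbation (Lemma~\ref{lem:c0c1}(i), which is harmless precisely because the perturbation is local and small), and shows closedness using the constructive Claim (existence of an elementary collide-out path $\eta$ at the boundary angle satisfying the $C^1$-connectable conditions) together with Lemma~\ref{Th:positive.homotopy.of.two.elementary.path} and transitivity. Connectedness of $(0,\theta_0)$ then gives $\Theta=(0,\theta_0)$. Your construction of the explicit model $N_2(\phi,b)\cdot\diag(R(\alpha),R(\alpha))$ and the decomposition of the pre-collision piece match the ingredients of the paper's Claim, so your proposal contains most of the raw material; what is missing is the topological argument that replaces the impossible-to-implement single global homotopy with a chain of local ones.
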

\begin{proof}
Let $\Theta \subseteq (0,\theta_0)$ be the set of angles at which an elementary collide-out path $\mu$ collides, and such that $\gamma \sim_{+, con}\mu$. Then $\theta^* \in \Theta$. For any $\theta \in \Theta$, according to Lemma \ref{lem:c0c1}(i), we can locally perturb the collision of the positive path on $\theta \in (\theta - \epsilon, \theta + \epsilon)$ for some $\epsilon > 0$; thus, $\Theta$ is an open subset of $(0,\theta_0)$. To prove $\Theta$ is the whole interval $(0,\theta_0)$, it is sufficient to show that $\Theta$ is also a closed subset of $(0,\theta_0)$.

Now suppose an elementary collide-out path $\xi$ which collide at $\th_\xi\in(0,\th_0)$
    can be positively homotopic to a familty of positive paths $\eta^{\theta}$ where two pairs of eigenvalues collide at $\th\in(\th_\xi-\alpha^*,\th_\xi+\alpha^*)$
    where $0<\alpha^*<\min\{\th_\xi,\th_0-\th_\xi\}$.
    Then we will prove that $\xi$ can be positively homotopic to a path $\eta$ where two pairs of eigenvalues collide at the angle $\th_\xi-\alpha^*$ (or $\th_\xi+\alpha^*$).
    First, we need to consider whether
    the elementary collide-out path that collide at $\th_\xi-\alpha^*$ exists.
    To finish the lemma, we need the
    following claim whose proof will be
    provided afterward.

{\bf  Claim}: There exists an elementary collide-out path with a single collision occurring at
eigenvalues
$\{e^{\sqrt{-1}\th},e^{-\sqrt{-1}\th}\}$
for $\th\in(0,\th_0)$,
such that the $C^1$-connectable condition
between such a path and $\xi$ holds.

Armed with this claim,
    suppose $\eta$ is an elementary collide-out path that connects some conjugate matrices of $\xi(0), \xi(1)$, respectively, with its only collision at $e^{\sqrt{-1}(\th_\xi-\alpha^*)}$
    such that $\xi$ and $\eta$ satisfy the
    $C^1$-connectable condition.
    By perturbation (see Lemma \ref{lem:c0c1}), there exists $\epsilon>0$ such that, for any $\alpha\in(\th_\xi-\alpha^*-\epsilon,\th_\xi-\alpha^*+\epsilon)$,
    $\eta$ can be homotopic to an elementary collide-out path that collides at $e^{\sqrt{-1}\alpha}$
    via a $C^1$-connectable positive homotopy.
    Fix an angle $\alpha_0\in(\th_\xi-\alpha^*,\th_\xi-\alpha^*+\epsilon)$,
    suppose $\tilde\eta$ is such a path that collides at $e^{\sqrt{-1}\alpha_0}$.
    
    On the other hand, $\xi$ can also be positively homotopic to a positive path $\tilde\xi$ that only collides at $e^{\sqrt{-1}\alpha_0}$.
   According to Lemma \ref{Th:positive.homotopy.of.two.elementary.path},
    $\tilde\xi$ and $\tilde\eta$ are positively homotopic.
    By transitivity, $\xi\sim_{+,con}\eta$.

    Finally, we have $\Theta=(0,\theta_0)$.
    In other words,
    we can move the collission
    to any angle in the global interval
    $(0,\th_0)$.
Therefore, we only need to proof the claim.

\begin{proof}[Proof of the Claim.]
For $\epsilon>0$ small enough,	$\mu(t)=N_2(\th,b)\cdot\diag(R(t-{1\over2}),R(t-{1\over2})$ is a positive path
	for $t\in[{1\over2}-\epsilon,{1\over2}+\epsilon]$.
	By $3^\circ$ of Lemma \ref{lemma:B_U.perturbation}, 
	$\Delta(\mu(t))>0$ when $t<{1\over2}$, 
    and $\Delta(\mu(t))<0$ when $t>{1\over2}$;
    and hence $\mu$ has type
	$\mathcal{O}_{\mathcal{U}}
	\rightarrow\mathcal{B}_{\mathcal{U}}
	\rightarrow\mathcal{O}_{\mathcal{C}}$.
	Since $\gamma(0)\in\mathcal{O}_{\mathcal{U,R}}$ and $\mu({1\over2}-\epsilon)\in\mathcal{O}_{\mathcal{U}}$,
    for $\epsilon>0$ small enough,
	there exists a positive path $\gamma_1(t),t\in[0,{1\over2}-\epsilon]$, which connects $X_1^{-1}\gamma(0)X_1$
	and $\mu({1\over2}-\epsilon)$ for some $X_1\in\Sp(4)$.
 In fact, let $X_0$ be a matrix such that $X_0\mu({1\over2}-\ep)X_0^{-1}=\diag(R(2\pi-\th_{1,\ep}),R(\th_{2,\ep}))$ for some $\th_{1,\ep}\in(\th,\th+\ep)$ and
 $\th_{2,\ep}\in(\th-\ep,\th)$
 where $\ep<\min\{\th,\th_0-\th\}$ is sufficiently small.
 Furthermore, let
 $\th(t)$ be a strictly decreasing function with respect to $t$ such that
 $\th(0)=\th_0,\th({1\over2}-\ep)=\th_{1,\ep}<\th_0$
 and $\gamma_h$ be a positive path in $\Sp(2)$ that connects some matrix that conjugates to $D(\lambda_0)$ and
 $R(\th_{2,\ep})$ itself.
 Then $\gamma_1(t)=X_0^{-1}\diag(R(2\pi-\th(t),\gamma_h(t)))X_0$ for $t\in[0,{1\over2}-\ep]$.
 Furthermore, according to the Smoothing Lemma (Lemma \ref{lem: smoothing}),
 with appropriate perturbations near the end-points of $\gamma_1$,
 we can assume
	\begin{eqnarray}
	\gamma_1'(0)&=&X_1^{-1}\gamma'(0)X_1,
	\\
	\gamma_1'({1\over2}-\epsilon)&=&\mu'({1\over2}-\epsilon).
	\end{eqnarray}	
	By a similar argument, there exists a positive path $\gamma_2(t),t\in[{1\over2}+\epsilon,1]$, which connects 
	$\mu({1\over2}+\epsilon)$ and $X_2^{-1}\gamma(1)X_2$ for some $X_2\in\Sp(4)$, such that
	\begin{eqnarray}		\gamma_2'({1\over2}+\epsilon)&=&\mu'({1\over2}+\epsilon),
	\\
	\gamma_2'(1)&=&X_2^{-1}\gamma'(1)X_2.
	\\
	\end{eqnarray}
    Here $\gamma_2$ is entirely in $\mathcal{O}_{\mathcal{C}}$.
 
	Now $\tilde\gamma=\gamma_1*\mu*\gamma_2$ is a positive path
	with the type $\mathcal{O}_{\mathcal{U,R}}
	\rightarrow\mathcal{B}_{\mathcal{U}}
	\rightarrow\mathcal{O}_{\mathcal{C}}$.
	Moreover, we have $\tilde\gamma({1\over2})=\mu({1\over2})=N_2(\th,b)$,
	and
	\begin{eqnarray}	
	\tilde\gamma(0)=X_1^{-1}\gamma(0)X_1,&\quad&
	\tilde\gamma'(0)=X_1^{-1}\gamma'(0)X_1,
	\\
	\tilde\gamma(1)=X_2^{-1}\gamma(1)X_2,&\quad&
	\tilde\gamma'(1)=X_2^{-1}\gamma'(1)X_2.
	\end{eqnarray}
Thus $\tilde\gamma$ is the required
positive path.  
\end{proof}
We have completed the proof of Lemma \ref{Th:move.collisions.on.U}.
\end{proof}

\subsection{Move the collisions from the unit circle $\U$ to the real line $\R$}

The following theorem demonstrates that, for a given elementary collide-in (or collide-out) path, its single collision can be moved from $\U\backslash\{\pm1\}$ to $\R\backslash\{0,\pm1\}$ through a $C^1$-connectable positive homotopy.


\begin{theorem}\label{Th:transfer.collisions}
Any elementary collide-out (collide-in) path can be positively homotopic to a positive path with only one collision on the real line via a $C^1$-connectable positive homotopy.
\end{theorem}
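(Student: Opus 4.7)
The plan is to first concentrate the $\mathcal{U}$-collision arbitrarily close to $1$ by Lemma~\ref{Th:move.collisions.on.U}, then construct an explicit local positive model whose only generic collision lies on $\mathcal{R}\setminus\{0,\pm1\}$, and finally glue this local replacement into the global path using the tools of Sections~\ref{sec:3} and~\ref{sec:4}. I treat the elementary collide-out case; the collide-in case is analogous (with reversed orientation on the auxiliary rotation parameter).

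\emph{Step 1 (Localization).} Using Lemma~\ref{Th:move.collisions.on.U}, I apply a $C^1$-connectable positive homotopy so that the generic collision of $\gamma$ on $\U\setminus\{\pm1\}$ occurs at an angle $\theta$ arbitrarily close to $0$. After this step the entire portion of $\gamma$ lying in $\mathcal{B}_{\mathcal U,1}\cup\mathcal O_{\mathcal U}\cup\mathcal B_{\mathcal U}$ is confined to an arbitrarily small neighborhood $\mathcal N$ of a matrix $M_\ast\in\mathcal B_{1}$ carrying a single Jordan block of quadruple eigenvalue $1$. Inside $\mathcal N$ the local bifurcation picture is precisely the $(\pm1)^2$ diagram of Figure~\ref{figure:bf_c2_2}, in which the four codimension-$1$ walls $\mathcal B_{\mathcal U,1}$, $\mathcal B_{\mathcal U}$, $\mathcal B_{\mathcal R,1}$, $\mathcal B_{\mathcal R}$ all meet at $M_\ast$.

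\emph{Step 2 (Local swap).} Inside $\mathcal N$ I construct an explicit positive path $\widetilde\mu$ whose trajectory is
\[
\mathcal O_{\mathcal{U,R}}\;\to\;\mathcal B_{\mathcal R,1}\;\to\;\mathcal O_{\mathcal R}\;\to\;\mathcal B_{\mathcal R}\;\to\;\mathcal O_{\mathcal C}.
\]
The building blocks are the normal forms $N_1(1,a)$ and $M_2(\lambda,c)$ of Section~\ref{subsec:2.1}, concatenated and composed on the right with rotations $\mathrm{diag}(R(\alpha(t)),R(\alpha(t)))$. The discriminant formulas of Lemmas~\ref{lemma:B_U.perturbation} and~\ref{lemma:B_1.perturbation}, together with the analogous formula for $M_2(\lambda,c)\cdot\mathrm{diag}(R(\alpha),R(\alpha))$ stated just after them, show that a suitable monotone choice of $\alpha(t)$ drives $\Delta$ first across zero at a $\mathcal B_{\mathcal R,1}$-crossing (splitting the pair on $\U$ into a pair on $\R^+\setminus\{1\}$) and then back across zero at a $\mathcal B_{\mathcal R}$-crossing (merging the two pairs on $\R^+$ into a quadruple in $\mathcal C$). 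The middle, purely $\mathcal O_{\mathcal R}$-portion is truly hyperbolic and hence can be chosen positive by Lemma~\ref{Lemma:hyperbolic.positive.path}; after shrinking $\mathcal N$, Theorem~\ref{Thm:remove.hyperbolic.collisions} provides the freedom to adjust the intermediate conjugacy classes to match those needed at the two crossings.

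\emph{Step 3 (Globalization).} Outside $\mathcal N$ the original path $\gamma$ and the modified path $\widetilde\mu$ coincide up to a conjugation arbitrarily close to the identity. On the truly hyperbolic tail $\{t\ge t^*+\delta\}$, Theorem~\ref{Thm:remove.hyperbolic.collisions} supplies a $C^1$-connectable positive homotopy; on the initial segment $\{t\le t_0-\delta\}$, which is collision-free and lies in $\mathcal O_{\mathcal{U,R}}$, the Positive Decomposition Lemma (Lemma~\ref{Lm:positive.decomposition}) together with Proposition~\ref{prop:lift} and the Part Speed Changing Lemma (Lemma~\ref{Lemma:change.part.time}) supplies the homotopy. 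The Smoothing Lemma (Lemma~\ref{lem: smoothing}) and the gluing procedure described after Definition~\ref{pos.connectable.homotopy} combine these pieces into a single $C^1$-connectable positive homotopy from $\gamma$ to a positive path with exactly one generic collision, located on $\R\setminus\{0,\pm1\}$. The main obstacle is Step~2: positivity constrains the orientation in which each codimension-$1$ wall can be crossed (visible in the $\pm$ decoration of $\mathcal B_{\mathcal U}$ in Section~\ref{subsec:5.1}), so one must verify that the alternative trajectory through $\mathcal B_{\mathcal R,1}$ and $\mathcal B_{\mathcal R}$ is realized by a genuine positive path with the prescribed end-data, and the discriminant identities of Lemmas~\ref{lemma:B_U.perturbation}--\ref{lemma:B_1.perturbation} are precisely the tool for this verification, paralleling their use in Lemma~\ref{Th:move.collisions.on.U}.
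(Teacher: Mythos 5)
Your overall plan (swap the collision by working near the codimension-$2$ stratum $\mathcal B_1$) is the same idea the paper uses, but the execution has one concrete error and one structural gap that the paper's argument is specifically designed to close.

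First, the concrete error. In Step 2 you say the discriminant $\Delta$ is ``first driven across zero at a $\mathcal B_{\mathcal R,1}$-crossing.'' It is not. On $\mathcal B_{\mathcal R,1}$ the eigenvalues are $\{\lambda,\lambda^{-1},1,1\}$, so $\mu_1=\lambda+\lambda^{-1}$, $\mu_2=2$, and
\[
\Delta=(\mu_1-\mu_2)^2=(\lambda+\lambda^{-1}-2)^2>0
\]
for $\lambda\ne1$; the same computation shows $\Delta>0$ on $\mathcal B_{\mathcal U,1}$. The discriminant $\Delta$ vanishes only on $\mathcal B_{\mathcal U}$, $\mathcal B_{\mathcal R}$ and at $\mathcal B_1$ itself — it detects a collision of the two $\mu$-values, not a passage of one pair through $\{1,1\}$ — so Lemmas~\ref{lemma:B_U.perturbation}--\ref{lemma:B_1.perturbation} cannot certify the $\mathcal B_{\mathcal R,1}$ crossing in the way you describe. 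Your proposed sign-analysis therefore does not produce the trajectory you claim.

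Second, and more seriously, Step~3 never produces the positive homotopy \emph{inside} $\mathcal N$. You glue homotopies on the initial $\mathcal O_{\mathcal U,\mathcal R}$-segment and on the truly hyperbolic tail, but the two paths genuinely differ inside $\mathcal N$: one passes through $\mathcal B_{\mathcal U,1}\to\mathcal O_{\mathcal U}\to\mathcal B_{\mathcal U}$ and the other through $\mathcal B_{\mathcal R,1}\to\mathcal O_{\mathcal R}\to\mathcal B_{\mathcal R}$. That they are even $C^0$-close near $\mathcal B_1$ does not give a \emph{positive} homotopy, and none of the cited lemmas (Smoothing Lemma, Lemma~\ref{Lm:positive.decomposition}, Theorem~\ref{Thm:remove.hyperbolic.collisions}) applies on this mixed stratum. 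The paper closes exactly this gap by a different mechanism: it first builds a genuine positive path $\tilde\gamma$ that passes \emph{through} the $\mathcal B_1$-point $M_2(1,c)$ (explicitly $\mu(t)=M_2(1,c)\cdot\mathrm{diag}(R(t-\tfrac12),R(t-\tfrac12))$, positive by Lemma~\ref{lemma:B_1.perturbation}, extended by $\gamma_1\subset\mathcal O_{\mathcal U,\mathcal R}$ and $\gamma_2\subset\mathcal O_{\mathcal C}$), and then observes that both alternatives $\gamma_\pm$ around the cone point in the $0^4$ bifurcation picture are $C^1$-small perturbations of $\tilde\gamma$ with fixed end-segments; openness of positivity in $C^1$ then gives simultaneously the existence of the replacement \emph{and} the positive homotopy $\gamma_+\sim_{+,\mathrm{con}}\tilde\gamma\sim_{+,\mathrm{con}}\gamma_-$. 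Only afterward does the paper invoke Lemmas~\ref{Th:move.collisions.on.U} and~\ref{Th:positive.homotopy.of.two.elementary.path} to match $\gamma_-$ to the original $\gamma$. Your ``localize first'' order is also weaker than you need: Lemma~\ref{Th:move.collisions.on.U} only moves the $\mathcal B_{\mathcal U}$-collision angle, and the $\mathcal O_{\mathcal U,\mathcal R}$-endpoint conjugacy class is fixed, so you cannot confine the whole $\mathcal B_{\mathcal U,1}\cup\mathcal O_{\mathcal U}\cup\mathcal B_{\mathcal U}$ segment to an arbitrarily small neighborhood of $\mathcal B_1$ without further construction — which is precisely what the paper's auxiliary paths $\gamma_1,\gamma_2$ accomplish.
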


\begin{proof}[Proof of Theorem \ref{Th:transfer.collisions}]

The proof will come in several steps.

{\bf  Step 1: Construct a positive path with a single collision occurring at the eigenvalue $\pm1$}

    We only consider the case when one pair of eigenvalues
    pass through $\{1,1\}$,
    and the other case is similar.
 Such a construction is similar to that of
 the claim in the proof of Lemma \ref{Th:move.collisions.on.U}.

	For $\epsilon>0$ small enough,	$\mu(t)=M_2(1,c)\cdot\diag(R(t-{1\over2}),R(t-{1\over2})$ is a positive path
	for $t\in[{1\over2}-\epsilon,{1\over2}+\epsilon]$.
	By $1^\circ$ of Lemma \ref{lemma:B_1.perturbation}, 
	$\Delta(\mu(t))>0$ when $t<{1\over2}$, 
    and $\Delta(\mu(t))<0$ when $t>{1\over2}$;
    and hence $\mu$ has type
	$\mathcal{O}_{\mathcal{U,R}}
	\rightarrow\mathcal{B}_{\pm1}
	\rightarrow\mathcal{O}_{\mathcal{C}}$.
	Since $\gamma(0),\mu({1\over2}-\epsilon)\in\mathcal{O}_{\mathcal{U,R}}$,
    for $\epsilon>0$ small enough,
    there exists a positive path $\gamma_1(t),t\in[0,{1\over2}-\epsilon]$, 
    entirely in $\mathcal{O}_{\mathcal{U,R}}$,
    which connects $X_1^{-1}\gamma(0)X_1$
	and $\mu({1\over2}-\epsilon)$ for some $X_1\in\Sp(4)$.
 Furthermore, according to the Smoothing Lemma (Lemma \ref{lem: smoothing}),
 with appropriate perturbations near the end-points of $\gamma_1$,
 we can assume
	\begin{eqnarray*}
	\gamma_1'(0)=X_1^{-1}\gamma'(0)X_1,\qquad	\gamma_1'({1\over2}-\epsilon)=\mu'({1\over2}-\epsilon).
	\end{eqnarray*}	
Meanwhile, there exists a positive path $\gamma_2(t),t\in[{1\over2}+\epsilon,1]$, 
entirely in $\mathcal{O}_{\mathcal{C}}$,
which connects 
	$\mu({1\over2}+\epsilon)$ and $X_2^{-1}\gamma(1)X_2$ for some $X_2\in\Sp(2)$, such that
	\begin{eqnarray*}		\gamma_2'({1\over2}+\epsilon)=\mu'({1\over2}+\epsilon),\qquad
	\gamma_2'(1)=X_2^{-1}\gamma'(1)X_2.
	\end{eqnarray*}
 
	Now $\tilde\gamma=\gamma_1*\mu*\gamma_2$ is a positive path
	with the type $\mathcal{O}_{\mathcal{U,R}}
	\rightarrow\mathcal{B}_{\pm1}
	\rightarrow\mathcal{O}_{\mathcal{C}}$.
	Moreover, we have $\tilde\gamma({1\over2})=\mu({1\over2})=M_2(1,c)$,
	and
 	\begin{eqnarray*}	
	\qquad\tilde\gamma(0)=X_1^{-1}\gamma(0)X_1,\,
	\tilde\gamma'(0)=X_1^{-1}\gamma'(0)X_1;\quad
	\tilde\gamma(1)=X_2^{-1}\gamma(1)X_2,\,
	\tilde\gamma'(1)=X_2^{-1}\gamma'(1)X_2.
	\end{eqnarray*}
	 Thus $\tilde\gamma$ is the required
	 positive path.
	
	{\bf  Step 2: Perturb the collision at $\{1,1,1,1\}$}

   Referring to Lemma \ref{bifurcation.digram.of.c=1.2} and the subsequent argument, the bifurcation diagram for general deformations of $M_2(1, c)$ is illustrated in Figure \ref{figure:bf_c2} (d). Additional insights can be gained from Figure \ref{bifurcation_diagram_of_zero4}.
    
    \begin{figure}[ht]
	\centering
	\includegraphics[height=10.5cm]{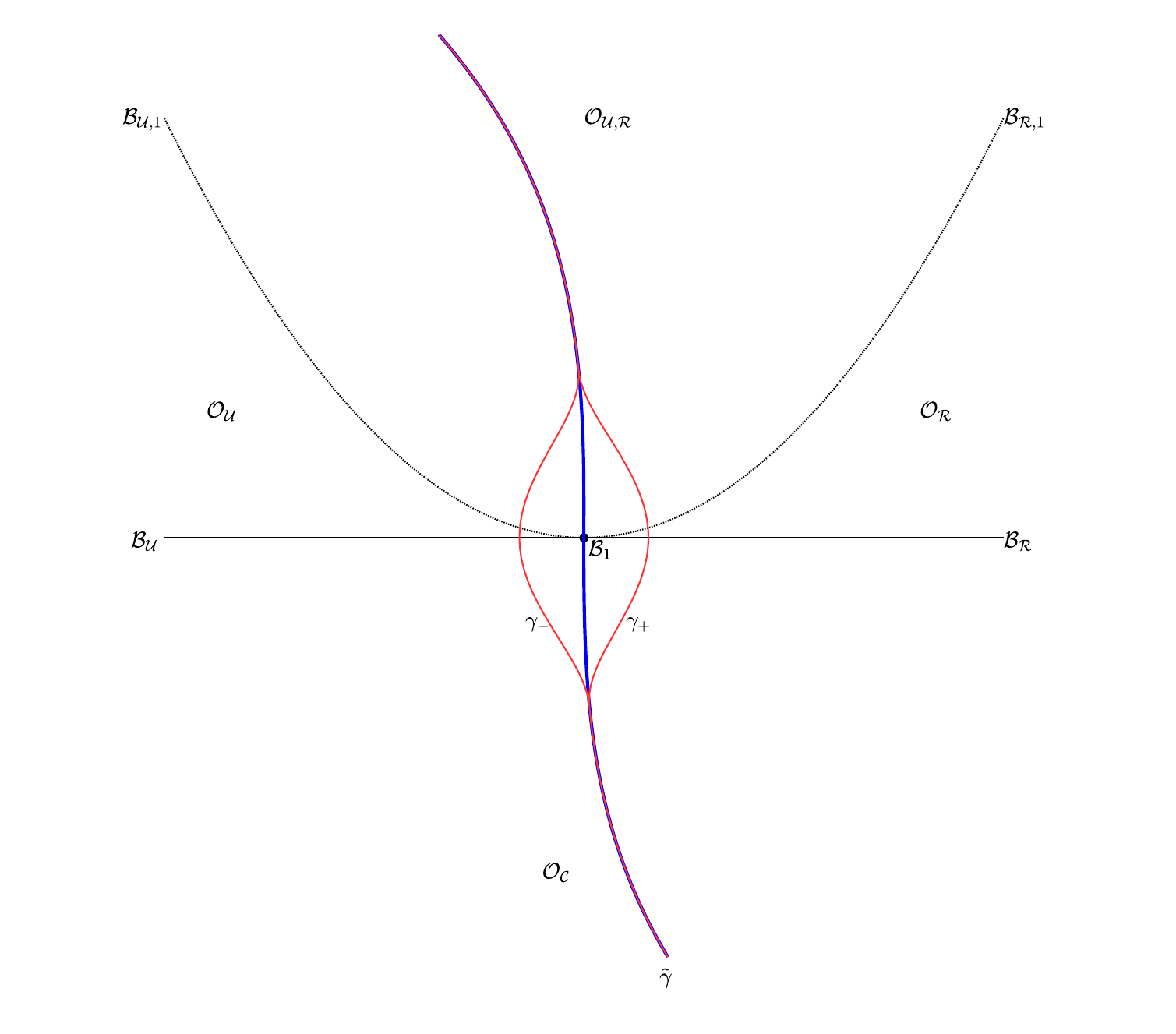}
	\vskip -0.8 cm
	\caption{ Perturb the path near $\mathcal{B}_1$.}
    \label{bifurcation_diagram_of_zero4}
    \end{figure}
	Then we perturb $\tilde\gamma$ near the point $M_2(1,c)\in\mathcal{B}_1$, as depicted in Figure \ref{bifurcation_diagram_of_zero4}.
    Here $\gamma_+$ has only one collision on $\R\backslash\{\pm1\}$,
    while $\gamma_-$ has only one collision on $\U\backslash\{\pm1\}$.
    Note that $\gamma_+,\gamma_-$, and $\tilde\gamma$
    are positive homotopy with fixed end-segments (see Formula (\ref{fix.condition}) and below),
    which implies that $\gamma_+\sim_{+,con}\tilde\gamma\sim_{+,con}\gamma_-$.

  {\bf Step 3: Move the collision from $\U$ to $\R$}

    Note that
	the positive path $\gamma_-$ in Step 2 has the same type as $\gamma$.
    Using Lemma \ref{Th:move.collisions.on.U} to $\gamma_-$, 
    there exists a positive path $\hat{\gamma}$ such that $\tilde\gamma\sim_{+,con}\hat\gamma$;
    and $\gamma,\hat\gamma$ have the same conjugate collision points.
    Then by Lemma \ref{Th:positive.homotopy.of.two.elementary.path},
    $\gamma\sim_{+,con}\hat\gamma$;
    and hence $\gamma\sim_{+,con}\gamma_+$.
    Note that $\gamma_+$ is a positive path with a single collision
    on $\R$.
    Thus, we move the collision from $\U$ to $\R$ via a proper 
    $C^1$-connectable positive homotopy.
\end{proof}

For a positive path $\gamma$, if one pair of eigenvalues travels as 
$\R^+\rightarrow\{1,1\}\rightarrow\U$,
another pair of eigenvalues comes from $\R^-\rightarrow\{-1,-1\}\rightarrow\U$,
and then they collide at $\mathcal{B}_{\mathcal{U}}$,
moving from $\mathcal{O}_{\mathcal{U}}$ to $\mathcal{O}_{\mathcal{C}}$.
Hence, we can not only move the collision to $\R^+$, but also to $\R^-$.
From another point of view, for such a path,
we can move the collision from $\R^+$ to $\R^-$ (or from $\R^-$ to $\R^+$) through $\U$.
For more precise discussions,
we need to determine whether the eigenvalues in 
$\R\backslash\{0\}$ belongs to $\R^+$ or $\R^-$.
For this purpose, we utilize slightly different symbols to denote the subsets of
the regions as listed in Section 5.1.
For example, $\mathcal{O}_{\mathcal{R^-,R^+}}$ represent
the subset of $\mathcal{O}_{\mathcal{R}}$,
which consists of all matrices with one pair
of eigenvalues in $\R^+$ and another pair of eigenvalues in $\R^-$; 
$\mathcal{B}_{\mathcal{R}^-,1}$ represents
the subset of $\mathcal{B}_{\mathcal{R},1}$,
which consists of all matrices with one pair
of eigenvalues in $\R^-$ and another pair of eigenvalues being $\{1,1\}$; 
and so on.
Indeed, we have

\begin{lemma}\label{Th:transfer.R+.to.R-}
(i) Suppose the positive path $\gamma$ travels as follows: 
	\begin{equation}
	\mathcal{O}_{\mathcal{R^-,R^+}}
	  \rightarrow\mathcal{B}_{\mathcal{R^-},1}
    \rightarrow\mathcal{O}_{\mathcal{R^-,U}}
      \rightarrow\mathcal{B}_{-1,\mathcal{U}}
    \rightarrow\mathcal{O}_{\mathcal{U}}
	\rightarrow\mathcal{B}_{\mathcal{U}}
	\rightarrow\mathcal{O}_{\mathcal{C}},
	\end{equation}
see Figure \ref{Sp4_type2}.
Then $\gamma$ can be positively homotopic to a positive path
that has only one collision on $\R^+$, as depicted in Figure \ref{Sp4_type3}.
It can also be positively homotopic to a positive path
that has only one collision on $\R^-$,
see Figure \ref{Sp4_type4}.

(ii) Any positive path travels as follows:
	\begin{equation}
	\mathcal{O}_{\mathcal{R^+,R^-}}
	  \rightarrow\mathcal{B}_{1,\mathcal{R^-}}
    \rightarrow\mathcal{O}_{\mathcal{U,R^-}}
      \rightarrow\mathcal{B}_{-1,\mathcal{R^-}}
    \rightarrow\mathcal{O}_{\mathcal{R^-,R^-}}
	\rightarrow\mathcal{B}_{\mathcal{R}}
	\rightarrow\mathcal{O}_{\mathcal{C}}
	\end{equation}
 see Figure \ref{Sp4_type3},
 can be positively homotopic to a positive path with type
 \begin{equation}
	\mathcal{O}_{\mathcal{R^+,R^-}}
	  \rightarrow\mathcal{B}_{\mathcal{R^+},-1}
    \rightarrow\mathcal{O}_{\mathcal{R^+,U}}
      \rightarrow\mathcal{B}_{\mathcal{R^+},1}
    \rightarrow\mathcal{O}_{\mathcal{R^+,R^+}}
	\rightarrow\mathcal{B}_{\mathcal{R}}
	\rightarrow\mathcal{O}_{\mathcal{C}},
	\end{equation}
 see Figure \ref{Sp4_type4}.
\end{lemma}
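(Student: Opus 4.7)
The plan is to adapt the three-stage strategy of Theorem \ref{Th:transfer.collisions} (move the collision close to $\{\pm 1,\pm 1,\pm 1,\pm 1\}$, cross a codimension-$2$ stratum, then perturb), using $\mathcal{B}_{1}$ and $\mathcal{B}_{-1}$ as the switching points that allow us to redirect the final collision onto $\R^+$ or $\R^-$ respectively. Part (i) is handled directly by this method, and part (ii) is then reduced to part (i) via Theorem \ref{Th:transfer.collisions} applied in reverse (the $C^1$-connectable positive homotopy relation being symmetric).

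For part (i), I would first apply Lemma \ref{Th:move.collisions.on.U} to move the final $\mathcal{B}_{\mathcal{U}}$ collision angle $\th^*$ along $\U$ toward $0$ (to obtain a single collision on $\R^+$) or toward $\pi$ (to obtain one on $\R^-$); this is possible because after the intermediate collisions $\mathcal{B}_{\mathcal{R}^-,1}$ and $\mathcal{B}_{-1,\mathcal{U}}$ both pairs already lie in $\mathcal{O}_\mathcal{U}$, and all of the preceding pieces of $\gamma$ are unaffected by this motion. Next, following Step~1 of the proof of Theorem \ref{Th:transfer.collisions}, I would construct an auxiliary positive path $\tilde\gamma$ that coincides with $\gamma$ (up to a symplectic conjugation) outside a neighborhood of the collision and that passes through $M_2(1,c)\in\mathcal{B}_{1}$ (respectively $M_2(-1,c)\in\mathcal{B}_{-1}$) at the collision time; the Smoothing Lemma (Lemma \ref{lem: smoothing}), Lemma \ref{lem:c0c1}, and the Positive Decomposition Lemma (Lemma \ref{Lm:positive.decomposition}) are used to positively interpolate between $\gamma$ and $\tilde\gamma$ on the pieces flanking the collision. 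Finally, I would perturb $\tilde\gamma$ near the codimension-$2$ point using Lemma \ref{lemma:B_1.perturbation} and the bifurcation diagram in Figure \ref{bifurcation_diagram_of_zero4}: one perturbation $\gamma_{\mathcal{U}}$ reproduces a path of the original type (ending in $\mathcal{O}_\mathcal{C}$ through $\mathcal{B}_\mathcal{U}$), while another perturbation $\gamma_+$ yields a path whose sole collision is on $\R^+$. Both are $C^1$-connectably positively homotopic to $\tilde\gamma$ (as fixed-end-segment perturbations), and Lemma \ref{Th:positive.homotopy.of.two.elementary.path} gives $\gamma\sim_{+,con}\gamma_{\mathcal{U}}$, so that $\gamma\sim_{+,con}\gamma_+$.

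For part (ii), the source path has final collision $\mathcal{B}_{\mathcal{R}^-}$ rather than $\mathcal{B}_{\mathcal{U}}$. By Theorem \ref{Th:transfer.collisions} applied in the reverse direction I would move this final $\mathcal{R}^-$ collision back onto $\U$: both pairs, being in $\R^-$ just before the collision, can positively travel through $\{-1,-1\}$ into $\U$, producing an intermediate path of the type addressed in part~(i). Applying part~(i) to this intermediate path then produces a positive path whose single collision lies on $\R^+$, i.e.\ the desired target of (ii). The main obstacle is the third stage of part~(i): the codimension-$2$ strata $\mathcal{B}_{\pm 1}$ have several codimension-$1$ strata emanating from them, and one must verify, using the full bifurcation picture around $\mathcal{B}_{\pm 1}$ from Figure \ref{figure:bf_c2}(d) together with extensions of Lemmas \ref{lemma:B_U.perturbation}--\ref{lemma:B_1.perturbation} beyond the one-parameter family $\diag(R(\alpha),R(\alpha))$, that both the $\mathcal{O}_\mathcal{U}$-type and the $\mathcal{O}_{\mathcal{R}^+,\mathcal{R}^+}$-type perturbations of $\tilde\gamma$ arise simultaneously with matching $C^1$-connectable endpoints.
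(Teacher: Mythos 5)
Your proof is correct and takes essentially the same approach as the paper, whose proof of this lemma is a one-line reference to the proof of Theorem \ref{Th:transfer.collisions}; you correctly identify the elementary collide-out sub-paths, the passage through $M_2(\pm1,c)\in\mathcal{B}_{\pm1}$, and the cusp perturbation (via Lemmas \ref{lemma:B_1.perturbation}, \ref{Th:move.collisions.on.U}, and \ref{Th:positive.homotopy.of.two.elementary.path}) as the key ingredients, and your reduction of part~(ii) to part~(i) by running the argument backward is the intended reading of ``immediately follows.'' The concern you raise in the final paragraph is already resolved exactly as in Step~2 of the proof of Theorem \ref{Th:transfer.collisions}: since $\gamma_+$, $\gamma_{\mathcal{U}}$ and $\tilde\gamma$ are perturbations with fixed end-segments (and hence share all endpoint data outside the small neighborhood of $\mathcal{B}_{\pm1}$), both desired perturbations arise simultaneously and $C^1$-connectably without any extension of the $\diag(R(\alpha),R(\alpha))$ computations.
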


\begin{figure}[ht]
	\centering
	\includegraphics[height=6.5cm]{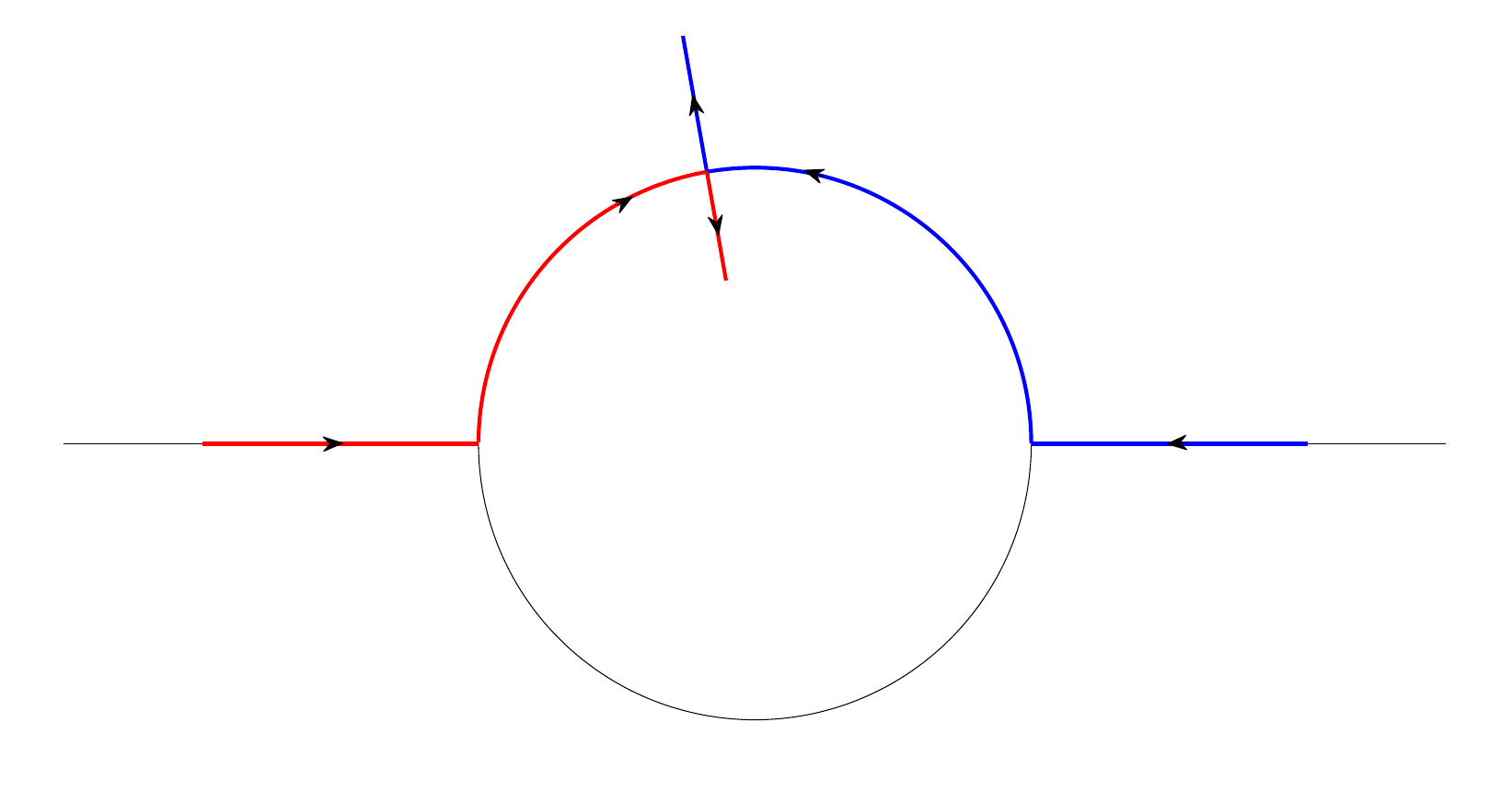}
	\vskip -0.8 cm
	\caption{A path with type $\mathcal{O}_{\mathcal{R^-,R^+}}
	  \rightarrow\mathcal{B}_{\mathcal{R^-},1}
    \rightarrow\mathcal{O}_{\mathcal{R^-,U}}
      \rightarrow\mathcal{B}_{-1,\mathcal{U}}
    \rightarrow\mathcal{O}_{\mathcal{U}}
	\rightarrow\mathcal{B}_{\mathcal{U}}
	\rightarrow\mathcal{O}_{\mathcal{C}}$. The red curve represents a pair of eigenvalues that goes to $N_{-1}^-$. The blue curve represents the another pair of eigenvalues that goes to $N_{1}^-$.}
    \label{Sp4_type2}
\end{figure}

\begin{figure}[ht]
	\centering
	\includegraphics[height=6.0cm]{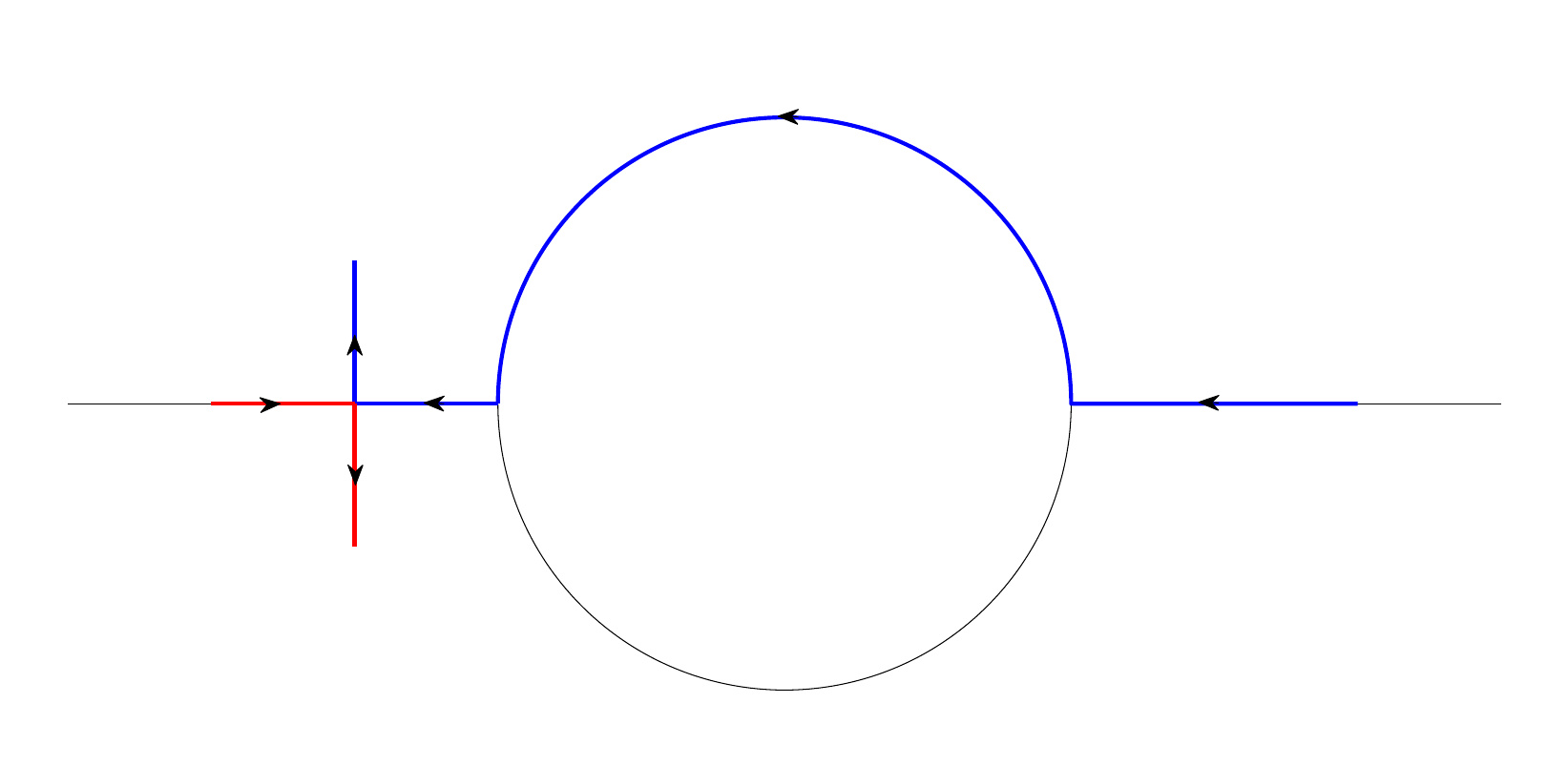}
	\vskip -0.8 cm
	\caption{A path with type $\mathcal{O}_{\mathcal{R^+,R^-}}
	  \rightarrow\mathcal{B}_{1,\mathcal{R^-}}
    \rightarrow\mathcal{O}_{\mathcal{U,R^-}}
      \rightarrow\mathcal{B}_{-1,\mathcal{R^-}}
    \rightarrow\mathcal{O}_{\mathcal{R^-,R^-}}
	\rightarrow\mathcal{B}_{\mathcal{R}}
	\rightarrow\mathcal{O}_{\mathcal{C}}$. Each curve
    with distinct colors represents the trajectory of
    a pair of eigenvalues.}
    \label{Sp4_type3}
\end{figure}

\begin{figure}[ht]
	\centering
	\includegraphics[height=6.0cm]{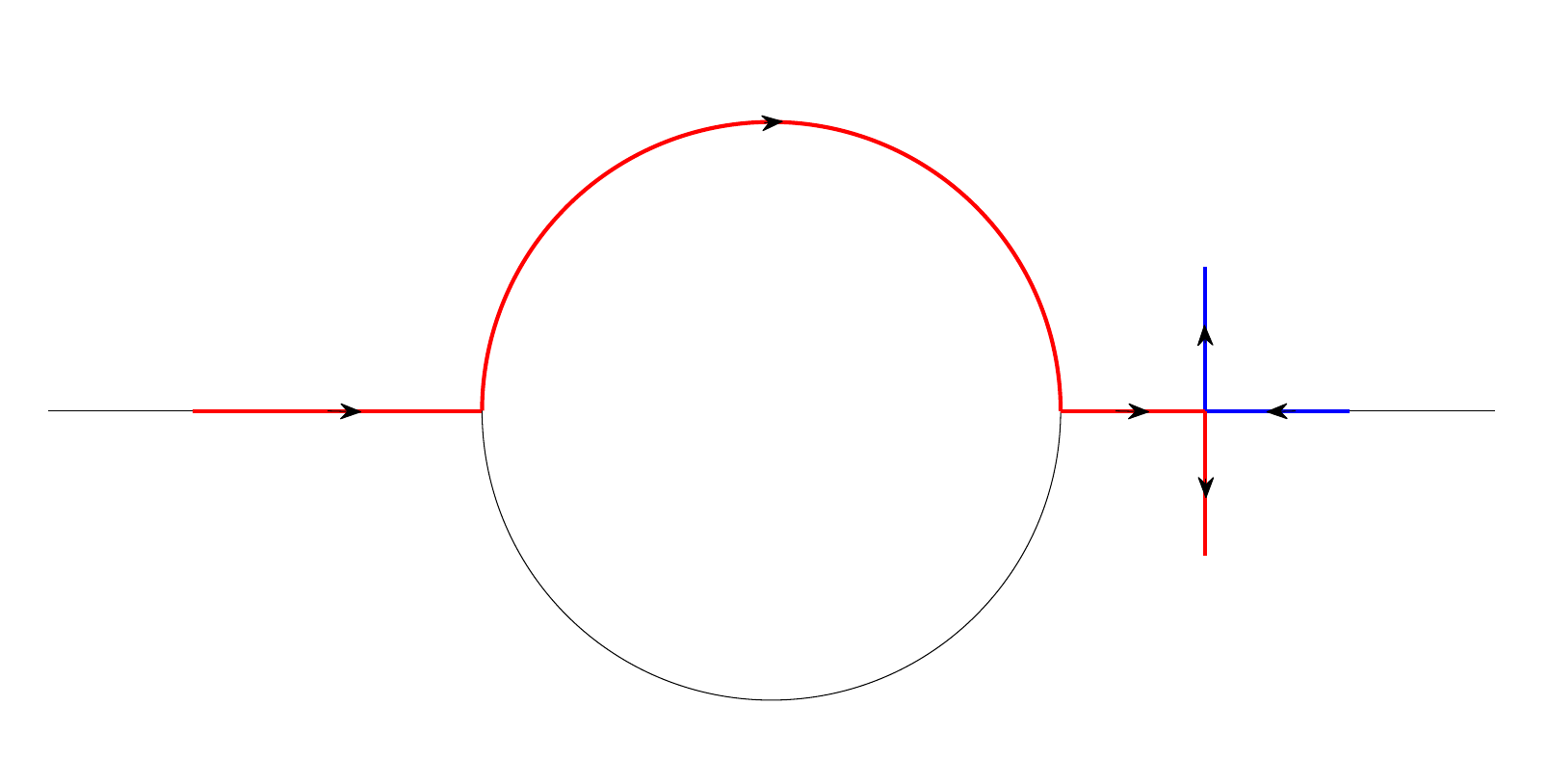}
	\vskip -0.8 cm
	\caption{A path with type $\mathcal{O}_{\mathcal{R^+,R^-}}
	  \rightarrow\mathcal{B}_{\mathcal{R^+},-1}
    \rightarrow\mathcal{O}_{\mathcal{R^+,U}}
      \rightarrow\mathcal{B}_{\mathcal{R^+},1}
    \rightarrow\mathcal{O}_{\mathcal{R^+,R^+}}
	\rightarrow\mathcal{B}_{\mathcal{R}}
	\rightarrow\mathcal{O}_{\mathcal{C}}$. Each curve
    with different colors represents the trajectory of
    a pair of eigenvalues.}
    \label{Sp4_type4}
\end{figure}

\begin{proof}
The proof of the statement (i) is similar to the proof of Theorem \ref{Th:transfer.collisions},
and (ii) immediately follows from (i).
\end{proof}

\begin{remark}\label{RM:transfer.R+.to.R-}
    For a positive path in $\Sp(4)$,
    if one pair of eigenvalues remain on $\R^-$ while another pair of eigenvalues travels from $\R^+$ through $\U$ to $\R^-$,
    they collide at $\R^-$.
    This collision can be transferred through $\U$ to $\R^+$ via positive homotopy.
\end{remark}

According to Theorem \ref{Th:transfer.collisions}, we can move the collision
from $\U\backslash\{\pm1\}$ to $\R\backslash\{\pm1\}$.
Furthermore, once the collision is on $\R\backslash\{\pm1\}$,
the next lemma shows that
the collision can freely moved along $\R^+\backslash\{1\}$ or $\R^-\backslash\{-1\}$.
In fact, by similar arguments to those in the proof of Lemma \ref{Th:move.collisions.on.U}, along with $4^\circ$ of Lemma 2.5, we have:

\begin{lemma}\lb{Th:transfer.collisions.on.R}
Suppose $\gamma$ is a positive path that has only one collision from
$\mathcal{O}_{\mathcal{R}^+,\mathcal{R}^+}$ (or $\mathcal{O}_{\mathcal{R}^-,\mathcal{R}^-}$) 
	to $\mathcal{O}_{\mathcal{C}}$.
	Then, for any $\lambda>1$ (resp. $\lambda<-1$), there exists a positive path
	$\mu$ that has only one collision at $\lambda$,
	such that $\gamma\sim_{+,con}\mu$.
\end{lemma}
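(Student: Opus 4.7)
The plan is to adapt the openness–closedness strategy used in the proof of Lemma \ref{Th:move.collisions.on.U}, with the unit-circle normal form $N_2(\theta,b)$ replaced by the hyperbolic normal form $M_2(\lambda,c)$ supplied by part $4^\circ$ of Lemma 2.5. Concretely, I let
\[
 \Lambda \;=\; \bigl\{\lambda\in(1,\infty) \;:\; \exists\,\mu\text{ with a single collision at }\{\lambda,1/\lambda\}\text{ going } \mathcal{O}_{\mathcal{R}^+,\mathcal{R}^+}\!\to\!\mathcal{B}_{\mathcal{R}}\!\to\!\mathcal{O}_{\mathcal{C}},\ \gamma\sim_{+,con}\mu\bigr\},
\]
with the analogous definition on $(-\infty,-1)$. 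The original collision value of $\gamma$ lies in $\Lambda$, so $\Lambda\neq\emptyset$. Openness of $\Lambda$ is immediate from Lemma \ref{lem:c0c1}(i): a small $C^1$-perturbation of the collision location within the local model gives another positive path of the same type, and these perturbations can be absorbed into the global homotopy.

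The crux is closedness. Given $\lambda^{*}$ in the closure of $\Lambda$, I build a local collision model at $\lambda^{*}$. By $4^\circ$ of Lemma 2.5, for $c=(c_1,c_2)$ with $c_2>0$, the path $\mu(t)=M_2(\lambda^{*},c)\cdot\mathrm{diag}(R(t-\tfrac12),R(t-\tfrac12))$ is positive on $[\tfrac12-\epsilon,\tfrac12+\epsilon]$ for small $\epsilon>0$, and $\Delta(\mu(t))>0$ for $t<\tfrac12$ while $\Delta(\mu(t))<0$ for $t>\tfrac12$. Thus $\mu$ realises the type $\mathcal{O}_{\mathcal{R}^+,\mathcal{R}^+}\!\to\!\mathcal{B}_{\mathcal{R}}\!\to\!\mathcal{O}_{\mathcal{C}}$ with the sole collision at $\{\lambda^{*},1/\lambda^{*}\}$.

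To globalise, I connect $X_1^{-1}\gamma(0)X_1$ to $\mu(\tfrac12-\epsilon)$ by a positive path $\gamma_1$ contained in $\mathcal{O}_{\mathcal{R}^+,\mathcal{R}^+}$: the region is a product-type stratum, so by the Positive Decomposition Lemma (Lemma \ref{Lm:positive.decomposition}) this reduces to joining the two independent eigenvalue pairs inside $\Sp(2)$, which is immediate from Proposition \ref{Prop:joint}. On the other side, I connect $\mu(\tfrac12+\epsilon)$ to $X_2^{-1}\gamma(1)X_2$ by a positive path $\gamma_2$ inside the connected component of the truly hyperbolic stratum $\mathcal{O}_{\mathcal{C}}$ containing both endpoints, using Theorem \ref{Thm:remove.hyperbolic.collisions} and Lemma \ref{Lemma:hyperbolic.positive.path}. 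After applying the Smoothing Lemma (Lemma \ref{lem: smoothing}) near $t=\tfrac12\pm\epsilon$ and adjusting the endpoint derivatives so that $\gamma_1'(0)=X_1^{-1}\gamma'(0)X_1$ and $\gamma_2'(1)=X_2^{-1}\gamma'(1)X_2$, the concatenation $\tilde\gamma=\gamma_1*\mu*\gamma_2$ is a $C^1$ positive path with a unique collision at $\lambda^{*}$ and satisfies the $C^1$-connectable conditions with $\gamma$.

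It remains to check $\gamma\sim_{+,con}\tilde\gamma$. Pick $\lambda_n\in\Lambda$ close to $\lambda^{*}$ and let $\eta$ be a witness, i.e.\ $\gamma\sim_{+,con}\eta$. By the openness argument applied to $\tilde\gamma$ and $\eta$, both can be perturbed to have their collision at a common value $\lambda'$ very close to $\lambda^{*}$, giving positive paths $\tilde\gamma'$ and $\eta'$ of the same type and collision point. Exactly as in the proof of Lemma \ref{Th:positive.homotopy.of.two.elementary.path}, I conjugate so the collision points agree on an interval, apply Lemma \ref{lem:c0c1}(iii) to glue them locally, then invoke the Positive Decomposition Lemma on the $\mathcal{O}_{\mathcal{R}^+,\mathcal{R}^+}$ side and Theorem \ref{Thm:remove.hyperbolic.collisions} on the $\mathcal{O}_{\mathcal{C}}$ side. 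This yields $\tilde\gamma'\sim_{+,con}\eta'$, hence $\gamma\sim_{+,con}\tilde\gamma$ by transitivity, so $\lambda^{*}\in\Lambda$. Since $\Lambda$ is a nonempty open and closed subset of the connected interval $(1,\infty)$, $\Lambda=(1,\infty)$; the $\mathcal{R}^-$ case is identical with the sign of $\lambda$ reversed. I expect the bookkeeping for the $C^1$-connectable splicing (in particular maintaining the positivity and the generic character of the extension after the smoothing step) to be the main technical obstacle, just as it was in Lemma \ref{Th:move.collisions.on.U}.
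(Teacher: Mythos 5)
Your proposal is correct and follows the same route the paper takes: the paper's proof of this lemma is a one-line invocation of the Lemma \ref{Th:move.collisions.on.U} argument, with the unit-circle model $N_2(\theta,b)$ replaced by the real-line model $M_2(\lambda,c)$ and the collision type read off from $4^\circ$ of Lemma 2.5, which is exactly what you carry out. The only small imprecision is that joining $X_1^{-1}\gamma(0)X_1$ to $\mu(\tfrac12-\epsilon)$ inside $\mathcal{O}_{\mathcal{R}^+,\mathcal{R}^+}$ is not "immediate from Proposition \ref{Prop:joint}" (which does not keep the path hyperbolic); the correct tools are the connectedness of $\mathcal{O}_{\mathcal{R}^+,\mathcal{R}^+}$ together with Lemma \ref{Lemma:hyperbolic.positive.path}, as in Lemma \ref{Lemma:Sp4.case1&2}.
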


\subsection{Eliminate the collisions on the real line}

According to Theorem \ref{Th:transfer.collisions},
if a positive path contains collisions on the unit circle, 
it can be positively homotopic to a positive path that only
has collisions on the real line.
Therefore, we only consider positive paths with possible collisions on the real line.

Let $\gamma$ be a positive loop that starts at $I$.
Suppose the eigenvalues first collide at $t=t_1$,
then they must collide at $\R^+$ (or $\R^-$) from $\mathcal{O}_\mathcal{R}$ to $\mathcal{O}_\mathcal{C}$.
Since the loop must return back to $I$,
there must be a subsequent collision
from $\mathcal{O}_\mathcal{C}$ to $\mathcal{O}_\mathcal{R}$.
Now, let's assume they successively collide at $t=t_2$,
which must collide from $\mathcal{O}_\mathcal{C}$ to $\mathcal{O}_\mathcal{R}$.
We have only four cases:

(i) The path starts from $\mathcal{O}_{\mathcal{R}^+,\mathcal{R}^+}$ \footnote{In \cite{Sli}, Slimowitz used the notation $\mathcal{O}_\mathcal{R}^+$, the notations below are similar},
enters $\mathcal{O}_\mathcal{C}$, and then returns to $\mathcal{O}_{\mathcal{R}^+,\mathcal{R}^+}$;

(ii) The path starts from $\mathcal{O}_{\mathcal{R}^-,\mathcal{R}^-}$,
enters $\mathcal{O}_\mathcal{C}$, and then returns to $\mathcal{O}_{\mathcal{R}^-,\mathcal{R}^-}$;

(iii) The path starts from $\mathcal{O}_{\mathcal{R}^+,\mathcal{R}^+}$,
enters $\mathcal{O}_\mathcal{C}$, and then returns to $\mathcal{O}_{\mathcal{R}^-,\mathcal{R}^-}$;

(iv) The path starts from $\mathcal{O}_{\mathcal{R}^-,\mathcal{R}^-}$,
enters $\mathcal{O}_\mathcal{C}$, and then returns to $\mathcal{O}_{\mathcal{R}^+,\mathcal{R}^+}$.

We will demonstrate that the two adjacent collisions can be simultaneously removed through a positive homotopy.

\medskip

We first have

\begin{lemma}\label{Lemma:Sp4.case1&2}
    Suppose $\gamma$ is a positive truly hyperbolic path that connects two points $A_0,A_1\in\mathcal{O}_\mathcal{R^+,R^+}$ (or $A_0,A_1\in\mathcal{O}_\mathcal{R^-,R^-}$).
    Then there exists another positive truly hyperbolic path $\mu$ entirely within $\mathcal{O}_\mathcal{R^+,R^+}$ (resp. $\mathcal{O}_\mathcal{R^-,R^-}$)
    such that the $C^1$-connectable conditions between
    $\gamma$ and $\mu$ hold.
    Moreover, we have $\gamma\sim_{+,con}\mu$.
\end{lemma}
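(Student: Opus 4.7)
The overall plan is to produce a positive truly hyperbolic path $\mu$ lying entirely within $\mathcal{O}_{\mathcal{R}^+,\mathcal{R}^+}$ that satisfies the $C^1$-connectable conditions with $\gamma$, and then invoke Theorem \ref{Thm:remove.hyperbolic.collisions} to conclude $\gamma\sim_{+,con}\mu$. This strategy is available because $\mathcal{O}_{\mathcal{R}^+,\mathcal{R}^+}$ and any region of the truly hyperbolic set that $\gamma$ could visit (essentially $\mathcal{O}_{\mathcal{R}^+,\mathcal{R}^+}\cup\mathcal{O}_\mathcal{C}$) lie in the same connected component of the truly hyperbolic set: they meet across the codimension-one boundary $\mathcal{B}_{\mathcal{R}^+}$, where a pair of eigenvalues collides on $\mathbf{R}^+\setminus\{1\}$ and then leaves into $\mathbf{C}\setminus(\mathbf{U}\cup\mathbf{R})$.

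To exhibit such a $\mu$, first I would use path-connectedness of $\mathcal{O}_{\mathcal{R}^+,\mathcal{R}^+}$ (its quotient under $\pi$ is parameterized by unordered pairs $\{\lambda_1,\lambda_2\}\subset(1,\infty)$ with $\lambda_1\neq\lambda_2$, and every conjugacy class is itself connected) to choose a smooth truly hyperbolic path $\tilde\mu:[0,1]\to\mathcal{O}_{\mathcal{R}^+,\mathcal{R}^+}$ joining $A_0$ to $A_1$. Applying Lemma \ref{Lemma:hyperbolic.positive.path} to $\tilde\mu$ yields a positive truly hyperbolic path $\bar\mu$ with $\pi(\bar\mu)=\pi(\tilde\mu)$; since $\mathcal{O}_{\mathcal{R}^+,\mathcal{R}^+}$ is a union of conjugacy classes, $\bar\mu$ also remains inside this region.

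Next I would promote $\bar\mu$ to a positive path $\mu$ that matches the endpoint derivatives of $\gamma$. Writing $\gamma'(0)=JP_0A_0$ and $\gamma'(1)=JP_1A_1$ with $P_0,P_1>0$, the short model paths $\alpha(t)=e^{tJP_0}A_0$ on $[0,\epsilon]$ and $\beta(t)=e^{(t-1)JP_1}A_1$ on $[1-\epsilon,1]$ are positive, satisfy $\alpha(0)=A_0$, $\alpha'(0)=\gamma'(0)$, $\beta(1)=A_1$, $\beta'(1)=\gamma'(1)$, and by openness of $\mathcal{O}_{\mathcal{R}^+,\mathcal{R}^+}$ they remain in this region for $\epsilon>0$ small. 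I would then replace $\bar\mu|_{[0,\epsilon]}$ with $\alpha$ and $\bar\mu|_{[1-\epsilon,1]}$ with $\beta$, filling in the middle by a positive path in $\mathcal{O}_{\mathcal{R}^+,\mathcal{R}^+}$ connecting $\alpha(\epsilon)$ to $\beta(1-\epsilon)$ (obtained once more from Lemma \ref{Lemma:hyperbolic.positive.path} applied to a smooth interpolation), and smoothing the two junctions inside the conjugacy-class fibers via the Smoothing Lemma (Lemma \ref{lem: smoothing}). Taking $X_1=Y_1=I$ in Definition \ref{pos.connectable.homotopy}, the resulting $\mu$ satisfies $\mu(0)=A_0=\gamma(0)$, $\mu'(0)=\gamma'(0)$, $\mu(1)=A_1=\gamma(1)$, $\mu'(1)=\gamma'(1)$, so the $C^1$-connectable conditions hold automatically; Theorem \ref{Thm:remove.hyperbolic.collisions} then gives $\gamma\sim_{+,con}\mu$, and the $\mathcal{O}_{\mathcal{R}^-,\mathcal{R}^-}$ case is handled verbatim.

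The main obstacle I anticipate is the junction-smoothing step: the Smoothing Lemma requires each junction point to be generic in $\Sp(4)$ and its $\pi$-image to be a smooth point of the projected path, and the smoothed path must stay in the open region $\mathcal{O}_{\mathcal{R}^+,\mathcal{R}^+}$. Genericity at the junctions can be arranged by a slight perturbation of the gluing times, smoothness of the projected path follows because the two positive segments being glued lie in the same conjugacy-class fiber of a smooth curve, and the containment follows from the $C^0$-closeness provided by Lemma \ref{lem: smoothing} together with the openness of $\mathcal{O}_{\mathcal{R}^+,\mathcal{R}^+}$.
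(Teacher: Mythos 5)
Your proof follows essentially the same outline as the paper's: establish path-connectedness of $\mathcal{O}_{\mathcal{R}^+,\mathcal{R}^+}$, produce a $C^1$ truly hyperbolic path joining $A_0$ to $A_1$ inside the region, upgrade it to a positive truly hyperbolic path via Lemma~\ref{Lemma:hyperbolic.positive.path}, arrange the $C^1$-connectable conditions, and invoke Theorem~\ref{Thm:remove.hyperbolic.collisions}. The only genuinely different step is the order in which you handle the endpoint matching, and here you have a small technical slip. You claim you can obtain, from Lemma~\ref{Lemma:hyperbolic.positive.path}, a positive path ``connecting $\alpha(\epsilon)$ to $\beta(1-\epsilon)$'' and hence that $X_1=Y_1=I$. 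But that lemma only produces a positive path in the \emph{same conjugacy classes}; its construction lets you pin down the initial point (by taking $X_{s_1}(0)=I$), yet its terminal point is only determined up to conjugation, as the pieces are glued ``allowing for conjugation between the adjoint endpoints.'' So your middle piece starts at $\alpha(\epsilon)$ but in general ends at $W^{-1}\beta(1-\epsilon)W$ for some $W\in\Sp(4)$, and you must then splice in $W^{-1}\beta W$ on $[1-\epsilon,1]$; the resulting $\mu$ satisfies $\mu(1)=W^{-1}A_1W$, $\mu'(1)=W^{-1}\gamma'(1)W$, so $Y_1=W\neq I$ in general. This does not break the argument — the $C^1$-connectable conditions (\ref{C1.condition}) are designed precisely to accommodate such a conjugation — but the claim $X_1=Y_1=I$ is not justified. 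The paper avoids this entirely by first perturbing the $C^1$ path inside $\mathcal{O}_{\mathcal{R}^+,\mathcal{R}^+}$ so that it \emph{coincides} with $\gamma$ on $[0,\epsilon)\cup(1-\epsilon,1]$ (possible because $\gamma$ is positive there and the region is open), and only then applies Lemma~\ref{Lemma:hyperbolic.positive.path}, which then preserves the coincidence near the ends; that sequencing sidesteps the gluing, the model paths, and the Smoothing Lemma you use, and makes the $C^1$-connectable conditions immediate.
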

\begin{proof}
First, we note that $\mathcal{O}_\mathcal{R^+,R^+}$ and $\mathcal{O}_\mathcal{R^-,R^-}$ are path connected. Indeed, for any $M_1,M_2\in\mathcal{O}_\mathcal{R^+,R^+}$ (resp. $\mathcal{O}_\mathcal{R^-,R^-}$), they
can be represent as
\begin{equation}
    M_1=P_1^{-1}\diag(D(a_1),D(b_1))P_1,
    \quad
    M_2=P_2^{-1}\diag(D(a_2),D(b_2))P_2,
\end{equation}
where $a_1,b_1,a_2,b_2\in\R^+$ (resp. $\R^-$)
and $P_1,P_2\in\Sp(4)$.
Since $D(\lambda_1)$ can be connected to $D(\lambda_2)$ if $\lambda_1\lambda_2>0$,
together with the path connection of $\Sp(4)$,
$M_1$ can be connected to $M_2$.

Due to the path connection of $\mathcal{O}_\mathcal{R^+,R^+}$ (or $\mathcal{O}_\mathcal{R^-,R^-}$),
there exists a $C^1$ path (not necessarily positive) entirely within $\mathcal{O}_\mathcal{R^+,R^+}$,
which connects $A_0$ and $A_1$.
Furthermore, we perturb such a path
so that it coincides with $\gamma$ when
$t\in[0,\ep)\cup(1-\ep,1]$ for some $\ep>0$.
Then
    the existence of $\mu$ is guaranteed by Lemma \ref{Lemma:hyperbolic.positive.path},
    and $\gamma\sim_{+,con}\mu$ follows from Theorem \ref{Thm:remove.hyperbolic.collisions}.
\end{proof}

According to Cases (i)-(iv), we have


\begin{theorem}\label{Thm.remove.collisions.on.R.of.Sp4}
	Given a positive loop $\gamma$ in
 $\Sp(4)$ which only 
	has collisions on  $\R\backslash\{0,\pm1\}$,
	it can be homotopic to a positive loop 
    with no collisions.
\end{theorem}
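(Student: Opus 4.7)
The plan is to show that, given such a loop $\gamma$, the collisions on $\R\setminus\{0,\pm1\}$ can be eliminated in adjacent pairs, and to iterate. By genericity (applying Lemma \ref{lem:generic homotopy} if necessary), I may assume $\gamma$ has only finitely many collisions on $\R\setminus\{0,\pm1\}$, each one being a codimension one crossing at $\mathcal{B}_\mathcal{R}$. Since $\gamma$ is a loop based at $I$, and each collision on $\R\setminus\{0,\pm1\}$ either carries a quadruple of eigenvalues from $\mathcal{O}_\mathcal{R}$ into $\mathcal{O}_\mathcal{C}$ or in the reverse direction, the collisions come in consecutive ``out--in'' pairs at times $t_1 < t_2$, with the sub-path $\gamma|_{[t_1,t_2]}$ lying in $\mathcal{O}_\mathcal{C}$ except at its endpoints. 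By perturbing slightly, I may further assume $\gamma(t_1-\epsilon)$ and $\gamma(t_2+\epsilon)$ are generic points.

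For each such pair I would enumerate the four cases (i)--(iv) listed just before the theorem, according to whether the real eigenvalues at $t_1-\epsilon$ and $t_2+\epsilon$ lie in $\mathcal{O}_{\mathcal{R}^+,\mathcal{R}^+}$ or $\mathcal{O}_{\mathcal{R}^-,\mathcal{R}^-}$. In Cases (iii) and (iv), where the path exits from one side of the real axis and re-enters from the other, I would first transform the pair to Case (i) or (ii): by Remark \ref{RM:transfer.R+.to.R-} (which is the real-line analogue of Theorem \ref{Th:transfer.collisions} already established in the preceding subsection), one of the two collisions can be swept through $\U$ via a $C^1$-connectable positive homotopy, so that afterwards both the starting and ending real points lie on the same sign component. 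This step uses the Positive Decomposition Lemma together with the Part Speed Changing Lemma to isolate the offending pair of eigenvalues from the rest of the loop, so that the homotopy extends to all of $\gamma$.

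Having reduced to Cases (i) and (ii), I would apply Lemma \ref{Lemma:Sp4.case1&2}: the restricted path $\gamma|_{[t_1-\epsilon,t_2+\epsilon]}$, which starts and ends in $\mathcal{O}_{\mathcal{R}^+,\mathcal{R}^+}$ (respectively $\mathcal{O}_{\mathcal{R}^-,\mathcal{R}^-}$), is truly hyperbolic on the interior, and Lemma \ref{Lemma:Sp4.case1&2} yields a $C^1$-connectable positive homotopy to a new positive sub-path $\mu$ lying entirely in $\mathcal{O}_{\mathcal{R}^+,\mathcal{R}^+}$ (resp.\ $\mathcal{O}_{\mathcal{R}^-,\mathcal{R}^-}$). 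Such a $\mu$ has no eigenvalue collisions at all, so replacing $\gamma|_{[t_1-\epsilon,t_2+\epsilon]}$ by $\mu$ and conjugating the rest of the loop (as in Step 1 of the introductory outline, to extend the $C^1$-connectable homotopy to the whole loop $[0,1]$) removes the two collisions at $t_1$ and $t_2$ without introducing new ones.

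Iterating this procedure on the remaining pairs of collisions, each round strictly decreasing the (finite) number of collisions on $\R\setminus\{0,\pm1\}$, terminates with a positive loop homotopic to $\gamma$ that has no collisions. The main technical obstacle is the reduction in Cases (iii) and (iv): pushing a collision from $\R^+$ to $\R^-$ through $\U$ must be done while preserving positivity at every moment and while not accidentally creating collisions in the rest of the loop, which is precisely why the Part Speed Changing Lemma and Positive Decomposition Lemma are essential — they let me treat the two relevant pairs of eigenvalues independently of the others during the transfer. Once this transfer is accomplished, the final elimination using Lemma \ref{Lemma:Sp4.case1&2} is almost immediate, since a positive truly hyperbolic homotopy within a single component of the hyperbolic set is already guaranteed by Theorem \ref{Thm:remove.hyperbolic.collisions}.
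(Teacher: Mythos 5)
Your overall strategy matches the paper's: enumerate the four cases by which sign components of $\R$ the out/in collisions lie on, eliminate Cases (i)/(ii) directly via Lemma \ref{Lemma:Sp4.case1&2}, and reduce Cases (iii)/(iv) to the former two by transferring one collision across $\U$. However, the reduction step — which you correctly flag as the crux — is left at the level of a slogan, and the ingredient that actually makes it work is missing. Invoking Remark \ref{RM:transfer.R+.to.R-} (or Lemma \ref{Th:transfer.R+.to.R-}) presupposes that the path already has the very specific type $\mathcal{O}_{\mathcal{R^+,R^-}}\rightarrow\mathcal{B}_{1,\mathcal{R^-}}\rightarrow\mathcal{O}_{\mathcal{U,R^-}}\rightarrow\mathcal{B}_{-1,\mathcal{R^-}}\rightarrow\mathcal{O}_{\mathcal{R^-,R^-}}\rightarrow\mathcal{B}_{\mathcal{R}}\rightarrow\mathcal{O}_{\mathcal{C}}$ near the collision. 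But in Case (iii) both pairs of eigenvalues approach $t_1$ from $\mathcal{O}_{\mathcal{R^+,R^+}}$, so that type must be manufactured. The paper does this via an index argument: because $\gamma$ starts at $I$ and $\gamma|_{[0,t_1-\epsilon]}$ is collision-free, each pair of eigenvalues must wind at least once around $\U$ ($k_1,k_2\ge 1$, from \cite[Lemma 3.1]{LaM} or Ekeland), so the Part Speed Changing Lemma can retime one pair to land on $\R^-$ at $t_1-2\epsilon$ and only then slide to $\R^+$ by $t_1-\epsilon$, producing exactly the hypothesized type. Without stating this winding-number lower bound, you have not justified that the retiming is possible at all, and the reduction to Cases (i)/(ii) is unsupported.

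A smaller but telling imprecision: you describe the role of the Part Speed Changing and Positive Decomposition Lemmata as letting you ``treat the two relevant pairs of eigenvalues independently of the others during the transfer.'' In $\Sp(4)$ there are no ``others'' — the two pairs are all the eigenvalues. The actual use of the Part Speed Changing Lemma here is to reparametrize the two $\Sp(2)$ factors \emph{relative to each other} so that the desired stratum is visited; the ``isolation from other eigenvalues'' reading is the $\Sp(2n)$, $n\ge3$ version. You also treat Cases (iii) and (iv) symmetrically, but the paper's Case (iv) requires a separate analysis when a third collision with the same two pairs occurs, splitting into further subcases before the transfer can be carried out. These are not cosmetic omissions: without the winding number argument and the explicit reparametrization set-up, the transfer step as written does not go through.
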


\begin{proof}

For Cases (i) and (ii), we can eliminate the two collisions by Lemma \ref{Lemma:Sp4.case1&2},
through a $C^1$-connectable positive homotopy.
Consequently, we only need to address Cases (iii) and (iv).
		
For Case (iii),
we have $i_1(\ga|_{[0,t_1-\epsilon]})=2(k_1+k_2)$ 
where $k_1, k_2\ge1$ are the winding numbers about $\U$
for the two pairs of eigenvalues, respectively.
Since $\ga|_{[0,t_1-\epsilon]}$ has no collisions,
we can use Lemma \ref{Lemma:change.part.time},
to change the speed of the two pairs of eigenvalues.
This allows $\gamma|_{[0,t_1-\epsilon]}$ to be positively homotopic to $\mu|_{[0,t_1-\epsilon]}$ such that,
for $t\in[0,t_1-2\epsilon]$, one pair of eigenvalues of $\mu$ rotates $k_1-{1\over2}$ rounds to $\R^-$,
and another pair of eigenvalues of $\mu$ rotates $k_2$ rounds to $\R^+$;
for $t\in[t_1-2\epsilon,t_1-{\epsilon}]$,
the first pair of eigenvalues of $\mu$ rotates ${1\over2}$ rounds from $\R^-$ to $\R^+$,
while the second pair of eigenvalues of $\mu$ remains on $\R^+$.
Then we glue the two sub-paths $\mu|_{[0,t_1-\epsilon]}$
	and $\gamma|_{[t_1-\epsilon,1]}$ together to form a new positive path $\mu$.

 Now the path $\mu|_{[t_1-2{\epsilon},t_1+\epsilon]}$ has the type
 \begin{equation}
	\mathcal{O}_{\mathcal{R^+,R^-}}
	  \rightarrow\mathcal{B}_{\mathcal{R^+},-1}
    \rightarrow\mathcal{O}_{\mathcal{R^+,U}}
      \rightarrow\mathcal{B}_{\mathcal{R^+},1}
    \rightarrow\mathcal{O}_{\mathcal{R^+,R^+}}
	\rightarrow\mathcal{B}_{\mathcal{R}}
	\rightarrow\mathcal{O}_{\mathcal{C}}.
	\end{equation}
 By applying Lemma \ref{Th:transfer.R+.to.R-},
 we can transfer the collision from $\R^+$ to $\R^-$.
Consequently, $\mu$ satisfies our Case (ii), allowing us to remove the two subsequent collisions.

    For Case (iv),
	we have $i_1(\ga|_{[0,t_1-\epsilon]})=2(k_1+k_2+1)$ 
	where $k_1+{1\over2}, k_2+{1\over2}$ are the winding number about $\U$
 for the two pairs of eigenvalues, respectively.	
If $\gamma$ has only two collisions,
    and since $\gamma(0)=I_4$, for $t\in[t_2+\epsilon,1]$, one pair of eigenvalues
    travels from $\R^+$ to $\U$, and then rotates $k_3$ rounds to
    $\{1,1\}$,
    while another pair of eigenvalues
    also travels from $\R^+$ to $\U$, and then rotates $k_4$ rounds
    to $\{1,1\}$.
    Here $k_3$ and $k_4$ are positive integers (see \cite[Lemma 3.1 (i)]{LaM} or \cite[pp.16, Proposition 1]{Eke}).
    Under a similar argument as in Case (iii), we can move the collision
    at time $t_2$ from $\R^+$
	to $\R^-$ through $\U$,
    and hence it can be removed together with the previous collision at time $t_1$.
 
	If there exists a third collision,
	it must be one of the following two sub-cases:
	
	i) The path enters $\mathcal{O}_\mathcal{C}$ from 
	$\mathcal{O}_{\mathcal{R^+,R^+}}$ at $t_3>t_2$ with no rotations on $[t_2,t_3]$;
	
	ii) The path rotates several (and half) rounds, and then
	enters $\mathcal{O}_\mathcal{C}$ from 
	$\mathcal{O}_{\mathcal{R^+,R^+}}$ (resp. $\mathcal{O}_{\mathcal{R^-,R^-}}$) at $t_3>t_2$.
	
For Subcase i), we can use Lemma \ref{Lemma:Sp4.case1&2} to remove the two collisions
	at $t=t_2$ and $t=t_3$;
	for Subcase ii), by utilizing Lemma \ref{Th:transfer.R+.to.R-},
	Theorem \ref{Th:transfer.collisions.on.R}
    and a similar argument to Case (iii),
	we can transfer the collision at $t=t_2$ from $\R^+$
	to $\U$, and then to $\R^-$.
	Now the new positive path travels from $\mathcal{O}_{\mathcal{R^-,R^-}}$,
	enters $\mathcal{O}_\mathcal{C}$ at $t=t_1$, and then returns to $\mathcal{O}_{\mathcal{R^-,R^-}}$ at $t=t_2$.
	Therefore, we have converted the path to Case (ii).
	
	Consequently, we can remove all collisions of $\gamma$ by induction.
\end{proof}

Now we can give the proof of our main theorem for $\Sp(4)$:

\begin{proof}[Proof of Theorem A for $\Sp(4)$]
According to Theorem \ref{Th:transfer.collisions} and induction, if a positive path includes collisions on the unit circle, it can be positively homotopic to another positive path that only has collisions on the real line. Subsequently, applying Theorem \ref{Thm.remove.collisions.on.R.of.Sp4} and Theorem \ref{Thm:pos.homotopy.of.loops.has.no.collisions}, we can conclude that the case $n=2$ of Theorem A.
\end{proof}

\setcounter{equation}{0}
\section{Positive paths in $\Sp(2n)$ for $n\ge3$}
\label{sec:6}

\subsection{Remove the collisions 
of the elementary two double-collisions paths}
\label{subsec:6.1}

Since $\mathcal{C}onj(\Sp(2n))$ is substantially more complicated,
we will not provide a complete classification of $\mathcal{C}onj(\Sp(2n))$.
Below we present some regions of $\mathcal{C}onj(\Sp(6))$ that we will used later:

\hangafter 1
\hangindent 3.5em
\;\;(i) $\mathcal{O}_{\mathcal{C,U}}$,
consisting of all matrices with $4$ distinct eigenvalues in
$\C\backslash(\U\cup\R)$, and a conjugate pair of eigenvalues on
$\U\backslash\{\pm1\}$;

\hangafter 1
\hangindent 3.5em
\;(ii) $\mathcal{O}_{\mathcal{C,R}}$,
consisting of all matrices with $4$ distinct eigenvalues in
$\C\backslash(\U\cup\R)$, and a pair of eigenvalues on
$\R\backslash\{\pm1\}$;

\hangafter 1
\hangindent 3.5em
(iii) $\mathcal{O}_{\mathcal{U}}$,
consisting of all matrices with eigenvalues on $\U\backslash\{\pm1\}$
where each eigenvalues has multiplicity 1 (or multiplicity 2 with splitting number $\pm2$, or multiplicity 3 with splitting number $\pm3$);

\hangafter 1
\hangindent 3.5em
\;(iv) $\mathcal{B}_{\mathcal{U}}^2$,
consisting of all matrices with a conjugate pair of eigenvalues on $\U\backslash\{\pm1\}$ 
which has multiplicity 2
with only one Jordan block,
and another conjugate pair of eigenvalues on
$\U\backslash\{\pm1\}$ which has multiplicity 1;

\hangafter 1
\hangindent 3.5em
\;\;(v) $\mathcal{B}_{\mathcal{U}}^3$,
consisting of all matrices with a conjugate pair of eigenvalues which has multiplicity 3
and has only one Jordan block.

\noindent Moreover, we define
$\mathcal{B}_{\mathcal{U}}=\mathcal{B}_{\mathcal{U}}^2\cup\mathcal{B}_{\mathcal{U}}^3$. 
\medskip

For the collisions of a generic positive path in $\Sp(2n)$, where $n \geq 3$, we do not necessarily encounter the constraints outlined in Lemma \ref{toplogical.constraint.of.Sp4}. Instead, we have an additional situation:

\begin{lemma}
	Let $\gamma$ be a generic positive path in $\Sp(2n)$. If a quadruplet of eigenvalues of $\gamma$ exits $\U\backslash\{\pm1\}$ and enters $\C\backslash(\U\cup\R)$ at time $t_2$, denoted by ${\lambda_1,{1\over\lambda_1}}$ and ${\lambda_2,{1\over\lambda_2}}$, then one of the following cases must occur:
	
	(i) after the recent preceding double-collision with $\{\lambda_i,{1\over\lambda_i}\},i=1,2$
	 of $\gamma$, one pair of eigenvalues, denoted as $\{\lambda_1,{1\over\lambda_1}\}$, 
	 enters from $\{1,1\}$ or $\{-1,-1\}$ at time $t_1 < t_2$;
	
	(ii) one pair of eigenvalues, denoted as $\{\lambda_2,{1\over\lambda_2}\}$,
	collide with a third pair of eigenvalues $\{\lambda_3,{1\over\lambda_3}\}$ of $\gamma$
	at time $t_1<t_2$ from $\C\backslash(\U\cup\R)$,
	and  then $\{\lambda_i,{1\over\lambda_i}\},i=1,2,3$ remain on $\U\backslash\{\pm1\}$ for $t\in(t_1,t_2)$.
\end{lemma}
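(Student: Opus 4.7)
The plan is to trace the two colliding pairs of eigenvalues backward from $t_2$ and to classify the most recent codimension-$1$ event on $\U$ affecting either pair, using Krein signature theory together with the bifurcation classification of Section~\ref{sec:2}.

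First, I would establish that at $t_2^-$ the pairs $\{\lambda_1,1/\lambda_1\}$ and $\{\lambda_2,1/\lambda_2\}$ carry opposite Krein signatures on $\U$. Since $\gamma$ is positive and the collision point $\gamma(t_2)$ lies on the $\U\to\C\setminus(\U\cup\R)$ boundary stratum (the ``trivial $N_2$'' case in the sign convention of Lemma~\ref{lemma:B_U.perturbation}), the Kre\v{\i}n--Lyubarski\v{\i} theorem \cite{KL62} together with its $C^1$-extension \cite[Theorem 1.3]{CLW} forces the two pairs to have opposite Krein signatures at the moment of the collision; under the positive flow these produce opposite directions of rotation along $\U$.

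Second, by genericity of $\gamma$ and Lemma~\ref{bifurcation.digram.of.c=1.2}, in the open interval $(0,t_2)$ only isolated codimension-$1$ events occur, and every such event affecting a pair of eigenvalues on $\U\setminus\{\pm 1\}$ is either (a) a passage through $\{1,1\}$ or $\{-1,-1\}$, or (b) a transverse double-collision with a second pair, lying in $\mathcal{B}_\mathcal{U}$ and subdivided into $\mathcal{B}_\mathcal{U}^-$ ($\U\to\C$ forward in time) or $\mathcal{B}_\mathcal{U}^+$ ($\C\to\U$ forward in time); same-Krein collisions, triple collisions on $\U$, and higher multiplicities are non-generic and do not intervene. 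Now let $t_1^*$ denote the largest time strictly less than $t_2$ at which $\{\lambda_1,1/\lambda_1\}$ or $\{\lambda_2,1/\lambda_2\}$ undergoes such an event; between $t_1^*$ and $t_2$, both pairs move freely on $\U\setminus\{\pm 1\}$, and since each is on $\U$ for $t$ just larger than $t_1^*$, the event at $t_1^*$ cannot be of type $\mathcal{B}_\mathcal{U}^-$ (which would push the pair into $\C\setminus(\U\cup\R)$ immediately after $t_1^*$). The two surviving possibilities give exactly the dichotomy of the lemma: passage through $\pm 1$ of (say) $\{\lambda_1,1/\lambda_1\}$ yields case~(i) with $t_1:=t_1^*$, while a $\mathcal{B}_\mathcal{U}^+$ double-collision of (say) $\{\lambda_2,1/\lambda_2\}$ with a third pair $\{\lambda_3,1/\lambda_3\}$ (both emerging from $\C\setminus(\U\cup\R)$ onto $\U$) yields case~(ii) with $t_1:=t_1^*$.

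The hardest part will be establishing the opposite-Krein claim at $t_2^-$ in the higher-dimensional setting $n\ge 3$, where additional non-participating pairs of eigenvalues of $\gamma$ complicate the Krein-signature bookkeeping: one must localize the analysis near the four eigenvalues $\lambda_1,1/\lambda_1,\lambda_2,1/\lambda_2$ and then import the $\Sp(4)$ normal-form analysis of Lemma~\ref{lemma:B_U.perturbation}. A secondary obstacle is the careful verification that $\mathcal{B}_\mathcal{U}^-$ cannot appear as the backward event at $t_1^*$ and that no other codimension-$1$ event intervenes in $(t_1^*,t_2)$; both are handled by combining \cite[Theorem 1.3]{CLW} for Krein bookkeeping with the codimension count of Lemma~\ref{bifurcation.digram.of.c=1.2}.
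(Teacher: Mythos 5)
Your framework—tracing backward from $t_2$, classifying the most recent codimension-one event on $\U$ involving $\{\lambda_1,1/\lambda_1\}$ or $\{\lambda_2,1/\lambda_2\}$, and using genericity (Lemma~\ref{bifurcation.digram.of.c=1.2}) to restrict the event types—is the right skeleton, and the exclusion of $\mathcal{B}_\mathcal{U}^-$ is argued correctly. However, your enumeration of the ``surviving possibilities'' is incomplete: you pass directly from ``the event at $t_1^*$ is a $\mathcal{B}_\mathcal{U}^+$ collision'' to ``it involves a third pair $\lambda_3$,'' silently skipping the scenario in which the $\mathcal{B}_\mathcal{U}^+$ collision at $t_1^*$ is between $\{\lambda_1,1/\lambda_1\}$ and $\{\lambda_2,1/\lambda_2\}$ \emph{themselves}. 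That scenario falls under neither case~(i) nor case~(ii), so it has to be ruled out, and this is precisely where the opposite-Krein observation must be cashed in rather than merely stated: after the two pairs emerge from a $\mathcal{B}_\mathcal{U}^+$ point with opposite Krein signatures, positivity forces their angles on $\U$ to move monotonically in opposite directions, so they separate, and re-collision at $t_2$ would require one of them to wrap through $\pm 1$—an intervening event, contradicting the maximality of $t_1^*$. You invoke ``opposite directions of rotation'' but never deploy it for this exclusion.

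Two smaller points. First, your argument presupposes that $t_1^*$ exists; as stated the lemma gives no hypothesis guaranteeing a prior event (the $\Sp(4)$ version explicitly hedges with ``if exist''), and in the paper's actual usage this is supplied by the loop condition $\gamma(0)=I$, which you should make explicit. Second, case~(ii) of the lemma also asserts that $\lambda_3$ remains on $\U\setminus\{\pm 1\}$ throughout $(t_1,t_2)$; your choice of $t_1^*$ as the last event for $\lambda_1$ or $\lambda_2$ does not control events involving only $\lambda_3$, so this clause is not automatic (in the paper it is effectively repaired afterward via the Part Speed Changing Lemma~\ref{Lemma:change.part.time}). Also, the citation of Kre\v{\i}n--Lyubarski\v{\i} and \cite[Theorem 1.3]{CLW} is a bit off-target for the opposite-Krein claim: those concern regularity of eigenvalue branches, whereas the definite/indefinite dichotomy you need is the older Krein--Gelfand--Lidskii signature theory, which in the paper's own notation amounts to the trivial/non-trivial $N_2$ distinction of Lemma~\ref{lemma:B_U.perturbation}.
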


If the first case occurs, we can handle the collision in a similar way as in $\Sp(4)$. Thus, we only need to address the second case.

For Case (ii), in general, on the time interval $t\in(t_1, t_2+2\epsilon)$
for some $\ep>0$ small enough, the third pair of eigenvalues ${\lambda_3, {1\over\lambda_3}}$ may collide with other eigenvalues different from ${\lambda_i, {1\over\lambda_i}}, i=1,2$. By adjusting the speed of some eigenvalues (see Lemma \ref{Lemma:change.part.time}), we can move these collisions with respect to ${\lambda_3, {1\over\lambda_3}}$ between times $t_2+\epsilon$ and $t_2+2\epsilon$ via a positive homotopy. 
Now the other eigenvalues do not collide with ${\lambda_i, {1\over\lambda_i}}, i=1,2,3$ in the time interval $(t_1-\epsilon, t_2+\epsilon)$. Hence, according to the Positive Decomposition Lemma,
there exists a symplectic path $X(t), t\in(t_1-\epsilon, t_2+\epsilon)$ such that

\begin{equation}\label{Sp6.decomposition}
\gamma(t)=X(t)^{-1}\left(\matrix{ \gamma_1(t)& O\cr O& \gamma_2(t)}\right)X(t).
\end{equation}
Here $\gamma_1\in\Sp(6), \gamma_2\in(\Sp(2(n-3)))$ 
are positive paths (see Proposition \ref{Prop:positive.decomposition} below), and
\begin{equation}
\sigma(\gamma_1(t))
=\{\lambda_1(t),{1\over\lambda_1(t)},
   \lambda_2(t),{1\over\lambda_2(t)},
   \lambda_3(t),{1\over\lambda_3(t)}\}.
\end{equation}

Now, let's assume $\gamma_1$ exits $\mathcal{O}_\mathcal{C,U}$ and enters $\mathcal{O}_\mathcal{U}$ at time $t_1$. During the interval $t\in(t_1, t_2)$, the eigenvalues remain on $\U$. At $t=t_2$, one pair of the eigenvalues collides with the third pair and enters $\mathcal{O}_\mathcal{C,U}$. In other words, there exists an $\epsilon>0$ small enough such that
\begin{equation}
\gamma_(t)\in\left\{\matrix{
	\mathcal{O}_{\mathcal{C},\mathcal{U}},&t\in(t_1-2\epsilon,t_1),\cr 
	\mathcal{B}_{\mathcal{U}}^2,&t=t_1,\cr
	\mathcal{O}_{\mathcal{U}},&t\in(t_1,t_2),\cr 
	\mathcal{B}_{\mathcal{U}}^2,&t=t_2,\cr
	\mathcal{O}_{\mathcal{C,U}},&t\in(t_2,t_2+2\epsilon) \cr }\right.
\end{equation}
Then $\mu=\gamma_1|_{[t_1-\epsilon,t_2+\epsilon]}$ is a positive path connecting $\gamma_1(t_1-\epsilon)\in\mathcal{O}_{\mathcal{C,U}}$ and $\gamma_1(t_2+\epsilon)\in\mathcal{O}_{\mathcal{C,U}}$. We refer to such a path as an ``elementary two double-collisions path".

\begin{figure}[ht]
	\centering
	\includegraphics[height=8.0cm]{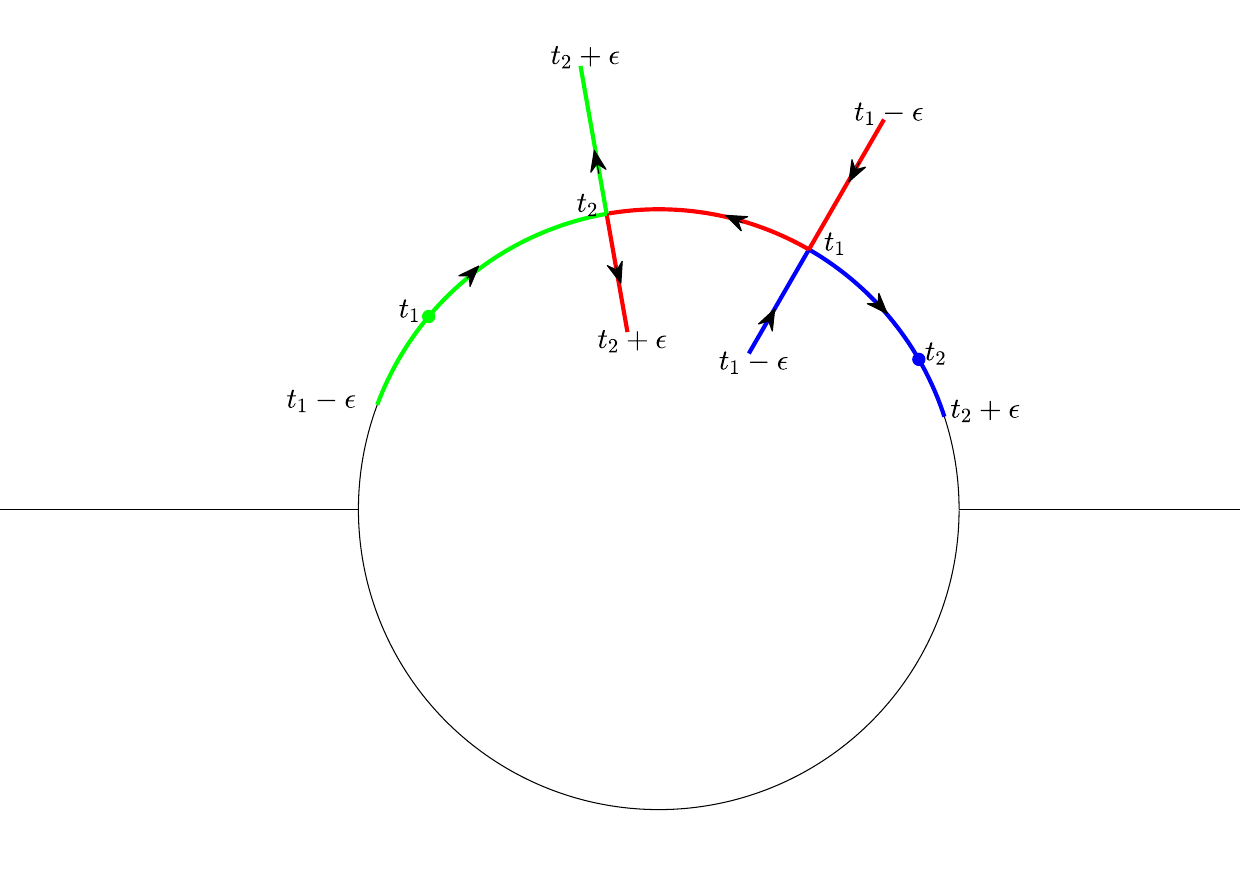}
	\vskip -0.6 cm
	\caption{An elementary two double-collisions path. Each curve with different colors represent a trajectory of the eigenvalues of 
    the path in $\Sp(6)$ for $t\in[t_1-\epsilon,t_2+\epsilon]$.}
	\label{fig:elementary.path.of.Sp6}
\end{figure}

\begin{definition}\label{elementary.two.double-collision.path}
	An {\bf elementary two double-collisions path} is a positive symplectic path in $\Sp(6)$
	that follows the trajectory:
	\begin{equation}
	\mathcal{O}_{\mathcal{C,U}}
	\rightarrow\mathcal{B}^2_{\mathcal{U}}
	\rightarrow\mathcal{O}_{\mathcal{U}}
	\rightarrow\mathcal{B}^2_{\mathcal{U}}
	\rightarrow\mathcal{O}_{\mathcal{C,U}}.
	\end{equation}
\end{definition}

We will eliminate the two double-collisions in the elementary double-collisions path through positive homotopy. Now we may have

\begin{theorem}\label{Thm:Sp6.remove.collisions.on.U}
	Any elementary two double-collisions path
	can be $C^1$-connectably positively homotopic to a positive path
	which has no collisions.
\end{theorem}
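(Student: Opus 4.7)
The plan is to mirror the three-step argument for Theorem \ref{Th:transfer.collisions}, replacing the swallowtail at $\mathcal{B}_{\pm 1}$ shown in Figure \ref{bifurcation_diagram_of_zero4} by the cusp $A_2$ at $\mathcal{B}^3_{\mathcal{U}}$ identified in Lemma \ref{Lemma:cusp}. An elementary two double-collisions path $\gamma$ crosses the cusp from its complement $\mathcal{O}_{\mathcal{C,U}}$ into its interior $\mathcal{O}_{\mathcal{U}}$ and back out by piercing the two smooth branches $\mathcal{B}^2_{\mathcal{U}}$; I would first coalesce the two piercings into a single tangency at the cusp apex $\mathcal{B}^3_{\mathcal{U}}$, and then exploit the openness of the positive cone to push this tangency onto the collision-free side of the cusp.

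First I would build a local model at the apex: for $\theta^*$ close to the common collision angle and a suitable $b$, set $\mu(t)=N_3(\theta^*, b)\cdot \mathrm{diag}(R(t-\frac{1}{2}), R(t-\frac{1}{2}), R(t-\frac{1}{2}))$ on $[\frac{1}{2}-\epsilon, \frac{1}{2}+\epsilon]$. By Lemma \ref{Lemma:multiplication} and Remark \ref{rk:multiplication}, $\mu$ is positive, and Lemma \ref{lemma:N_3}($5^\circ$)--($6^\circ$) gives $\Delta(0)=\Delta'(0)=0$ and $\Delta''(0)>0$, so $\mu$ meets $\mathcal{B}^3_{\mathcal{U}}$ only at $t=\frac{1}{2}$ and otherwise stays in $\mathcal{O}_{\mathcal{C,U}}$. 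Next, following Step~1 of the proof of Theorem \ref{Th:transfer.collisions} with $N_3(\theta^*,b)$ in place of $M_2(1,c)$, I would extend $\mu$ to a full path $\tilde\gamma$ on $[t_1-\epsilon, t_2+\epsilon]$ by appending positive paths entirely in $\mathcal{O}_{\mathcal{C,U}}$ that match the conjugacy and $C^1$-connectable data of $\gamma$ at the endpoints; their existence is guaranteed by Lemma \ref{Lemma:hyperbolic.positive.path}, Theorem \ref{Thm:remove.hyperbolic.collisions}, and the Smoothing Lemma (Lemma \ref{lem: smoothing}). The relation $\gamma\sim_{+,con}\tilde\gamma$ would then be established by first using an $\Sp(6)$-analogue of Lemma \ref{Th:move.collisions.on.U} to bring the two collision angles of $\gamma$ toward a single value near $\theta^*$, and then invoking an $\Sp(6)$-analogue of Lemma \ref{Th:positive.homotopy.of.two.elementary.path} to homotope coincident-collision paths with matching $C^1$ data.

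Finally, I would exploit Lemma \ref{Lemma:cusp}: the bifurcation diagram of generic deformations of $N_3(\theta^*, b)$ is a curve of type $\lambda_2^2\pm\lambda_1^3=0$, whose two smooth branches are $\mathcal{B}^2_{\mathcal{U}}$, whose apex is $\mathcal{B}^3_{\mathcal{U}}$, and whose complement has two connected components, the cusp interior $\mathcal{O}_{\mathcal{U}}$ and the smooth exterior $\mathcal{O}_{\mathcal{C,U}}$. A small positive perturbation of $\tilde\gamma$ near $t=\frac{1}{2}$ that displaces the apex crossing to the smooth-exterior side produces a positive path $\gamma_+$ with no collisions on the interval, related to $\tilde\gamma$ by a positive homotopy with fixed end-segments; by transitivity, $\gamma\sim_{+,con}\gamma_+$. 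The hardest part will be this last step: it is not automatic that the positive perturbations of $N_3(\theta^*,b)$ fill out a direction transverse to the cusp pointing into $\mathcal{O}_{\mathcal{C,U}}$, since the one-parameter family $\mathrm{diag}(R(\alpha))^{\dm 3}$ of Lemma \ref{lemma:N_3} only yields a tangent approach ($\Delta''(0)>0$ without directional freedom). I expect this to be resolved by a direct two-parameter positive deformation of $N_3(\theta^*, b)$, refining the computation in Lemma \ref{lemma:N_3} so that the image of $\Delta$ sweeps out a full open sector around the cusp apex, with the positive cone providing the needed transverse direction into $\mathcal{O}_{\mathcal{C,U}}$.
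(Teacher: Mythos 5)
Your proposal reproduces the paper's own two-step argument: a local triple-collision model $\mu(t)=N_3(\varphi_0,b)\cdot\diag(R(t-{1\over2}),R(t-{1\over2}),R(t-{1\over2}))$ glued into a $C^1$-connectable $\tilde\gamma$, followed by the cusp perturbation of Lemma \ref{Lemma:cusp}, with only a cosmetic difference in ordering (the paper constructs $\tilde\gamma$ at the apex first and then matches the two-collision branch $\gamma_-$ to the original path via the collision-moving argument of Theorem \ref{Th:transfer.collisions}; you propose to coalesce the collision angles first). The transversality worry you flag is already handled by openness, without any two-parameter computation: $\mathcal{P}_{N_3}$ is open in all of $T_{N_3}\Sp(6)$ and the linear projection onto the two-dimensional versal deformation parameter space is surjective, so the image of the positive cone is an open neighborhood of the rotation direction, and both a collision-free perturbation $\gamma_+$ and a two-collision perturbation $\gamma_-$ of $\tilde\gamma$ exist as positive paths.
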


\begin{proof}
We will use several steps to prove this theorem.	 
	
{\bf  Step 1: Construct a positive path with a single triple-collision}
	
Let $\gamma\in\Sp(6)$ be an elementary two double-collisions path,
and $\Delta(t):=\Delta(\gamma(t))$ be the discriminant of its collisions.
 As depicted in Figure \ref{fig:elementary.path.of.Sp6},
 we have
	\begin{equation}
	\Delta(t)\;\left\{\matrix{
		>0, &t\in[t_1-\epsilon,t_1),\cr 
		=0, &t=t_1,\cr
		<0, &t\in(t_1,t_2),\cr 
		=0, &t=t_2,\cr
		>0, &t\in(t_2,t_2+\epsilon].\cr }\right.
	\end{equation}
Without loss of generality, we assume that 
the total time interval of $\gamma$ is $[0,1]$ and two collision times are
$t_1={1\over3}$ and $t_2={2\over3}$.	
According to the insight provided by Figure \ref{fig:elementary.path.of.Sp6} again,
	we can assume
	\begin{eqnarray}
	\sigma(\gamma(t_1))&=&\{e^{\sqrt{-1}\th_1},e^{-\sqrt{-1}\th_1},
	e^{\sqrt{-1}\th_1},e^{-\sqrt{-1}\th_1},
	e^{\sqrt{-1}\th_3}, e^{-\sqrt{-1}\th_3}\},
	\\
	\sigma(\gamma(t_2))&=&\{e^{\sqrt{-1}\th_0},e^{-\sqrt{-1}\th_0},
	e^{\sqrt{-1}\th_2},e^{-\sqrt{-1}\th_2},
	e^{\sqrt{-1}\th_2}, e^{-\sqrt{-1}\th_2}\},
	\end{eqnarray}
	where $\th_0<\th_1<\th_2<\th_3$.

Letting $\varphi_0={\th_1+\th_2\over2}$,
	for $\epsilon>0$ sufficiently small, 	$\mu(t)=N_3(\varphi_0,b)\cdot\diag(R(t-{1\over2}),R(t-{1\over2}),R(t-{1\over2}))$ is a positive path
	for $t\in[{1\over2}-\epsilon,{1\over2}+\epsilon]$. By $5^\circ$ of Lemma \ref{lemma:N_3}, for $t\ne{1\over2}$,
	we have $\Delta(\mu(t)) > 0$. Consequently, $\sigma(\mu(t))$ possesses
	one pair of eigenvalues on $\U$ and a quadruple of $4$ eigenvalues in $\C\backslash(\U\cup\R)$.
Therefore, $\mu$ has the type
	$\mathcal{O}_{\mathcal{C,U}}
	\rightarrow\mathcal{B}_{\mathcal{U}}^3
	\rightarrow\mathcal{O}_{\mathcal{C,U}}$.

Since $\gamma(0),\mu({1\over2}-\epsilon)\in\mathcal{O}_{\mathcal{C,U}}$,
    for $\epsilon>0$ sufficiently small,
	there exists a positive path $\gamma_1(t),t\in[0,{1\over2}-\epsilon]$ that connects $X_1^{-1}\gamma(0)X_1$
	and $\mu({1\over2}-\epsilon)$ for some $X_1\in\Sp(6)$, such that
	\begin{eqnarray}
	\gamma_1'(0)&=&X_1^{-1}\gamma'(0)X_1,
	\\
	\gamma_1'({1\over2}-\epsilon)&=&\mu'({1\over2}-\epsilon).
	\end{eqnarray}
    Here $\gamma_1$ is entirely in $\mathcal{O}_{\mathcal{C,U}}$.
	
	By a similar argument, there exists a positive path $\gamma_2(t),t\in[{1\over2}+\epsilon,1]$ that connects
	$\mu({1\over2}+\epsilon)$ and $X_2^{-1}\gamma(1)X_2$ for some $X_2\in\Sp(6)$, such that
	\begin{eqnarray}	
	\gamma_1'({1\over2}+\epsilon)&=&\mu'({1\over2}+\epsilon),
	\\
	\gamma_1'(1)&=&X_2^{-1}\gamma'(1)X_2,
	\end{eqnarray}
    where $\gamma_2$ is entirely in $\mathcal{O}_{\mathcal{C,U}}$.
	Now $\tilde\gamma=\gamma_1*\mu*\gamma_2$ is a positive path
	with type $\mathcal{O}_{\mathcal{C,U}}
	\rightarrow\mathcal{B}_{\mathcal{U}}^3
	\rightarrow\mathcal{O}_{\mathcal{C,U}}$.
	Moreover, we have $\tilde\gamma(0)=\mu(0)=N_3(\varphi_0,b)$,
	and
	\begin{eqnarray}	
	\tilde\gamma(0)=X_1^{-1}\gamma(0)X_1,&\quad&
	\tilde\gamma'(0)=X_1^{-1}\gamma'(0)X_1,
	\\
	\tilde\gamma(1)=X_2^{-1}\gamma(1)X_2,&\quad&
	\tilde\gamma'(1)=X_2^{-1}\gamma'(1)X_2.
	\end{eqnarray}
	 Thus $\tilde\gamma$ is the required
	 positive path
      with a single triple-collision.

 \medskip
	{\bf  Step 2: Eliminate the two double-collisions via a triple-collision path}
	
    By Lemma \ref{Lemma:cusp}, the bifurcation diagram of
    general deformations for $N_3(\varphi,b)\in\mathcal{B}_{\mathcal{U}}^3$ is a cusp with type $A_2$, as illustrated in Figure \ref{fig:cusp}
    (or see Figure \ref{figure:bf_c2} (b)).
    Consequently, we can perturb $\tilde\gamma$ near the point $N_3(\varphi_0,b)$.
   This perturbation can lead to two scenarios:
	either the perturbed path intersects $\mathcal{B}_{\mathcal{U}}^2$ at precisely
	two points (denoted as $\gamma_-$),
	or the perturbed path does not intersect with $\mathcal{B}_{\mathcal{U}}^2$
	(denoted as $\gamma_+$), as depicted in Figure \ref{fig:cusp}.
    Note that $\gamma_+,\gamma_-$, and $\tilde\gamma$
    are positive homotopy with fixed end-segments (refer to Formula (\ref{fix.condition}) and subsequent content),
    which implies that $\gamma_+\sim_{+,con}\tilde\gamma\sim_{+,con}\gamma_-$.
	
	\begin{figure}[ht]
		\centering
		\includegraphics[height=8.0cm]{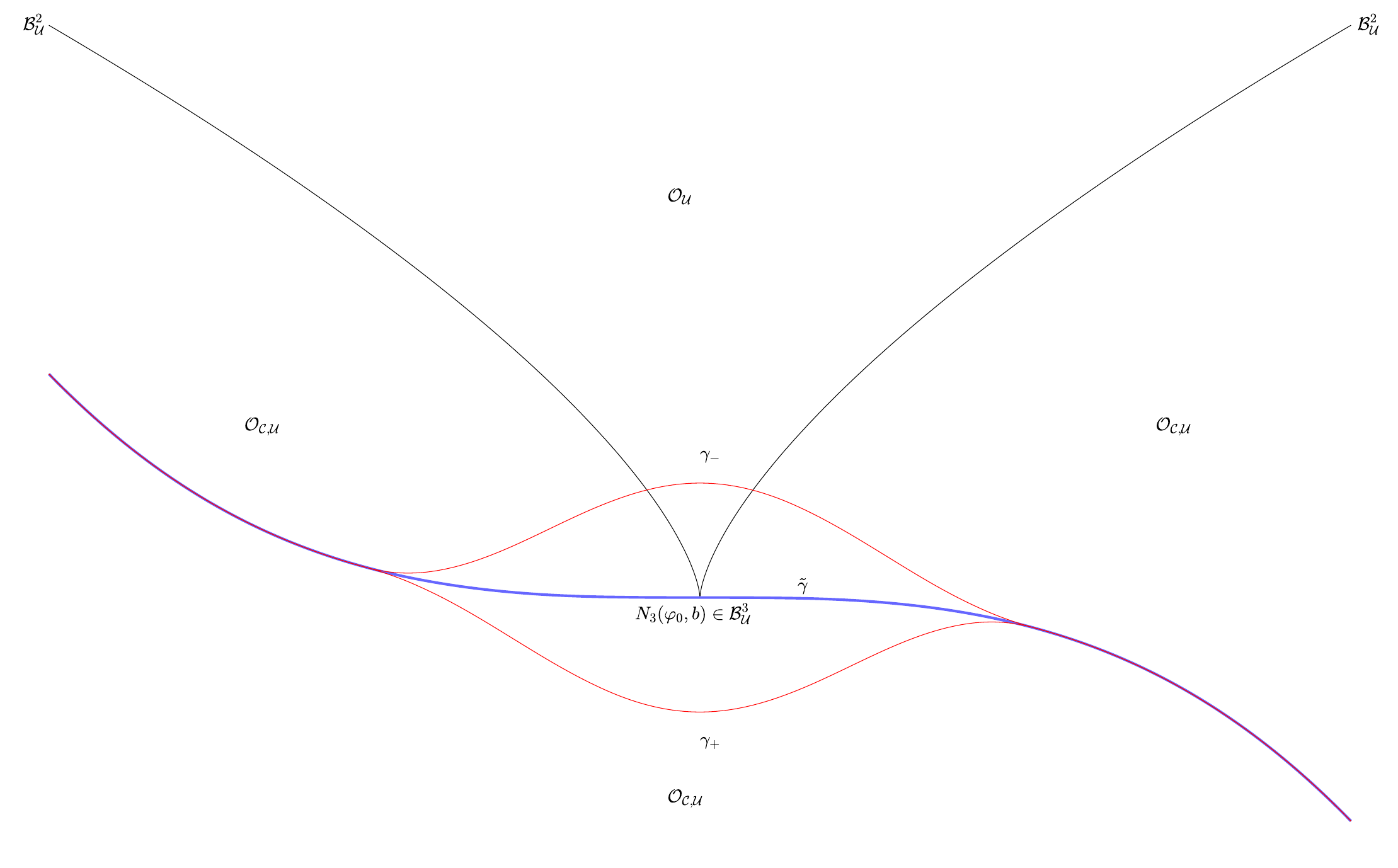}
		\caption{Perturb the path near the cusp point.}
		\label{fig:cusp}
	\end{figure}
	
	The positive path $\gamma_-$ has the same type as $\gamma$.
	Using a similar argument to that in Theorem \ref{Th:transfer.collisions},
	we can move the two double-collision points
	such that $\gamma_-$ and $\gamma$ has the same double-collision points in the sense of conjugate class.
	Moreover, we have $\gamma\sim_{+,con}\gamma_-$,
	and hence $\gamma\sim_{+,con}\gamma_+$.
\end{proof}



\subsection{Eliminate collisions on the real line}

By applying Theorem \ref{Th:transfer.collisions} and Theorem \ref{Thm:Sp6.remove.collisions.on.U},
we can eliminate the collisions on $\U$
or transfer the collisions from $\U$ to $\R$.
After these operations, the positive path has collisions only on the real line $\R$.

    Now, let $\ga$ be a positive loop that starts at $I$
    and has collisions only on the real line $\R$.
	Suppose the first collision occurs at time $t=t_1$,
    where two pairs of eigenvalues, 
    $\{\lambda_1,{1\over\lambda_1}\}$ and $\{\lambda_2,{1\over\lambda_2}\}$ rotate $k_1$ and $k_2$ rounds, respectively;
    and subsequently, they collide at $\R^+$ (or $\R^-$).
   Because the loop must return back to $I$,
	there must be a subsequent collision
	from $\mathcal{O}_\mathcal{C}$ to $\R$.
    Moreover, we assume they collide successively at $t=t_2$.

    Since the other eigenvalues do not collide with 
$\{\lambda_i,{1/\lambda_i}\},i=1,2$ on the time interval $[0,t_2+\epsilon)$ where $\ep>0$ is small enough,
there exist a symplectic path $X_t,t\in[0,t_2+\epsilon]$
such that
\begin{equation}\label{path.decomposition}
\gamma(t)=X_t^{-1}\left(\matrix{ \gamma_1(t)& O\cr O& \gamma_2(t)}\right)X_t.
\end{equation}
Here $\gamma_1\subset\Sp(4), \gamma_2\subset\Sp(2(n-2))$, and
\begin{equation}
\sigma(\gamma_1(t))
=\{\lambda_1(t),{1\over\lambda_1(t)},
   \lambda_2(t),{1\over\lambda_2(t)}\}.
\end{equation}

Using the same argument as in proof of Theorem \ref{Thm.remove.collisions.on.R.of.Sp4} for $\Sp(4)$, 
$\gamma_1|_{[0,t_2+\epsilon]}$ falls into one of the four cases:
	
	\noindent(i) The path starts from $\mathcal{O}_{\mathcal{R}^+,\mathcal{R}^+}$
	enters $\mathcal{O}_\mathcal{C}$, and then 
re-enters $\mathcal{O}_{\mathcal{R}^+,\mathcal{R}^+}$;
	
	\noindent(ii) The path starts from $\mathcal{O}_{\mathcal{R}^-,\mathcal{R}^-}$
	enters $\mathcal{O}_\mathcal{C}$, and then 
 re-enters $\mathcal{O}_{\mathcal{R}^-,\mathcal{R}^-}$;
	
	\noindent(iii) The path starts from $\mathcal{O}_{\mathcal{R}^+,\mathcal{R}^+}$
	enters $\mathcal{O}_\mathcal{C}$, and then re-enters $\mathcal{O}_{\mathcal{R}^-,\mathcal{R}^-}$;
 
	\noindent(iv) The path starts from $\mathcal{O}_{\mathcal{R}^-,\mathcal{R}^-}$
	enters $\mathcal{O}_\mathcal{C}$, and then re-enters $\mathcal{O}_{\mathcal{R}^+,\mathcal{R}^+}$.
    
For Cases (i) and (ii), the two double-collisions can be removed together, as shown by Lemma
\ref{Lemma:Sp4.case1&2}.
For Case (iii), we have

\begin{lemma}\label{Lemma:case3}
For Case (iii), we can remove at least two collisions 
through a proper positive homotopy.
\end{lemma}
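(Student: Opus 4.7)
The plan is to reduce the problem to the corresponding Case (iii) argument for $\Sp(4)$ embedded in the proof of Theorem \ref{Thm.remove.collisions.on.R.of.Sp4}. By the decomposition (\ref{path.decomposition}), on the interval $[0,t_2+\epsilon]$ we have $\gamma(t)=X_t^{-1}\diag(\gamma_1(t),\gamma_2(t))X_t$, where the $\Sp(4)$-block $\gamma_1$ contains precisely the two pairs of eigenvalues involved in the two collisions at $t_1$ and $t_2$. Any $C^1$-connectable positive homotopy of $\gamma_1$ (with $\gamma_2$ held fixed) induces such a homotopy of $\diag(\gamma_1,\gamma_2)$, and conjugating by a continuous path of symplectic matrices close to $X_t$ lifts it to a $C^1$-connectable positive homotopy of $\gamma$ itself; hence the entire problem reduces to working inside the $\Sp(4)$-block $\gamma_1$.

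Next, I would rerun the $\Sp(4)$ Case (iii) argument on $\gamma_1$. Apply the Part Speed Changing Lemma (Lemma \ref{Lemma:change.part.time}) to the collision-free initial segment $\gamma_1|_{[0,t_1-\epsilon]}$: via the Positive Decomposition Lemma (Lemma \ref{Lm:positive.decomposition}) this segment further splits as a diagonal sum of two positive paths in $\Sp(2)$ on which the time parametrizations can be adjusted independently, so the winding numbers $k_1,k_2\ge 1$ of the two pairs of eigenvalues can be rearranged to $k_1-\frac{1}{2}$ and $k_2$ on $[0,t_1-2\epsilon]$. This brings the first pair of eigenvalues to $\R^-$ and leaves the second at $\R^+$ at time $t_1-2\epsilon$, so that on $[t_1-2\epsilon,t_1-\epsilon]$ the first pair completes its remaining half-rotation from $\R^-$ through $\U$ back to $\R^+$. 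The sub-path $\gamma_1|_{[t_1-2\epsilon,t_1+\epsilon]}$ then realizes the second of the two trajectory types listed in Lemma \ref{Th:transfer.R+.to.R-}(ii), so that lemma transfers the collision at $t_1$ from $\R^+$ across $\U$ to $\R^-$.

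After this transfer, both collisions of $\gamma_1$ at $t_1$ and $t_2$ take place between points of $\mathcal{O}_{\mathcal{R}^-,\mathcal{R}^-}$, putting us into the Case (ii) situation. By Lemma \ref{Lemma:Sp4.case1&2}, a positive truly hyperbolic path entirely inside $\mathcal{O}_{\mathcal{R}^-,\mathcal{R}^-}$ connects the endpoints at times $t_1-\epsilon$ and $t_2+\epsilon$ via a $C^1$-connectable positive homotopy with the current form of $\gamma_1$, thereby eliminating both collisions. Lifting back through the decomposition (\ref{path.decomposition}) yields a $C^1$-connectable positive homotopy of the original loop $\gamma$ in which at least these two collisions have been removed, which is the claim.

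The principal technical obstacle is ensuring that the speed changes and the intermediate $\Sp(4)$-homotopies can be assembled so that the $C^1$-connectable conditions remain compatible with the conjugation $X_t$ and with the untouched block $\gamma_2$ at the boundaries of the time interval. This is handled by choosing every reparametrization $\tau$ in Lemma \ref{Lemma:change.part.time} to equal the identity with unit derivative near its endpoints, so that the gluing construction described just after Definition \ref{pos.connectable.homotopy} applies and lifts the $C^1$-connectable positive homotopy of $\gamma_1$ unobstructed to one of $\gamma$.
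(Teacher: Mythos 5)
Your proof follows the same core mechanism as the paper's (isolate the two colliding pairs in an $\Sp(4)$ block, rerun the $\Sp(4)$ Case~(iii) argument with the Part Speed Changing Lemma, transfer the $t_1$-collision across $\U$ to $\R^-$, then invoke Lemma~\ref{Lemma:Sp4.case1&2}), but it takes a noticeably different route with the decomposition. You restrict attention to the interval $[0,t_2+\epsilon]$, where the $\Sp(4)\oplus\Sp(2(n-2))$ splitting from~(\ref{path.decomposition}) is always available and $\gamma_1(0)=I_4$, so the winding-number bookkeeping of the $\Sp(4)$ Case~(iii) argument applies verbatim. The paper instead performs a case analysis on what happens \emph{after} $t_2$: if there is no further collision involving $\{\lambda_1,1/\lambda_1\}$ or $\{\lambda_2,1/\lambda_2\}$, it extends the decomposition to $[0,1]$ and applies Theorem~\ref{Thm.remove.collisions.on.R.of.Sp4} directly to the loop $\gamma_1$; if the next such collision at $t_3$ involves a \emph{third} pair $\{\lambda_3,1/\lambda_3\}$, it uses the Part Speed Changing Lemma to push all unrelated collisions of $\{\lambda_3,1/\lambda_3\}$ out of the way and obtain an $\Sp(6)\oplus\Sp(2(n-3))$ decomposition on the shifted window $(t_1-\epsilon,t_3+2\epsilon)$ before transferring and removing. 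Your route is simpler and more uniform, avoiding the tripartite case split entirely; what the paper's route buys is a cleaner interface with Lemma~\ref{Lemma:case4}, which really does need the $\Sp(6)$ window for its Subcase~i), and a presentation in which the role of the third pair $\{\lambda_3,1/\lambda_3\}$ is made visible in Case~(iii) in parallel with Case~(iv). One point you should tighten: the lift-back sentence ("conjugating by a continuous path of symplectic matrices close to $X_t$") is vague; the precise mechanism is that Lemma~\ref{Lm:positive.decomposition} together with Proposition~\ref{prop:lift} gives $\gamma|_{[0,t_2+\epsilon]}\sim_{+,con}\mathrm{diag}(\mu_1,\mu_2)$, after which the $\Sp(4)$ homotopy on $\mu_1$ extends block-diagonally (its positivity is immediate, and the $C^1$-connectable endpoint data are the conjugating matrices $\mathrm{diag}(X_s,I)$, $\mathrm{diag}(Y_s,I)$), and the gluing construction following Definition~\ref{pos.connectable.homotopy} then reassembles the full loop.
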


\begin{proof}
    We need to consider whether there exists the third collision with $\{\lambda_i,{1\over\lambda_i}\},i=1,2$.
    If not, the decomposition (\ref{path.decomposition})
    can be extended to the entire interval $[0,1]$.
    Then, following the proof of Theorem \ref{Thm.remove.collisions.on.R.of.Sp4} for $\Sp(4)$, 
    the two collisions of $\gamma_1$ can be removed together.
    
	Now, suppose there exists a third collision at time $t_3$.
    If the third collision is also collided by  $\{\lambda_1,{1\over\lambda_1}\}$ and $\{\lambda_2,{1\over\lambda_2}\}$,
    we can apply the proof of Theorem \ref{Thm.remove.collisions.on.R.of.Sp4} for $\Sp(4)$
    to eliminate the collisions of $\gamma_1$.

    A new phenomenon occurs when the next collision with respect to $\{\lambda_i,{1\over\lambda_i}\},i=1,2$ 
   involves a third pair of eigenvalues $\{\lambda_3,{1\over\lambda_3}\}$.
    Without loss of generality, suppose $\{\lambda_1,{1\over\lambda_1}\}$ 
    collides with $\{\lambda_3,{1\over\lambda_3}\}$ at time $t=t_3$.
    Then $\{\lambda_2,{1\over\lambda_2}\}$ does not
    collide with $\{\lambda_i,{1\over\lambda_i}\},i=1,3$, during the time interval 
    $(t_2,t_3)$.

    If necessary, we first move
    all the collisions between $\{\lambda_3,{1\over\lambda_3}\}$ and other eigenvalues
    except for $\{\lambda_i,{1\over\lambda_i}\},i=1,2$, on the time interval 
    $(t_1-2\epsilon,t_3-2\epsilon)$ to the new
    time interval $(t_1-2\epsilon,t_1-\epsilon)$ 
    by using the Part Speed Changing Lemma,
    we obtain a similar decomposition as in
    (\ref{Sp6.decomposition}) on the time interval
    $(t_1-\epsilon,t_3+2\epsilon)$.
    
    By Theorem \ref{Th:transfer.R+.to.R-},
    Remark \ref{RM:transfer.R+.to.R-}
	and Theorem \ref{Th:transfer.collisions.on.R},
	we can transfer the collision at $t=t_1$ from $\R^+$
	to $\U$, and then to $\R^-$.
	Now the new positive path travels from $\mathcal{O}_{\mathcal{R}^-,\mathcal{R}^-}$
	enters $\mathcal{O}_\mathcal{C}$ at $t=t_1$, and then re-enters $\mathcal{O}_{\mathcal{R}^-,\mathcal{R}^-}$ at $t=t_2$.
    Therefore, we convert the path to Case (ii).

    At last, the two double-collisions can be removed
    together, as shown by Lemma \ref{Lemma:Sp4.case1&2}.
\end{proof}

Case (iv) is more complicated,  and we 
provide a detailed proof in the following lemma:
\begin{lemma}\label{Lemma:case4}
For Case (iv), we can remove at least two collisions 
through a proper positive homotopy.
\end{lemma}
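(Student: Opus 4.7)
The plan is to mirror the Case (iv) argument of Theorem \ref{Thm.remove.collisions.on.R.of.Sp4} for $\Sp(4)$, adapting it to $\Sp(2n)$ by the same dimension-reduction technique used in Lemma \ref{Lemma:case3}. The essential new wrinkle relative to Case (iii) is that the two pairs $\{\lambda_1,1/\lambda_1\}$ and $\{\lambda_2,1/\lambda_2\}$ must pass from $\R^-$ to $\R^+$ through $\mathcal{O}_\mathcal{C}$, which forces an extra half-rotation through $\{\pm 1,\pm 1\}$ somewhere along the loop; we shall exploit this to move a collision from $\R^+$ back to $\R^-$ (or vice versa) through $\U$, converting to Case (i) or (ii), and then apply Lemma \ref{Lemma:Sp4.case1&2}.

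First I examine whether there is any later collision of $\{\lambda_1,1/\lambda_1\}$ or $\{\lambda_2,1/\lambda_2\}$ with a third pair $\{\lambda_j,1/\lambda_j\}$ ($j\ge 3$). If no such collision exists on $[0,1]$, then the decomposition (\ref{path.decomposition}) extends over the entire loop, $\gamma_1\in\Sp(4)$ is itself a positive loop of Case (iv) type, and I invoke directly the Case (iv) argument of Theorem \ref{Thm.remove.collisions.on.R.of.Sp4}: using the loop condition and the half-rotation forced on $[t_2+\epsilon,1]$, the collision at $t_2$ can be transferred from $\R^+$ through $\U$ to $\R^-$ via Lemma \ref{Th:transfer.R+.to.R-} and Theorem \ref{Th:transfer.collisions.on.R}, producing a Case (ii) path whose two collisions are removed by Lemma \ref{Lemma:Sp4.case1&2}.

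If such a third-pair interaction does occur, say at the smallest such time $t_3>t_2$, I first apply the Part Speed Changing Lemma (Lemma \ref{Lemma:change.part.time}) to push into an early time window all collisions of $\{\lambda_3,1/\lambda_3\}$ with pairs other than $\{\lambda_1,1/\lambda_1\},\{\lambda_2,1/\lambda_2\}$ on $[t_1-2\epsilon,t_3+2\epsilon]$. Together with the Positive Decomposition Lemma (Lemma \ref{Lm:positive.decomposition}), this yields a $\Sp(6)$ decomposition on $(t_1-\epsilon,t_3+2\epsilon)$. Then the $\Sp(4)$ subcase analysis of Case (iv) in Theorem \ref{Thm.remove.collisions.on.R.of.Sp4} adapts verbatim: if between $t_2$ and $t_3$ the eigenvalues merely sit in $\mathcal{O}_{\mathcal{R^+,R^+}}$ without rotation, then Lemma \ref{Lemma:Sp4.case1&2} removes the pair of collisions at $t_2$ and $t_3$ directly; otherwise, by combining Lemma \ref{Th:transfer.R+.to.R-}, Remark \ref{RM:transfer.R+.to.R-}, Theorem \ref{Th:transfer.collisions} and Theorem \ref{Th:transfer.collisions.on.R}, the collision at $t_2$ can be transferred from $\R^+$ through $\U$ to $\R^-$, producing a Case (ii) configuration on $[0,t_3+\epsilon]$ to which Lemma \ref{Lemma:Sp4.case1&2} applies.

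The main obstacle I anticipate is bookkeeping: after each transfer the $C^1$-connectable condition must be preserved and the modified path must be patched back into the original loop without introducing new collisions among the other $2n-6$ eigenvalues. This is handled exactly as in the proof of Lemma \ref{Lemma:case3}, by first using the Part Speed Changing Lemma to move spurious collisions outside the interval of interest, and by invoking the conjugation homotopies $H(s,t)=Z(s)^{-1}\gamma(t)Z(s)$ to glue the local modification back. Iterating the procedure removes at least two collisions per step, and finite induction on the number of collisions of $\gamma$ completes the proof.
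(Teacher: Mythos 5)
Your overall strategy is the right one and mirrors the paper: decompose via the Positive Decomposition Lemma and the Part Speed Changing Lemma, and then reduce to the $\Sp(4)$ analysis (transferring $\R^+\to\U\to\R^-$ to land in Case (ii), or removing collisions along a truly hyperbolic stretch). The branch where no third pair is ever involved, and the rotation subcases you lump under ``otherwise,'' are all handled correctly and in the same spirit as the paper.

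There is, however, a concrete gap in the subcase where $\{\lambda_1,1/\lambda_1\}$ collides with a genuinely third pair $\{\lambda_3,1/\lambda_3\}$ at $t_3$ with no intermediate rotation. You assert that the $\Sp(4)$ Case (iv) Subcase (i) treatment ``adapts verbatim'' and that ``Lemma \ref{Lemma:Sp4.case1&2} removes the pair of collisions at $t_2$ and $t_3$ directly.'' That appeal does not go through. Lemma \ref{Lemma:Sp4.case1&2} is an $\Sp(4)$-specific statement: it concerns a positive truly hyperbolic path in $\Sp(4)$ whose two endpoints both lie in $\mathcal{O}_{\mathcal{R}^+,\mathcal{R}^+}\subset\Sp(4)$, which is the situation in the $\Sp(4)$ Subcase (i) precisely because both collisions there involve the \emph{same} two pairs. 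Here, by contrast, the $t_2$-collision is between $\{\lambda_1,1/\lambda_1\}$ and $\{\lambda_2,1/\lambda_2\}$ while the $t_3$-collision is between $\{\lambda_1,1/\lambda_1\}$ and $\{\lambda_3,1/\lambda_3\}$; the decomposed factor is $\gamma_1\in\Sp(6)$, and the endpoints $\gamma_1(t_2-\epsilon)$ and $\gamma_1(t_3+\epsilon)$ lie in $\mathcal{O}_{\mathcal{C,R}}\subset\Sp(6)$ (a complex quadruple plus one real pair), not in $\mathcal{O}_{\mathcal{R}^+,\mathcal{R}^+}$. Lemma \ref{Lemma:Sp4.case1&2} simply does not apply. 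The paper instead performs a second Part Speed Changing step to clear the spurious $\{\lambda_3,1/\lambda_3\}$-collisions from $(t_2-\epsilon,t_3+\epsilon)$, observes that $\gamma_1|_{(t_2-\epsilon,t_3+\epsilon)}$ is then a positive truly hyperbolic $\Sp(6)$-valued path (see Figure \ref{Sp6_hyp}), and removes both collisions using the dimension-free hyperbolic machinery, namely Lemma \ref{Lemma:hyperbolic.positive.path} together with Theorem \ref{Thm:remove.hyperbolic.collisions}. Your proof should invoke those general results for this subcase rather than Lemma \ref{Lemma:Sp4.case1&2}.
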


\begin{proof}
    For Case (iv), 
    either there does not exist
    a third collision,
    or the third collision is also
    collided by $\{\lambda_1,{1\over\lambda_1}\}$ and $\{\lambda_2,{1\over\lambda_2}\}$,
    we can eliminate the collisions
    as a similar argument of Case (iii).
    

    Now we consider when
    the next collision with respect to $\{\lambda_i,{1\over\lambda_i}\},i=1,2$ 
   involves a third pair of eigenvalues $\{\lambda_3,{1\over\lambda_3}\}$.
    Without loss of generality, suppose $\{\lambda_1,{1\over\lambda_1}\}$ 
    collides with $\{\lambda_3,{1\over\lambda_3}\}$ at time $t=t_3$.
    Then $\{\lambda_2,{1\over\lambda_2}\}$ does not
    collide with $\{\lambda_i,{1\over\lambda_i}\},i=1,3$, during the time interval 
    $(t_2,t_3)$.

    By a similar argument to Case (iii),
    and utilizing the Part Speed Changing Lemma,
    we obtain a decomposition similar to
    (\ref{Sp6.decomposition}) on time interval
    $(t_1-\epsilon,t_3+\epsilon)$.
    Now, it falls into one of the following three sub-cases:
	
    i) $\{\lambda_1,{1\over\lambda_1}\}$ remains in $\R^+$ if $t\in(t_2,t_3)$,
    and $\{\lambda_1,{1\over\lambda_1}\}$ collides
    with $\{\lambda_3,{1\over\lambda_3}\}$ from $\R^+$ to $\C\backslash(\U\cup\R)$ at time $t_3$;
	
    ii) $\{\lambda_1,{1\over\lambda_1}\}$ rotates $k\ge1$ rounds to
    $\R^+$ if $t\in(t_2,t_3)$,
    and then $\{\lambda_1,{1\over\lambda_1}\}$ collides
    with $\{\lambda_3,{1\over\lambda_3}\}$ from $\R^+$ to $\C\backslash(\U\cup\R)$ at time $t_3$;

    iii) $\{\lambda_1,{1\over\lambda_1}\}$ rotates $k+{1\over2}$ rounds to $\R^-$ if $t\in(t_2,t_3)$,
    and then $\{\lambda_1,{1\over\lambda_1}\}$ collides
    with $\{\lambda_3,{1\over\lambda_3}\}$ from $\R^-$ to $\C\backslash(\U\cup\R)$ at time $t_3$.

    For Subcase i), if necessary, we first relocate all collisions between $\{\lambda_3,{1\over\lambda_3}\}$ and eigenvalues other than $\{\lambda_i,{1\over\lambda_i}\}, i=1,2$, from the time interval $(t_2-2\epsilon, t_3-2\epsilon)$ to the new interval $(t_2-2\epsilon, t_2-{\epsilon})$ by using the Part Speed Changing Lemma. 
    Then we have a similar decomposition with
    (\ref{Sp6.decomposition}) on time interval
    $(t_2-\epsilon,t_3+\epsilon)$,
   as illustrated in Figure \ref{Sp6_hyp}.
    Therefore, by Lemma \ref{Lemma:hyperbolic.positive.path} and
    Theorem \ref{Thm:remove.hyperbolic.collisions} 
    (The Positive Homotopy Theorem of Positive Truly Hyperbolic Paths), 
    we can remove the two collisions
	at $t=t_2$ and $t=t_3$.

    \begin{figure}[ht]
	\centering
	\includegraphics[height=7.0cm]{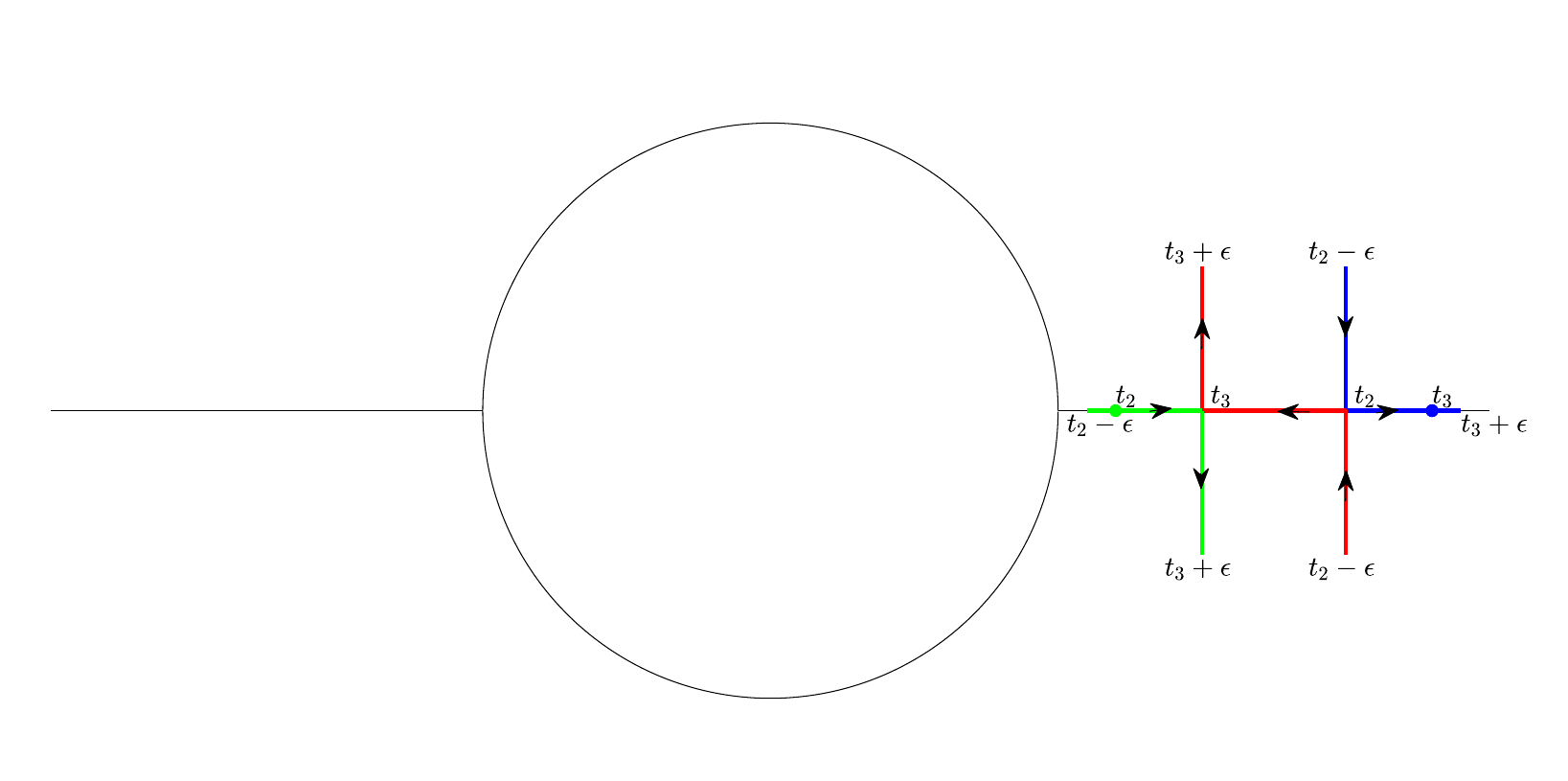}
	\vskip -1 cm
	\caption{Subcase i): Each curve with different colors represents a trajectory of the eigenvalues for $\gamma_1\in\Sp(6)$ for $t\in(t_2-\epsilon,t_3+\epsilon)$.}
	\label{Sp6_hyp}
    \end{figure}

   

Now we treat subcase ii). 
By Theorem \ref{Th:transfer.R+.to.R-},
Remark \ref{RM:transfer.R+.to.R-},
Lemma \ref{Th:transfer.collisions.on.R}
and a similar argument as in Case (iii),
we can transfer the collision
 of $\gamma_1$ in (\ref{path.decomposition})
 at $t=t_2$ from $\R^+$
	to $\U$, and then to $\R^-$.
	Now the new positive path $\gamma_1$ travels from $\mathcal{O}_{\mathcal{R}^-,\mathcal{R}^-}$,
	enters $\mathcal{O}_\mathcal{C}$ at $t=t_1$, and then re-enters $\mathcal{O}_{\mathcal{R}^-,\mathcal{R}^-}$ at $t=t_2$.
	Therefore, we convert the path to Case (ii).

    Subcase iii) is similar to Subcase ii).
 \end{proof}

 According to Cases (i)-(iv), we have

 \begin{theorem}\label{Thm.remove.collisions.on.R.of.Sp2n}
	Given a positive loop $\gamma$ in
 $\Sp(2n)$ which only 
	has collisions on  $\R\backslash\{0,\pm1\}$,
	it can be homotopic to a positive loop 
    with no collisions.
\end{theorem}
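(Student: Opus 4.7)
The plan is to mirror the structure of the proof of Theorem \ref{Thm.remove.collisions.on.R.of.Sp4} for $\Sp(4)$, but using the higher-dimensional machinery (the Positive Decomposition Lemma, the Part Speed Changing Lemma, and Lemmas \ref{Lemma:case3}, \ref{Lemma:case4}) to reduce each pair of successive $\R$-collisions to a four-dimensional problem. First, I would perturb $\gamma$ slightly so that it is a generic positive loop whose only collisions lie on $\R\backslash\{0,\pm1\}$; then the total number of collisions is finite and I may induct on this number, with the base case being the case of no collisions (where there is nothing to prove).

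For the inductive step, let $t_1$ be the time of the first collision, and consider the two pairs of eigenvalues $\{\lambda_1,1/\lambda_1\}$, $\{\lambda_2,1/\lambda_2\}$ that meet at $t_1$ on $\R^+$ or $\R^-$, entering $\mathcal{O}_\mathcal{C}$. Since $\gamma$ is a loop returning to $I$, these eigenvalues must eventually re-collide on $\R$; let $t_2$ be the time of the first such collision. On the interval $[0,t_2+\epsilon]$ no other pair of eigenvalues collides with $\{\lambda_i,1/\lambda_i\}$ for $i=1,2$, so by the Positive Decomposition Lemma (Lemma \ref{Lm:positive.decomposition}) there exists a symplectic path $X_t$ such that
\begin{equation*}
\gamma(t)=X_t^{-1}\diag(\gamma_1(t),\gamma_2(t))X_t,
\end{equation*}
with $\gamma_1\subset\Sp(4)$ carrying the two pairs above. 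Following the exact four-case dichotomy of Section \ref{sec:5}, the sub-path $\gamma_1|_{[0,t_2+\epsilon]}$ falls into one of Cases (i)--(iv). If Case (i) or (ii) occurs, Lemma \ref{Lemma:Sp4.case1&2} immediately removes the two collisions at $t_1$ and $t_2$ via a $C^1$-connectable positive homotopy of $\gamma_1$, which lifts back to a positive homotopy of $\gamma$ via $X_t$. If Case (iii) or (iv) occurs, I would invoke Lemma \ref{Lemma:case3} or Lemma \ref{Lemma:case4} respectively; these lemmas, after possibly relocating intermediate collisions of a third pair $\{\lambda_3,1/\lambda_3\}$ by the Part Speed Changing Lemma and a decomposition similar to \eqref{Sp6.decomposition}, guarantee the removal of at least two collisions.

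In each scenario, the resulting loop has strictly fewer collisions on $\R\backslash\{0,\pm1\}$ than $\gamma$ and is still positively homotopic to $\gamma$. By the inductive hypothesis, the new loop is positively homotopic to a positive loop with no collisions, and transitivity completes the step. The main obstacle is verifying that the reductions in Lemmas \ref{Lemma:case3} and \ref{Lemma:case4} genuinely apply inside $\Sp(2n)$ without creating new $\U$-collisions, because transferring a collision across $\U$ (as in Lemma \ref{Th:transfer.R+.to.R-} and Remark \ref{RM:transfer.R+.to.R-}) is carried out inside the $\Sp(6)$-factor obtained by decomposition, and one must ensure that the resulting homotopy of the $\Sp(6)$-factor lifts, together with the unchanged $\Sp(2(n-6))$-factor, to a globally positive homotopy of $\gamma$. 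This lifting step is precisely what the Part Speed Changing Lemma combined with the Positive Decomposition Lemma accomplishes, so the argument closes and the induction terminates in finitely many steps, yielding a positive loop with no collisions positively homotopic to $\gamma$.
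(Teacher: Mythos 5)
Your proposal is correct and takes essentially the same route as the paper: decompose $\gamma$ on $[0,t_2+\epsilon]$ via the Positive Decomposition Lemma into an $\Sp(4)$-factor carrying the two colliding pairs, run the four-case dichotomy from Section \ref{sec:5}, invoke Lemma \ref{Lemma:Sp4.case1&2} for Cases (i)--(ii) and Lemmas \ref{Lemma:case3}, \ref{Lemma:case4} for Cases (iii)--(iv), and close by induction on the (finite) number of collisions. The lifting concern you flag at the end is already discharged by the combination of Proposition \ref{Prop:positive.decomposition}/Lemma \ref{Lm:positive.decomposition} and the Part Speed Changing Lemma exactly as you suggest, so no gap remains.
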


 \begin{proof}

    Since for Cases (i) and (ii), the two double-collisions can be removed together by Lemma
\ref{Lemma:Sp4.case1&2},
     it immediately follows from Lemma \ref{Lemma:case3} 
     and Lemma \ref{Lemma:case4}. Then the theorem follows from the argument by induction as in the introduction (see Section \ref{subsec, n>2}).
 \end{proof}

Now we can give the proof of our main theorem for $\Sp(2n),n\ge3$.

\begin{proof}[Proof of Theorem A for $\Sp(2n),n\ge3$]
According to Theorem \ref{Th:transfer.collisions}, Theorem \ref{Thm:Sp6.remove.collisions.on.U} and induction, 
if a positive path includes collisions on the unit circle, it can be positively homotopic to another positive path that only has collisions on the real line. Subsequently, applying Theorem \ref{Thm.remove.collisions.on.R.of.Sp2n} and Theorem \ref{Thm:pos.homotopy.of.loops.has.no.collisions}, we can conclude the cases $n>2$ of Theorem A.
\end{proof}



\setcounter{equation}{0}
\section{Appendix}

\subsection{The proofs of technical lemmas}
This section presents the proofs of technical lemmas required in the preceding sections. We will restate the lemmas here for the reader's convenience.

\noindent{\bf Lemma 2.3.}
{\it	
	For $\th\in(0,2\pi)\backslash\{\pi\}$ and $(b_1,b_2,b_3,b_4)\in\R^4$, 
	consider the matrix $N=N_2(\th,b)$ and $N_\alpha=N\cdot\diag(R(\alpha),R(\alpha))$.
    Then

    $1^\circ$
    $\Delta(N_\alpha)=8(b_2-b_3)\sin\th\cos\alpha\sin\alpha
+[16\sin^2\th+(b_1-b_4)^2+4b_2b_3]\sin^2\alpha$;

    $2^\circ$ If $N$ is non-trivial, i.e., $(b_2-b_3)\sin\th<0$,
    then there exists $\alpha_0>0$ small enough such that
    $\Delta(N_\alpha)<0$ if $\alpha\in(0,\alpha_0)$,
    and $\Delta(N_\alpha)>0$ if $\alpha\in(-\alpha_0,0)$;

    $3^\circ$ If $N$ is trivial, i.e., $(b_2-b_3)\sin\th>0$,
    then there exists $\alpha_0>0$ small enough such that
    $\Delta(N_\alpha)>0$ if $\alpha\in(0,\alpha_0)$,
    and $\Delta(N_\alpha)<0$ if $\alpha\in(-\alpha_0,0)$.
}

\begin{proof}
$1^\circ$ First, we use $c_\th,s_\th$ and $c_\alpha,s_\alpha$
to denote $\cos\th,\sin\th$ and $\cos\alpha,\sin\alpha$, respectively.
By direct computation, we have
\begin{eqnarray*}
N_\alpha&=&N_2(\th,b)\cdot \diag(R(\alpha),R(\alpha))  \nonumber\\
&=&
\left(\matrix{c_\th &   b_1 &       -s_\th &         b_2 \cr
	0 & c_\th &       0 & -s_\th \cr
	s_\th &   b_3 &  c_\th &         b_4 \cr
	0 &   s_\th & 0 &  c_\th \cr}\right)
\left(\matrix{c_\alpha &  -s_\alpha &   0&         0\cr
	s_\alpha & c_\alpha &       0 & 0 \cr
	0 &   0 &  c_\alpha &  -s_\alpha \cr
	0 &    0 & s_\alpha &  c_\alpha \cr}\right)
\nonumber\\
&=&
\left(\matrix{c_\th c_\alpha+b_1s_\alpha &   
        -c_\th s_\alpha+b_1c_\alpha &       
        -s_\th c_\alpha+b_2s_\alpha &         
        s_\th s_\alpha+b_2c_\alpha \cr
	c_\th s_\alpha & c_\th c_\alpha &  
        -s_\th s_\alpha & -s_\th c_\alpha \cr
	s_\th c_\alpha+b_3s_\alpha &   
        -s_\th s_\alpha+b_3c_\alpha &  
        c_\th c_\alpha+b_4s_\alpha &         
        -c_\th s_\alpha+b_4c_\alpha \cr
	s_\th s_\alpha &   s_\th c_\alpha & 
        c_\th s_\alpha &  c_\th c_\alpha \cr
}\right).
\end{eqnarray*}
Then we have
$$
\sigma_1(N_\alpha)=tr(N_\alpha)=4\cos\th\cos\alpha+(b_1+b_4)\sin\alpha
$$
and
\begin{eqnarray*}
\sigma_2(N_\alpha)&=&(4\cos^2\th+2)\cos^2\alpha+(2-4\sin^2\th+b_1b_4-b_2b_3)\sin^2\alpha
\nonumber\\
&&+2[(b_1+b_4)\cos\th-(b_2-b_3)\sin\th]
\cos\alpha\sin\alpha.
\end{eqnarray*}
Hence,
we have
\begin{eqnarray*}
\Delta(N_\alpha)&=&
\sigma_1(N_\alpha)^2-4\sigma_2(N_\alpha)+8
\nonumber\\
&=&8(b_2-b_3)\sin\th\cos\alpha\sin\alpha
+[16\sin^2\th+(b_1-b_4)^2+4b_2b_3]\sin^2\alpha.
\end{eqnarray*}

$2^\circ$ Since $\sin^2\alpha$ is a higher-order term
with respect to $\alpha$,
the signature of $\Delta(N_\alpha)$ only depends on
its first term for $\alpha$ small enough.
Therefore, $2^\circ$ follows from $(b_2-b_3)\sin\th<0$.

Using similar arguments, we obtain $3^\circ$.
\end{proof}

\noindent{\bf Lemma 2.4.}
{\it
	For $(c_1,c_2)\in\R^2$ where $c_2<0$, 
	consider the matrix $M^{\pm}_\alpha=M_2(\pm1,c)\cdot\diag(R(\alpha),R(\alpha))$.
    Then we have

    $1^\circ$
    $\Delta(M_2(1,c)\cdot \diag(R(\alpha),R(\alpha)))=
    4c_2\cos\alpha\sin\alpha+[(c_1+c_2)^2+4]\sin^2\alpha$.
    Hence there exists $\alpha_0>0$ small enough such that
    $\Delta(M^+_\alpha)<0$ if $\alpha\in(0,\alpha_0)$,
    and $\Delta(M^+_\alpha)>0$ if $\alpha\in(-\alpha_0,0)$;

    $2^\circ$
    $\Delta(M_2(-1,c)\cdot \diag(R(\alpha),R(\alpha)))=
    4c_2\cos\alpha\sin\alpha+[(c_1-c_2)^2+4]\sin^2\alpha$.
    Hence there exists $\alpha_0>0$ small enough such that
    $\Delta(M^-_\alpha)<0$ if $\alpha\in(0,\alpha_0)$,
    and $\Delta(M^-_\alpha)>0$ if $\alpha\in(-\alpha_0,0)$.
} 
\begin{proof}
$1^\circ$ First, we have
\begin{eqnarray*}
M^+_\alpha
&=&M_2(1,c)\cdot\diag(R(\alpha),R(\alpha))  \nonumber\\
&=&
\left(\matrix{1 &   c_1 &       1 &         0 \cr
	0 & 1 &       0 & 0 \cr
	0 &   c_2 &  1 &         -c_2 \cr
	0 &   -1 & 0 &  1 \cr}\right)
\left(\matrix{\cos\alpha &  -\sin\alpha &   0&         0\cr
	\sin\alpha & \cos\alpha &       0 & 0 \cr
	0 &   0 &  \cos\alpha &  -\sin\alpha \cr
	0 &    0 & \sin\alpha &  \cos\alpha \cr}\right)
\nonumber\\
&=&
\left(\matrix{\cos\alpha+c_1\sin\alpha &   
             -\sin\alpha+c_1\cos\alpha &       
             \cos\alpha&   -\sin\alpha  \cr
	\sin\alpha & \cos\alpha &       0 & 0 \cr
	c_2\sin\alpha &   c_2\cos\alpha&  
                \cos\alpha-c_2\sin\alpha &         
                -\sin\alpha-c_2\cos\alpha\cr
	-\sin\alpha &   -\cos\alpha & 
             \sin\alpha &  \cos\alpha\cr
}\right).
\end{eqnarray*}
Then we get
$$
\sigma_1(M^+_\alpha)=tr(M^+_\alpha)=4\cos\alpha+(c_1-c_2)\sin\alpha,
$$
and
\begin{eqnarray*}
\sigma_2(M^+_\alpha)=2+4\cos^2\alpha-(1+c_1c_2)\sin^2\alpha
+(2c_1-3c_2)\cos\alpha\sin\alpha;
\end{eqnarray*}
and hence,
we have
\begin{eqnarray*}
\Delta(M^+_\alpha)=
\sigma_1(M^+_\alpha)^2-4\sigma_2(M^+_\alpha)+8
=4c_2\cos\alpha\sin\alpha
+[(c_1+c_2)^2+4]\sin^2\alpha.
\end{eqnarray*}
Finally, the signature of $\Delta(M^+_\alpha)$ 
is the same as that of $-\alpha$
since $c_2<0$ and $\sin^2\alpha=o(\alpha)$ 
for $\alpha$ small enough.

$2^\circ$ Similarly, We have
\begin{eqnarray*}
M^-_\alpha
&=&M_2(-1,c)\cdot\diag(R(\alpha),R(\alpha))  \nonumber\\
&=&
\left(\matrix{-1 &   c_1 &       1 &         0 \cr
	0 & -1 &       0 & 0 \cr
	0 &   c_2 &  -1 &         c_2 \cr
	0 &   -1 & 0 &  -1 \cr}\right)
\left(\matrix{\cos\alpha &  -\sin\alpha &   0&         0\cr
	\sin\alpha & \cos\alpha &       0 & 0 \cr
	0 &   0 &  \cos\alpha &  -\sin\alpha \cr
	0 &    0 & \sin\alpha &  \cos\alpha \cr}\right)
\nonumber\\
&=&
\left(\matrix{-\cos\alpha+c_1\sin\alpha &   
             \sin\alpha+c_1\cos\alpha &       
             \cos\alpha&   -\sin\alpha  \cr
	-\sin\alpha & -\cos\alpha &       0 & 0 \cr
	c_2\sin\alpha &   c_2\cos\alpha&  
             -\cos\alpha+c_2\sin\alpha &         
             \sin\alpha+c_2\cos\alpha\cr
	-\sin\alpha &   -\cos\alpha & 
             -\sin\alpha &  -\cos\alpha\cr
}\right)
\end{eqnarray*}
and
\begin{eqnarray*}
\Delta(M^-_\alpha)
=4c_2\cos\alpha\sin\alpha
+[(c_1-c_2)^2+4]\sin^2\alpha.
\end{eqnarray*}
In this case, the signature of $\Delta(M^-_\alpha)$ with respect to $-\alpha$ remains the same since $c_2 < 0$ and $\sin^2\alpha = o(\alpha)$ for sufficiently small $\alpha$.

\end{proof}

\noindent{\bf Lemma 2.5.}	
{\it For $\lambda\in\R\backslash\{0,\pm1\}$ and $c=(c_1,c_2)\in\R^2$, 
	 consider the matrix $M_2(\lambda,c)$ and $M_\alpha=M_2(\lambda,c)\cdot\diag(R(\alpha),R(\alpha))$
  for some small $\alpha$.
	 Then
	 
	 $1^\circ$ $M_2(\lambda,c)\in\Sp(4)$.
	  
	  $2^\circ$ $\lambda$ and ${1\over\lambda}$ are double eigenvalues of $M$.
	  
	  $3^\circ$ $\dim_{\C}\ker_{\C}(M-\lambda I)=1$ and
   $\dim_{\C}\ker_{\C}(M-{1\over\lambda} I)=1$.
	  
	  $4^\circ$         $\Delta(M_2(\lambda,c)\cdot\diag(R(\alpha),R(\alpha)))=4c_2\cos\alpha\sin\alpha+[(c_1+\lambda c_2)^2+{4\over\lambda^2}]\sin^2\alpha$.
    Hence, if $c_2<0$,
    there exists $\alpha_0>0$ small enough such that
    $\Delta(M_\alpha)<0$ if $\alpha\in(0,\alpha_0)$,
    and $\Delta(M_\alpha)>0$ if $\alpha\in(-\alpha_0,0)$.
}
\begin{proof}
    $1^\circ$ to $3^\circ$ follow from direct computations.

    For $4^\circ$, by a similar argument
    as in Lemma 2.4,
    we have
    \begin{eqnarray*}
M_\alpha
&=&M_2(\lambda,c)\cdot\diag(R(\alpha),R(\alpha))  \nonumber\\
&=&
\left(\matrix{\lambda &   c_1 &       1 &         0 \cr
	0 & \lambda^{-1} &       0 & 0 \cr
	0 &   c_2 &  -\lambda&         -\lambda c_2 \cr
	0 &   -\lambda^{-2} & 0 &  \lambda^{-1}\cr}\right)
\left(\matrix{\cos\alpha &  -\sin\alpha &   0&         0\cr
	\sin\alpha & \cos\alpha &       0 & 0 \cr
	0 &   0 &  \cos\alpha &  -\sin\alpha \cr
	0 &    0 & \sin\alpha &  \cos\alpha \cr}\right)
\nonumber\\
&=&
\left(\matrix{\lambda\cos\alpha+c_1\sin\alpha &   
             -\lambda\sin\alpha+c_1\cos\alpha &       
             \cos\alpha&   -\sin\alpha  \cr
	\lambda^{-1}\sin\alpha & -\lambda^{-1}\cos\alpha &       0 & 0 \cr
	c_2\sin\alpha &   c_2\cos\alpha&  
             \lambda\cos\alpha-\lambda c_2\sin\alpha &         
             -\lambda\sin\alpha-\lambda c_2\cos\alpha\cr
	-\lambda^{-2}\sin\alpha &   -\lambda^{-2}\cos\alpha & 
             \lambda^{-1}\sin\alpha &  \lambda^{-1}\cos\alpha\cr
}\right)
\end{eqnarray*}
and
\begin{eqnarray*}
\Delta(M_\alpha)
=4c_2\cos\alpha\sin\alpha
+[(c_1+\lambda c_2)^2+{4\over\lambda^2}]\sin^2\alpha.
\end{eqnarray*}
The signature of $\Delta(M_\alpha)$ 
is the same with respect to $-\alpha$
since $c_2<0$ and $\sin^2\alpha=o(\alpha)$
for $\alpha$ small enough.
\end{proof}

\noindent{\bf Lemma 2.6.}
{\it	
    For $\om=e^{\th\sqrt{-1}}$ with
	$\th\in(0,2\pi)\backslash\{\pi\}$ and $(b_1,b_2,b_3,b_4)\in\R^4$, 
	consider the matrices $N_3(\th,b),\tilde{N}_3(\th,b)$ and $M_\alpha=N_3(\th,b)\cdot\diag(R(\alpha),R(\alpha),R(\alpha))$.
	Then
	
	$1^\circ$ $N_3(\th,b)\in\Sp(6)$ if and only if
	$\left(\matrix{f_1 & f_2\cr g_1 & g_2}\right)=
	\left(\matrix{\sin2\th & \cos2\th\cr -\cos2\th & \sin2\th}\right)$
	and
	$(b_2-b_3)\cos\th+(b_1+b_4)\sin\th=1$.
	
	$2^\circ$ $\tilde{N}_3(\th,b)\in\Sp(6)$ if and only if
	$\left(\matrix{f_1 & f_2\cr g_1 & g_2}\right)=
	\left(\matrix{\cos2\th & -\sin2\th\cr -\sin2\th & -\cos2\th}\right)$
	and
	$(b_2-b_3)\cos\th+(b_1+b_4)\sin\th=-1$.
   In the following we always suppose $N_3(\th,b),\tilde{N}_3(\th,b)\in\Sp(6)$.
	
	$3^\circ$ $\om$ and $\bar\om$ are triple eigenvalues of $N_3(\th,b)$ (or $\tilde{N}_3(\th,b)$).
	
	$4^\circ$ $\dim_{\C}\ker_{\C}(N_3(\th,b)-\om I)=1$ and $\dim_{\C}\ker_{\C}(\tilde{N}_3(\th,b)-\om I)=1$.
	
	$5^\circ$ Let $\Delta(\alpha)=\Delta(M_\alpha)$ for $\alpha\in\R$.
	Then $\Delta(0)=\Delta'(0)=0$ and $\Delta''(0)=10368\sin^6\th>0$.

    $6^\circ$ There exists $\alpha_0>0$ small enough such that
	$D_\om(M_\alpha)\ne0$ if $0<|\alpha|<\alpha_0$.
}
\begin{proof}
$1^\circ$ to $4^\circ$ follow from direct
computations.

For $5^\circ$,
since the eigenvalues of $N_3(\th,b)$ are
a triple pair of $\{e^{\sqrt{-1}\th},e^{-\sqrt{-1}\th}\}$,
we have
\begin{eqnarray*}
\sigma_1(M_\alpha)|_{\alpha=0}&=&\sigma_1(N_3(\th,b))=6\cos\th,
\\
\sigma_2(M_\alpha)|_{\alpha=0}&=&\sigma_2(N_3(\th,b))=3+12\cos^2\th,
\\
\sigma_3(M_\alpha)|_{\alpha=0}&=&\sigma_3(N_3(\th,b))=12\cos\th+8\cos^3\th.
\end{eqnarray*}
Then we have
$$
    A(M_\alpha)|_{\alpha=0}=B(M_\alpha)|_{\alpha=0}=
    C(M_\alpha)|_{\alpha=0}=0,
$$
where $A,B,C$ are given by (\ref{A})-(\ref{C}).
Hence,
$$
    \Delta(M_\alpha)|_{\alpha=0}=(B^2-4AC)|_{\alpha=0}=0.
$$
Moreover, we have
\begin{eqnarray}
    \Delta(M_\alpha)'|_{\alpha=0}&=&(2BB'-4A'C-4AC')|_{\alpha=0}=0,
    \nonumber\\
    \Delta(M_\alpha)''|_{\alpha=0}&=&
   [(2BB''+2(B')^2-4(4A'C+AC')-8A'C']|_{\alpha=0}  \nonumber
    \\
    &=&
    2[(B'|_{\alpha=0})^2-4(A'|_{\alpha=0})(C'|_{\alpha=0})], \label{7.1}
\end{eqnarray}
where $'$ and $''$ represent the first and second derivatives with respect to $\alpha$,
respectively.

Now we use $c_\th,s_\th$ and $c_\alpha,s_\alpha$
to denote $\cos\th,\sin\th$ and $\cos\alpha,\sin\alpha$, respectively.
Then we have
\begin{eqnarray*}
&&M_\alpha 
=N_3(\th,b)\cdot \diag(R(\alpha),R(\alpha),R(\alpha))  \nonumber\\
&&=
\left(\matrix{
	c_\th &   b_1   & -s_\th & b_2      & 1     & 0\cr
	      0 & c_\th &    0     & -s_\th & 0     & 0\cr
	s_\th &   b_3   &  c_\th &      b_4 & 0     & 1\cr
	      0 & s_\th &    0     &  c_\th & 0     & 0\cr
          0 &    f_1  &    0     &     f_2  &c_\th&-s_\th\cr
          0 &    g_1  &    0     &     g_2  &s_\th& c_\th\cr
      }\right)
\left(\matrix{c_\alpha &  -s_\alpha &   0&         0& 0& 0\cr
	s_\alpha & c_\alpha &       0 & 0& 0& 0 \cr
	0 &   0 &  c_\alpha &  -s_\alpha& 0& 0 \cr
	0 &    0 & s_\alpha &  c_\alpha& 0& 0 \cr
 0& 0& 0& 0& c_\alpha& -s_\alpha\cr
 0& 0& 0& 0& s_\alpha& c_\alpha\cr}\right)
\nonumber\\
&&=
\left(\matrix{c_\th c_\alpha+b_1 s_\alpha &   
        -c_\th s_\alpha+b_1 c_\alpha &       -s_\th c_\alpha+b_2 s_\alpha &         s_\th s_\alpha+b_2 c_\alpha&
        c_\alpha & -s_\alpha\cr
	c_\th s_\alpha & c_\th c_\alpha &  
        -s_\th s_\alpha & -s_\th c_\alpha &
        0 & 0\cr
	s_\th c_\alpha+b_3s_\alpha &   
        -s_\th s_\alpha+b_3c_\alpha &  c_\th c_\alpha+b_4s_\alpha &         
        -c_\th s_\alpha+b_4c_\alpha &
        s_\alpha & c_\alpha\cr
	s_\th s_\alpha &   s_\th c_\alpha & 
        c_\th s_\alpha &  c_\th c_\alpha&
        0 & 0\cr
    f_1s_\alpha & f_1c_\alpha & 
        f_2s_\alpha & f_2c_\alpha &
        c_\th c_\alpha-s_\th s_\alpha&
        -c_\th s_\alpha-s_\th c_\alpha \cr
    g_1s_\alpha & g_1c_\alpha & 
        g_2s_\alpha & g_2c_\alpha &
        s_\th c_\alpha+c_\th s_\alpha&
        -s_\th s_\alpha+c_\th c_\alpha
}\right).
\nonumber\\
\end{eqnarray*}
Then we have
$$
\sigma_1(M_\alpha)=tr(M_\alpha)=6\cos\th\cos\alpha+(b_1+b_4-2\sin\th)\sin\alpha;
$$
and hence,
\begin{eqnarray*}
\sigma_1'(M_\alpha)|_{\alpha=0}=b_1+b_4-2\sin\th.
\end{eqnarray*}
However, since $\sigma_2(M_\alpha)$ and
$\sigma_3(M_\alpha)$ are much complicated, we only give their first derivatives with respect to $\alpha$ at $\alpha=0$:
\begin{eqnarray*}
\sigma_2'(M_\alpha)|_{\alpha=0}&=&
4(b_1+b_4)\cos\th-2(b_2-b_3)\sin\th-8\cos\th\sin\th-(f_1+g_2)
\nonumber\\
&=&4(b_1+b_4)\cos\th-2(b_2-b_3)\sin\th-12\cos\th\sin\th,
\\
\sigma_3'(M_\alpha)|_{\alpha=0}&=&6(b_1+b_4)-8(3-\sin^2\th)\sin\th.
\end{eqnarray*}
Hence, by (\ref{A})-(\ref{C}),
we have
\begin{eqnarray}
A'(M_\alpha)|_{\alpha=0}&=&6[(b_2-b_3)+2\cos\th]\sin\th,
\label{A'}\\
B'(M_\alpha)|_{\alpha=0}&=&-24[(b_2-b_3)\cos\th+2+\sin^2\th]\sin\th,
\\
C'(M_\alpha)|_{\alpha=0}&=&24[(b_2-b_3)\cos\th+2+4\sin^2\th]\sin\th\cos\th. \label{C'}
\end{eqnarray}

Therefore, by (\ref{7.1})-(\ref{C'}),
we have
\begin{eqnarray*}
\Delta''(M_\alpha)|_{\alpha=0}=10368\sin^6\th.
\end{eqnarray*}

For $6^\circ$,
suppose the eigenvalues of $M_\alpha\in\Sp(6)$ are
	$\sigma(M)=\{\lambda_1,\lambda_2,\lambda_3,{1\over \lambda_1},{1\over\lambda_2},{1\over\lambda_3}\}$
	with
	$Im(\lambda_i)>0$ (or $|\lambda_i|>1$ for $\lambda_i\in\R$).
 Here $\lambda_i,i=1,2,3$ depend on $\alpha$.
 Using the same notations of (\ref{mu})-(\ref{C}),
 we have
 \begin{eqnarray*}
     D_\omega(M_\alpha)&=&\om^{-3}\det(M_\alpha-\om I)
     \nonumber\\
     &=&\om^{-3}\Pi_{i=1}^3(\om-\lambda_i)(\om-{1\over\lambda_i})
     \nonumber\\
     &=&\om^{-3}\Pi_{i=1}^3(\om^2-\mu_i\om+1)
     \nonumber\\
     &=&\Pi_{i=1}^3(\om+\bar\om-\mu_i)
     \nonumber\\
     &=&\Pi_{i=1}^3(2\cos\th-\mu_i)
     \nonumber\\
     &=&8\cos^3\th-4\Sigma_1\cos^2\th+2\Sigma_2\cos\th-\Sigma_3
     \nonumber\\
     &=&8\cos^3\th-4\sigma_1\cos^2\th+2(\sigma_2-3)\cos\th-(\sigma_3-2\sigma_1),
 \end{eqnarray*}
 where $\om=e^{\sqrt{-1}\th}$.
 Hence, we have
 \begin{eqnarray}
     \frac{\partial D_\omega(M_\alpha)}{\partial\alpha}\Big|_{\alpha=0}
  &=&-4\cos^2\th\cdot\sigma_1'(M_\alpha)|_{\alpha=0}
  +2\cos\th\cdot\sigma_2'(M_\alpha)|_{\alpha=0}
  -(\sigma_3'(M_\alpha)-2\sigma_1'(M_\alpha))|_{\alpha=0}
  \nonumber\\
  &=&-4\cos^2\th\cdot(b_1+b_4-2\sin\th)
  \nonumber\\
  &&+2\cos\th\cdot[4(b_1+b_4)\cos\th-2(b_2-b_3)\sin\th-12\cos\th\sin\th]
  \nonumber\\
  &&-[6(b_1+b_4)-8(3-\sin^2\th)\sin\th-2(b_1+b_4-2\sin\th)]
  \nonumber\\
  &=&-[4(b_1+b_4)\sin^2\th+4(b_2-b_3)\sin\th\cos\th]+12\sin\th-8\cos^2\th\sin\th
  \nonumber\\
  &=&8\sin^3\th,  \label{D_prime}
 \end{eqnarray}
 where in the last equality, we have used
 $(b_2-b_3)\cos\th+(b_1+b_4)\sin\th=1$.
 Since $D_\omega(M_\alpha)|_{\alpha=0}=D_\omega(N_3(\th,b))=0$, together with (\ref{D_prime}),
 we conclude that
 $D_\om(M_\alpha)\ne0$ for $\alpha\ne0$ small enough.
\end{proof}

\subsection{Proofs of facts}
\label{proofs.facts}

\noindent{\bf Fact 3.1.}
{\it	
    There does not exist a positive path which entirely
    in the truly hyperbolic set of $\Sp(2)$ with endpoints $D({1\over2})$ and $D(2)$.
}

\begin{proof}
    Suppose $\gamma(t),t\in[0,1]$ is a positive path with conditions $\gamma(0)=D({1\over2})$ and $\gamma(1)=D(2)$.
    By the structure of $\Sp(2)$ in Chapter 2 of
    \cite{Lon}, for each $t$, $\gamma(t)$ can be
    written in the form uniquely in the cylinder coordinate:
    \begin{eqnarray*}
    \gamma(t)&=&
        \left(\matrix{r(t)& z(t)\cr 
        z(t)&{1+z(t)^2\over r(t)}}\right)
        \left(\matrix{\cos\theta(t)&-\sin\theta(t)\cr \sin\theta(t)&\cos\theta(t)}\right)
        \nonumber\\
        &:=&M(t)R(\theta(t)).
    \end{eqnarray*}
    Here $r(t)>0$ for $t\in[0,1]$.
    Since $\gamma$ is entirely in the truly hyperbolic set,
    there is no collision;
    and hence, $r,z$ and $\theta$ are $C^1$ functions.
    The endpoints of $D({1\over2})$ and $D(2)$ give:
    \begin{eqnarray*}
    r(0)={1\over2},\quad z(0)=0,\quad \theta(0)=0;\\
    r(1)=2,\quad z(1)=0,\quad \theta(1)=2k\pi,
    \end{eqnarray*}
    for some integer $k$.

    Since
    \begin{eqnarray*}
    P&=&J^{-1}\gamma'\gamma^{-1}
      =J^{-1}[M'R+MR']R^{-1}M^{-1}
    \nonumber\\
    &=&J^{-1}[M'R+M(\theta'JR)]R^{-1}M^{-1}
    \nonumber\\
    &=&J^{-1}M'M^{-1}+\theta'M^{-T}M^{-1}
    \end{eqnarray*}
    is positive definite,
    then
    \begin{eqnarray*}
    M^TPM=&M^T[J^{-1}M'M^{-1}+\theta'M^{-T}M^{-1}]M
    =M^TJ^{-1}M'+\theta'I_2
    \end{eqnarray*}
    is also positive definite.
    However, we have
    $$
        \det(M^TJ^{-1}M')=det(M')
        =-\left({r'z-rz'\over r}\right)^2
         -\left({r'\over r}\right)^2
        \le0.
    $$
    Therefore, to guarantee the positive definite of
    $M^TPM$, we must have $\theta'(t)>0$ for any
    $t\in[0,1]$.
    Thus we must have $\gamma(1)=2k\pi$ for some positive integer $k$.
    Now, there exist a time $t_0\in(0,1)$ such that
    $\theta(t_0)=\pi$; and hence
    $$
        \gamma(t_0)=
        \left(\matrix{r(t)& z(t)\cr 
        z(t)&{1+z(t)^2\over r(t)}}\right)\cdot J_2
        =\left(\matrix{z(t)& -r(t)\cr 
        {1+z(t)^2\over r(t)}& -z(t)}\right),
    $$
    which is not truly hyperbolic, 
    a contradiction!
\end{proof}

\noindent{\bf Fact 3.2.}
{\it
There exist two truly hyperbolic matrices such that it is impossible to connect them via any positive truly hyperbolic path.
}
\begin{proof}
    We prove it for $\Sp(4)$, and
    the argument is similar for $\Sp(2n),n>2$.
    
    Let $A_0=\diag(D(2),D(3))$ and
    $\gamma_1(t)=\diag(e^{\pi Jt}D(2),e^{\pi Jt}D(3)),t\in[0,1]$.
    Then $\gamma_1(0)=A_0$
    and $\gamma_1(1)=-A_0$.
    Since $\gamma_1(1)$ and $\gamma_1(0)$
    are truly hyperbolic,
    according to Lemma \ref{Lemma:hyperbolic.positive.path},
    there exists a positive truly hyperbolic
    path $\gamma_2$ that connects $\gamma_1(1)=-A_0$ and $X^{-1}\gamma_1(0)X=X^{-1}A_0X$
    for some $X\in\Sp(4)$.
    
    We denote by $B_0=X^{-1}A_0X$.
    Then we claim that there does not exist
    any positive truly hyperbolic path that
    connects $B_0$ and $A_0$.
    If not, we assume $\gamma_3$ to be such a path.
    Gluing $\gamma_1,\gamma_2$ and $\gamma_3$, we obtain a piece-wise positive loop
    $$
        \gamma = \gamma_1*\gamma_2*\gamma_3.
    $$
    Furthermore, we can assume that $\gamma$ is $C^1$ up to the perturbations near the adjoint points, as permitted by the Smoothing Lemma.
    Moreover, by the construction,
    we have the Maslov-type index of $i_1(\gamma)=2$.
    On the other hand, 
    by Remark \ref{rk:multiplication},
    $\mu=\gamma(0)^{-1}\gamma$ is a positive loop that starts at identity.
    Then by the case $n=2$ of Theorem A,
    the Maslov-type index $i_1(\mu)\ge4$.
    However, $\gamma$ and $\mu$ are homotopic,
    thus they must have the same Maslov-type index, which raises a contradiction!
    In other words, the positive truly hyperbolic path that connects $B_0$ and $A_0$ does not exist.
\end{proof}

\medskip
\noindent {\bf Acknowledgements.}
We would like to thank Prof. Yiming Long for many helpful suggestions and remarks. We appreciate Prof. McDuff's comments on our paper and her encouragement for our future research.

\end{document}